\newcommand{\C}{\mathbb{C}}
\newcommand{\tr}{\mathrm{tr}}
\newcommand{\g}{\mathfrak{g}}
\newcommand{\bb}{\mathfrak{b}}
\newcommand{\ba}{\mathfrak{b}^a}
\newcommand{\ua}{\mathfrak{u}^a}
\newcommand{\lf}{\mathfrak{l}}
\newcommand{\uu}{\mathfrak{u}}
\newcommand{\h}{\mathfrak{h}}
\newcommand{\p}{\mathfrak{p}}
\newcommand{\greg}{\mathfrak{g}_{\mathrm{reg}}}
\newcommand{\gsreg}{\mathfrak{g}_{\mathrm{sreg}}^{a}}
\newcommand{\sln}{\mathfrak{sl}}
\newcommand{\SLn}{\mathrm{SL}}
\newcommand{\gsing}{\mathfrak{g}_{\mathrm{sing}}}
\newcommand{\ad}{\mathrm{ad}}
\newcommand{\Ad}{\mathrm{Ad}}
\newcommand{\Sing}{\mathrm{Sing}}
\renewcommand{\exp}{\mathrm{exp}}
\newcommand\scalemath[2]{\scalebox{#1}{\mbox{\ensuremath{\displaystyle #2}}}}
\numberwithin{equation}{section}
\newtheorem{thm}{Theorem}[section]
\newtheorem{lem}[thm]{Lemma}
\newtheorem{cor}[thm]{Corollary}
\newtheorem{prop}[thm]{Proposition}
\theoremstyle{definition}
\newtheorem{rem}[thm]{Remark}
\newtheorem{defn}[thm]{Definition}
\begin{document}

\title[On the fibres of Mishchenko--Fomenko systems]{On the fibres of Mishchenko--Fomenko systems}

\author[Peter Crooks]{Peter Crooks}
\author[Markus R\"oser]{Markus R\"oser}
\address[Peter Crooks]{Department of Mathematics, Northeastern University, 360 Huntington Avenue, Boston, MA 02115, USA}
\email{p.crooks@northeastern.edu}
\address[Markus R\"oser]{Fachbereich Mathematik, Universit\"at Hamburg, 20146 Hamburg, Germany}
\email{markus.roeser@uni-hamburg.de}

\subjclass[2010]{17B80 (primary); 17B63, 22E46 (secondary)}
\keywords{integrable system, Mishchenko--Fomenko subalgebra, semisimple Lie algebra}

\begin{abstract}
This work is concerned with Mishchenko and Fomenko's celebrated theory of completely integrable systems on a complex semisimple Lie algebra $\g$. Their theory associates a maximal Poisson-commutative subalgebra of $\mathbb{C}[\g]$ to each regular element $a\in\g$, and one can assemble free generators of this subalgebra into a moment map $F_a:\g\rightarrow\mathbb{C}^b$. This leads one to pose basic structural questions about $F_a$ and its fibres, e.g. questions concerning the singular points and irreducible components of such fibres.

We examine the structure of fibres in Mishchenko--Fomenko systems, building on the foundation laid by Bolsinov, Charbonnel--Moreau, Moreau, and others. This includes proving that the critical values of $F_a$ have codimension $1$ or $2$ in $\mathbb{C}^b$, and that each codimension is achievable in examples. Our results on singularities make use of a subalgebra $\ba\subseteq\mathfrak{g}$, defined to be the intersection of all Borel subalgebras of $\g$ containing $a$. In the case of a non-nilpotent $a\in\g_{\text{reg}}$ and an element $x\in\ba$, we prove the following: $x+[\mathfrak{b}^a,\mathfrak{b}^a]$ lies in the singular locus of $F_a^{-1}(F_a(x))$, and the fibres through points in $\ba$ form a $\mathrm{rank}(\g)$-dimensional family of singular fibres. We next consider the irreducible components of our fibres, giving a systematic way to construct many components via Mishchenko--Fomenko systems on Levi subalgebras $\mathfrak{l}\subseteq\g$. In addition, we obtain concrete results on irreducible components that do not arise from the aforementioned construction. Our final main result is a recursive formula for the number of irreducible components in $F_a^{-1}(0)$, and it generalizes a result of Charbonnel--Moreau. Illustrative examples are included at the end of this paper.               
\end{abstract}

\maketitle

\vspace{-10pt}

{\small\tableofcontents} 

\section{Introduction}
\subsection{Context and main results}
There is a fruitful and well-developed synergy between Lie theory and complete integrability, and this is perhaps best witnessed by the Mishchenko--Fomenko systems on a complex semisimple Lie algebra $\g$. Such systems were formally introduced in the late 1970s \cite{Mishchenko}, and they have received considerable attention in the research literature (e.g. \cite{CRR,AbeCrooks,CrooksRayan,Charbonnel,Arakawa,Molev,Bolsinov,Bolsinov91,BolsinovRemarks,Moreau,Shuvalov,Tarasov,Panyushev,Panyushev2,Vinberg,KostantHessenberg}). Particular emphasis has been placed on the fibres of $F_a:\mathfrak{g}\rightarrow\mathbb{C}^b$, the Mishchenko--Fomenko system determined by a regular element $a\in\g$ and a chosen basis of invariant polynomials on $\g$. While these fibres are known to be pure-dimensional \cite{Moreau}, they are sometimes singular and often admit complicated decompositions into irreducible components. The singularities are partly governed by Bolsinov's description of $F_a$ and its critical points \cite{Bolsinov}, while Charbonnel and Moreau \cite{Charbonnel-Moreau} give significant insight into the aforementioned irreducible components. 

We study the singularities and irreducible components of fibres in Mishchenko--Fomenko systems, building on the foundation laid by Bolsinov \cite{Bolsinov}, Charbonnel--Moreau \cite{Charbonnel-Moreau}, Moreau \cite{Moreau}, and others. To describe our results, let $\mathfrak{g}_{\text{reg}}$ denote the set of regular elements in $\mathfrak{g}$ and consider its complement $\mathfrak{g}_{\text{sing}}:=\mathfrak{g}\setminus\mathfrak{g}_{\text{reg}}$. Bolsinov \cite{Bolsinov} shows the critical points of $F_a$ to be given by $\mathrm{Sing}^a:=\mathfrak{g}_{\text{sing}}+\mathbb{C}a\subseteq\mathfrak{g}$, and we use this to investigate the critical values of $F_a$. More precisely, we obtain the following result.

\begin{thm}\label{Theorem: First theorem}
For all $a\in\g_{\emph{reg}}$, the codimension of the closure $\overline{F_a(\mathrm{Sing}^a)}$ in $\mathbb{C}^b$ is $1$ or $2$.
\end{thm}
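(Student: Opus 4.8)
The plan is to sandwich $\dim\overline{F_a(\Sing^a)}$ between $b-2$ and $b-1$; recall that $b=\tfrac{1}{2}(\dim\g+\rank\g)$, so that $\dim\g-b=\tfrac{1}{2}(\dim\g-\rank\g)$ is the common dimension of the nonempty fibres of $F_a$ by \cite{Moreau}. Two ingredients recalled above do most of the work: Bolsinov's description $\Sing^a=\gsing+\C a$ of the critical locus of $F_a$ \cite{Bolsinov}, and Moreau's pure-dimensionality of fibres \cite{Moreau}. I will also use Kostant's classical fact that $\gsing$ has codimension $3$ in $\g$.

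The first step is to compute $\dim\Sing^a=\dim\g-2$. Since $\greg$ is open, $G$-stable and $\C^{\times}$-stable and $a\in\greg$, for every $x\in\g$ we have $t^{-1}(x+ta)=a+t^{-1}x\to a\in\greg$ as $t\to\infty$, so $x+ta\in\greg$ for $|t|$ large. In particular $\{t\in\C:x+ta\in\gsing\}$ is a proper Zariski-closed, hence finite, subset of $\C$. Therefore the addition morphism $\gsing\times\C\to\g$, $(x,t)\mapsto x+ta$, has finite fibres, and its image $\Sing^a$ has dimension $\dim\gsing+1=\dim\g-2$. Being the critical locus of $F_a$, the set $\Sing^a$ is Zariski-closed, so it has an irreducible component $Z$ with $\dim Z=\dim\g-2$.

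Next I establish the two bounds. For $\dim\overline{F_a(\Sing^a)}\le b-1$: the $b$ components of $F_a$ are algebraically independent, so $F_a$ is dominant, and generic smoothness (we are in characteristic zero) provides a dense open $V\subseteq\C^b$ over which $F_a$ is smooth. Every critical point of $F_a$ maps into the complement $\C^b\setminus V$, and since $\Sing^a$ is precisely the critical locus \cite{Bolsinov}, it follows that $\overline{F_a(\Sing^a)}\subseteq\C^b\setminus V$ is a proper closed subset. For $\dim\overline{F_a(\Sing^a)}\ge b-2$: restrict $F_a$ to the component $Z$ and choose $c$ in a dense open subset of $\overline{F_a(Z)}$, so that the fibre-dimension theorem gives $\dim(Z\cap F_a^{-1}(c))=\dim Z-\dim\overline{F_a(Z)}$. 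Since $Z\cap F_a^{-1}(c)$ lies inside the fibre $F_a^{-1}(c)$, which has dimension $\dim\g-b$ by \cite{Moreau}, we get $\dim\overline{F_a(Z)}\ge\dim Z-(\dim\g-b)=(\dim\g-2)-(\dim\g-b)=b-2$, whence $\dim\overline{F_a(\Sing^a)}\ge\dim\overline{F_a(Z)}\ge b-2$. Combining the two inequalities gives the theorem.

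I expect the genuinely delicate point to be the dimension count $\dim\Sing^a=\dim\g-2$: one must ensure that translating $\gsing$ along $\C a$ raises the dimension by exactly one, i.e. that the addition morphism does not fold a positive-dimensional family of $a$-lines onto $\gsing$, and this is exactly what the $\C^{\times}$-invariance of $\greg$ rules out. The rest is a routine combination of generic smoothness, the fibre-dimension theorem, and the cited results of Bolsinov and Moreau.
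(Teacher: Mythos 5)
Your argument is correct, and it parallels the paper's proof in outline while replacing its key step by a different mechanism. Both proofs start from the fact that $\gsing$ has codimension exactly $3$ (the paper cites Veldkamp for this; Kostant's original argument only gives codimension $\geq 3$), extract an irreducible component $Z\subseteq\Sing^a$ of codimension $2$, and dispose of the upper bound by a Sard-type statement (the paper invokes Sard directly, you invoke generic smoothness in characteristic zero -- essentially the same point). Where you genuinely diverge is the lower bound $\dim\overline{F_a(Z)}\geq b-2$: the paper argues commutative-algebraically, using Moreau's flatness of the extension $\mathcal{F}_a\subseteq\C[\g]$ to get the going-down property, bounding the height of $I\cap\mathcal{F}_a$ by $2$, and converting the resulting Krull-dimension estimate into a generic rank bound for $dF_a\vert_Z$; you instead argue geometrically, applying the fibre-dimension theorem to $F_a\vert_Z$ and bounding its generic fibre $Z\cap F_a^{-1}(c)$ inside the full fibre $F_a^{-1}(c)$, whose dimension $\dim\g-b$ is supplied by pure-dimensionality (the paper's Proposition on fibre dimensions, itself a consequence of Moreau's flatness). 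So both routes ultimately rest on the same flatness input, but yours trades the going-down/Krull-dimension bookkeeping for a more elementary dimension count, at the cost of needing the equidimensionality of all fibres rather than flatness alone. A further small merit of your write-up is that you actually verify $\dim\Sing^a=\dim\g-2$ via the finite-fibre argument for $(x,t)\mapsto x+ta$ (using $\C^{\times}$-stability and openness of $\greg$), a point the paper asserts without detail when it says that some component of $\Sing^a$ has codimension $2$; you also correctly note that $\Sing^a$ is closed because it is the critical locus of $F_a$ by Bolsinov's theorem.
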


We then use examples to show that each codimension is achievable.   

While Theorem \ref{Theorem: First theorem} gives information about the critical values of $F_a$, it has no implications for identifying the smooth and singular fibres. We address this by introducing a subalgebra $\mathfrak{b}^a\subseteq\mathfrak{g}$, defined to be the intersection of all Borel subalgebras of $\mathfrak{g}$ containing $a$. This leads us to prove the following.

\begin{thm} \label{Theorem: Second Theorem}
Assume that $a\in\g_{\emph{reg}}$ is not nilpotent. If $x\in\mathfrak{b}^a$, then $x+[\mathfrak{b}^a,\mathfrak{b}^a]$ is contained in the singular locus of $F_a^{-1}(F_a(x))$. In particular, the fibre $F_a^{-1}(F_a(x))$ is singular.  
\end{thm}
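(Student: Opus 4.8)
We outline a possible route; the statement will follow by combining two assertions about the set $x+[\ba,\ba]$: that it is contained in the single fibre $F_a^{-1}(F_a(x))$, and that it consists of critical points of $F_a$. Granting these, the conclusion is immediate. Indeed, since $F_a$ is assembled from free generators of the Mishchenko--Fomenko subalgebra, $d_yF_a$ has generic rank $b$ and, by Moreau's equidimensionality result, every fibre $F_a^{-1}(c)$ is pure of dimension $\dim\g-b$; hence $y\in F_a^{-1}(c)$ is a smooth point of the fibre if and only if $\rank(d_yF_a)=b$, i.e. if and only if $y$ is not a critical point. By Bolsinov's description the critical locus is $\Sing^a=\gsing+\C a$, so the singular locus of $F_a^{-1}(c)$ equals $F_a^{-1}(c)\cap\Sing^a$; the two assertions say precisely that $x+[\ba,\ba]$ lies in this intersection, and as $x$ belongs to it the fibre is singular. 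The work is thus entirely in the two assertions.

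For the first, fix a Borel subalgebra $\bb$ with $a\in\bb$. Since $\ba\subseteq\bb$ we get $[\ba,\ba]\subseteq[\bb,\bb]=\mathfrak{n}$, the nilradical of $\bb$, while $x+za\in\bb$ for every $z\in\C$ because $a,x\in\bb$. Thus, for $n\in[\ba,\ba]$, the elements $x+za$ and $x+za+n$ of $\bb$ have the same image under $\bb\to\bb/\mathfrak{n}$. I would then invoke the classical fact that the restriction to $\bb$ of any $p\in\C[\g]^{\g}$ factors through $\bb\to\bb/\mathfrak{n}=\h$ (equivalently, the composite $\bb\hookrightarrow\g\to\g/\!/G$ coincides with $\bb\to\h\to\h/W$), which gives $p(x+za+n)=p(x+za)$ for all invariant $p$ and all $z$. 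Since the components of $F_a$ are coefficient functions in the expansions $z\mapsto p_i(\,\cdot\,+za)$ of a generating set $(p_i)$ of $\C[\g]^{\g}$, this yields $F_a(x+n)=F_a(x)$, proving the first assertion and, incidentally, that $F_a|_{\ba}$ factors through $\ba/[\ba,\ba]$.

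For the second assertion it suffices, since $\ba$ is a subalgebra and hence $x+[\ba,\ba]\subseteq\ba$, to prove $\ba\subseteq\Sing^a=\gsing+\C a$; that is, that for each $v\in\ba$ some $v+z_0a$ lies in $\gsing$. Write $a=a_s+a_n$ for the Jordan decomposition; since $a$ is not nilpotent, $a_s\neq0$. I would use that $\ba\subseteq\lf:=\mathfrak{z}_\g(a_s)$ --- each Borel containing $a$ contains $a_s$, and the intersection of all Borels containing the semisimple element $a_s$ lies in its centralizer $\lf$ --- and that $a\in\lf$. Let $\m$ be the sum of the nonzero $\ad(a_s)$-eigenspaces, so $\g=\lf\oplus\m$ with $\m\neq0$, with $\m$ stable under $\ad(\lf)$, and with $\ad(a_s)|_{\m}$ invertible. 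Fix $v\in\ba$ and set $P(z):=\det\!\big(\ad(v+za)|_{\m}\big)\in\C[z]$. Its leading coefficient is $\det\!\big(\ad(a)|_{\m}\big)$, which is nonzero because $\ad(a)|_{\m}=\ad(a_s)|_{\m}+\ad(a_n)|_{\m}$ is an invertible operator plus a commuting nilpotent one. Hence $\deg P=\dim\m\geq1$, so $P(z_0)=0$ for some $z_0\in\C$; choose $0\neq\zeta\in\m$ with $[v+z_0a,\zeta]=0$. Since $v+z_0a\in\lf$ and both $\lf$ and $\m$ are $\ad(v+z_0a)$-stable, one has $\mathfrak{z}_\g(v+z_0a)=\mathfrak{z}_{\lf}(v+z_0a)\oplus\big(\mathfrak{z}_\g(v+z_0a)\cap\m\big)$, so $\dim\mathfrak{z}_\g(v+z_0a)\geq\rank\lf+1=\rank\g+1$. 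Therefore $v+z_0a\in\gsing$, i.e. $v\in\Sing^a$, completing the second assertion.

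Combining the two assertions proves the displayed statement. For the closing sentence of the paragraph in the theorem, one uses in addition that $\ba$ is a Borel subalgebra of $\lf$ (the one containing the regular nilpotent $a_n$), so $\ba/[\ba,\ba]\cong\h$ has dimension $\rank\g$; by the first assertion $F_a|_{\ba}$ factors through this quotient, and since $F_a|_{\h}$ has finite fibres (its components include the coordinates of $\h\to\h/W$) the image $F_a(\ba)$ has dimension exactly $\rank\g$, with all the corresponding fibres singular by the above. I expect the genuinely load-bearing step to be the third paragraph --- the observation that the pencil $z\mapsto\ad(v+za)|_{\m}$ must have a singular member, which is exactly where non-nilpotency of $a$ enters (it forces $\m\neq0$) --- whereas the inclusion $\ba\subseteq\lf$ and the identification of the singular locus of a fibre (via Moreau's equidimensionality) are the points that need the most care in citing the literature.
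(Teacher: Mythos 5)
There is a genuine gap in your first paragraph, and it is exactly the step you flag as routine. Your two assertions establish that $x+\ua$ lies in $F_a^{-1}(F_a(x))\cap\Sing^a$, i.e.\ that these are critical points of the map $F_a$ lying on the fibre. But the claimed equivalence ``$y$ is a smooth point of the fibre if and only if $\rank(dF_a(y))=b$'' is false in the ``only if'' direction: the Jacobian criterion gives that full rank implies smoothness, while the converse requires the scheme-theoretic fibre $V(f_1-c_1,\dots,f_b-c_b)$ to be reduced, which neither pure-dimensionality nor Moreau's flatness guarantees. The paper's own $\sln_2(\C)$ example makes this concrete: for $a=n$ nilpotent, $F_n^{-1}(0)=V(x_1^2+x_2x_3,\,x_3)=V(x_1^2,x_3)$ is a non-reduced double line whose reduction $\uu_{+}$ is a \emph{smooth} variety consisting entirely of critical points of $F_n$. (That example has $a$ nilpotent, so it does not contradict the theorem, but it refutes the general principle ``singular locus of the fibre $=$ fibre $\cap\,\Sing^a$'' on which your deduction rests; establishing reducedness of the fibres for non-nilpotent $a$ would be a substantial separate result.) Consequently, membership in $\Sing^a$ does not by itself yield singularity of the reduced fibre, and your argument does not close.

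The paper avoids this issue entirely by a different mechanism: since $a$ is not nilpotent, $\ba$ is a Borel subalgebra of the proper Levi $\g_s$ and hence not a Borel subalgebra of $\g$, so by Proposition \ref{Proposition: Intersection of Borels} there are two \emph{distinct} Borel subalgebras $\bb_1,\bb_2\supseteq\ba$ containing $a$. Proposition \ref{Prop: Unipotent orbit}(i) then exhibits $x+\uu_1$ and $x+\uu_2$ as two distinct irreducible components of $F_a^{-1}(F_a(x))$, and $x+\ua\subseteq(x+\uu_1)\cap(x+\uu_2)$ is therefore automatically in the singular locus of the reduced fibre. Your second and third paragraphs are correct and could be kept (the pencil argument showing $\ba\subseteq\Sing^a$ is a nice observation), but to repair the proof you should replace the first paragraph's Jacobian reasoning with an argument that produces two components through each point of $x+\ua$, as the paper does.
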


This gives context for considering the fibres $F_a^{-1}(F_a(x))$ appearing above, i.e. the fibres lying over points in $F_a(\mathfrak{b}^a)\subseteq\mathbb{C}^b$. One is motivated to gauge the prevalence of these singular fibres amongst all fibres of $F_a$, which amounts to computing $\dim(F_a(\mathfrak{b}^a))$. Our next result gives this dimension in addition to supplementary facts about $F_a(\mathfrak{b}^a)$. 

\begin{thm} \label{Theorem: Third Theorem}
Let $a=s+n$ be the Jordan decomposition of $a\in\g_{\emph{reg}}$ into a semisimple element $s\in\g$ and a nilpotent element $n\in\g$. Fix a Cartan subalgebra $\h\subseteq\g$ containing $s$ and let $W$ be the Weyl group of $(\g,\h)$. Then $F_a(\mathfrak{b}^a)$ is a smooth, $r$-dimensional, closed subvariety of $\mathbb{C}^b$ with coordinate ring canonically isomorphic to $\mathbb{C}[\h]^{W_s}$, where $r$ is the rank of $\g$ and $W_s$ is the $W$-stabilizer of $s$.
\end{thm}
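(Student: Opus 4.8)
The plan is to show that $F_a|_{\ba}$ descends through $\ba/[\ba,\ba]$ to a closed immersion of $\h/\!\!/W_s$ into $\C^b$. First I would pin down $\ba$. Writing $\lf:=\g^s$ — a Levi subalgebra, since $s$ is semisimple — regularity of $a$ forces $a$ to be regular in $\lf$ (as $\g^a=\lf^a$), so $n$ is a regular nilpotent of $[\lf,\lf]$. As a Borel subalgebra is algebraic it contains $a$ iff it contains $s$ and $n$; a Borel through $s$ meets $\lf$ in a Borel of $\lf$, which must then contain $n$, and a regular nilpotent lies in a unique Borel, so every Borel of $\g$ through $a$ cuts out the same Borel $\mathfrak b_0$ of $\lf$. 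A dimension count against the two Borels of $\g$ assembled from opposite parabolics with Levi factor $\lf$ then gives $\ba=\mathfrak b_0$, so $\ba=\mathfrak z(\lf)\oplus\mathfrak b_{[\lf,\lf]}$, the derived subalgebra $[\ba,\ba]$ is the nilradical of $\mathfrak b_{[\lf,\lf]}$, and $\ba/[\ba,\ba]$ is a Cartan subalgebra of $\lf$ containing $s$, which I may take to be the given $\h$; write $\pi\colon\ba\twoheadrightarrow\h$ for the quotient. (If this description of $\ba$ is already available from an earlier section, I would simply cite it.)

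Next I would exploit that a $\g$-invariant restricts to an $\lf$-invariant whose restriction to the Borel $\ba$ factors through $\pi$: for $x\in\ba$ and $z\in\C$ this yields $p_i(x+za)=p_i(\pi(x)+zs)$ for each basic invariant $p_i$ of degree $d_i$ (using $\pi(a)=s$ and $a-s\in[\ba,\ba]$). Hence $F_a|_{\ba}$ is the pullback along $\pi$ of the map $\Phi\colon\h\to\C^b$ with components $h\mapsto\tfrac1{k!}\bigl(\D_s^{\,k}(p_i|_\h)\bigr)(h)$ ($1\le i\le r$, $0\le k\le d_i-1$), with $\D_s$ the directional derivative along $s$; in particular $F_a(\ba)=\Phi(\h)$. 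These components are homogeneous, so $A:=\C[\Phi_1,\dots,\Phi_b]\subseteq\C[\h]$ is graded, contains $\C[\h]^W$ (the $k=0$ components are the $p_i|_\h$), and is contained in $\C[\h]^{W_s}$ (each $\D_s^{\,k}(p_i|_\h)$ is $W_s$-invariant as $s$ is $W_s$-fixed). I would also record that $\Phi$ separates $W_s$-orbits: if $\Phi(h)=\Phi(h')$ then $h+zs$ and $h'+zs$ are $W$-conjugate for every $z\in\C$, and finiteness of $W$ forces one $w$ to work for infinitely many $z$, so $w(s)=s$ and $w(h)=h'$; the converse is clear. Finally $V(A_+)\subseteq V(p_1|_\h,\dots,p_r|_\h)=\{0\}$, so $\C[\h]$ is finite over $A$ and $\mathrm{Frac}(A)=\C(\h)^{W_s}$.

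With this in hand, the theorem reduces to the single claim $A=\C[\h]^{W_s}$. Granting it: $W_s=\Stab_W(s)$ is generated by the reflections it contains (Steinberg), so $\C[\h]^{W_s}$ is a polynomial ring in $r$ variables (Chevalley--Shephard--Todd), making $F_a(\ba)=\mathrm{Spec}(A)$ smooth of dimension $r$; the surjection $\C[y_1,\dots,y_b]\twoheadrightarrow A$ exhibits it as a closed subvariety of $\C^b$ (its underlying set being $\Phi(\h)$ since $\h\twoheadrightarrow\mathrm{Spec}(A)$ is onto), and the restriction isomorphism $\C[\lf]^{\lf}\xrightarrow{\ \sim\ }\C[\h]^{W_s}$ identifies the coordinate ring canonically, independently of all choices. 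To attack the claim I would write $\h=\mathfrak z\oplus\h'$ with $\mathfrak z=\mathfrak z(\lf)\ni s$ and $\h'=\h\cap[\lf,\lf]$, so that $W_s$ is the Weyl group of $[\lf,\lf]$ acting on $\h'$ and trivially on $\mathfrak z$, whence $\C[\h]^{W_s}=\C[\mathfrak z]\otimes\C[\h']^{W_s}$ is polynomial with fundamental degrees $1$ (with multiplicity $\dim\mathfrak z$) together with the degrees of the basic invariants of $[\lf,\lf]$. Since $A$ is a graded subalgebra of this ring, it equals the ring as soon as $A$ contains an algebraically independent homogeneous family realising exactly those $r$ degrees. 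The $\dim\mathfrak z$ degree-one elements are at hand: the degree-one part of $A$ is spanned by the $\D_s^{\,d_i-1}(p_i|_\h)$, which up to scalars are the restrictions to $\h$ of the differentials of the $p_i$ at $s$, and these span $\mathfrak z(\g^s)=\mathfrak z$ by the standard transversality of the adjoint quotient at a semisimple element, so their restrictions span $\mathfrak z^*$.

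The hard part will be the remainder of the claim: producing among the $\D_s^{\,k}(p_i|_\h)$ a further $\dim\h'$ algebraically independent elements whose degrees are those of $[\lf,\lf]$ — equivalently, showing that the finite bijective morphism $\h/\!\!/W_s\to\C^b$ induced by $\Phi$ is a closed immersion, not merely a bijection. (Orbit separation alone, already secured, is not enough: a graded subalgebra can separate all orbits and still be proper in the invariant ring.) My approach would be a degree-by-degree analysis of the argument-shifted invariants $p_i(h+zs)$ restricted to $\h'$, selecting for each fundamental degree $d'$ of $[\lf,\lf]$ a coefficient $\D_s^{\,k}(p_i|_\h)$ of degree $d'$ and verifying algebraic independence of the resulting system; a fallback is to show instead that $\mathrm{Spec}(A)$ is normal, since $\C[\h]^{W_s}$ is already the integral closure of $A$ in the common fraction field $\C(\h)^{W_s}$. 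Should this generation statement turn out to be available from Charbonnel--Moreau or another cited source, invoking it would finish the proof.
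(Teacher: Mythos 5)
Your reduction is sound and runs largely parallel to the paper's own argument (Theorem \ref{Theorem: F(ba)}): the factorization of $F_a\big\vert_{\ba}$ through the Cartan subalgebra $\h$, the closedness of the image via finiteness of $\C[\h]$ over the subalgebra $A$ generated by the components of $F_a\big\vert_{\h}$ (which contains $\C[\h]^W$), and the deduction of smoothness from Steinberg plus Chevalley--Shephard--Todd are all there. But the theorem is not proved until the central claim $A=\C[\h]^{W_s}$ is established, and that is exactly what you leave open. Everything you actually verify --- $\C[\h]^W\subseteq A\subseteq\C[\h]^{W_s}$, separation of $W_s$-orbits, finiteness of $\C[\h]$ over $A$, equality of fraction fields --- only shows that $\C[\h]^{W_s}$ is the normalization of $A$, i.e.\ that the induced map $\h/W_s\to F_a(\ba)$ is finite, bijective and birational. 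As you note yourself, this does not yield an isomorphism; without it you obtain neither the asserted coordinate ring nor smoothness (a non-normal image could perfectly well be singular). So the proposal has a genuine gap at its decisive point, and your closing sentence ("should this generation statement turn out to be available \dots invoking it would finish the proof") concedes as much.

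The missing ingredient is precisely the paper's Lemma \ref{Lemma: Generate}: the restrictions $f_1\big\vert_{\h},\dots,f_b\big\vert_{\h}$ generate $\C[\h]^{W_s}$. The paper does not prove this from scratch either; after replacing $a$ by its semisimple part $s$ on $\h$ (via \cite[Corollary 3.1.43]{Chriss}, just as you do), it invokes a lemma of Shuvalov (the first lemma on p.~302 of \cite{Shuvalov}) asserting that the shift-of-argument coefficients $f_{ij}^s$ restricted to $\g_s$ generate $\C[\g_s]^{G_s}$, and then transports this through the Chevalley restriction isomorphism $\C[\g_s]^{G_s}\cong\C[\h]^{W_s}$. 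Your proposed degree-by-degree strategy would indeed suffice if carried out --- homogeneous algebraically independent invariants realizing the fundamental degrees of $W_s$ (equivalently, with degree product $|W_s|$) automatically generate --- and your normality fallback is likewise a valid route, but neither is executed, and the statement is genuinely nontrivial. Citing the Shuvalov-type generation result, as the paper does, is the efficient way to close the gap.
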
 

Our attention subsequently turns to describing the irreducible components of the fibres of $F_a$. To this end, suppose that $\p$ is a parabolic subalgebra of $\g$ containing $a\in\g_{\text{reg}}$. Let $a=s+n$ be the Jordan decomposition, and choose a Cartan subalgebra $\h\subseteq\p$ containing $s$. One then has a unique $\h$-stable Levi factor $\mathfrak{l}\subseteq\mathfrak{p}$. Let $a_{\mathfrak{l}}\in\mathfrak{l}$ denote the projection of $a$ onto $\mathfrak{l}$ with respect to the decomposition $\mathfrak{p}=\mathfrak{l}\oplus\mathfrak{u}$, where $\mathfrak{u}$ is the nilpotent radical of $\mathfrak{p}$. We establish that $a_{\mathfrak{l}}$ is regular in $\mathfrak{l}$, allowing us to form an appropriate Mishchenko--Fomenko system $F_{a_{\mathfrak{l}}}:\mathfrak{l}\rightarrow\mathbb{C}^{b(\mathfrak{l})}$. We then prove the following fact about the irreducible components of $F_a^{-1}(F_a(x))$ for $x\in\p$. 

\begin{thm}\label{Theorem: Correspondence theorem}
Use the objects and notation described in the previous paragraph, and let $x_{\mathfrak{l}}\in\mathfrak{l}$ denote the projection of $x\in\p$ onto $\mathfrak{l}$. If $Y$ is an irreducible component of $F_{a_{\mathfrak{l}}}^{-1}(F_{a_{\mathfrak{l}}}(x_{\mathfrak{l}}))$ containing $x_{\mathfrak{l}}$, then $Y+\mathfrak{u}$ is an irreducible component of $F_a^{-1}(F_a(x))$ containing $x$ and contained in $\mathfrak{p}$. The associated map
\begin{eqnarray*}
\{\text{irred. comp. $Y\subseteq F_{a_{\mathfrak{l}}}^{-1}(F_{a_{\mathfrak{l}}}(x_{\mathfrak{l}}))$ s.t. $x_{\mathfrak{l}}\in Y$}\} &\to& \{\text{irred. comp. $Z\subseteq F_{a}^{-1}(F_{a}(x))$ s.t. $x\in Z\subseteq \mathfrak{p}$}\} \\
Y &\mapsto & Y+\uu
\end{eqnarray*}
is a bijection.
\end{thm}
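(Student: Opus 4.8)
The plan is to understand how the Mishchenko--Fomenko map $F_a$ restricts to the parabolic $\mathfrak{p}$, and to exploit the decomposition $\mathfrak{p} = \mathfrak{l} \oplus \mathfrak{u}$ together with the fact that $\mathfrak{u}$ consists of nilpotent elements acting by nilpotent operators. The key structural input is a comparison between $F_a$ evaluated on $\mathfrak{p}$ and $F_{a_{\mathfrak{l}}}$ evaluated on $\mathfrak{l}$. Concretely, I would first establish that for $x \in \mathfrak{p}$, the value $F_a(x)$ depends only on $x_{\mathfrak{l}}$ — equivalently, $F_a(x) = F_a(x_{\mathfrak{l}} + x_{\mathfrak{u}})$ is unchanged as $x_{\mathfrak{u}}$ ranges over $\mathfrak{u}$. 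This should follow from the construction of the Mishchenko--Fomenko subalgebra: its generators are coefficients in the expansion of $p(x + za)$ for $p$ an invariant polynomial on $\g$, and because $a \in \mathfrak{p}$ and $\mathfrak{u}$ is an ideal in $\mathfrak{p}$ lying in the nilradical, the relevant characteristic-polynomial-type invariants of $x + za$ only see the image of $x+za$ in $\mathfrak{p}/\mathfrak{u} \cong \mathfrak{l}$. One then needs to identify the resulting polynomials on $\mathfrak{l}$ with (a generating set for) the Mishchenko--Fomenko subalgebra attached to $a_{\mathfrak{l}} \in \mathfrak{l}_{\text{reg}}$; the regularity of $a_{\mathfrak{l}}$ is exactly the fact the excerpt says is proved earlier, so this comparison gives a commutative square relating $F_a|_{\mathfrak{p}}$, $F_{a_{\mathfrak{l}}}$, and the projection $\mathfrak{p} \to \mathfrak{l}$, up to a fixed linear embedding $\mathbb{C}^{b(\mathfrak{l})} \hookrightarrow \mathbb{C}^b$ on the target (after discarding redundant coordinates).

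Granting this, I would prove $F_a^{-1}(F_a(x)) \cap \mathfrak{p} = (F_{a_{\mathfrak{l}}}^{-1}(F_{a_{\mathfrak{l}}}(x_{\mathfrak{l}}))) \oplus \mathfrak{u}$ as subvarieties of $\mathfrak{p}$: the inclusion $\supseteq$ is immediate from $\mathfrak{u}$-invariance, and $\subseteq$ follows because if $y \in \mathfrak{p}$ satisfies $F_a(y) = F_a(x)$ then $F_{a_{\mathfrak{l}}}(y_{\mathfrak{l}}) = F_{a_{\mathfrak{l}}}(x_{\mathfrak{l}})$ by the comparison, so $y_{\mathfrak{l}}$ lies in the $\mathfrak{l}$-fibre and $y = y_{\mathfrak{l}} + y_{\mathfrak{u}}$ lies in the product. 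Since $\mathfrak{u}$ is an affine space, the operation $V \mapsto V \oplus \mathfrak{u}$ is a bijection between irreducible closed subsets of $\mathfrak{l}$ meeting the $\mathfrak{l}$-fibre and irreducible closed subsets of $\mathfrak{p}$ contained in (and dense-over) the corresponding piece, preserving irreducibility, dimension-up-to-$\dim\mathfrak{u}$, and the containment of marked points ($x_{\mathfrak{l}} \in Y \iff x \in Y \oplus \mathfrak{u}$). This already gives the claimed bijection onto $\{$irred. comp. $Z \subseteq F_a^{-1}(F_a(x))$ with $x \in Z \subseteq \mathfrak{p}\}$, provided one knows that the irreducible components of the product fibre are exactly the products $Y \oplus \mathfrak{u}$ with $Y$ an irreducible component of the $\mathfrak{l}$-fibre — a standard fact for a product of a variety with affine space.

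The one genuinely non-formal point is that an irreducible component $Y \oplus \mathfrak{u}$ of $F_a^{-1}(F_a(x)) \cap \mathfrak{p}$ is in fact an irreducible component of the \emph{whole} fibre $F_a^{-1}(F_a(x))$, not merely a component of its intersection with $\mathfrak{p}$. This is where I expect the main obstacle to lie, and the natural tool is a dimension count: by Moreau's purity result \cite{Moreau}, every irreducible component of $F_a^{-1}(F_a(x))$ has dimension $b$ (more precisely $\dim\g - b$ in the relevant normalization — I would use whichever is correct for the conventions fixed earlier). So it suffices to check $\dim(Y \oplus \mathfrak{u}) = \dim Y + \dim \mathfrak{u}$ equals this common fibre dimension. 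For that I would use purity of the $\mathfrak{l}$-fibres (Moreau's theorem applied to $\mathfrak{l}$), giving $\dim Y = \dim\mathfrak{l} - b(\mathfrak{l})$, together with the numerical identities $\dim\mathfrak{p} = \dim\mathfrak{l} + \dim\mathfrak{u}$, $\dim\g = \dim\mathfrak{p} + \dim\mathfrak{u}$, and $b = b(\mathfrak{l}) + \dim\mathfrak{u}$ — the last being a known relation between the ``$b$''-invariants of $\g$ and $\mathfrak{l}$ (half the sum of dimension and rank, adjusted by the number of roots in $\mathfrak{u}$). Combining these yields $\dim Y + \dim\mathfrak{u} = \dim\g - b$, so $Y \oplus \mathfrak{u}$ has the full expected fibre dimension and hence, being irreducible and contained in the fibre, is a component of it. Conversely any component $Z$ of the fibre with $Z \subseteq \mathfrak{p}$ is a fortiori a component of $F_a^{-1}(F_a(x)) \cap \mathfrak{p}$ (same dimension, same ambient fibre), hence of the form $Y \oplus \mathfrak{u}$; and $x \in Z$ translates to $x_{\mathfrak{l}} \in Y$. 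This closes the bijection and finishes the proof.
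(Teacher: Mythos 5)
Your overall architecture (show $F_a$ is constant on sets of the form $Y+\uu$, then conclude by a dimension count using purity and $b(\g)=b(\lf)+\dim(\uu)$, $\mathrm{rk}(\g)=\mathrm{rk}(\lf)$) is the same as the paper's, and that part of your argument is sound. The genuine gap is the ``comparison'' you take as your key structural input: it is not true that $F_a\big\vert_{\p}$ equals $F_{a_{\lf}}$ composed with the projection $\p\to\lf$ up to a linear embedding $\C^{b(\lf)}\hookrightarrow\C^{b}$, and consequently your asserted equality $F_a^{-1}(F_a(x))\cap\p=F_{a_{\lf}}^{-1}(F_{a_{\lf}}(x_{\lf}))+\uu$ is false in general. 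What is true — and all the paper claims — is that the components of $F_a\big\vert_{\lf}$ \emph{lie in} the subalgebra $\mathcal{F}_{a_{\lf}}\subseteq\C[\lf]$; the restricted argument-shifted $\g$-invariants generate in general only a proper subalgebra of $\mathcal{F}_{a_{\lf}}$, because $\C[\g]^G\big\vert_{\lf}$ is in general a proper subalgebra of $\C[\lf]^L$. Hence $F_a^{-1}(F_a(x))\cap\lf$ is a union of possibly \emph{several} fibres of $F_{a_{\lf}}$, not the single fibre through $x_{\lf}$, so your claim ``$F_a(y)=F_a(x)$ with $y\in\p$ implies $F_{a_{\lf}}(y_{\lf})=F_{a_{\lf}}(x_{\lf})$'' fails. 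Concretely, take $\g=\sln_2(\C)$, $a$ regular nilpotent, $\p=\bb$ the Borel containing $a$, $\lf=\h$, and $x=x_{\h}\in\h$ regular: $F_{a_{\h}}$ is a linear isomorphism, so $F_{a_{\h}}^{-1}(F_{a_{\h}}(x_{\h}))=\{x_{\h}\}$, yet $F_a^{-1}(F_a(x))\cap\bb$ contains both $x_{\h}+\uu$ and $-x_{\h}+\uu$; see Proposition \ref{Prop: Unipotent orbit}(iii), which records the general pattern $wx_{\h}+\uu$, $w\in W$.

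This error is harmless where you use it for the forward direction (there the weaker true statement suffices: containment of the components of $F_a\big\vert_{\lf}$ in $\mathcal{F}_{a_{\lf}}$, or two applications of Lemma \ref{Lemma:3.1.43.parabolic} as in the paper, give constancy of $F_a$ on $Y+\uu$), but it is used essentially in your surjectivity step, where you conclude that any component $Z$ with $x\in Z\subseteq\p$ has the form $Y+\uu$ with $Y$ a component of the particular fibre $F_{a_{\lf}}^{-1}(F_{a_{\lf}}(x_{\lf}))$. That conclusion does not follow once the displayed equality is dropped. The repair is what the paper does: note $F_a^{-1}(F_a(x))\cap\p=(F_a^{-1}(F_a(x))\cap\lf)+\uu$ and that $F_a^{-1}(F_a(x))\cap\lf$ is a union of $F_{a_{\lf}}$-fibres; by purity in $\lf$ (Proposition \ref{Proposition: Irreducible component dimension}) every component of such a fibre has dimension $b(\lf)-\mathrm{rk}(\lf)$, while every component of $F_a^{-1}(F_a(x))\cap\lf$ has dimension at most this, so the components of the intersection are exactly the components of the constituent fibres; then for $Z$ with $x\in Z\subseteq\p$ the projection $Z_{\lf}$ is irreducible of the right dimension and contains $x_{\lf}$, which pins down the fibre as $F_{a_{\lf}}^{-1}(F_{a_{\lf}}(x_{\lf}))$. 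The marked-point condition $x\in Z$ is precisely what selects the correct fibre; without that extra step your bijection is not established.
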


This result has an interesting inductive quality, as it reduces certain questions about Mishchenko--Fomenko fibres in $\g$ to questions about such fibres in lower-dimensional reductive Lie algebras. On the other hand, it only constructs irreducible components lying in the union of parabolic subalgebras containing $a$. We are thereby motivated to find fibres $F_a^{-1}(z)$ with the following property: there exists at least one irreducible component $Z\subseteq F_a^{-1}(z)$ that is not contained in any proper parabolic subalgebra of $\g$ containing $a$. We call all such irreducible components \textit{exotic}, and we prove the following result.

\begin{prop}\label{Proposition: Early}
Assume that $\g$ is simple and that $a\in\g_{\emph{reg}}$ is semisimple. Let $\g_a$ be the $\g$-centralizer of $a$, and choose a collection of simple positive roots with respect to the Cartan subalgebra $\g_a$. Denote the resulting positive Borel subalgebra by $\mathfrak{b}\subseteq\g$. Let $\xi\in\g$ be a sum of non-zero negative simple root vectors, one for each negative simple root. If $x\in\xi+\mathfrak{b}$ has a non-zero component in the highest root space, then the fibre $F_a^{-1}(F_a(x))$ has an exotic irreducible component. 
\end{prop}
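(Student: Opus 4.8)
The plan is to prove the stronger statement that the point $x$ itself lies in no proper parabolic subalgebra of $\g$ containing $a$. The Proposition will then follow at once: $F_a^{-1}(F_a(x))$ is a closed subvariety of $\g$ containing $x$, so $x$ lies in at least one of its irreducible components $Z$, and any parabolic subalgebra of $\g$ containing $Z$ also contains $x$, hence is either improper or fails to contain $a$ --- that is, $Z$ is exotic.

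To prove the stronger statement, I would suppose that $\p\subseteq\g$ is a parabolic subalgebra with $a\in\p$ and $x\in\p$, and deduce $\p=\g$. The first step is to observe that $\p$ is ``standard'' relative to the Cartan subalgebra $\h:=\g_a$ used to define the roots. Indeed, a semisimple element of a parabolic subalgebra lies in a Levi factor, so $a\in\lf$ for some Levi factor $\lf\subseteq\p$; since $\lf$ is reductive of full rank and $a$ is semisimple, $a$ lies in a Cartan subalgebra of $\lf$, which is a Cartan subalgebra of $\g$ through $a$ and hence, by regularity of $a$, equals $\g_a$. Thus $\h=\g_a\subseteq\lf\subseteq\p$, so $\p$ is $\ad_\h$-stable and $\p=\h\oplus\bigoplus_{\alpha\in\Phi}\g_\alpha$ for the subset $\Phi:=\{\alpha\in\Delta:\g_\alpha\subseteq\p\}$ of the root system $\Delta$, which is closed since $\p$ is a subalgebra; note $\p=\g$ precisely when $\Phi=\Delta$. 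Now write $x=h_0+\sum_i c_if_i+\sum_{\beta\in\Delta^+}b_\beta$ with $h_0\in\h$, $0\ne f_i\in\g_{-\alpha_i}$ for the simple roots $\alpha_i$, and $b_\beta\in\g_\beta$; the hypotheses on $x$ amount exactly to $c_i\ne 0$ for all $i$ and $b_\theta\ne 0$ for the highest root $\theta$. Comparing $\ad_\h$-eigencomponents in the relation $x\in\p$ then forces $-\alpha_1,\dots,-\alpha_r\in\Phi$ and $\theta\in\Phi$.

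The decisive step is then a purely combinatorial claim: a closed subset $\Phi$ of the root system of a simple Lie algebra that contains all negative simple roots and the highest root $\theta$ must equal the whole root system. First, $\Phi\supseteq\Delta^-$ by the standard fact that every negative root is obtained from negative simple roots by successively adding roots. For $\Delta^+\subseteq\Phi$ I would argue by contradiction: if not, choose $\gamma\in\Delta^+\setminus\Phi$ of maximal height. Since $\theta\in\Phi$ we have $\gamma\ne\theta$; because $\g$ is simple, $\theta$ is the only positive root that cannot be raised by any simple root, so there is a simple root $\alpha_i$ with $\gamma+\alpha_i\in\Delta^+$. By maximality of $\gamma$ we get $\gamma+\alpha_i\in\Phi$, and then $\gamma=(\gamma+\alpha_i)+(-\alpha_i)\in\Phi$ since $-\alpha_i\in\Phi$ and $\Phi$ is closed --- a contradiction. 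Hence $\Phi=\Delta$ and $\p=\g$, which completes the proof.

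I do not expect a single hard obstacle, but rather that the care must go into assembling the reduction correctly: one needs that a parabolic containing the regular semisimple element $a$ is automatically standard with respect to $\h=\g_a$, and one must invoke --- uniformly across all simple types, including the non-simply-laced ones where root strings are longer --- the two root-system inputs above, namely that the negative simple roots generate $\Delta^-$ under addition and that in a simple root system $\theta$ is the unique positive root admitting no simple raising. The hypothesis that $x$ has a nonzero component in the highest root space is exactly what activates the second input, and simplicity of $\g$ cannot be dropped, as it is precisely what makes $\theta$ the unique maximal root.
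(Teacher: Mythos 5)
Your proof is correct and follows essentially the same route as the paper's: both arguments reduce to showing that a point of $\xi+\bb$ with non-zero highest-root component lies in no proper parabolic subalgebra containing $a$ (whence the component through that point is exotic), using that such a parabolic contains $\g_a$ and is therefore a sum of root spaces forced to include all negative simple root spaces and the highest root space. The only difference is cosmetic: where the paper invokes the classical fact that the highest root space together with the negative simple root spaces generates $\g$, you prove the equivalent combinatorial statement that a closed subset of an irreducible root system containing $-\Pi$ and the highest root must be all of $\Delta$.
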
            

By virtue of Tarasov's work \cite{Tarasov}, the above-mentioned subset $\xi+\mathfrak{b}$ is a section of $F_a:\g\rightarrow\mathbb{C}^b$. Proposition \ref{Proposition: Early} therefore gives a $b$-dimensional family of fibres admitting exotic irreducible components. One shortcoming is that this family turns out not to include the zero-fibre $F_a^{-1}(0)$, arguably one of the most natural fibres to study. We address this issue in the context of a specific example.

\begin{prop}\label{Proposition: sl3exotic}
If $\g=\mathfrak{sl}_3(\mathbb{C})$, then $F_a^{-1}(0)$ has an exotic irreducible component for all $a\in\g_{\emph{reg}}$.
\end{prop}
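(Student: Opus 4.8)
The strategy is to make everything completely explicit in $\g=\sln_3(\C)$ and to exhibit an irreducible component of $F_a^{-1}(0)$ that cannot be contained in any proper parabolic subalgebra containing $a$. First I would reduce to a convenient normal form: since the Mishchenko--Fomenko system $F_a$ depends on $a\in\greg$ only through the adjoint orbit of $a$ up to the scaling $\C a$ (and $\Sing^a=\gsing+\C a$ is visibly orbit-equivariant), and since Theorem~\ref{Theorem: Correspondence theorem} and the earlier theorems are all $\Ad(G)$-equivariant, it suffices to treat one representative $a$ in each of the two relevant regular classes — a regular semisimple $a$ (diagonal, distinct eigenvalues) and a regular nilpotent $a$ (a principal nilpotent). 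For $\sln_3$ the invariants are $p_2(X)=\tr(X^2)$ and $p_3(X)=\det(X)$, so $F_a$ has $b=3$ components $F_a=(F_1,F_2,F_3)$ obtained by expanding $p_2(X+ta)$ and $p_3(X+ta)$ in $t$ and discarding the parts that do not involve $X$; concretely $F_a(X)$ records $\tr(X^2)$, $\tr(Xa)$, $\det(X)$, and the bilinear-in-$(X,X,a)$ term of $\det$. I would write these four polynomials out on the six-dimensional affine space $\sln_3$.

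Next I would determine $F_a^{-1}(0)$ and its irreducible components by direct computation. The locus $F_a^{-1}(0)$ is cut out by $\tr(X^2)=0$, $\det(X)=0$, $\tr(Xa)=0$, and the mixed term $=0$; by Moreau's purity result \cite{Moreau} every component has dimension $b=3$. One family of components is supplied by Theorem~\ref{Theorem: Correspondence theorem}: each proper parabolic $\p\supseteq a$ contributes components contained in $\p$, built from $F_{a_\lf}^{-1}(0)$ on a Levi $\lf$ (here $\lf\cong\mathfrak{gl}_2$ up to center, a rank-two reductive algebra whose zero-fibre components are easy to list). I would enumerate all such "parabolic" components — there are finitely many parabolics through a fixed regular $a$, in fact for regular semisimple $a$ they correspond to the $W$-translates of the standard parabolics, and for regular nilpotent $a$ there is a unique Borel and the two maximal parabolics containing it. Having this finite list in hand, the claim reduces to: the number of irreducible $3$-dimensional components of the explicit variety $F_a^{-1}(0)$ strictly exceeds the number of parabolic ones, OR one of the listed components can be checked by inspection to meet the interior of no parabolic. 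I expect the cleanest route is to pick an explicit point $x\in F_a^{-1}(0)$ lying in a component $Z$ and to verify that $x$ is not contained in any proper parabolic subalgebra containing $a$ (e.g. $x$ together with $a$ generates $\g$, or $x$ has nonzero component in "too many" root spaces relative to every parabolic through $a$); since $Z\subseteq\p$ would force $x\in\p$, this rules $Z$ out of the parabolic family. Combined with $x\in Z\subseteq F_a^{-1}(0)$ and purity, $Z$ is an exotic component.

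The main obstacle is the bookkeeping of the irreducible decomposition of the explicit affine scheme $F_a^{-1}(0)$: it is a codimension-three complete-intersection-type locus in $\sln_3$ defined by four equations (so already not a complete intersection by naive count, which is a hint that the geometry is genuinely reducible), and one must be careful to identify \emph{all} three-dimensional components, distinguish them, and check which lie inside a parabolic. I would handle this by exploiting $\Ad$-symmetry to cut down cases, by using the nilpotent-orbit stratification of the "obvious" sublocus $\{X:\tr(X^2)=\det(X)=0\}$ (the union of the subregular and zero orbit closures, whose components and dimensions are classical), and then intersecting with the two linear conditions coming from $a$; the exotic component should emerge as the part of this intersection on which $X$ is itself regular nilpotent but not aligned with any parabolic containing $a$. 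A secondary technical point is to treat the two regular classes of $a$ uniformly or else to run the argument twice; I would do the regular semisimple case first (where $\mathfrak{b}^a=\h$ is a Cartan and the geometry is most transparent) and then indicate the modifications for regular nilpotent $a$, where $\mathfrak b^a$ is a Borel and the relevant section $\xi+\mathfrak b$ of Proposition~\ref{Proposition: Early} degenerates.
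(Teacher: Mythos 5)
Your overall strategy is the same as the paper's: write $F_a$ explicitly, exhibit a point $x\in F_a^{-1}(0)$ that lies in no proper parabolic subalgebra containing $a$, and conclude that the component through $x$ is exotic. However, there are two genuine gaps. The more serious one is in your reduction to normal forms: $\sln_3(\C)$ has \emph{three} types of regular conjugacy classes, not two. Besides the regular semisimple and regular nilpotent classes, there is the mixed class represented by $r=\mathrm{diag}(\rho,\rho,-2\rho)+e_{12}$ with $\rho\neq 0$, whose Jordan decomposition has both parts nonzero (its centralizer is $2$-dimensional, so it is regular, but it is neither semisimple nor nilpotent). Since the proposition asserts the existence of an exotic component for \emph{every} $a\in\greg$, your plan as written simply does not cover this class; the paper has to run the explicit construction a third time for $r$ (producing a different witness point, and a different, smaller list of Borels and parabolics through $r$). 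The set $\widetilde{\mathcal P_a}$ and the shape of the witness both change in this case, so it cannot be absorbed into either of your two cases.

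The second gap is in your setup of $F_a$ itself. For $\sln_3(\C)$ one has $b=\tfrac12(\dim\g+r)=5$, not $3$, and $F_a:\sln_3(\C)\to\C^5$ has five components: the two invariants $\tr(x^2)$, $\tr(x^3)$ (or $\det$), the two linear functions $\tr(ax)$ and $\tr(a^2x)$, and the mixed quadratic $\tr(ax^2)$. Your list omits $\tr(a^2x)$ and is internally inconsistent (you state $b=3$ but list four equations). The omission matters: $F_a^{-1}(0)$ is cut out by all five equations, and a point or component satisfying only four of them need not lie in the true zero-fibre. (The pure dimension of the components is $\dim\g-b=3$, which happens to equal the number you quoted, but for the wrong reason.) Once these two issues are repaired --- add the third regular class with its own witness, and work with the correct five-component map --- the rest of your plan (finding $x$ with nonzero entries in enough root spaces to escape every $\p\in\widetilde{\mathcal P_a}$, then invoking purity) is exactly what the paper carries out.
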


The zero-fibre also features prominently in Charbonnel and Moreau's paper \cite{Charbonnel-Moreau}. These authors take $a\in\g_{\text{reg}}$ to be nilpotent and give a recursive formula for the number of irreducible components in $F_a^{-1}(0)$. We generalize this formula to the case of an arbitrary $a\in\mathfrak{g}_{\text{reg}}$, proving the result below.    

\begin{thm}\label{Theorem: RecFormulaIntro}
For all $a\in\g_{\emph{reg}}$, the number of irreducible components in $F_a^{-1}(0)$ is given by \eqref{Equation:RecursiveFormula}.
\end{thm}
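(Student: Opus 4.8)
The strategy is to reduce the count for arbitrary $a \in \g_{\mathrm{reg}}$ to the nilpotent case treated by Charbonnel--Moreau, via the Levi-reduction machinery of Theorem \ref{Theorem: Correspondence theorem} together with a partition of $F_a^{-1}(0)$ according to which minimal parabolic (or Levi) an irreducible component lives in. Write $a = s + n$ for the Jordan decomposition and let $\mathfrak{l} = \g_s$ be the centralizer of $s$, a Levi subalgebra in which $a_{\mathfrak{l}} = a$ is regular and nilpotent (since $n$ is the nilpotent part). The first step is to set up a stratification: every irreducible component $Z$ of $F_a^{-1}(0)$ is contained in some parabolic $\p \supseteq \Stab$-type data, and one wants a \emph{canonical} smallest such parabolic (or, dually, the smallest Levi $\mathfrak{l}' \supseteq \mathfrak{l}$ such that $Z$ meets the open dense set where the relevant parabolic has Levi $\mathfrak{l}'$). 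For components contained in a proper parabolic $\p$ with Levi $\mathfrak{l}' \supsetneq \mathfrak{l}$, Theorem \ref{Theorem: Correspondence theorem} identifies them bijectively with components of $F_{a_{\mathfrak{l}'}}^{-1}(0)$ in the reductive algebra $\mathfrak{l}'$ (note $0 \in \mathbb{C}^{b(\mathfrak{l}')}$ is sent to $0$ under restriction, so $F_{a_{\mathfrak{l}'}}(x_{\mathfrak{l}'}) = 0$ when $F_a(x) = 0$ and $x \in \p$). This is the source of the recursion.

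The second step is to isolate the \emph{exotic} components — those not contained in any proper parabolic containing $a$ — since these are exactly the ones not accounted for by the inductive term. So I would write
\begin{equation*}
\#\{\text{irred. comp. of } F_a^{-1}(0)\} = \#\{\text{exotic comp.}\} + \sum_{\mathfrak{l}'} (\text{contribution from proper parabolics with Levi } \mathfrak{l}'),
\end{equation*}
and the combinatorial heart of the matter is to make sure the sum over $\mathfrak{l}'$ does not overcount: a component contained in two parabolics with Levi factors $\mathfrak{l}_1'$ and $\mathfrak{l}_2'$ should be counted once, at the intersection $\mathfrak{l}_1' \cap \mathfrak{l}_2'$ (which is again a Levi containing $\mathfrak{l}$). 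This is where an inclusion–exclusion or Möbius-function argument over the poset of Levi subalgebras containing $\mathfrak{l}$ enters, mirroring the structure already present in Charbonnel--Moreau's nilpotent formula; I would expect \eqref{Equation:RecursiveFormula} to be precisely the bookkeeping that results. The exotic count in the base of the recursion should match the Charbonnel--Moreau "primitive" count for the nilpotent element $a_{\mathfrak{l}} \in \mathfrak{l}_{\mathrm{reg}}$, because when $\mathfrak{l}' = \mathfrak{l}$ itself the reduction is trivial and "$Z$ not in any proper parabolic of $\mathfrak{l}$ containing $a_\mathfrak{l}$" is the exoticness condition one level down.

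The third step is to verify that the recursion actually terminates and that the minimal-parabolic assignment is well-defined — i.e. that every non-exotic component has a genuinely canonical smallest Levi, so the strata are disjoint and the induction is on $\dim \g - \dim \mathfrak{l}'$ (strictly decreasing since $\mathfrak{l}' \supsetneq \mathfrak{l}$ contributes via $F_{a_{\mathfrak{l}'}}$ with $\mathfrak{l}'$ of strictly smaller... wait, rather: $\mathfrak{l}'$ is intermediate, $\mathfrak{l} \subseteq \mathfrak{l}' \subsetneq \g$, so $\dim \mathfrak{l}' < \dim \g$ and the induction is sound). The main obstacle, I expect, is exactly the overcounting/canonicity issue in step two: showing that the assignment $Z \mapsto$ (smallest Levi) is well-defined requires knowing that if $Z \subseteq \p_1 \cap \p_2$ then $Z$ is contained in a parabolic with Levi $\mathfrak{l}_1' \cap \mathfrak{l}_2'$ — equivalently, that the set of parabolics containing a fixed irreducible subvariety of a fibre is closed under the natural "intersection" operation on parabolics sharing the Cartan $\h$. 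Establishing this cleanly — likely using that $Z$ is irreducible, contains a point $x$ with prescribed Jordan data, and that $F_a$ restricted to $\p$ factors through $F_{a_{\mathfrak{l}'}}$ — is the delicate point; once it is in hand, the formula \eqref{Equation:RecursiveFormula} and its proof by induction on $\dim \g$ should follow by assembling the pieces and invoking Charbonnel--Moreau \cite{Charbonnel-Moreau} for the exotic/primitive base contribution.
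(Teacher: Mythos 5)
Your high-level skeleton---partition the components of $F_a^{-1}(0)$ according to the smallest parabolic containing them, recurse via Theorem \ref{Theorem: Correspondence theorem}, and leave the exotic components as an irreducible residue---is indeed the skeleton of the paper's argument, but the bookkeeping you propose would not produce \eqref{Equation:RecursiveFormula}, and two of your structural choices are wrong. First, the recursion is indexed by parabolic subalgebras $\p\in\mathcal P_a$ (proper, non-Borel, containing $a$), not by Levi subalgebras $\lf'\supseteq\g_s$: a parabolic containing $a$ need not have a Levi factor containing $\g_s$. Already for $\g=\sln_3(\C)$ and $a=r$, the parabolic $\p_2$ of Section \ref{Subsection: The case g=sln3} has $\h$-stable Levi factor with diagonal blocks $\{1\},\{2,3\}$, which does not contain $\g_s$ (blocks $\{1,2\},\{3\}$); and the Borel subalgebras in $\mathcal B_a$, whose Levi factors are Cartan subalgebras, must be split off as the separate term $|\mathcal B_a|$, since $F_{a_{\h}}^{-1}(0)=\{0\}$ forces each $\bb\in\mathcal B_a$ to contribute exactly one component, namely $[\bb,\bb]$. (Your Levi-indexed picture is modelled on the nilpotent case, where all parabolics containing $a$ are standard with respect to the unique Borel containing $a$; for general $a$ this fails, which is precisely the new difficulty.) Second, there is no M\"obius or inclusion--exclusion structure in \eqref{Equation:RecursiveFormula}: all terms are positive. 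Overcounting is avoided by defining, for each $\p\in\mathcal P_a$, the set $^{\g}\mathcal I_a^{\p}$ of components contained in $\p$ but in no strictly smaller member of $\mathcal P_a$, observing that these sets are pairwise disjoint, and then showing via the correspondence theorem that $^{\g}\mathcal I_a^{\p}$ is in bijection with $^{\lf_\p}\mathcal I_{a_{\lf_\p}}'$, i.e.\ with the exotic components one level down; these in turn factor as a product over the simple factors of $[\lf_\p,\lf_\p]$, a step absent from your sketch and necessary because $\lf_\p$ is only reductive.

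Two further points. Theorem \ref{Theorem: Correspondence theorem} concerns components through a fixed point $x$; the reason it applies to \emph{all} components of $F_a^{-1}(0)$ contained in $\p$ is Lemma \ref{Lemma:zero_in_irrcomp} (every component of the zero-fibre contains the origin, by homogeneity of the $f_{ij}^a$), which lets one take $x=0$ uniformly---your sketch never supplies this. Finally, your expectation that the exotic count "should match the Charbonnel--Moreau primitive count" misreads the nature of the formula: \eqref{Equation:RecursiveFormula} is recursive with the quantities $|{}^{\lf}\mathcal I_{a_{\lf}}'|$ left as unevaluated inputs, and the top-level term $|{}^{\g}\mathcal I_a'|$ for non-nilpotent $a$ counts components that by definition arise from no Levi reduction whatsoever; they are genuinely new data (cf.\ Proposition \ref{Proposition: sl3exotic}), not something computed by Charbonnel--Moreau.
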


To formulate our last main result, we consider the adjoint group $G$ of $\g$ and the adjoint representation $\mathrm{Ad}:G\rightarrow\operatorname{GL}(\g)$. Each $x\in\g$ thereby determines an adjoint orbit $Gx:=\{\mathrm{Ad}_g(x):g\in G\}\subseteq\g$, and these orbits bear the following relation to fibres of $F_a$.

\begin{thm}\label{Theorem: Fibre description}
If $a\in\g_{\emph{reg}}$ and $x\in\g\setminus\mathrm{Sing}^a$, then there exists a finite subset $\Lambda_a\subseteq\mathbb{C}$ for which
$$F_a^{-1}(F_a(x))=\bigcap_{\lambda\in\Lambda_a}\big(\overline{G(x+\lambda a)}-\lambda a\big)$$ and $$F_a^{-1}(F_a(x))\setminus\mathrm{Sing}^a=\bigcap_{\lambda\in\Lambda_a}\big(G(x+\lambda a)-\lambda a\big).$$
\end{thm}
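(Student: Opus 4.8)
The plan is to understand the Mishchenko--Fomenko subalgebra generated by the shifted invariants $p(\,\cdot\, + \lambda a)$ for $p$ an invariant polynomial, and to recall (from Bolsinov, or its standard reformulation) that $F_a$ can be encoded via these shifts. First I would make precise the following: the coordinates of $F_a$ are, up to reordering and rescaling, the nonleading coefficients of $\lambda \mapsto p_i(x + \lambda a)$ as $p_i$ ranges over a chosen homogeneous basis of $\mathbb{C}[\g]^G$; equivalently, $F_a(x) = F_a(y)$ if and only if $p_i(x+\lambda a) = p_i(y + \lambda a)$ for all $i$ and all $\lambda \in \mathbb{C}$ (the leading coefficient being $p_i(a)$, which is fixed). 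From this, $F_a^{-1}(F_a(x))$ is exactly the set of $y$ such that $y + \lambda a$ and $x + \lambda a$ have the same image under the full invariant map $\chi:\g \to \g /\!/ G$ for every $\lambda$.

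Next I would invoke the classical fact that for the adjoint action of a semisimple $G$, the fibre $\chi^{-1}(\chi(z))$ equals the closure $\overline{Gz}$ precisely when $z$ is regular, and in general $\chi^{-1}(\chi(z)) = \overline{G z_{\mathrm{reg}}}$ where $z_{\mathrm{reg}}$ is a regular element in the same fibre; more importantly, the regular locus of $\chi^{-1}(\chi(z))$ is a single orbit $Gz_{\mathrm{reg}}$. Since $x \notin \mathrm{Sing}^a$, by Bolsinov's description the element $x + \lambda a$ is regular for all but finitely many $\lambda$; let $\Lambda_a$ be that finite exceptional set together with (say) $0$, chosen so that for $\lambda \notin \Lambda_a$ the element $x + \lambda a$ is regular. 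Then for $\lambda \notin \Lambda_a$, the condition "$y + \lambda a$ and $x + \lambda a$ lie in the same $\chi$-fibre" becomes "$y + \lambda a \in \overline{G(x+\lambda a)}$", i.e. $y \in \overline{G(x+\lambda a)} - \lambda a$. The key point is then that imposing this for $\lambda \notin \Lambda_a$ already implies it for $\lambda \in \Lambda_a$: this is a Zariski-closedness / continuity argument, since $\{y : p_i(y + \lambda a) = p_i(x+\lambda a)\ \forall i,\lambda\}$ is cut out by a family of polynomial identities in $(y,\lambda)$, and a polynomial identity holding for all $\lambda$ outside a finite set holds identically. Hence the intersection over $\lambda \notin \Lambda_a$ of $\big(\overline{G(x+\lambda a)} - \lambda a\big)$ coincides with $F_a^{-1}(F_a(x))$. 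To get a \emph{finite} intersection as in the statement, I would further observe that the scheme-theoretic fibre is cut out by finitely many of the coefficient equations, so finitely many values of $\lambda$ suffice; one replaces $\Lambda_a$ by a suitable finite set, possibly after a Vandermonde-type argument extracting the coefficients of the polynomials $p_i(x+\lambda a)$ from their values at enough points $\lambda$.

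For the second displayed equality, I would restrict attention to the open subset $\g \setminus \mathrm{Sing}^a$, on which $x + \lambda a$ is regular for the relevant $\lambda$. On the regular locus, $\chi^{-1}(\chi(x+\lambda a)) \cap \g_{\mathrm{reg}} = G(x+\lambda a)$ exactly (the fibre of $\chi$ over the image of a regular element meets $\g_{\mathrm{reg}}$ in a single orbit). Intersecting with $\g \setminus \mathrm{Sing}^a$ and translating back by $-\lambda a$ turns each closed factor $\overline{G(x+\lambda a)} - \lambda a$ into the locally closed $G(x+\lambda a) - \lambda a$, and the same finite-intersection reasoning as above applies; one must check that a point $y \in F_a^{-1}(F_a(x)) \setminus \mathrm{Sing}^a$ genuinely has $y + \lambda a$ regular for each $\lambda$ in the chosen finite set, which follows because $y \notin \mathrm{Sing}^a$ means $y + \mathbb{C}a \subseteq \g_{\mathrm{reg}}$ (after removing the finitely many bad $\lambda$, and noting $\mathrm{Sing}^a = \g_{\mathrm{sing}} + \mathbb{C}a$ is a union of affine lines in the $a$-direction, so $y \notin \mathrm{Sing}^a$ forces the whole line $y + \mathbb{C}a$ to avoid $\g_{\mathrm{sing}}$).

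The main obstacle I anticipate is the bookkeeping needed to pass from "the identities hold for all $\lambda \in \mathbb{C}$" (an infinite intersection, clearly equal to the fibre) to "a finite sub-intersection suffices," i.e. pinning down an explicit finite $\Lambda_a$ and proving the reverse inclusion $\bigcap_{\lambda \in \Lambda_a}(\overline{G(x+\lambda a)} - \lambda a) \subseteq F_a^{-1}(F_a(x))$. This requires a Vandermonde argument: the values $\{p_i(y+\lambda a)\}_{\lambda \in \Lambda_a}$ at $\deg p_i + 1$ distinct points determine all coefficients of $\lambda \mapsto p_i(y + \lambda a)$, hence $F_a(y)$; so taking $\Lambda_a$ to contain $1 + \max_i \deg p_i$ suitably generic values (avoiding the finite bad set from Bolsinov) is enough. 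Assembling this cleanly, and making sure the same finite set works simultaneously for both displayed equalities, is the part demanding the most care; the underlying geometric inputs (Bolsinov's $\mathrm{Sing}^a$, the orbit structure of $\chi$-fibres over regular values, purity from \cite{Moreau}) are standard and quotable.
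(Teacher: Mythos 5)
Your overall strategy coincides with the paper's: encode $F_a^{-1}(F_a(x))$ as $\{y\colon f(y+\lambda a)=f(x+\lambda a)\ \forall f\in\C[\g]^G,\ \lambda\in\C\}$, use that the adjoint-quotient fibre through a regular element is the orbit closure and meets $\greg$ in the single orbit, and cut the intersection down to finitely many $\lambda$ by interpolation. The paper packages your Vandermonde step as a change of free generators of $\mathcal F_a$ (Lemma \ref{Lemma:AlternativeGenerators}), but it is the same computation, and for the first displayed equality your outline is complete; note only that for $x\notin\Sing^a$ the element $x+\lambda a$ is regular for \emph{every} $\lambda$, not merely cofinitely many.

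The gap is in the second displayed equality, where you verify only the inclusion $F_a^{-1}(F_a(x))\setminus\Sing^a\subseteq\bigcap_{\lambda\in\Lambda_a}(G(x+\lambda a)-\lambda a)$. The reverse inclusion is the delicate one: a point $y$ of the finite intersection is known to satisfy $y+\lambda a\in\greg$ only for $\lambda\in\Lambda_a$, and nothing forces $y+\mu a\in\greg$ for the remaining $\mu$, i.e.\ forces $y\notin\Sing^a$. (Over the full intersection $\lambda\in\C$ this is automatic, which is why part (i) of Theorem \ref{Proposition: FibreFaIntersectionOrbitTranslate} goes through.) This is not removable bookkeeping. Take $\g=\sln_3(\C)$, $a=n=e_{12}+e_{23}$ and $x=e_{21}-e_{32}$: then $x+\mu n$ is regular nilpotent for every $\mu$, so $x\notin\Sing^n$ and $F_n(x)=0$; yet for any finite $\Lambda_a$ the element $u=\alpha e_{12}+\beta e_{23}$, with $\alpha,\beta$ nonzero, distinct, and outside $-\Lambda_a$, satisfies $u+\lambda n\in G(x+\lambda n)$ for all $\lambda\in\Lambda_a$ (all elements involved are regular nilpotent, hence conjugate), while $u\in\Sing^n$ because $u-\alpha n=(\beta-\alpha)e_{23}$ is a non-regular nilpotent. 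So the finite-intersection form of the second identity needs either the full intersection over $\C$ or an additional argument showing $F_a^{-1}(F_a(x))\cap\Sing^a\subseteq\bigcup_{\lambda\in\Lambda_a}(\gsing-\lambda a)$, which can fail. Be aware that the paper's own proof is equally terse at exactly this point (``proceeds analogously''), so this step should be treated with care rather than quoted.
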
  

While this result is incidental to the overall emphasis of our paper, we believe it to be interesting and worth documenting.

\subsection{Organization}
Section \ref{Section: Lie-theoretic foundations} gathers some of the purely Lie-theoretic ideas on which this paper depends. We begin with Section \ref{Subsection: Conventions}, which is largely devoted to notation, conventions, and classical facts. Section \ref{Subsection: Some basic results} then establishes some results about interactions between invariant polynomials, regular elements, and parabolic subalgebras. Section \ref{subsec:ba} subsequently gives a Lie-theoretic introduction to the subalgebra $\ba\subseteq\g$ mentioned above.

Section \ref{Section: Generalities} is concerned with fundamental properties of Mishchenko--Fomenko systems on $\g$. This section begins with \ref{Subsection: The Mishchenko-Fomenko subalgebra}, which recalls the Mishchenko--Fomenko system $F_a:\g\rightarrow\mathbb{C}^b$ and subalgebra $\mathcal{F}_a\subseteq\mathbb{C}[\g]$ associated to each $a\in\greg$. Section \ref{Subsection: Some elementary results} then connects $F_a$ to the Borel subalgebras of $\g$ that contain $a$. This leads to Section \ref{Subsection: Sections of MF}, where we recall Tarasov's sections of $F_a$ and discuss mild extensions thereof. Section \ref{Subsection: Reductive Lie algebras} subsequently harnesses Moreau's work to consider Mishchenko--Fomenko systems on reductive Lie algebras. Such systems are shown to have pure-dimensional fibres, and the fibre dimensions are expressed in Lie-theoretic terms. Section \ref{Subsection: Alternative generators} next gives a non-standard set of free generators for $\mathcal{F}_a$, and Section \ref{Subsection: Some additional} uses this set to prove Theorem \ref{Theorem: Fibre description}.  

Section \ref{Section: Irreducible components} studies the irreducible components of the fibres of $F_a:\g\rightarrow\mathbb{C}^b$. The proofs of Theorem \ref{Theorem: Correspondence theorem} and related facts constitute Section \ref{Subsection: Parabolic components}. These results are then used in Section \ref{Subsection: A recursive formula} to obtain Theorem \ref{Theorem: RecFormulaIntro}. Section \ref{Subsection: Exotic Components} subsequently discusses exotic irreducible components and proves Proposition \ref{Proposition: Early}. 

Section \ref{Section: Singularities} deals with the singularities of Mishchenko--Fomenko systems. Theorem \ref{Theorem: First theorem} is proved in Section \ref{Subsection: Critical values}, while Section \ref{Subsection: A family of singular fibres} is concerned with the proofs of Theorems \ref{Theorem: Second Theorem} and \ref{Theorem: Third Theorem}.

Section \ref{Sec:Examples} illustrates some of our main results in the cases $\g=\mathfrak{sl}_2(\mathbb{C})$ (Section \ref{subsection:sln2}) and $\mathfrak{g}=\mathfrak{sl}_3(\mathbb{C})$ (Section \ref{Subsection: The case g=sln3}). In addition, Section \ref{Subsection: The case g=sln3} contains the proof of Proposition \ref{Proposition: sl3exotic}.

After Section \ref{Sec:Examples}, we list and describe some of the notation appearing throughout our paper. This is merely a quick and convenient reference for the reader, and should not be viewed as a source of definitions.
   
\subsection*{Acknowledgements}
The authors gratefully acknowledge Anne Moreau for fruitful conversations at the very outset of this project. We also wish to thank Stefan Rosemann for several helpful discussions. The first author is supported by an NSERC Postdoctoral Fellowship. 

\section{Lie-theoretic foundations}\label{Section: Lie-theoretic foundations}

In what follows, we establish and discuss the fundamental Lie-theoretic underpinnings of our work. The objects and notation introduced in Section \ref{Subsection: Conventions} shall remain fixed throughout this paper.  

\subsection{Conventions}\label{Subsection: Conventions}
Let $\g$ be a rank-$r$ complex semisimple Lie algebra with Killing form $\langle\cdot,\cdot\rangle:\g\otimes_{\mathbb{C}}\g\rightarrow\mathbb{C}$, adjoint group $G$, and exponential map $\exp:\g\rightarrow G$. One then has the adjoint representation of $G$ on $\g$, to be denoted by
$$\mathrm{Ad}:G\rightarrow\operatorname{GL}(\g),\quad g\mapsto\mathrm{Ad}_g,\quad g\in G.$$
The Killing form is $\mathrm{Ad}$-invariant and therefore induces a $G$-module isomorphism
\begin{equation}\label{Equation: Module isomorphism}\g\xrightarrow{\cong}\g^*,\quad x\mapsto x^{\vee}:=\langle x,\cdot\rangle,\quad x\in\g.\end{equation} Abusing notation slightly, we denote the inverse isomorphism by 
\begin{equation}\label{Equation: Inverse module isomorphism}\g^*\xrightarrow{\cong}\g,\quad \gamma\mapsto \gamma^{\vee},\quad \gamma\in\g^*.\end{equation}
The canonical Lie--Poisson structure on $\g^*$ thereby corresonds to a Poisson bracket $\{\cdot,\cdot\}$ on $\mathbb{C}[\g]:=\mathrm{Sym}(\g^*)$, defined as follows:
\begin{equation}\label{Equation:PoissonBracket}
\{f_1,f_2\}(x):=\langle x, [df_1(x)^{\vee},df_2(x)^{\vee}]\rangle,\quad f_1,f_2\in\mathbb{C}[\g],\text{ }x\in\g,
\end{equation}
where $df_1(x),df_2(x)\in\g^*$ are the differentials at $x$ of $f_1$ and $f_2$, respectively. Note that the symplectic leaves associated with this Poisson bracket are precisely the adjoint orbits of $G$, i.e. the locally closed subvarieties $$Gx:=\{\mathrm{Ad}_g(x):g\in G\}\subseteq\g,\quad x\in\g.$$ 

Now let
$$\mathrm{ad}:\g\rightarrow\mathfrak{gl}(\g),\quad x\mapsto\mathrm{ad}_x=[x,\cdot],\quad x\in\g$$ denote the adjoint representation of $\g$ on itself. Each $x\in\g$ then determines a $G$-stabilizer $$G_x:=\{g\in G:\mathrm{Ad}_g(x)=x\}$$ and a $\g$-centralizer $$\g_x:=\mathrm{ker}(\mathrm{ad}_x)=\{y\in\g: \mathrm{ad}_x(y)=0\}.$$ The former is a closed subgroup of $G$ with Lie algebra equal to the latter, and $x$ is called \textit{regular} if these two objects are $r$-dimensional. Let $\g_{\text{reg}}$ denote the set of all regular elements, which is known to be an open, dense, $G$-invariant subvariety of $\g$. Its complement is the closed subvariety $\g_{\text{sing}}:=\g\setminus\g_{\text{reg}}$ of all \textit{singular} elements in $\g$.   

Recall that $x\in\g$ is called semisimple (resp. nilpotent) if $\mathrm{ad}_x$ is semisimple (resp. nilpotent) as a vector space endomorphism of $\g$. Every $x\in\g$ has a decomposition of the form $x=s+n$, where $s,n\in\g$ are uniquely determined by the following properties: $s$ is semisimple, $n$ is nilpotent, and $[s,n]=0$. One calls $s$ (resp. $n$) the semisimple (resp. nilpotent) part of $x$, and refers to the statement $x=s+n$ as the \textit{Jordan decomposition} of $x$.

We now discuss some conventions regarding the root space decomposition. If $\h\subseteq\g$ is a Cartan subalgebra, we shall let $\Delta\subseteq\h^*$ denote the associated set of roots. Note that each $\alpha\in\Delta$ determines a one-dimensional root space $$\g_{\alpha}:=\{x\in\g:[h,x]=\alpha(h)x\text{ for all }h\in\h\},$$ and that one has
$$\g=\h\oplus\bigoplus_{\alpha\in\Delta}\g_{\alpha}.$$
Now suppose that we have chosen a Borel subalgebra $\mathfrak{b}\subseteq\g$ containing $\h$. This choice induces a partition of $\Delta$ into positive roots $\Delta_{+}$ and negative roots $\Delta_{-}=-\Delta_{+}$, so that
$$\mathfrak{b}=\h\oplus\bigoplus_{\alpha\in\Delta_{+}}\g_{\alpha}.$$ The nilpotent radical of $\mathfrak{b}$ is then given by
$$\mathfrak{u}=[\mathfrak{b},\mathfrak{b}]=\bigoplus_{\alpha\in\Delta_{+}}\g_{\alpha}.$$

Let us use $\Pi\subseteq\Delta_{+}$ to denote the set of simple roots. The subsets of $\Pi$ are in bijective correspondence with the standard parabolic subalgebras of $\g$, i.e. the parabolic subalgebras $\p\subseteq\g$ satisfying $\mathfrak{b}\subseteq\p$. This bijection associates the subset
$$\Pi_{\p}:=\{\alpha\in\Pi:\mathfrak{g}_{-\alpha}\subseteq\p\}$$ to each standard parabolic subalgebra $\p$. The inverse bijection takes a subset $Q\subseteq\Pi$ to a standard parabolic subalgebra $\p_{Q}$, defined via the following procedure. Let $\Delta_Q$ be the set of roots occurring as $\mathbb{Z}$-linear combinations of elements in $Q$, i.e.
$$\Delta_Q:=\Delta\cap\mathrm{span}_{\mathbb{Z}}(Q)\subseteq\h^*.$$ Consider the reductive subalgebra $$\lf_Q:=\h\oplus\bigoplus_{\alpha\in\Delta_Q}\g_{\alpha}$$ and the nilpotent subalgebra
$$\uu_Q:=\bigoplus_{\alpha\in\Delta_{+}\setminus\Delta_Q}\g_{\alpha}.$$ The vector space direct sum
$$\p_Q:=\lf_Q\oplus\uu_Q$$
is then a standard parabolic subalgebra of $\g$ with Levi factor $\lf_Q$ and nilpotent radical $\uu_Q$.

Let us conclude with a few aspects of invariant polynomials on $\g$. To this end, recall that a polynomial $f\in\mathbb{C}[\g]$ is called \textit{invariant} if $f(\mathrm{Ad}_g(x))=f(x)$ for all $g\in G$ and $x\in\g$. One has the subalgebra $\C[\g]^G\subseteq\mathbb{C}[\g]$ of all invariant polynomials on $\g$, and this subalgebra is known to be generated by $r$-many homogeneous, algebraically independent elements. Let $f_1,\ldots,f_r$ be a fixed choice of such generators, and let $d_1,\ldots,d_r\in\mathbb{Z}_{>0}$ denote their respective homogeneous degrees. If $b$ is the dimension of any Borel subalgebra in $\g$, then one has \begin{equation}\label{eq:2b=dimg +r}
b = \tfrac{1}{2}(\dim(\g) + r) = \sum_{i=1}^r d_i.
\end{equation}

It will be advantageous to briefly consider the map 
\begin{equation}\label{eq:AdQuot}
F := (f_1,\dots,f_r):\g\to \C^r
\end{equation}
and its fibres. We note that the closure of an adjoint orbit $\mathcal{O}\subseteq \g_{\text{reg}}$ is a fibre of $F$, provided that this closure is taken in $\g$. The association $\mathcal{O}\mapsto\overline{\mathcal{O}}$ then defines a bijection from the set of adjoint orbits $\mathcal{O}\subseteq\g_{\text{reg}}$ to the set of fibres of $F$. It follows that \begin{equation}\label{eq:AdQuotFibre}
F^{-1}(F(x)) = \overline{Gx}\quad\text{and}\quad F^{-1}(F(x))\cap\g_{\text{reg}}=Gx
\end{equation} for all $x\in\g_{\text{reg}}$. If $x\in\g_{\text{reg}}$ is semisimple, then $Gx$ is closed in $\g$ and $F^{-1}(F(x)) = Gx=F^{-1}(F(x))\cap\g_{\text{reg}}$. We also note that $F^{-1}(0)$ is the cone of all nilpotent elements in $\g$.

\subsection{Some basic results}\label{Subsection: Some basic results}
We now establish a few Lie-theoretic facts needed in subsequent sections, beginning with the following slight generalization of \cite[Corollary 3.1.43]{Chriss}.

\begin{lem}\label{Lemma:3.1.43.parabolic}
Let $\p\subseteq\g$ be a parabolic subalgebra with nilpotent radical $\mathfrak{u}$. If $f\in \C[\mathfrak{g}]^G$ and $x\in\p$, then $f$ is constant on $x+\mathfrak{u}$. 
\end{lem}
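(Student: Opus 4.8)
The plan is to reduce the statement to a classical fact about adjoint orbits and invariant polynomials, namely that invariant polynomials separate closed orbits and, in particular, that two elements with the same semisimple part in their Jordan decomposition — more precisely, two elements whose orbit closures intersect — take the same values under every $f\in\C[\g]^G$. Concretely, I would show that for each $x\in\p$ and each $u\in\mathfrak{u}$, the element $x+u$ lies in the same fibre of the adjoint quotient map $F$ from \eqref{eq:AdQuot} as $x$ itself; since $f$ factors through $F$ (being invariant, $f$ is a polynomial in $f_1,\dots,f_r$), this gives $f(x+u)=f(x)$.

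The key step is therefore to establish that $x$ and $x+u$ have the same image under $F$. The cleanest route uses the one-parameter subgroup argument: write $\p=\p_Q=\lf_Q\oplus\uu_Q$ after conjugating so that $\p$ is standard with Levi $\lf=\lf_Q$ containing a chosen Cartan $\h$, and let $\{h_t\}_{t\in\C^\times}\subseteq G$ be the one-parameter subgroup generated by a suitable element of $\h$ acting with strictly positive weights on all root spaces $\g_\alpha$ with $\alpha\in\Delta_+\setminus\Delta_Q$ (i.e. on $\uu$) and with weight $0$ on $\lf$. Then for $x=x_{\lf}+x_{\uu}$ with $x_{\lf}\in\lf$, $x_{\uu}\in\uu$, one has $\Ad_{h_t}(x)=x_{\lf}+\Ad_{h_t}(x_{\uu})$, and as $t\to 0$ the term $\Ad_{h_t}(x_{\uu})\to 0$. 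Hence $x_{\lf}\in\overline{Gx}$, so $F(x)=F(x_{\lf})$ by \eqref{eq:AdQuotFibre} (or directly by continuity of $F$ and $G$-invariance). The identical argument applied to $x+u$ (which also decomposes with Levi-part $x_{\lf}$, since $u\in\uu$) gives $F(x+u)=F(x_{\lf})=F(x)$. Finally, since $f\in\C[\g]^G$ lies in $\C[f_1,\dots,f_r]=\C[\g]^G$, we may write $f=P(f_1,\dots,f_r)$ for some polynomial $P$, and then $f(x+u)=P(F(x+u))=P(F(x))=f(x)$ for all $u\in\mathfrak{u}$, which is the claim.

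I expect the main obstacle — really the only point requiring care — to be the bookkeeping around the one-parameter subgroup and the reduction to standard form: one must verify that the grading element can be chosen in $\h$ so that it acts with positive weights precisely on $\uu$ and trivially on $\lf$, and that conjugating $\p$ to a standard parabolic does not cost anything (it does not, since conjugation preserves both $\C[\g]^G$ and the property of being a parabolic with a given nilpotent radical, and the statement to be proved is visibly $G$-equivariant). An alternative that sidesteps the explicit one-parameter subgroup is to invoke directly the cited \cite[Corollary 3.1.43]{Chriss}, which presumably treats the Borel case $\p=\mathfrak{b}$, $\mathfrak{u}=[\mathfrak{b},\mathfrak{b}]$: for a general parabolic $\p$ one picks a Borel $\mathfrak{b}\subseteq\p$, notes $[\mathfrak{b},\mathfrak{b}]\supseteq\mathfrak{u}$, and applies the Borel case twice — once to see $f$ is constant on $x+[\mathfrak{b},\mathfrak{b}]\supseteq x+\mathfrak{u}$. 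This makes the "slight generalization" essentially immediate; I would likely present the one-parameter subgroup argument as it is self-contained, and remark that it also recovers the Borel statement.
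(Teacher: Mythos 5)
Your argument is correct, but it is genuinely different from the one in the paper. You contract $x$ onto its Levi component: choosing a cocharacter $h_t$ of a maximal torus in the Levi that acts trivially on $\lf$ and with strictly positive weights on $\uu$, you get $\Ad_{h_t}(x)\to x_{\lf}$ as $t\to 0$, whence $f(x)=f(x_{\lf})$ by invariance and continuity, and likewise $f(x+u)=f(x_{\lf})$ since $x+u$ has the same $\lf$-component; this works uniformly for every $x\in\p$ with no genericity assumption. The paper instead proves that for $x\in\p$ regular semisimple the set $x+\uu$ is a single orbit of the unipotent group $U=\exp(\uu)$ (via $\ad_x:\uu\to\uu$ being an isomorphism, density of $Ux$ in $x+\uu$, and closedness of unipotent orbits), so $f$ is constant there, and then extends to all of $\p$ by density of regular semisimple elements and continuity. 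Your route is more direct and avoids the orbit analysis, but the paper's argument buys an extra byproduct it uses immediately afterwards (its Corollary 2.3): whenever $\g_x\cap\uu=\{0\}$ one has $x+\uu=Ux$, which a limit argument cannot give. Two small points in your write-up: the appeal to \eqref{eq:AdQuotFibre} is not literally available since that statement is made only for $x\in\greg$ — your parenthetical ``by continuity of $F$ and $G$-invariance'' is the correct justification, and in fact the whole detour through $F$ and $f=P(f_1,\dots,f_r)$ is unnecessary, as continuity and invariance of $f$ itself already give $f(x)=f(x_{\lf})$; and in your alternative reduction to the Borel case of \cite[Corollary 3.1.43]{Chriss} you need the (true, but unstated) observation that every $x\in\p$ lies in a Borel subalgebra $\bb\subseteq\p$ — take a Borel $\bb_{\lf}\subseteq\lf$ through $x_{\lf}$ and set $\bb=\bb_{\lf}\oplus\uu$ — after which a single application of the Borel case to $x+[\bb,\bb]\supseteq x+\uu$ suffices.
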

\begin{proof}
Let $U$ be the closed, connected subgroup of $G$ having Lie algebra $\uu$, observing that $U=\exp(\uu)$. Given $u\in U$ and $\xi\in\uu$, let us consider the element $\mathrm{Ad}_u(x+\xi)\in\g$. We have $u=\exp(\eta)$ for some $\eta\in\uu$, so that  
\[
\Ad_u(x+\xi) = x + \xi + \sum_{k=1}^\infty\frac{1}{k!}\ad_\eta^k(x +\xi).
\]
Since $\uu$ is an ideal of $\p$, this calculation establishes that $\mathrm{Ad}_u(x+\xi)\in x+\uu$. It follows that $x+\uu$ is invariant under the adjoint action of $U$.

Now assume that our element $x$ is regular and semisimple. The centralizer $\mathfrak{g}_x$ is then a Cartan subalgebra, and as such it must consist of semisimple elements. The subalgebra $\uu$ consists entirely of nilpotent elements, implying that $\mathfrak{g}_x\cap\uu = \{0\}$. Noting again that $\uu$ is an ideal in $\p$, we conclude that $\mathrm{ad}_x$ restricts to a vector space isomorphism
\[
\ad_x: \uu\xrightarrow{\cong}\uu.
\]
This combines with the $U$-invariance of $x+\uu$ and \cite[Lemma 1.4.12]{Chriss} to imply the following: $Ux$ is a Zariski-dense subset of $x+\uu$, where $Ux\subseteq\g$ denotes the $U$-orbit of $x$. Since an orbit of a unipotent group on an affine variety is Zariski-closed (see \cite[Lemma 3.1.1]{Chriss}), we deduce that 
\[
x+\uu = Ux.
\]
Our invariant polynomial $f$ must therefore satisfy 
\[
f(x + \xi) = f(x)
\]
for all $\xi\in\uu$. This proves the lemma in the case that $x$ is regular and semisimple. At the same time, the regular semisimple elements of $\g$ in $\p$ form a dense subset of $\p$. Continuity thereby forces $f$ to be constant on $x+\uu$ for all $x\in \p$.
\end{proof}

\begin{rem}
Note that Lemma \ref{Lemma:3.1.43.parabolic} reduces to \cite[Corollary 3.1.43]{Chriss} if $\mathfrak{p} = \mathfrak{b}$ is a Borel subalgebra. In this particular case, our proof becomes essentially identical to the one given for \cite[Lemma 3.1.44]{Chriss}. 
\end{rem}

The proof of Lemma \ref{Lemma:3.1.43.parabolic} turns out to imply the following extra result.

\begin{cor}
Let $\p\subseteq \g$ be a parabolic subalgebra with nilpotent radical $\uu$ and chosen Levi factor $\lf$, and let $x\in\p$ be such that $\g_x\cap\uu = \{0\}$. Write $x = x_{\lf} + x_{\uu}$ with $x_{\lf}\in\lf$ and $x_{\uu}\in\uu$. Then $$x+\uu = x_{\lf}+\uu = Ux$$ is a single $U$-orbit. If $x$ is also regular and semisimple, then $x_{\mathfrak{l}}$ is regular and semisimple in $\lf$.
\end{cor}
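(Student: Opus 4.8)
The plan is to separate the statement into two halves: the orbit description $x+\uu = x_{\lf}+\uu = Ux$, and the claim that regular semisimplicity of $x$ descends to $x_{\lf}$. As the preceding remark hints, the first half is essentially already contained in the proof of Lemma \ref{Lemma:3.1.43.parabolic}; the second half is a short transfer-of-regularity argument. So the main task is to isolate and reassemble material that is, in large part, already on the table.

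For the orbit description, note first that $x - x_{\lf} = x_{\uu}\in\uu$, which gives $x+\uu = x_{\lf}+\uu$ for free. For $x+\uu = Ux$, I would reread the proof of Lemma \ref{Lemma:3.1.43.parabolic} and observe that it invokes regularity and semisimplicity of $x$ only through the single consequence $\g_x\cap\uu = \{0\}$, which is now a hypothesis. Spelled out: the $\Ad_{\exp\eta}$ computation in that proof shows $x+\uu$ is $\Ad_U$-stable for \emph{every} $x\in\p$; since $\uu$ is an ideal of $\p$ and $x\in\p$, one has $\ad_x(\uu)\subseteq\uu$, and $\ker(\ad_x)\cap\uu = \g_x\cap\uu = \{0\}$ promotes this to an isomorphism $\ad_x\colon\uu\xrightarrow{\cong}\uu$; hence the tangent space to $Ux$ at $x$ is $\ad_x(\uu) = \uu$, so $Ux$ is dense in the irreducible affine space $x+\uu$; and since a unipotent orbit on an affine variety is Zariski-closed, $Ux = x+\uu$. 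This is the same argument as before, with the key hypothesis supplied directly rather than deduced from regular semisimplicity.

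For the final sentence, assume in addition that $x$ is regular and semisimple. Then $x_{\lf}\in x+\uu = Ux$, so $x_{\lf} = \Ad_u(x)$ for some $u\in U\subseteq G$; hence $\ad_{x_{\lf}} = \Ad_u\circ\ad_x\circ\Ad_u^{-1}$ is similar to $\ad_x$ and is therefore semisimple, and $\dim\g_{x_{\lf}} = \dim\g_x = r$, so $x_{\lf}$ is regular and semisimple in $\g$. Since $\lf$ is an $\ad_{x_{\lf}}$-stable subalgebra, $\ad_{x_{\lf}}|_{\lf}$ is again semisimple, so $x_{\lf}$ is semisimple in $\lf$ and lies in some Cartan subalgebra $\h$ of $\lf$. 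The crucial point is that a Levi subalgebra of a parabolic subalgebra of $\g$ has the same rank as $\g$, so $\h$ is also a Cartan subalgebra of $\g$ and $\dim\h = r$; combining $\h\subseteq\g_{x_{\lf}}$ with $\dim\g_{x_{\lf}} = r$ forces $\h = \g_{x_{\lf}}$, and in particular $\g_{x_{\lf}}\subseteq\lf$. Therefore $\lf_{x_{\lf}} = \g_{x_{\lf}}\cap\lf = \h$ is $r$-dimensional, i.e. $x_{\lf}$ is regular in $\lf$. I expect this last step to be the only one requiring real care: the natural-looking move is to study $\lf_{x_{\lf}}$ head-on, but without knowing regularity in $\lf$ already one cannot bound its dimension, so the right route is to establish regular semisimplicity of $x_{\lf}$ in $\g$ first and then push it down through the full-rank Levi. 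Everything else is routine bookkeeping.
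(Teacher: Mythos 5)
Your proposal is correct and follows essentially the same route as the paper: it observes that the proof of Lemma \ref{Lemma:3.1.43.parabolic} only uses regular semisimplicity through the consequence $\g_x\cap\uu=\{0\}$, concludes $x+\uu=Ux$, and then transfers regular semisimplicity from $x$ to its $U$-conjugate $x_{\lf}$. The only difference is that you spell out the final step (which the paper dismisses as ``straightforward'') via the full-rank Levi argument $\h=\g_{x_{\lf}}\subseteq\lf$, and that expansion is correct.
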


\begin{proof}
The proof of Lemma \ref{Lemma:3.1.43.parabolic} shows that if $x\in\p$ satisfies $\mathfrak{g}_x\cap\uu = \{0\}$, then $x+\uu$ is a single $U$-orbit. Hence $x_{\lf} = x-x_{\uu}\in x+\uu$ must be $U$-conjugate to $x$. If $x$ is regular and semisimple in $\g$, then the previous sentence forces $x_{\lf}$ to be regular and semisimple in $\g$. It is then straightforward to show that $x_{\lf}$ is regular and semisimple in $\lf$. 
\end{proof}

\begin{lem}\label{Lemma:projregnilpotent}
Let $\p\subseteq\g$ be a parabolic subalgebra with nilpotent radical $\uu$ and chosen Levi factor $\lf$, so that $\p=\lf\oplus\uu$. Assume that $a\in\p$ is regular and nilpotent in $\g$, and let $a_{\lf}\in\lf$ denote the projection of $a$ onto $\lf$. Then $a_{\lf}$ is regular in $\lf$.
\end{lem}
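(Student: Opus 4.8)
The plan is to reduce the statement about the nilpotent element $a$ to a statement about a convenient representative in its $P$-orbit, and then exploit the preceding corollary. First I would observe that regularity of an element $x\in\lf$ is equivalent to $\dim\lf_x=\rank(\lf)=\rank(\g)$, since $\lf$ and $\g$ have the same rank (they share a Cartan subalgebra). So the goal is to bound $\dim\lf_{a_\lf}$ from above by $r=\rank(\g)$; the reverse inequality is automatic. Next, I would use the fact that $a$ is regular nilpotent in $\g$ to produce an $\mathfrak{sl}_2$-triple $(a,h,f)$ with $a$ the nilpositive element, and note that the semisimple $h$ can be chosen inside any Cartan subalgebra $\h\subseteq\p$; after conjugating by an element of the Levi $\lf$ (which preserves $\p$, $\uu$, and the property of being regular nilpotent), I may assume $h\in\lf$ and that $h$ is dominant, so that $\ad_h$ has non-negative eigenvalues on $\p$ and $a$ has $\ad_h$-weight $2$. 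Because $\uu$ is spanned by root spaces whose roots are sums of a positive root of $\lf$ and a nonzero element supported outside $\Pi_\p$, while $a$ is a sum of simple root vectors for a Borel inside $\p$, a careful choice of principal $\mathfrak{sl}_2$ adapted to $\p$ lets me arrange $a=a_\lf+a_\uu$ with $a_\lf$ a principal nilpotent of $\lf$.

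Concretely, here is the cleanest route I would take. Since $a$ is regular nilpotent, the Borel subalgebras containing $a$ form a single point: there is a \emph{unique} Borel $\mathfrak{b}\subseteq\g$ with $a\in\mathfrak{b}$, and in fact $a\in[\mathfrak{b},\mathfrak{b}]$ with nonzero component in every simple root space relative to $\mathfrak{b}$. As $\p$ is a parabolic containing $a$, it contains a Borel containing $a$, hence $\mathfrak{b}\subseteq\p$. Choose the Cartan $\h\subseteq\mathfrak{b}\subseteq\p$; then the $\h$-stable Levi $\lf$ is the standard Levi $\lf_Q$ for $Q=\Pi_\p$, and $\uu=\uu_Q$ is a sum of root spaces $\g_\alpha$ with $\alpha\in\Delta_+\setminus\Delta_Q$. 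Writing $a=\sum_{\alpha\in\Pi}c_\alpha e_\alpha$ with all $c_\alpha\neq 0$, the projection onto $\lf$ with respect to $\p=\lf\oplus\uu$ is exactly $a_\lf=\sum_{\alpha\in Q}c_\alpha e_\alpha$, a sum of nonzero simple root vectors of $\lf$, one for each simple root of $\lf$. By the classical characterization of regular nilpotents (e.g. Kostant), such an element is a regular nilpotent of $\lf$. This also shows $\g_a\cap\uu=\{0\}$ is \emph{not} needed; the argument is purely combinatorial once the right Cartan is chosen.

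The main obstacle is the compatibility of the chosen Levi factor with the one appearing implicitly in the statement: the lemma says "chosen Levi factor $\lf$," and a general Levi is $P$-conjugate but not equal to a standard one containing a prescribed Cartan. I would handle this by noting that any Levi factor of $\p$ is $U$-conjugate to the $\h$-stable Levi for the Cartan $\h\subseteq\mathfrak{b}$ constructed above, and that such a $U$-conjugation intertwines the two projections $\p\to\lf$ up to an element of $\uu$, hence carries $a_\lf$ (for one Levi) to a $U$-conjugate of $a_\lf$ (for the other); since $U$-conjugation preserves being regular nilpotent in the target Levi, the conclusion is Levi-independent. The remaining steps — that a sum of nonzero simple root vectors is regular nilpotent, and that $\rank(\lf)=\rank(\g)$ — are standard and I would simply cite them. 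I expect the write-up to be short: identify the standard Levi, compute the projection, invoke Kostant's criterion, and transfer along $U$.
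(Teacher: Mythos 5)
Your proof is correct and takes essentially the same route as the paper's: both reduce to a Cartan subalgebra inside a Borel subalgebra of $\p$ containing $a$, expand $a$ in root vectors, and apply Kostant's criterion (nonzero components in every simple root space) to $a$ and then to $a_{\lf}$ --- the paper reaches the standard situation by $L$-conjugating $a_{\lf}$ into a Borel of $\lf$, whereas you invoke uniqueness of the Borel containing a regular nilpotent and then transfer between Levi factors by $U$-conjugation, but these are equivalent bookkeeping choices. One small imprecision: a regular nilpotent in $\bb$ has the form $\sum_{\alpha\in\Delta_+}r_\alpha$ with $r_\alpha\neq 0$ for $\alpha\in\Pi$, not literally a sum of simple root vectors as you write; the projection onto $\lf$ and the conclusion are unaffected.
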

\begin{proof}
Let $\bb_\lf\subseteq\lf$ be a Borel subalgebra. Note that $a_{\lf}$ is $L$-conjugate to a point in $\bb_{\lf}$, and that $L$ respects the decomposition $\p=\lf\oplus\uu$. We may therefore assume that $a_{\lf}\in\bb_{\lf}$, in which case $a\in \bb:= \bb_{\lf}\oplus\uu$. Now let $\h$ be a Cartan subalgebra of $\g$ contained in $\bb_{\lf}$. Note that $\h$ and $\bb$ (resp. $\h$ and $\bb_{\lf}$) determine sets of roots $\Delta$ (resp. $\Delta_{\lf}$), positive roots $\Delta_{+}$ (resp. $(\Delta_{+})_{\lf}$), and simple roots $\Pi$ (resp. $\Pi_{\lf}$) for $\g$ (resp. $\lf$), and that we have $$\bb = \h\oplus\bigoplus_{\alpha\in\Delta_+}\g_\alpha.$$ Since $a$ is nilpotent and contained in $\bb$, we conclude that $$a = \sum_{\alpha\in\Delta_+}r_\alpha$$ with $r_{\alpha}\in\g_{\alpha}$ for each $\alpha\in\Delta_{+}$. The regularity of $a$ and \cite[Theorem 5.3]{KostantTDS} then imply that $r_{\alpha}\neq 0$ for all $\alpha\in\Pi$. It follows that
$$a_{\lf}=\sum_{\alpha\in(\Delta_{+})_{\lf}}r_{\alpha}$$
with $r_{\alpha}\neq 0$ for each $\alpha\in\Pi\cap(\Delta_{+})_{\lf}=\Pi_{\lf}$. Another application of \cite[Theorem 5.3]{KostantTDS} then shows $a_{\lf}$ to be regular in $\lf$.
\end{proof}

To prepare for what lies ahead, we record the following standard fact (cf. \cite[Proposition 20.7.6]{Tauvel}).

\begin{lem}\label{Lemma: Jordan decomposition Borel}
Let $\p$ be a parabolic subalgebra of $\mathfrak{g}$. If $x\in\p$, then the semisimple and nilpotent parts of $x$ are contained in $\p$. In particular, this holds if $\p = \bb$ is a Borel subalgebra.
\end{lem}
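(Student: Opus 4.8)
The plan is to exploit the fact that parabolic subalgebras are algebraic, so that their Jordan decompositions are inherited from an ambient general linear Lie algebra. Concretely, let $P\subseteq G$ be the parabolic subgroup with $\mathrm{Lie}(P)=\p$. Since $P$ is closed in the algebraic group $G$, it is itself algebraic, and the image $\Ad(P)\subseteq\operatorname{GL}(\g)$ is a closed, hence algebraic, subgroup whose Lie algebra is $\ad(\p)$ (we are in characteristic $0$, so Lie algebra and image commute).

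First I would recall the standard fact that the Lie algebra of an algebraic subgroup $H\subseteq\operatorname{GL}(V)$ is stable under the Jordan decomposition in $\operatorname{End}(V)$: if $Y\in\mathrm{Lie}(H)$, then its semisimple and nilpotent parts $Y_s,Y_n$ again lie in $\mathrm{Lie}(H)$. Applying this with $V=\g$ and $H=\Ad(P)$ yields that $(\ad_x)_s$ and $(\ad_x)_n$ belong to $\ad(\p)$ for every $x\in\p$.

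Next I would match these with the abstract Jordan decomposition $x=s+n$ in $\g$. Since $\ad_s$ is semisimple, $\ad_n$ is nilpotent, and $[\ad_s,\ad_n]=\ad_{[s,n]}=0$, uniqueness of the Jordan decomposition in $\operatorname{End}(\g)$ forces $(\ad_x)_s=\ad_s$ and $(\ad_x)_n=\ad_n$. Hence $\ad_s,\ad_n\in\ad(\p)$. As $\g$ is semisimple, $\ad:\g\to\operatorname{End}(\g)$ is injective, so $\ad_s\in\ad(\p)$ and $\ad_n\in\ad(\p)$ give $s\in\p$ and $n\in\p$, as required. The final assertion is immediate, since every Borel subalgebra is in particular parabolic.

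The only genuinely non-elementary ingredient here is the cited stability of $\mathrm{Lie}(H)$ under Jordan decomposition; everything else is bookkeeping with the uniqueness of Jordan parts. An alternative, more self-contained route would first reduce to a standard parabolic $\p=\lf\oplus\uu$ using $G$-equivariance of the Jordan decomposition, then observe that the projection $\p\to\lf$ is the differential of the algebraic group morphism $P\to L$ and hence preserves Jordan parts, and conclude via the reductive case together with $\uu\subseteq\p$; but this essentially repackages the same algebraic-group input, so I would present the first argument (and note the reference \cite[Proposition 20.7.6]{Tauvel}).
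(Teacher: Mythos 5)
Your proof is correct. The paper does not actually prove this lemma; it records it as a standard fact with a citation to Tauvel--Yu, and your argument is precisely the standard one behind that reference: parabolic subalgebras are Lie algebras of closed subgroups (namely $P=N_G(\p)$), Lie algebras of closed subgroups of $\operatorname{GL}(V)$ are stable under Jordan decomposition in $\operatorname{End}(V)$, and the abstract Jordan decomposition in the semisimple $\g$ is matched to the concrete one via the uniqueness of Jordan parts and the injectivity of $\ad$. Every step is sound, so your write-up supplies a complete proof of something the paper takes for granted.
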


\begin{lem}\label{Lemma:al_regular_standardparabolic}
Let $a\in\greg$ have a Jordan decomposition of $a=s+n$, where $s\in\g$ is semisimple and $n\in\g$ is nilpotent. Suppose that $\h\subseteq\g$ is a Cartan subalgebra containing $s$, and that $\bb\subseteq\g$ is a Borel subalgebra containing $a$ and $\h$. Fix a subset $Q$ of the simple roots determined by $\h$ and $\bb$, and let $\Pi_Q$, $\Delta_Q$, $\lf_Q$, $\uu_Q$, and $\p_Q:=\lf_Q\oplus\uu_Q$ be as described at the end of Section \ref{Subsection: Conventions}. Let $a_{\lf_Q}\in\lf_Q$ be the projection of $a\in\p_Q$ onto $\lf_Q$. Then $a_{\lf_Q}$ is regular in $\lf_Q$.
\end{lem}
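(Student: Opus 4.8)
The plan is to reduce the statement to the regular-nilpotent case treated in Lemma \ref{Lemma:projregnilpotent}, by passing to the reductive subalgebra $\g_s\subseteq\g$. Write $\lf:=\lf_Q$, $\uu:=\uu_Q$, $\p:=\p_Q$, $a_\lf:=a_{\lf_Q}$, and set $\m:=\lf\cap\g_s$, which is the centralizer of $s$ in $\lf$. Since $\g_s$ and $\m$ are reductive and both contain $\h$, they both have rank $r$.

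The first step is to understand $a_\lf$. Since $a\in\bb$, Lemma \ref{Lemma: Jordan decomposition Borel} places the nilpotent part $n$ in $\bb$, and a nilpotent element of $\bb$ lies in $[\bb,\bb]=\bigoplus_{\alpha\in\Delta_+}\g_\alpha$; write $n=\sum_{\alpha\in\Delta_+}r_\alpha$ with $r_\alpha\in\g_\alpha$. The relation $[s,n]=0$ forces $\alpha(s)=0$ whenever $r_\alpha\neq 0$. Hence the $\lf$-component $n_\lf:=\sum_{\alpha\in\Delta_+\cap\Delta_Q}r_\alpha$ of $n$ commutes with $s$ and lies in $\g_s$, and it is nilpotent, since it lies in the nilradical of the Borel subalgebra $\bb\cap\lf$ of $\lf$. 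As $s\in\h$ contributes no root components, $a_\lf=s+n_\lf$, and by uniqueness of the Jordan decomposition this is the Jordan decomposition of $a_\lf$ in $\lf$. Since the semisimple and nilpotent parts of an element are polynomials in it with vanishing constant term, the centralizer of $a_\lf$ in $\lf$ equals the centralizer of $s$ in $\lf$ intersected with that of $n_\lf$, i.e.\ the centralizer of $n_\lf$ in $\m$. As $\rank(\m)=r=\rank(\lf)$, it therefore suffices to prove that $n_\lf$ is regular in $\m$.

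By the same polynomial argument applied in $\g$, the regularity of $a=s+n$ means $\dim(\g_s\cap\g_n)=r$; since $\g_s\cap\g_n$ is the centralizer of $n$ in $\g_s$ and $\rank(\g_s)=r$, this says precisely that $n$ is regular --- and, of course, nilpotent --- in $\g_s$. Now $\p\cap\g_s$ contains the Borel subalgebra $\bb\cap\g_s$ of $\g_s$, hence is a parabolic subalgebra of $\g_s$; from the root-space descriptions one reads off that $\p\cap\g_s=\m\oplus(\uu\cap\g_s)$ is a Levi decomposition, with nilradical $\uu\cap\g_s$ and Levi factor $\m$, and that the projection of $n$ onto $\m$ with respect to it is $n_\lf$, because $n_\lf\in\m$ and $n-n_\lf\in\uu\cap\g_s$. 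Lemma \ref{Lemma:projregnilpotent}, whose statement and proof pass without change to reductive Lie algebras --- a nilpotent element lies in the derived subalgebra, so one simply splits off the centre --- applies to the regular nilpotent element $n\in\g_s$, the parabolic subalgebra $\p\cap\g_s$, and its Levi factor $\m$, and yields that $n_\lf$ is regular in $\m$. In view of the first step, this proves the lemma.

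The argument involves no serious difficulty; the points that require care are the verification that $a_\lf=s+n_\lf$ really is the Jordan decomposition of $a_\lf$ in $\lf$ (this is where the hypotheses $s\in\h$ and $a\in\bb$ enter, via $n\in[\bb,\bb]$), the identification of $\p\cap\g_s$ as a parabolic subalgebra of $\g_s$ with Levi factor $\m$ together with the matching of the two projections of $n$, and the routine observation that Lemma \ref{Lemma:projregnilpotent} remains valid for reductive Lie algebras. I expect the bookkeeping around $\p\cap\g_s$ to be the most delicate part, though it is entirely elementary.
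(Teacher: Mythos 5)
Your proof is correct, and its skeleton coincides with the paper's: both first show that $a_{\lf_Q}=s+n_{\lf_Q}$ is the Jordan decomposition in $\lf_Q$, deduce that the $\lf_Q$-centralizer of $a_{\lf_Q}$ is the centralizer of $n_{\lf_Q}$ in $(\lf_Q)_s=\m$, observe that regularity of $a$ makes $n$ regular nilpotent in $\g_s$, and thereby reduce everything to showing $n_{\lf_Q}$ is regular in $\m$. Where you diverge is in the last step. The paper finishes by running Kostant's simple-root criterion (\cite[Theorem 5.3]{KostantTDS}) directly: it extracts the nonvanishing of the coefficients $n_\alpha$ on the relevant simple roots and reads off regularity of $n_{\lf_Q}$ in $(\lf_Q)_s$ from the expression \eqref{Equation: Nilp}. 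You instead observe that $\p_Q\cap\g_s$ is a parabolic subalgebra of the reductive algebra $\g_s$ with Levi factor $\m$ and nilradical $\uu_Q\cap\g_s$, that the projection of $n$ onto $\m$ is exactly $n_{\lf_Q}$, and then invoke Lemma \ref{Lemma:projregnilpotent} (extended to reductive algebras by splitting off the centre, as you note). This buys you a cleaner citation and, in particular, sidesteps any explicit description of the simple roots of $(\lf_Q)_s$ — a point the paper handles somewhat delicately — at the cost of the bookkeeping you correctly identify: verifying the parabolic structure of $\p_Q\cap\g_s$ and matching the two projections of $n$, both of which are straightforward from the root-space decompositions and which you carry out adequately.
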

\begin{proof}
Lemma \ref{Lemma: Jordan decomposition Borel} implies that $n\in\bb$, while we know that $n\in\g_s$. These considerations force $n$ to take the form
$$n = \sum_{(\alpha\in\Delta_{+})_s}n_\alpha,$$
where $\Delta_{+}$ is the set of positive roots determined by $\h$ and $\bb$, $(\Delta_{+})_s:=\{\alpha\in\Delta_{+}:\alpha(s)=0\}$, and $n_{\alpha}\in\g_{\alpha}$ for all $\alpha\in(\Delta_{+})_s$.
On the other hand, $\g_a = \g_s\cap\g_n$ is the $\g_s$-centralizer of $n$. The regularity of $a$ in $\g$ thus forces $n$ to be regular in $\g_s$. The previous three sentences and \cite[Theorem 5.3]{KostantTDS} then imply that $n_{\alpha}\neq 0$ for all simple roots $\alpha$ of $\g$ satisfying $\alpha(s)=0$. 

Now let $Q$ be a subset of the simple roots. Noting that $s\in\h\subseteq\lf_Q$, we have $a_{\lf_Q} = s + n_{\lf_Q}$ with 
\begin{equation}\label{Equation: Nilp}n_{\lf_Q} = \sum_{\alpha\in(\Delta_{+})_s\cap\Delta_Q}n_\alpha.\end{equation} The element $s$ commutes with all $n_{\alpha}$ appearing above, so that $a_{\lf_Q} = s + n_{\lf_Q}$ is the Jordan decomposition of $a_{\lf_Q}$. We conclude that the $\lf_Q$-centralizer of $a_{\lf_Q}$ equals the centralizer of $n_{\lf_Q}$ in $(\lf_Q)_s$. It will therefore suffice to prove that $n_{\lf_Q}$ is regular in $(\lf_Q)_s$. We thus observe that the simple roots of $(\lf_Q)_s$ are precisely the simple roots of $\g$ that lie in $Q$ and annihilate $s$. At the same time, the previous paragraph implies that $n_{\alpha}\neq 0$ for all $\alpha\in Q$ satisfying $\alpha(s)=0$. It now follows from \eqref{Equation: Nilp} and \cite[Theorem 5.3]{KostantTDS} that $n_{\lf_Q}$ is regular in $(\lf_Q)_s$, completing the proof.
\end{proof}

\begin{prop}\label{Prop:alregular}
Suppose that $a\in\greg$ has a Jordan decomposition of $a=s+n$. Let $\p\subseteq\g$ be a parabolic subalgebra with $a\in\p$, and choose a Cartan subalgebra $\h\subseteq\g$ with $s\in\h\subseteq\p$. Write $\lf$ for the unique $\h$-stable Levi factor of $\p$, and let $\uu$ denote the nilpotent radical of $\p$. Let $a_{\lf}\in\lf$ be the projection of $a$ onto $\l$ with respect to the decomposition $\p=\lf\oplus\uu$. Then $a_\lf$ is regular in $\lf$. 
\end{prop}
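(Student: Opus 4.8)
The statement is essentially Lemma \ref{Lemma:al_regular_standardparabolic} with two hypotheses relaxed: the parabolic $\p$ need not be standard relative to a fixed Borel, and $\h$ is only assumed to satisfy $s\in\h\subseteq\p$ rather than also being contained in a Borel that contains $a$. The plan is to reduce to the already-proved standard case by choosing a Borel subalgebra $\bb$ with $\h\subseteq\bb\subseteq\p$ such that $a\in\bb$. Such a Borel exists: working inside the reductive Lie algebra $\lf$, the nilpotent part $n$ lies in $\lf$ (by Lemma \ref{Lemma: Jordan decomposition Borel} applied to the parabolic $\p$, together with $n\in\g_s$ and $s\in\h\subseteq\lf$, so $n\in\lf$), and $n$ commutes with $s\in\h$; hence $n$ lies in $\lf_s=\g_s\cap\lf$, a reductive subalgebra containing the Cartan $\h$. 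Since $n$ is nilpotent in $\lf_s$, it lies in some Borel subalgebra of $\lf_s$ containing $\h$; adjoining to this Borel the appropriate positive root spaces of $\lf$ (those annihilating $s$ on one side and then the rest) produces a Borel $\bb_\lf\subseteq\lf$ with $\h\subseteq\bb_\lf$ and $a_\lf=s+n\in\bb_\lf$. Then $\bb:=\bb_\lf\oplus\uu$ is a Borel of $\g$ with $\h\subseteq\bb\subseteq\p$, and since $a=a_\lf+a_\uu$ with $a_\uu\in\uu\subseteq\bb$, we get $a\in\bb$.

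Once $\bb$ is in hand, $\p$ becomes a standard parabolic with respect to $(\h,\bb)$, say $\p=\p_Q$ for the subset $Q\subseteq\Pi$ of simple roots determined by $\h,\bb$. The $\h$-stable Levi factor $\lf$ of $\p$ coincides with $\lf_Q$ (uniqueness of the $\h$-stable Levi), and $a_\lf=a_{\lf_Q}$. Lemma \ref{Lemma:al_regular_standardparabolic} then applies verbatim and gives that $a_{\lf_Q}=a_\lf$ is regular in $\lf_Q=\lf$.

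Alternatively — and this may be cleaner to write — one can bypass the standard-parabolic lemma and argue directly in the style of its proof: with $\bb_\lf$ and $\bb=\bb_\lf\oplus\uu$ as above, $a\in\bb$ and $n\in\bb_\lf$, so $n=\sum_{\alpha\in(\Delta_+)_s}n_\alpha$ with $n_\alpha\in\g_\alpha$; regularity of $a$ in $\g$ forces $n$ regular in $\g_s$, hence by \cite[Theorem 5.3]{KostantTDS} $n_\alpha\neq 0$ for every simple root $\alpha$ of $\g$ with $\alpha(s)=0$. Writing $a_\lf=s+n_\lf$ with $n_\lf=\sum_{\alpha\in(\Delta_+)_s\cap\Delta_\lf}n_\alpha$, the same argument as in Lemma \ref{Lemma:al_regular_standardparabolic} identifies the $\lf$-centralizer of $a_\lf$ with the centralizer of $n_\lf$ in $\lf_s$, so it suffices that $n_\lf$ be regular in $\lf_s$; since the simple roots of $\lf_s$ are exactly the simple roots of $\lf$ annihilating $s$, and each such root lies in $\Delta_\lf$ with $n_\alpha\neq0$, a final application of \cite[Theorem 5.3]{KostantTDS} concludes.

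**Main obstacle.** The only genuine point requiring care is the construction of the Borel $\bb$ with $\h\subseteq\bb\subseteq\p$ and $a\in\bb$ — i.e., checking that $a_\lf$ lies in a Borel of $\lf$ containing the prescribed Cartan $\h$. This hinges on $n$ landing in $\lf_s$ (a reductive, not merely parabolic, subalgebra of $\lf$ containing $\h$) and on the standard fact that a nilpotent element of a reductive Lie algebra lies in a Borel containing any prescribed Cartan with which its semisimple ambient context is compatible; phrased correctly this is routine, but it is where the relaxation of hypotheses relative to Lemma \ref{Lemma:al_regular_standardparabolic} is actually cashed in. Everything after that is bookkeeping with roots.
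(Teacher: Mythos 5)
There is a genuine gap, and it sits exactly where you flag the ``main obstacle'': the Borel subalgebra $\bb$ with $\h\subseteq\bb\subseteq\p$ and $a\in\bb$ (equivalently, a Borel $\bb_{\lf}\subseteq\lf$ containing both the prescribed Cartan $\h$ and $a_{\lf}$) need not exist, so the reduction to Lemma \ref{Lemma:al_regular_standardparabolic} cannot be carried out for the given $\h$. Concretely, take $\g=\sln_3(\mathbb{C})$, $s=\mathrm{diag}(1,1,-2)$, $n=e_{12}$, and $a=s+n$, which is regular; let $\p$ be the block upper-triangular parabolic with blocks $(2,1)$, and let $\h=\mathbb{C}s\oplus\mathbb{C}(e_{12}+e_{21})$, which is a Cartan subalgebra of $\g$ with $s\in\h\subseteq\g_s=\lf\subseteq\p$. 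By Lemma \ref{Lemma: Borel intersection}, any Borel subalgebra of $\g$ containing $a$ meets $\g_s$ in $\ba$, which here is the upper-triangular part of $\g_s$ and does not contain $e_{12}+e_{21}$; hence no Borel subalgebra of $\g$ contains both $a$ and $\h$. Two of your supporting claims fail for related reasons. First, $n$ need not lie in $\lf$ (e.g.\ take $\p$ a Borel, so that $\lf=\h$ and the $\lf$-component $n_{\lf}$ of $n$ vanishes while $n\neq 0$); the nilpotent part of $a_{\lf}$ is $n_{\lf}$, not $n$, and ``$n\in\g_s$ and $s\in\lf$'' does not imply $n\in\lf$. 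Second, a nilpotent element of a reductive Lie algebra need not lie in a Borel subalgebra containing a prescribed Cartan: in $\sln_2(\mathbb{C})$ the nilpotent matrix $e_{11}-e_{22}+e_{12}-e_{21}$ lies in neither of the two Borel subalgebras containing the diagonal Cartan. So there is no ``correct phrasing'' under which your direct construction goes through, and your alternative route fails at the same step since it starts from the same nonexistent $\bb_{\lf}$.

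The missing idea is a conjugation step, which is how the paper proceeds. Since $\Ad_L$ preserves the decomposition $\p=\lf\oplus\uu$ and regularity in $\lf$ is invariant under $\Ad_L$, one may replace $a$ by an $L$-conjugate so that $a_{\lf}$ lands in a chosen Borel subalgebra $\bb_{\lf}$ of $\lf$ containing $\h$ (every element of $\lf$ is $L$-conjugate into $\bb_{\lf}$); then $a\in\bb:=\bb_{\lf}\oplus\uu$, the parabolic $\p$ becomes standard for $(\h,\bb)$, its $\h$-stable Levi factor is still $\lf$, and Lemma \ref{Lemma:al_regular_standardparabolic} applies (after passing, if necessary, to a Cartan subalgebra of $\bb_{\lf}$ containing the semisimple part of the new $a$). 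The point is that the conclusion depends only on $a$, $\p$, and $\lf$, so one is free to move $a$ within its $L$-orbit; trying to keep both $a$ and $\h$ fixed, as you do, is what breaks.
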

\begin{proof}
Choose a Borel subalgebra $\bb_{\lf}$ of $\lf$ that contains $\h$. Since $\p=\lf\oplus\uu$ is an $L$-module decomposition and $a_{\lf}$ is $L$-conjugate to a point in $\bb_{\lf}$, we may assume that $a_{\lf}\in \bb_{\lf}$. It then follows that $a\in \bb:= \bb_{\lf}\oplus \uu$. We also have $\bb\subseteq\p$, so that $\p=\p_Q$ for some subset $Q$ of the simple roots determined by $\h$ and $\bb$. Using the definition of $\lf_Q$ given at the end of Section \ref{Subsection: Conventions}, we see that $\lf_Q$ is $\h$-invariant. It follows that $\lf_Q=\lf$. 

The previous paragraph shows us to be in the situation of Lemma \ref{Lemma:al_regular_standardparabolic}, implying that $a_{\lf}$ is regular in $\lf$.
\end{proof}

\begin{rem}
Let $\p\subseteq\g$ be a parabolic subalgebra with Levi factor $\lf$ and nilpotent radical $\uu$. Our last few results make extensive use of the decomposition $\p=\lf\oplus\uu$ and the induced projection map $\p\rightarrow\lf$. This projection extends to a projection $\g\rightarrow\lf$, defined as follows. Given $a\in\g$, consider the linear functional $a_{\lf}^*\in\lf^*$ defined by $a_\lf^*(l) = \langle a,l\rangle$, $l\in\lf$. Since the Killing form on $\g$ is non-degenerate when restricted to a bilinear form on $\lf$, $a_\lf^*=\langle a_{\lf},\cdot\rangle$ for some unique element $a_\lf\in\lf$. Our projection $\g\rightarrow\lf$ is then defined by $a\mapsto a_{\lf}$. One then readily verifies that this extends the above-mentioned projection $\p\rightarrow\lf$.

In light of Proposition \ref{Prop:alregular}, it is tempting to imagine that $a\in\g_{\text{reg}}$ implies $a_{\lf}\in\lf_{\text{reg}}$. This implication turns out to be false.   
\end{rem}
\subsection{The subalgebra $\ba$ associated with $a\in\greg$}\label{subsec:ba}
To prepare for Section \ref{Section: Singularities}, we now associate a certain subalgebra $\ba\subseteq\g$ to each $a\in\g_{\text{reg}}$. Explaining this association here allows us to avoid a purely Lie-theoretic digression in Section \ref{Section: Singularities}.
     
Let $a\in \mathfrak{g}_{\text{reg}}$ have a Jordan decomposition of $a = s+n$, so that $s\in\g$ is semisimple, $n\in\g$ is nilpotent, and $[s,n]=0$. The centralizer $\mathfrak{g}_s\subseteq\mathfrak g$ is a reductive subalgebra with $\mathrm{rank}(\mathfrak g_s) = r$. We also have the decomposition
$$\g_s=\mathfrak{z}(\g_s)\oplus[\g_s,\g_s],$$
where $\mathfrak{z}(\g_s)$ denotes the centre of $\g_s$. Let us set $l:=\dim(\mathfrak{z}(\g_s))$, observing that the semisimple subalgebra $[\g_s,\g_s]$ has rank $r-l$. Noting that $a$ is regular and $\g_a=\g_s\cap\g_n$, the previous sentence and a simple dimension count imply that $n$ is regular in $[\mathfrak g_s,\mathfrak g_s]$. It follows that $n$ lies in a unique Borel subalgebra $\tilde{\mathfrak b}^a\subseteq [\g_s,\g_s]$, and that the $[\g_s,\g_s]$-centralizer $[\mathfrak g_s,\mathfrak g_s]_n$ is contained in $\tilde{\mathfrak b}^a$. We thus have 
\begin{equation}\label{eq:defba}
\g_a=\mathfrak g_s \cap \mathfrak g_n = \mathfrak{z}(\mathfrak g_s) \oplus [\mathfrak g_s,\mathfrak g_s]_n \subseteq \mathfrak{z}(\mathfrak g_s)\oplus \tilde{\mathfrak b}^a=:\ba.
\end{equation} 
Observe that $a\in\ba$, and that $\ba$ is a Borel subalgebra of $\mathfrak g_s$. 

\begin{lem}\label{Lemma: Unique Borel}
If $a\in\greg$ has a Jordan decomposition of $a=s+n$, then $\ba$ is the unique Borel subalgebra of $\g_s$ that contains $n$. 
\end{lem}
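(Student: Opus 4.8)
The plan is to reduce the uniqueness claim to the uniqueness of the Borel subalgebra of the \emph{semisimple} Lie algebra $[\g_s,\g_s]$ containing the regular nilpotent element $n$ — a fact that was already invoked when $\tilde{\mathfrak b}^a$ was defined in \eqref{eq:defba}. The existence half requires no work: by construction $n\in\tilde{\mathfrak b}^a\subseteq\mathfrak{z}(\g_s)\oplus\tilde{\mathfrak b}^a=\ba$, and $\ba$ was observed to be a Borel subalgebra of $\g_s$. So the content is uniqueness, and the bridge between Borel subalgebras of $\g_s$ and those of $[\g_s,\g_s]$ will be the decomposition $\g_s=\mathfrak{z}(\g_s)\oplus[\g_s,\g_s]$ together with the elementary fact that the centre $\mathfrak{z}(\g_s)$ is contained in every Borel subalgebra of $\g_s$.

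Concretely, I would take an arbitrary Borel subalgebra $\mathfrak{b}\subseteq\g_s$ with $n\in\mathfrak{b}$ and first note that $\mathfrak{z}(\g_s)+\mathfrak{b}$ is a solvable subalgebra of $\g_s$ (because $\mathfrak{z}(\g_s)$ is central), so maximality of $\mathfrak{b}$ forces $\mathfrak{z}(\g_s)\subseteq\mathfrak{b}$. Hence $\mathfrak{b}=\mathfrak{z}(\g_s)\oplus\mathfrak{b}'$ with $\mathfrak{b}':=\mathfrak{b}\cap[\g_s,\g_s]$, and a short check (any solvable subalgebra of $[\g_s,\g_s]$ strictly containing $\mathfrak{b}'$ would yield, after adjoining $\mathfrak{z}(\g_s)$, a solvable subalgebra of $\g_s$ strictly containing $\mathfrak{b}$) shows $\mathfrak{b}'$ to be a Borel subalgebra of $[\g_s,\g_s]$. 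Next I would recall from the construction of $\ba$ that $n$ lies in $[\g_s,\g_s]$ and is regular there; since $n$ is then the $[\g_s,\g_s]$-component of an element of $\mathfrak{b}=\mathfrak{z}(\g_s)\oplus\mathfrak{b}'$, it must lie in $\mathfrak{b}'$.

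Finally, because $n$ is a regular nilpotent element of the semisimple Lie algebra $[\g_s,\g_s]$, it is contained in a unique Borel subalgebra of $[\g_s,\g_s]$ — this is precisely the uniqueness statement used to define $\tilde{\mathfrak b}^a$ preceding \eqref{eq:defba}. As $n$ lies in both $\mathfrak{b}'$ and $\tilde{\mathfrak b}^a$, we conclude $\mathfrak{b}'=\tilde{\mathfrak b}^a$, whence $\mathfrak{b}=\mathfrak{z}(\g_s)\oplus\tilde{\mathfrak b}^a=\ba$. I do not expect a genuine obstacle here; the only points requiring care are the two pieces of reductive-Lie-algebra bookkeeping (every Borel subalgebra of $\g_s$ contains $\mathfrak{z}(\g_s)$, and meets $[\g_s,\g_s]$ in a Borel subalgebra) and the clean invocation of the uniqueness fact inside $[\g_s,\g_s]$ that the paper has already established.
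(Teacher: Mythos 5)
Your proof is correct and follows essentially the same route as the paper's: both reduce to the uniqueness of the Borel subalgebra of $[\g_s,\g_s]$ containing the regular nilpotent $n$, the only cosmetic difference being that you verify directly that $\mathfrak{z}(\g_s)\subseteq\mathfrak{b}$ and that $\mathfrak{b}\cap[\g_s,\g_s]$ is a Borel of $[\g_s,\g_s]$, whereas the paper gets the same conclusion via a dimension count showing $\mathfrak{b}=\mathfrak{z}(\g_s)\oplus(\mathfrak{b}\cap[\g_s,\g_s])\subseteq\ba$.
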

\begin{proof}
Given a Borel subalgebra $\mathfrak{b}\subseteq\mathfrak{g}_s$ containing $n$, note that $\mathfrak{b}\cap[\mathfrak{g}_s,\mathfrak{g}_s]$ is a solvable subalgebra of $[\mathfrak{g}_s,\mathfrak{g}_s]$ containing $n$. We conclude that $\mathfrak{b}\cap[\mathfrak{g}_s,\mathfrak{g}_s]\subseteq\tilde{\mathfrak{b}}^a$, which implies the inclusion \begin{equation}\label{Equation: Inclusion}\mathfrak{z}(\g_s)\oplus(\mathfrak{b}\cap[\mathfrak{g}_s,\mathfrak{g}_s])\subseteq\ba.
\end{equation} 
Now observe that the codimension of $[\mathfrak{g}_s,\mathfrak{g}_s]$ in $\mathfrak{g}_s$ is precisely $l=\dim(\mathfrak{z}(\mathfrak{g}_s))$, so that $$\dim(\mathfrak{b}\cap[\mathfrak{g}_s,\mathfrak{g}_s])\geq \dim(\mathfrak{b})-l.$$ It follows that the left hand side of \eqref{Equation: Inclusion} has dimension at least $\dim(\mathfrak{b})$. Since this left hand side is also contained in $\mathfrak{b}$, it must equal $\mathfrak{b}$. The inclusion \eqref{Equation: Inclusion} thus becomes $\mathfrak{b}\subseteq\ba$. This forces $\mathfrak{b}=\ba$ to hold, as $\bb$ and $\ba$ are both Borel subalgebras of $\g_s$. 
\end{proof}

\begin{lem}\label{Lemma: Borel intersection}
Suppose that $a\in\g_{\text{reg}}$ has a Jordan decomposition of $a=s+n$. If $a$ is contained in a Borel subalgebra $\mathfrak{b}\subseteq\g$, then $\ba=\mathfrak{b}\cap\mathfrak{g}_s$. In particular, $\mathfrak{b}^{a}\subseteq \mathfrak{b}$. 
\end{lem}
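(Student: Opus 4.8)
The plan is to recognize $\bb\cap\g_s$ as the unique Borel subalgebra of $\g_s$ containing $n$, which Lemma \ref{Lemma: Unique Borel} then identifies with $\ba$.

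First I would dispose of the elementary containments. Since $a=s+n$ lies in the (Borel, hence parabolic) subalgebra $\bb$, Lemma \ref{Lemma: Jordan decomposition Borel} gives $s\in\bb$ and $n\in\bb$; and $n\in\g_s$ because $[s,n]=0$, i.e. $\ad_s(n)=0$. Hence $n\in\bb\cap\g_s$.

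The one step with genuine content is to check that $\bb\cap\g_s$ is a Borel subalgebra of $\g_s$. For this I would choose a Cartan subalgebra $\h\subseteq\g$ with $s\in\h\subseteq\bb$; such an $\h$ exists because every semisimple element of a Borel subalgebra lies in some Cartan subalgebra of $\g$ contained in that Borel (the same fact already invoked for parabolics in Proposition \ref{Prop:alregular}). Writing $\g=\h\oplus\bigoplus_{\alpha\in\Delta}\g_\alpha$ for the root space decomposition and letting $\Delta_{+}$ be the positive system cut out by $\bb$, the inclusion $s\in\h$ yields $\g_s=\h\oplus\bigoplus_{\alpha\in\Delta_s}\g_\alpha$ with $\Delta_s:=\{\alpha\in\Delta:\alpha(s)=0\}$, and therefore $\bb\cap\g_s=\h\oplus\bigoplus_{\alpha\in\Delta_{+}\cap\Delta_s}\g_\alpha$. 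Now $\Delta_s$ is the root system of the reductive Lie algebra $\g_s$ with respect to $\h$, and $\Delta_{+}\cap\Delta_s$ is a positive system for it: it is closed under addition inside $\Delta_s$ and satisfies $\Delta_s=(\Delta_{+}\cap\Delta_s)\sqcup\big({-}(\Delta_{+}\cap\Delta_s)\big)$, both properties being inherited from $\Delta_{+}$. Hence $\bb\cap\g_s$ is a Borel subalgebra of $\g_s$.

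Combining the two steps, $\bb\cap\g_s$ is a Borel subalgebra of $\g_s$ containing $n$, so Lemma \ref{Lemma: Unique Borel} forces $\bb\cap\g_s=\ba$, and the inclusion $\ba\subseteq\bb$ is then immediate. I do not anticipate any serious obstacle: once $\h$ is chosen compatibly with both $s$ and $\bb$, the argument reduces to the standard observation that restricting a positive system to a sub-root-system again gives a positive system. If one preferred to avoid root-space bookkeeping, one could instead note that $\bb\cap\g_s$ is solvable, being a subalgebra of the solvable algebra $\bb$, and compare its dimension against $\tfrac12(\dim(\g_s)+r)$; but the explicit description above is cleaner and delivers the dimension for free.
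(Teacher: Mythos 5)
Your proposal is correct and follows essentially the same route as the paper's proof: choose a Cartan subalgebra $\h$ with $s\in\h\subseteq\bb$, observe that $\bb\cap\g_s$ is the Borel subalgebra of $\g_s$ determined by the positive system $\Delta_{+}\cap\Delta_s$, and then invoke Lemma \ref{Lemma: Unique Borel} together with $n\in\bb\cap\g_s$. The only difference is that you spell out in slightly more detail why $\Delta_{+}\cap\Delta_s$ is a positive system, a step the paper labels as straightforward.
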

\begin{proof}
Lemma \ref{Lemma: Jordan decomposition Borel} implies that $s,n\in\mathfrak{b}$, so that there exists a Cartan subalgebra $\mathfrak{h}$ of $\mathfrak{g}$ satisfying $s\in\mathfrak{h}\subseteq\mathfrak{b}$. Note also that $\mathfrak{b}$ determines the positive and negative roots of $(\mathfrak{g},\mathfrak{h})$, to be denoted $\Delta_{+}\subseteq\mathfrak{h}^*$ and $\Delta_{-}\subseteq\mathfrak{h}^*$, respectively. Now observe that $\mathfrak{h}$ is a Cartan subalgebra of $\mathfrak{g}_s$, allowing us to consider the roots $\Delta_s\subseteq\mathfrak{h}^*$ of $(\mathfrak{g}_s,\mathfrak{h})$. It is straightforward to check that $\Delta_s\cap\Delta_{+}$ and $\Delta_s\cap\Delta_{-}$ form positive and negative roots in $\Delta_s$, respectively. One then readily verifies that $\mathfrak{b}\cap\mathfrak{g}_s$ is the Borel subalgebra of $\mathfrak{g}_s$ corresponding to the positive roots $\Delta_s\cap\Delta_{+}$.  Together with Lemma \ref{Lemma: Unique Borel} and the fact that $n\in \mathfrak{b}\cap\mathfrak{g}_s$, this implies that $\ba=\mathfrak{b}\cap\mathfrak{g}_s$. 
\end{proof}

\begin{prop}\label{Proposition: Intersection of Borels}
If $a\in\mathfrak{g}_{\emph{reg}}$, then $\ba$ is the intersection of all Borel subalgebras of $\mathfrak{g}$ that contain $a$.
\end{prop}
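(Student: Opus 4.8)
The plan is to prove the two inclusions separately. Write $\mathcal{B}^a$ for the set of Borel subalgebras of $\g$ containing $a$, and set $\mathfrak{c} := \bigcap_{\mathfrak{b}\in\mathcal{B}^a}\mathfrak{b}$. First I would establish $\mathfrak{c}\subseteq\ba$. For this, recall that every $x\in\greg$ lies in at least one Borel subalgebra, and in fact $a$ lies in at least one; pick any $\mathfrak{b}_0\in\mathcal{B}^a$. Lemma \ref{Lemma: Borel intersection} gives $\ba=\mathfrak{b}_0\cap\g_s$, so it suffices to show $\mathfrak{c}\subseteq\g_s$, because then $\mathfrak{c}\subseteq\mathfrak{b}_0\cap\g_s=\ba$. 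To see $\mathfrak{c}\subseteq\g_s$, I would argue that the Borel subalgebras of $\g$ containing $a$ are exactly those of the form $\tilde{\mathfrak{b}}\oplus\mathfrak{n}^-$-type completions — more precisely, I would use the correspondence between $\mathcal{B}^a$ and the Borel subalgebras of $\g_s$ containing $n$, together with a parabolic. Actually the cleanest route: every $\mathfrak{b}\in\mathcal{B}^a$ satisfies $\mathfrak{b}\cap\g_s=\ba$ by Lemma \ref{Lemma: Borel intersection}, so in particular $\ba\subseteq\mathfrak{b}$ for every $\mathfrak{b}\in\mathcal{B}^a$, which immediately gives $\ba\subseteq\mathfrak{c}$ — this is the easy inclusion and is already recorded in Lemma \ref{Lemma: Borel intersection}.

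So the real content is the reverse inclusion $\mathfrak{c}\subseteq\ba$. Here the key idea is that $\mathcal{B}^a$ is large enough to cut $\g_s$ out from $\g$, and then to cut $\ba$ out from $\g_s$. I would proceed in two steps. Step one: show $\mathfrak{c}\subseteq\g_s$. Since $s$ is semisimple, $\g_s$ is reductive of full rank, and the Borel subalgebras of $\g$ containing $s$ correspond bijectively (via $\mathfrak{b}\mapsto\mathfrak{b}\cap\g_s$, $\mathfrak{b}_s\mapsto\mathfrak{b}_s+\mathfrak{n}$ for an appropriate nilpotent piece) to pairs consisting of a Borel $\mathfrak{b}_s$ of $\g_s$ and a choice of parabolic direction; more to the point, for a regular semisimple $s$ this is classical, but here $s$ need not be regular. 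Instead I would use: $\bigcap_{\mathfrak{b}\ni s}\mathfrak{b}=\g_s$ when the intersection runs over all Borels containing $s$ — this is a standard fact (the centralizer of a semisimple element equals the intersection of the Borels through it). Since every $\mathfrak{b}\in\mathcal{B}^a$ contains $a=s+n$ and hence contains $s$ (Lemma \ref{Lemma: Jordan decomposition Borel}), and moreover every Borel containing $s$ can be adjusted within $\g_s$ to also contain $n$ — wait, not every one does. Rather: I would use that the Borels of $\g$ containing $a$ are precisely the Borels $\mathfrak{b}$ with $s\in\mathfrak{b}$ and $\mathfrak{b}\cap\g_s\supseteq$ the unique Borel $\ba$ of $\g_s$ through $n$; equivalently $\mathfrak{b}\cap\g_s=\ba$. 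As $\mathfrak{b}$ ranges over these, $\mathfrak{b}$ ranges over all Borels of $\g$ whose intersection with $\g_s$ is $\ba$. I would then show $\bigcap\{\mathfrak{b}:\mathfrak{b}\cap\g_s=\ba\}=\ba$ directly: fix a Cartan $\h\subseteq\ba$, so $\h\subseteq\g_s$; the Borels of $\g$ containing $\ba$ form a set acted on transitively by the parabolic $N_G(\g_s)^\circ$-type group, and their intersection is $\ba$ because $\ba$ together with suitable opposite root vectors (those root vectors $x_{-\alpha}$ with $\alpha$ a root of $\g$ but not of $\g_s$, appropriately chosen) generate complementary subspaces whose common part is exactly $\ba$.

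Concretely, here is the argument I expect to run. Fix $\mathfrak{b}_0\in\mathcal{B}^a$ and a Cartan $\h$ with $s\in\h\subseteq\mathfrak{b}_0$; this gives roots $\Delta$, positive system $\Delta_+$, and $\Delta_s=\{\alpha:\alpha(s)=0\}$ with positive system $(\Delta_s)_+=\Delta_s\cap\Delta_+$, so $\ba=\h\oplus\bigoplus_{\alpha\in(\Delta_s)_+}\g_\alpha\oplus(\text{the }n\text{-part})$ — more precisely by Lemma \ref{Lemma: Borel intersection}, $\ba=\mathfrak{b}_0\cap\g_s$. For each $\alpha\in\Delta_+\setminus\Delta_s$, I claim there is a Borel $\mathfrak{b}_\alpha\in\mathcal{B}^a$ not containing $\g_\alpha$: take the reflection or, better, a parabolic $\p_\alpha\supseteq\g_s$ whose two Borels containing $\ba$ differ by $\g_\alpha\leftrightarrow\g_{-\alpha}$, and check $a\in\mathfrak{b}_\alpha$ since $a\in\g_s+\g_\alpha$-complement... the point being $a\in\ba\subseteq\mathfrak{b}_\alpha$. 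Then $\mathfrak{c}\subseteq\mathfrak{b}_0\cap\bigcap_\alpha\mathfrak{b}_\alpha\subseteq\h\oplus\bigoplus_{\alpha\in\Delta_s\cap\Delta_+}\g_\alpha\oplus(\cdots)$; combined with $\mathfrak{c}\subseteq\g_s$ (obtained similarly by also throwing in Borels that kill $\g_{-\alpha}$ for $\alpha\in\Delta_s$, or directly from $\bigcap_{\mathfrak b\ni s}\mathfrak b=\g_s$), we get $\mathfrak{c}\subseteq\ba$.

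The main obstacle will be the precise verification that for each non-$\g_s$ root space $\g_\alpha$ (and for each negative root space in $\g_s$) there is a genuine Borel subalgebra of $\g$ \emph{containing} $a$ but \emph{omitting} that root space — i.e., producing enough Borels in $\mathcal{B}^a$. This requires using that $a\in\ba$ and that $\ba$ is a Borel of the reductive $\g_s$, so that $\ba$ extends to many Borels of $\g$ (namely one for each Borel of $\g_s$-equals-$\ba$-compatible choice of parabolic), and then checking that these exhaust the relevant directions. Once that combinatorial-geometric fact is in hand, the two inclusions close up routinely. I would also double-check the degenerate edge cases: when $s=0$ (so $a$ nilpotent regular, $\g_s=\g$, $\ba=\tilde{\mathfrak{b}}^a$ the unique Borel through $a$, and $\mathcal{B}^a=\{\ba\}$), and when $n=0$ (so $a=s$ semisimple, $\ba=\g_s=\bigcap_{\mathfrak b\ni s}\mathfrak b$), both of which should fall out immediately and serve as sanity checks.
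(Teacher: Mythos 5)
Your treatment of the easy inclusion is fine and agrees with the paper: Lemma \ref{Lemma: Borel intersection} gives $\ba=\bb\cap\g_s\subseteq\bb$ for every Borel $\bb$ containing $a$, hence $\ba$ lies in the intersection. The problem is the hard inclusion. You correctly reduce it (after fixing $\mathfrak b_0\in\mathcal B_a$ and $\h$ with $s\in\h\subseteq\mathfrak b_0$) to the claim that for each $\alpha\in\Delta_+\setminus\Delta_s$ there exists a Borel subalgebra of $\g$ containing $a$ (equivalently, containing $\ba$ and $\h$) whose root set contains $-\alpha$ --- and then you stop, explicitly flagging this existence statement as ``the main obstacle.'' But this existence statement \emph{is} the content of the hard direction, and the constructions you gesture at do not deliver it: applying the reflection $s_\alpha$ to $\mathfrak b_0$ does not merely exchange $\g_\alpha$ and $\g_{-\alpha}$ unless $\alpha$ is simple for $\mathfrak b_0$, and a parabolic $\p_\alpha\supseteq\g_s$ whose two Borels ``differ by $\g_\alpha\leftrightarrow\g_{-\alpha}$'' exists only if $\alpha$ can be made simple in a positive system still containing $\Delta_s\cap\Delta_+$, which is the same unproved assertion in different clothes. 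A further, smaller flaw: the ``standard fact'' you offer as a fallback, $\bigcap_{\bb\ni s}\bb=\g_s$ for semisimple $s$, is false whenever $s$ is not regular, since then $\g_s$ has a nonzero semisimple part and cannot sit inside any (solvable) Borel; it is not load-bearing in your final plan, but it should be removed. (Your worry about separately forcing $\mathfrak c\subseteq\g_s$ is also unnecessary: $\mathfrak c\subseteq\mathfrak b_0$ already excludes every negative root space.)

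For comparison, the paper closes exactly this gap with a single auxiliary Borel rather than one per root: with $u$ the longest element of $W$ and $v$ the longest element of $W_s$, set $w=vu$; a short computation gives $\Delta_+\cap w\Delta_+=\Delta_+\cap\Delta_s$, so $\bb\cap w\bb=\ba$, and since $a\in\ba\subseteq w\bb$ both Borels contain $a$, squeezing the total intersection into $\ba$. Your outline is completable --- for instance, choose a real linear functional positive on $\Delta_s\cap\Delta_+$ and negative at $\alpha$ (possible because $\Delta\cap\mathrm{span}_{\R}(\Delta_s)=\Delta_s$, so $\alpha\notin\mathrm{span}_{\R}(\Delta_s)$) and take the associated positive system --- but some such argument, or the paper's Weyl-group computation, must actually be supplied before the proof is complete.
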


\begin{proof}
Let $\mathfrak{q}$ denote the intersection of Borel subalgebras that contain $a$. The inclusion $\ba\subseteq\mathfrak{q}$ follows immediately from Lemma \ref{Lemma: Borel intersection}. To show the opposite inclusion, we choose a Borel subalgebra $\mathfrak{b}\subseteq \mathfrak{g}$ containing $a$ and let $\mathfrak{h}$, $\Delta_{+}$, $\Delta_{-}$, and $\Delta_s$ be exactly as introduced in the proof of Lemma \ref{Lemma: Borel intersection}. The aforementioned lemma gives $\ba=\mathfrak{b}\cap\mathfrak{g}_s$, which becomes 
\begin{equation}\label{Equation: Decomposition of b_a}\mathfrak{b}^a=\mathfrak{h}\oplus\bigoplus_{\alpha\in\Delta_+\cap\Delta_s}\mathfrak{g}_{\alpha}
\end{equation} 
once $\mathfrak{b}\cap\mathfrak{g}_s$ is decomposed into $\mathfrak{h}$-weight spaces. 

Let $W:=W(\mathfrak{g},\mathfrak{h})$ and $W_s:=W(\mathfrak{g}_s,\mathfrak{h})$ be the Weyl groups of $(\mathfrak{g},\mathfrak{h})$ and $(\mathfrak{g}_s,\mathfrak{h})$, respectively. Recall that the length $l(t)$ of an element $t\in W$ is the number of negative roots that $t$ sends into $\Delta_{+}$. Denote by $u \in W$ and $v\in W_s$ the longest elements in $W$ and $W_s$, respectively. One knows that $u\Delta_{+}=\Delta_{-}$, $l(v) = |\Delta_-\cap\Delta_s|$ and $v(\Delta_{-}\cap\Delta_s)=\Delta_{+}\cap\Delta_s$. This implies that $v\alpha \in \Delta_-$ for each $\alpha\in\Delta_-\setminus\Delta_s$. With these last two sentences in mind, set $w = vu$ and note that
\begin{eqnarray*}
\Delta_+\cap w\Delta_+ &=& \Delta_+\cap v(u\Delta_+) \\
&=& \Delta_+\cap v\Delta_-\\
&=& \Delta_+\cap v\big((\Delta_-\cap \Delta_s) \cup (\Delta_-\setminus\Delta_s)\big)\\
&=& \Delta_+\cap\big(v(\Delta_-\cap \Delta_s) \cup v(\Delta_-\setminus\Delta_s)\big)\\ 
&=& \Delta_+\cap\Delta_s.
\end{eqnarray*}
This calculation and \eqref{Equation: Decomposition of b_a} imply that $\mathfrak{b}\cap w\mathfrak{b}=\ba$, where $w\mathfrak{b}$ is the Borel subalgebra of $\mathfrak{g}$ defined by
\[
w\mathfrak{b}:=\mathfrak{h}\oplus\bigoplus_{\alpha\in w\Delta_{+}}\mathfrak{g}_{\alpha}.
\]
It follows that $a$ lies in both $\mathfrak{b}$ and $w\mathfrak{b}$, implying the inclusion $\mathfrak{q}\subseteq\mathfrak{b}\cap w\mathfrak{b}=\ba$. This completes the proof.
\end{proof}

\begin{cor}\label{Corollary: gaba}
Suppose that $a\in\greg$. If $\bb\subseteq\g$ is a Borel subalgebra containing $a$, then $\mathfrak g_a\subseteq \bb$.
\end{cor}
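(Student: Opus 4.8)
The plan is to deduce this as a formal consequence of Proposition \ref{Proposition: Intersection of Borels} together with the inclusion $\g_a\subseteq\ba$ recorded in \eqref{eq:defba}. No new Lie-theoretic input is needed; the content has already been assembled in Section \ref{subsec:ba}.

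First I would recall that \eqref{eq:defba} gives, unconditionally, the chain $\g_a=\mathfrak g_s\cap\mathfrak g_n=\mathfrak z(\mathfrak g_s)\oplus[\mathfrak g_s,\mathfrak g_s]_n\subseteq\mathfrak z(\mathfrak g_s)\oplus\tilde{\mathfrak b}^a=\ba$, so $\g_a\subseteq\ba$ for every $a\in\greg$. Next, given a Borel subalgebra $\bb\subseteq\g$ with $a\in\bb$, Proposition \ref{Proposition: Intersection of Borels} identifies $\ba$ with the intersection of all Borel subalgebras of $\g$ containing $a$; in particular $\ba\subseteq\bb$. Combining the two inclusions yields $\g_a\subseteq\ba\subseteq\bb$, which is precisely the assertion.

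There is essentially no obstacle: the only thing to notice is that \eqref{eq:defba} supplies the containment $\g_a\subseteq\ba$ that makes Proposition \ref{Proposition: Intersection of Borels} directly applicable. If one prefers an argument that sidesteps the full intersection description, an alternative is to invoke Lemma \ref{Lemma: Borel intersection} in place of Proposition \ref{Proposition: Intersection of Borels}: it gives $\ba=\bb\cap\mathfrak g_s\subseteq\bb$, and \eqref{eq:defba} again finishes the proof. Either way the statement follows in one line.
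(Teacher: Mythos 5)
Your argument is exactly the paper's proof: \eqref{eq:defba} gives $\g_a\subseteq\ba$, and Proposition \ref{Proposition: Intersection of Borels} gives $\ba\subseteq\bb$. The alternative via Lemma \ref{Lemma: Borel intersection} is equally valid but is not a substantive departure, since the paper itself derives the proposition from that lemma.
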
 

\begin{proof}
The statement \eqref{eq:defba} gives $\g_a\subseteq\ba$, while Proposition \ref{Proposition: Intersection of Borels} implies that $\ba\subseteq\bb$. We conclude that $\g_a\subseteq\bb$. 
\end{proof}

\section{Generalities on Mishchenko--Fomenko systems}\label{Section: Generalities}

We devote the next few sections to a general discussion of Mishchenko--Fomenko systems on $\g$. While Sections \ref{Subsection: The Mishchenko-Fomenko subalgebra}--\ref{Subsection: Sections of MF} largely review relevant parts of the literature, Sections \ref{Subsection: Reductive Lie algebras}--\ref{Subsection: Some additional} contain new results.

\subsection{The Mishchenko--Fomenko subalgebra}\label{Subsection: The Mishchenko-Fomenko subalgebra}
Let us fix $a\in\greg$. Given $f\in \C[\g]$ and $\lambda\in\mathbb{C}$, consider the argument-shifted polynomial $f_{\lambda,a}\in\mathbb{C}[\g]$ given by 
\begin{equation}\label{Equation: Shift}
f_{\lambda,a}(x) := f(x+\lambda a),\quad x\in\g.
\end{equation}
Denote by $\mathcal{F}_a\subseteq\mathbb{C}[\g]$ the subalgebra generated by all $f_{\lambda,a}$ with $f\in\mathbb{C}[\g]^G$ and $\lambda\in\mathbb{C}$, i.e. 
\begin{equation}\label{eq:DefMFsubalgebra}
\mathcal F_a:= \left\langle f_{\lambda,a} |\, f\in\C[\g]^G, \lambda \in\C\right\rangle \subseteq \C[\g].
\end{equation}
We refer to $\mathcal{F}_a$ as the \textit{Mishchenko--Fomenko subalgebra} determined by $a$, largely to recognize its origins in the work \cite{Mishchenko}. 

Now recall the generators $f_1,\dots,f_r\in \C[\g]^G$ fixed in Section \ref{Subsection: Conventions}, and let $d_1,\ldots,d_r\in\mathbb{Z}_{>0}$ be their respective homogeneous degrees. The expansion 
\begin{equation}\label{eq:Def_fij}
(f_i)_{\lambda,a}(x)=f_i(x+\lambda a) = f_i(a)\lambda^{d_i}+\sum_{j=0}^{d_i-1}f_{ij}^a(x)\lambda^j,\quad x\in\g,\text{ }\lambda\in\mathbb{C}
\end{equation}
implicitly defines new polynomials $f_{ij}^a\in\C[\g]$ for $i\in\{1,\ldots,r\}$ and $j\in\{0,\ldots,d_i-1\}$. Note that $f_{i0}^a = f_i$ for all $i\in\{1,\ldots,r\}$, while \eqref{eq:2b=dimg +r} implies that total number of polynomials $f_{ij}$ is $b$. These considerations justify our enumerating the $f_{ij}^a$ as $f_1,\ldots,f_b$, where $f_1,\ldots,f_r$ are exactly as fixed in Section \ref{Subsection: Conventions}. Observe that this notation suppresses the dependence on $a$.

It is straightforward to verify that $f_1,\ldots,f_b$ generate $\mathcal{F}_a$ as an algebra. This is an instance of the following more substantial fact (cf. \cite[Theorem 4.2]{Mishchenko}).

\begin{thm}[Mishchenko--Fomenko]\label{Theorem: Mishchenko-Fomenko}
If $a\in\g_{\emph{reg}}$, then $\mathcal{F}_a$ is a Poisson-commutative subalgebra of $\mathbb{C}[\g]$ freely generated by $f_1,\ldots,f_b$. 
\end{thm}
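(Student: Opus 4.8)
The plan is to prove the two assertions of Theorem~\ref{Theorem: Mishchenko-Fomenko} — Poisson-commutativity of $\mathcal{F}_a$ and algebraic independence of $f_1,\dots,f_b$ — by two essentially independent arguments, both reducing to classical statements. For Poisson-commutativity, I would first observe that it suffices to show $\{f_{\lambda,a},g_{\mu,a}\}=0$ for all $f,g\in\C[\g]^G$ and all $\lambda,\mu\in\C$, since such elements generate $\mathcal{F}_a$. Writing out the bracket via \eqref{Equation:PoissonBracket} and using that $d(f_{\lambda,a})(x)=(df)(x+\lambda a)$ (and similarly for $g$), one gets $\{f_{\lambda,a},g_{\mu,a}\}(x)=\langle x,[df(x+\lambda a)^{\vee},dg(x+\mu a)^{\vee}]\rangle$. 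The key point is the classical identity that $df(y)^{\vee}\in\g_y$ for every $y\in\g$ and every $G$-invariant $f$ (differentiate $f(\Ad_{\exp(t\eta)}y)=f(y)$ in $t$). Hence $[y,df(y)^{\vee}]=0$ for all $y$. Applying this with $y=x+\lambda a$ and $y=x+\mu a$, a short manipulation — the ``Poisson pencil'' or Mishchenko--Fomenko argument: one shows the bracket is a polynomial in $\lambda-\mu$ whose coefficients vanish by expanding $\langle x,[\cdot,\cdot]\rangle$ and using invariance along the line $x+\C a$ — forces $\{f_{\lambda,a},g_{\mu,a}\}=0$ when $a$ is fixed. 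Concretely, $\langle x+\lambda a, [df(x+\lambda a)^\vee, dg(x+\mu a)^\vee]\rangle = 0$ and $\langle x+\mu a,[df(x+\lambda a)^\vee,dg(x+\mu a)^\vee]\rangle=0$ give, upon subtracting, $(\lambda-\mu)\langle a,[df(x+\lambda a)^\vee,dg(x+\mu a)^\vee]\rangle=0$, so the bracket is independent of how $x$ and $\lambda a$ are split; comparing with the case $\lambda=\mu$ (where $df(x+\lambda a)^\vee$ and $dg(x+\lambda a)^\vee$ both lie in $\g_{x+\lambda a}$ and the bracket is manifestly zero) completes the argument.

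For the freeness (algebraic independence) of $f_1,\dots,f_b$, the standard approach is a Jacobian/rank computation. One shows that the differentials $df_1(x),\dots,df_b(x)$ are linearly independent for $x$ in a dense open subset of $\g$; since $b=\dim(\text{Borel})$ and the $f_i$ lie in a commutative subalgebra, this is the maximal possible number of independent Poisson-commuting functions, and linear independence of differentials at one point implies algebraic independence of the polynomials. The cleanest way to exhibit such a point is to invoke a result already available: Tarasov's section $\xi+\bb$ is referenced later in the paper, and on it $F_a$ restricts to an isomorphism onto $\C^b$, which immediately forces the $df_i$ to be independent there. If one prefers a self-contained argument, evaluate at a generic point of the form $x=\mu a$ for suitable $\mu$, or use Bolsinov's description of the critical set $\Sing^a=\g_{\text{sing}}+\C a$ (cited in the introduction) to see that the generic rank of $dF_a$ equals $b$. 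Either way, the conclusion is that $\C(f_1,\dots,f_b)$ has transcendence degree $b$ over $\C$, hence the $f_i$ are algebraically independent, and $\mathcal{F}_a=\C[f_1,\dots,f_b]$ is a polynomial ring.

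The main obstacle is bookkeeping rather than conceptual: one must be careful that the naive pairwise-bracket computation genuinely covers all generators of $\mathcal{F}_a$ (it does, since a Poisson bracket is a biderivation, so vanishing on algebra generators propagates to the whole subalgebra) and that the ``pencil'' cancellation is valid over the polynomial ring in $x,\lambda,\mu$ and not just pointwise — this is routine once one works with the three identities above as polynomial identities in $\C[\g][\lambda,\mu]$. I would present the Poisson-commutativity in full and treat the freeness by citing Tarasov's section (Section~\ref{Subsection: Sections of MF}) together with the elementary fact that independence of differentials at a point implies algebraic independence, noting that this is the classical Mishchenko--Fomenko result \cite{Mishchenko} and referring there for the original proof.
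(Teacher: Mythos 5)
The paper does not actually prove this theorem: it quotes it from \cite{Mishchenko} (cf.\ Theorem 4.2 there), and the remark immediately following Theorem \ref{Theorem: Mishchenko-Fomenko} records that Mishchenko and Fomenko's original argument establishes the algebraic independence only for semisimple $a$, the general regular case being extracted from \cite[Theorem 1.3]{Bolsinov} or \cite[Section 3]{Panyushev}. Measured against that, your Poisson-commutativity half is the standard argument-shift computation and is essentially correct: with $u=df(x+\lambda a)^{\vee}$, $v=dg(x+\mu a)^{\vee}$, the identities $\langle x+\lambda a,[u,v]\rangle=0=\langle x+\mu a,[u,v]\rangle$ give $(\lambda-\mu)\langle a,[u,v]\rangle=0$, hence vanishing of $\{f_{\lambda,a},g_{\mu,a}\}$ for $\lambda\neq\mu$, and polynomiality in $(\lambda,\mu)$ disposes of $\lambda=\mu$. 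One caveat: your parenthetical justification of the $\lambda=\mu$ case (``both differentials lie in $\g_{x+\lambda a}$, so the bracket is manifestly zero'') is not quite right, since that centralizer need not be abelian when $x+\lambda a$ is singular; either invoke density of $\greg$ or, as you also suggest, run the whole computation as a polynomial identity in $\lambda,\mu$.

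The genuine gap is in the freeness part. Your primary route cites the Tarasov section $\xi+\bb$, but Theorem \ref{Theorem: Section} provides such a section only when $a$ is semisimple, whereas the statement concerns every $a\in\greg$; for regular nilpotent $a$ the remark following Corollary \ref{Corollary: Surjectivity} notes that $F_a$ may admit no global section at all, so this route cannot be repaired in general. Your suggested self-contained alternative of evaluating at a point $x=\mu a$ also fails: $\mu a=0+\mu a\in\Sing^a$, and concretely every differential $d(f_{\lambda,a})(\mu a)=df((\lambda+\mu)a)$ lies in $\g_a^{\vee}$, so the differentials of $\mathcal{F}_a$ at such a point span at most $r<b$ dimensions and can never witness independence. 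The only workable option among those you mention is the appeal to Bolsinov's description of the critical locus (or Panyushev's argument), which is exactly what the paper's remark relies on; if you intend your write-up to cover all regular $a$, that citation must carry the freeness claim as the actual argument, not appear as an optional aside, with the Tarasov section at best handling the semisimple case.
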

 
\begin{rem}
The freeness part amounts to $f_1,\ldots,f_b$ being algebraically independent in $\mathbb{C}[\g]$. Mishchenko and Fomenko's arguments in \cite{Mishchenko} only establish this algebraic independence for a semisimple element $a\in\g_{\text{reg}}$. Algebraic independence for all $a\in\g_{\text{reg}}$ can be extracted from \cite[Theorem 1.3]{Bolsinov} or \cite[Section 3]{Panyushev}. 
\end{rem}

Theorem \ref{Theorem: Mishchenko-Fomenko} implies that $f_1,\ldots,f_b$ form a completely integrable system on the Poisson variety $\g$, i.e. $f_{r+1},\ldots,f_b$ restrict to form a completely integrable system on each generic adjoint orbit in $\g$ (cf. \cite{CRR,Panyushev,Mishchenko}). To study this system, one often assembles $f_1,\ldots,f_b$ into a map

\begin{equation}\label{eq:DefF_a}
F_a:=(f_1,\ldots,f_b):\g\rightarrow\mathbb{C}^b.
\end{equation}
 We will sometimes refer to $F_a$ as the \textit{Mishchenko--Fomenko map}, and to the fibres of $F_a$ as \textit{Mishchenko--Fomenko fibres}. 

\begin{rem}\label{Remark: Essential properties}
Observe that the map $F_a$ satisfies $$F_a(\Ad_g(x)) = F_{\Ad_{g^{-1}}(a)}(x)$$ for all $g\in G$ and $x\in \g$. In particular, the essential properties of $F_a$ only depend on the adjoint orbit of $a$. 
\end{rem}

\begin{rem}\label{Remark: Independence of generators}
One can verify that 
\begin{equation}\label{Equation: Fibre via algebra} F_a^{-1}(F_a(x))=\{y\in\g:f(x+\lambda a)=f(y+\lambda a)\text{ }\forall f\in\mathbb{C}[\g]^G,\text{ }\lambda\in\mathbb{C}\}\end{equation} for all $x\in\g$, a fact that we use extensively in this paper.  Now suppose that $h_1,\ldots,h_b\in\mathbb{C}[\g]$ is another collection of algebraically independent generators for $\mathcal{F}_a$, and consider the map $$H_a:=(h_1,\ldots,h_b):\g\rightarrow\mathbb{C}^b.$$ It is then a straightforward consequence of \eqref{Equation: Fibre via algebra} that $F_a^{-1}(F_a(x))=H_a^{-1}(H_a(x))$ for all $x\in\g$.
\end{rem}

It will be advantageous to recall Bolsinov's work on the critical points of $F_a$, i.e. the points $x\in\g$ for which $dF_a(x):T_x\g\rightarrow\mathbb{C}$ has rank less than $b$. His result \cite[Proposition 3.1]{Bolsinov} states that the critical points of $F_a$ constitute the subset 
\begin{equation}\label{eq:Def_gsing+Ca}
\Sing^a := \gsing + \C a\subseteq\g.
\end{equation}
This means that the regular points of $F_a$ are given by 
\[
\gsreg := \g\setminus\Sing^a = \{x\in \g\colon x+\mathbb{C}a\subset\greg\}.
\]
Notice that $\gsreg$ is invariant under the dilation action of $\mathbb{C}^{\times}$ on $\g$. This reflects the fact that each component $f_{ij}^a$ of $F_a$ is homogeneous of degree $d_i-j$.

\subsection{Some elementary results about $F_a$}\label{Subsection: Some elementary results}
In what follows, we establish some straightforward facts about the Mishchenko--Fomenko map $F_a$. These facts are exploited in later parts of the paper.
\begin{prop}\label{Proposition: Binvariance}
Suppose that $a\in\greg$ is contained in a Borel subalgebra $\mathfrak b\subseteq\g$. If $B\subseteq G$ is the corresponding Borel subgroup, then the restriction $F_a\big\vert_{\mathfrak b}:\mathfrak{b}\rightarrow\mathbb{C}^b$ is $B$-invariant.
\end{prop}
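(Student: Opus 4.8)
The plan is to show that for each $g\in B$ and each $x\in\mathfrak{b}$, one has $F_a(\mathrm{Ad}_g(x)) = F_a(x)$. By Remark \ref{Remark: Essential properties}, we have $F_a(\mathrm{Ad}_g(x)) = F_{\mathrm{Ad}_{g^{-1}}(a)}(x)$, so it suffices to prove that $F_{\mathrm{Ad}_{g^{-1}}(a)}(x) = F_a(x)$ whenever $g\in B$ and $x\in\mathfrak{b}$. Using the characterization \eqref{Equation: Fibre via algebra} of Mishchenko--Fomenko fibres — or rather the underlying fact that $\mathcal{F}_a$ is generated by the argument-shifted invariants $f_{\lambda,a}$ — this reduces to the claim that
$$f(x+\lambda\,\mathrm{Ad}_{g^{-1}}(a)) = f(x+\lambda a)$$
for all $f\in\mathbb{C}[\g]^G$, all $\lambda\in\mathbb{C}$, all $g\in B$, and all $x\in\mathfrak{b}$.

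The key input is Lemma \ref{Lemma:3.1.43.parabolic} applied to the Borel subalgebra $\mathfrak{b}$ with nilpotent radical $\mathfrak{u}=[\mathfrak{b},\mathfrak{b}]$: any $f\in\mathbb{C}[\g]^G$ is constant on cosets $y+\mathfrak{u}$ with $y\in\mathfrak{b}$. So the strategy is to show that, for fixed $x\in\mathfrak{b}$ and $\lambda\in\mathbb{C}$, the two elements $x+\lambda\,\mathrm{Ad}_{g^{-1}}(a)$ and $x+\lambda a$ differ by an element of $\mathfrak{u}$ and both lie in $\mathfrak{b}$. The latter is clear: $a\in\mathfrak{b}$ and $\mathrm{Ad}_{g^{-1}}(a)\in\mathrm{Ad}_{g^{-1}}(\mathfrak{b})=\mathfrak{b}$ since $B$ normalizes $\mathfrak{b}$, so both shifted elements lie in $\mathfrak{b}$. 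For the former, I would argue that $a - \mathrm{Ad}_{g^{-1}}(a)\in\mathfrak{u}$ for every $g\in B$. Write $g^{-1}=\exp(h)\cdot u$ with $h$ in a Cartan of $\mathfrak{b}$ and $u\in U=\exp(\mathfrak{u})$ (using $B = \exp(\mathfrak{h})\ltimes U$); more cleanly, it suffices to check on the two factors. For $u\in U$, the expansion $\mathrm{Ad}_u(a) = a + \sum_{k\geq 1}\tfrac{1}{k!}\mathrm{ad}_\eta^k(a)$ with $u=\exp(\eta)$, $\eta\in\mathfrak{u}$, shows $\mathrm{Ad}_u(a)-a\in[\mathfrak{u},\mathfrak{b}]\subseteq\mathfrak{u}$ since $\mathfrak{u}$ is an ideal of $\mathfrak{b}$. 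For a torus element $t=\exp(h)$, decompose $a = a_{\mathfrak{h}} + \sum_{\alpha\in\Delta_+} a_\alpha$ along the root space decomposition of $\mathfrak{b}$; then $\mathrm{Ad}_t(a) = a_{\mathfrak{h}} + \sum_{\alpha\in\Delta_+} e^{\alpha(h)} a_\alpha$, so $\mathrm{Ad}_t(a) - a = \sum_{\alpha\in\Delta_+}(e^{\alpha(h)}-1)a_\alpha \in \mathfrak{u}$. Combining, $\mathrm{Ad}_{g^{-1}}(a) - a\in\mathfrak{u}$, hence $\lambda(\mathrm{Ad}_{g^{-1}}(a)-a)\in\mathfrak{u}$, and so $x + \lambda\,\mathrm{Ad}_{g^{-1}}(a) \in (x+\lambda a) + \mathfrak{u}$ with $x+\lambda a\in\mathfrak{b}$. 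Lemma \ref{Lemma:3.1.43.parabolic} then gives $f(x+\lambda\,\mathrm{Ad}_{g^{-1}}(a)) = f(x+\lambda a)$, which is exactly what was needed.

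I expect the main (minor) obstacle to be purely bookkeeping: phrasing the reduction from equality of all components $f_{ij}^a$ to equality of all shifted invariants $f_{\lambda,a}$ cleanly — this is immediate from the fact that $\mathcal{F}_a = \langle f_{\lambda,a} : f\in\mathbb{C}[\g]^G,\ \lambda\in\mathbb{C}\rangle$, so $F_a$ and the collection $\{f_{\lambda,a}\}$ have the same fibres by Remark \ref{Remark: Independence of generators}, and two points with the same image under all $f_{\lambda,a}$ automatically have the same image under $F_a$. There is no genuine difficulty; the content is entirely the identity $\mathrm{Ad}_{g^{-1}}(a) - a \in [\mathfrak{b},\mathfrak{b}]$ for $g\in B$ together with Lemma \ref{Lemma:3.1.43.parabolic}. (Alternatively one could bypass the torus computation by invoking Proposition \ref{Proposition: Intersection of Borels}: $B$ fixes $\mathfrak{b}$ as the unique Borel containing it in its normalizer, but the direct root-space computation above is shorter and self-contained.)
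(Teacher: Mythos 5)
Your proof is correct and rests on exactly the same ingredients as the paper's: the $B=TU$ decomposition together with the exponential/root-space computation showing that $B$ moves an element of $\mathfrak{b}$ only within its $\mathfrak{u}$-coset, followed by the constancy of invariant polynomials on such cosets (Lemma \ref{Lemma:3.1.43.parabolic}, or \cite[Corollary 3.1.43]{Chriss}). The only cosmetic difference is that you transfer the group action onto $a$ via Remark \ref{Remark: Essential properties} and prove $\mathrm{Ad}_{g^{-1}}(a)-a\in\mathfrak{u}$, whereas the paper keeps $a$ fixed and shows $\mathrm{Ad}_b(x)\in x_{\mathfrak h}+\mathfrak{u}$, applying the coset-constancy lemma twice; the two arguments are equivalent.
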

\begin{proof}
Choose a Cartan subalgebra $\mathfrak h\subseteq \mathfrak b$ and set $\mathfrak{u}:=[\mathfrak{b},\mathfrak{b}]$. We then have
$\mathfrak{b}=\mathfrak{h}\oplus\mathfrak{u}$ as vector spaces, so that each $x\in\mathfrak{b}$ decomposes as $x=x_{\mathfrak{h}}+x_{\mathfrak{u}}$, $x_{\mathfrak{h}}\in\mathfrak{h}$, $x_{\mathfrak{u}}\in\mathfrak{u}$.
Now let $T\subseteq B$ (resp. $U\subseteq B$) be the maximal torus (resp. maximal unipotent subgroup) corresponding to $\mathfrak{h}$ (resp. $\mathfrak{u}$). It follows that $B = TU$, so that each $b\in B$ takes the form $b=tu$, $t\in T$, $u\in U$. Since the exponential map defines an isomorphism from $\mathfrak{u}$ to $U$, we may write $u=\exp(y)$ for some $y\in\mathfrak{u}$.

Now suppose that $b\in B$ and $x\in\mathfrak{b}$. The previous paragraph justifies our writing $b=t\exp(y)$, $t\in T$, $y\in\mathfrak{u}$ and $x=x_{\mathfrak{h}}+x_{\mathfrak{u}}$, $x_{\mathfrak{h}}\in\mathfrak{h}$, $x_{\mathfrak{u}}\in\mathfrak{u}$. We thus have 
\[
\Ad_{\exp(y)}(x) = x_{\mathfrak{h}}+x_{\mathfrak u} + \sum_{k=1}^\infty \frac{1}{k!}\ad^k_y(x) \in x_{\mathfrak{h}}+\mathfrak{u},
\]
so that 
\[
\Ad_b(x) = \Ad_{t}(\Ad_{\exp(y)}(x))\in\Ad_t(x_{\mathfrak{h}}+\mathfrak{u})=x_{\mathfrak{h}}+\mathfrak{u}.
\]
It follows that any $f\in\C[\mathfrak g]^G$ satisfies
\[
f(\Ad_b(x)+\lambda a) = f(x_{\mathfrak h} + \lambda a) = f(x+\lambda a),
\] 
where we have used \cite[Corollary 3.1.43]{Chriss} twice. We conclude that $F_a(\Ad_b(x))=F_a(x)$ (see \eqref{Equation: Fibre via algebra}), completing the proof.
\end{proof}

\begin{cor}
Let all objects and notation be as set in the statement of Proposition \ref{Proposition: Binvariance}. If $x\in\mathfrak{b}$, then $Bx\subseteq F_a^{-1}(F_a(x))$. In particular, $x+[\bb,\bb]\subseteq F_a^{-1}(F_a(x))$ for all regular semisimple $x\in\bb$. 
\end{cor}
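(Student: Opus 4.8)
The plan is to deduce both assertions directly from Proposition \ref{Proposition: Binvariance} together with the corollary following Lemma \ref{Lemma:3.1.43.parabolic}, with essentially no calculation required. For the first statement, I would simply read off the content of Proposition \ref{Proposition: Binvariance}: since $F_a\big\vert_{\mathfrak b}$ is $B$-invariant, we have $F_a(\Ad_b(x)) = F_a(x)$ for every $b\in B$ and $x\in\mathfrak b$. Thus every point of the orbit $Bx=\{\Ad_b(x):b\in B\}$ is sent to $F_a(x)$ by $F_a$, which is precisely the inclusion $Bx\subseteq F_a^{-1}(F_a(x))$.

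For the ``in particular'' clause, I would specialize the corollary following Lemma \ref{Lemma:3.1.43.parabolic} to the parabolic subalgebra $\p=\bb$: here one may take the chosen Levi factor to be a Cartan subalgebra $\h\subseteq\bb$, and the nilpotent radical is $\uu:=[\bb,\bb]$, with associated unipotent subgroup $U\subseteq B$. When $x\in\bb$ is regular and semisimple, $\g_x$ is a Cartan subalgebra of $\g$ and hence consists of semisimple elements, whereas $\uu$ consists of nilpotent elements; therefore $\g_x\cap\uu=\{0\}$. The cited corollary then yields $x+[\bb,\bb]=Ux$. Since $U\subseteq B$ we have $Ux\subseteq Bx$, and combining this with the first part gives $x+[\bb,\bb]\subseteq Bx\subseteq F_a^{-1}(F_a(x))$.

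I do not anticipate a genuine obstacle here, as the substance is already contained in the preceding proposition and corollary; the only minor point to verify is that $\g_x\cap[\bb,\bb]=\{0\}$ for regular semisimple $x$, which is immediate from the semisimple/nilpotent dichotomy. If one wished to avoid invoking the corollary, one could argue in its place that for regular semisimple $x$ the map $\ad_x$ restricts to an automorphism of $\uu$, so $Ux$ is Zariski-dense in $x+\uu$ by \cite[Lemma 1.4.12]{Chriss} and Zariski-closed by \cite[Lemma 3.1.1]{Chriss}, forcing $Ux=x+\uu$; but reusing the corollary is the cleaner route.
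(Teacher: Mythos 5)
Your proposal is correct and takes essentially the same route as the paper: the first inclusion is read off directly from Proposition \ref{Proposition: Binvariance}, and the second reduces to knowing that $x+[\bb,\bb]$ lies in $Bx$ for regular semisimple $x\in\bb$. The only cosmetic difference is that you extract $x+[\bb,\bb]=Ux\subseteq Bx$ from the corollary to Lemma \ref{Lemma:3.1.43.parabolic} (after checking $\g_x\cap[\bb,\bb]=\{0\}$), whereas the paper simply cites the equivalent fact $Bx=x+[\bb,\bb]$ from \cite[Lemma 3.1.44]{Chriss}.
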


\begin{proof}
Our first assertion is an immediate consequence of Proposition \ref{Proposition: Binvariance}. The second assertion follows from the first, together with the fact that $Bx=x+[\mathfrak{b},\mathfrak{b}]$ for all regular semisimple $x\in\mathfrak{b}$ (see \cite[Lemma 3.1.44]{Chriss}).  
\end{proof}

\subsection{Sections of $F_a$}\label{Subsection: Sections of MF}
Given $a\in\mathfrak{g}_{\text{reg}}$, the following question is natural: does the Mishchenko--Fomenko map $F_a:\g\rightarrow\mathbb{C}^b$ admit a global section, i.e. a closed subvariety $Z\subseteq\g$ for which $F_a\big\vert_Z:Z\rightarrow\mathbb{C}^b$ is a variety isomorphism? Tarasov \cite{Tarasov} provides an affirmative answer for semisimple elements $a\in\mathfrak{g}_{\text{reg}}$ (cf. {\cite[Lemma 4]{Tarasov}}, {\cite[Theorem 3.6]{KostantHessenberg}}).

\begin{thm}\label{Theorem: Section}
Let $a\in\mathfrak{g}_{\emph{reg}}$ be semisimple, and choose a collection $\Pi$ of simple roots for $\mathfrak{g}$ with respect to the Cartan subalgebra $\mathfrak{g}_a$. Let $\mathfrak{b}\subseteq\mathfrak{g}$ denote the positive Borel subalgebra induced by the choice of simple roots, and let $\xi\in\mathfrak{g}$ be of the form
\[
\xi=\sum_{\alpha\in\Pi}e_{-\alpha},\quad e_{-\alpha}\in\mathfrak{g}_{-\alpha}\setminus\{0\},\quad\alpha\in\Pi.
\]
The affine subspace $\xi+\mathfrak{b}$ is then a global section of $F_a$.
\end{thm}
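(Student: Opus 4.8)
The plan is to show that $F_a\big\vert_{\xi+\mathfrak{b}}: \xi+\mathfrak{b}\to\mathbb{C}^b$ is a bijective morphism between affine varieties and then that it is an isomorphism. Since both $\xi+\mathfrak{b}$ and $\mathbb{C}^b$ are affine spaces of the same dimension $b$ (recall $\dim\mathfrak{b}=b$ by \eqref{eq:2b=dimg +r}), one natural route is to prove the map is finite and injective, hence a closed immersion onto $\mathbb{C}^b$ by Zariski's main theorem; alternatively one exhibits the inverse directly. I would first record that $a\in\mathfrak{b}$: indeed $a=s$ is semisimple and regular, $\mathfrak{g}_a$ is the Cartan subalgebra used to define $\Pi$ and $\mathfrak{b}$, so $a\in\mathfrak{g}_a\subseteq\mathfrak{b}$. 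This makes $F_a\big\vert_{\mathfrak b}$ well-behaved and lets us use Remark \ref{Remark: Essential properties} and \eqref{Equation: Fibre via algebra} freely.

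The heart of the argument is a grading/leading-term computation. I would put the principal grading on $\mathfrak{g}$ coming from the element $h^\vee\in\mathfrak{g}_a$ with $\alpha(h^\vee)=1$ for all $\alpha\in\Pi$ (equivalently $2\rho^\vee$), so that $\mathfrak{g}=\bigoplus_k\mathfrak{g}(k)$ with $\mathfrak{b}=\bigoplus_{k\ge 0}\mathfrak{g}(k)$ and $\xi\in\mathfrak{g}(-1)$. Rescaling by the associated one-parameter subgroup sends a point $\xi+x$ with $x\in\mathfrak{b}$ to (a scalar times) a point whose leading behaviour is governed by $\xi$ plus the lowest-degree part of $x$. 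Following Kostant's analysis of the principal $\mathfrak{sl}_2$ and the slice $\xi+\mathfrak{g}^e$ (here $\xi$ plays the role of the principal nilpotent $f$ and $\mathfrak{b}$ contains Kostant's slice $\xi+\mathfrak{z}_{\mathfrak g}(\xi_{\mathrm{prin}})$), the key point is: the composite
\[
\xi+\mathfrak{b}\xrightarrow{F_a}\mathbb{C}^b
\]
is, with respect to the filtrations induced by this grading on source and by the degrees $d_i-j$ of the components $f^a_{ij}$ on the target, an isomorphism at the associated-graded level. Concretely, one shows the differential $dF_a$ is invertible at every point of $\xi+\mathfrak b$ — equivalently $(\xi+\mathfrak b)\cap\Sing^a=\varnothing$, i.e. $\xi+x+\mathbb{C}a\subseteq\mathfrak{g}_{\mathrm{reg}}$ for all $x\in\mathfrak b$ — using that $\xi$ already lies in the dense sheet of regular elements transverse to $\mathfrak b$ and that adding elements of $\mathfrak b$ and $\mathbb C a\subseteq\mathfrak b$ cannot destroy regularity after rescaling. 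Combined with properness-type control from the grading (the rescaling action contracts $\xi+\mathfrak b$ and the fibres of $F_a$ are the rescaled $\overline{G(x+\lambda a)}-\lambda a$, cf. the circle of ideas around Theorem \ref{Theorem: Fibre description}), this yields that $F_a\big\vert_{\xi+\mathfrak b}$ is étale and proper, hence a finite covering; a degree count via the grading shows the degree is $1$.

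For injectivity I would argue directly from \eqref{Equation: Fibre via algebra}: if $\xi+x$ and $\xi+x'$ lie in the same fibre, then $f(\xi+x+\lambda a)=f(\xi+x'+\lambda a)$ for all $f\in\mathbb{C}[\mathfrak g]^G$ and all $\lambda$, so $\overline{G(\xi+x)}=\overline{G(\xi+x')}$ and more; invoking that these regular elements are $B$-conjugate into a common normal form (the argument-shift version of Kostant's slice transversality, using Proposition \ref{Proposition: Binvariance} to mod out by $B$) forces $x=x'$. The main obstacle, and the step I expect to require the most care, is this transversality statement: proving that $\xi+\mathfrak b$ meets every fibre of $F_a$ in exactly one point, equivalently that the $b$ functions $f^a_{ij}$ restrict to $\xi+\mathfrak b$ as a coordinate system. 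This is essentially Tarasov's lemma, and I would ultimately cite \cite[Lemma 4]{Tarasov} (see also \cite[Theorem 3.6]{KostantHessenberg}) for the sharp form, presenting the grading argument above as the conceptual skeleton and deferring the delicate leading-term bookkeeping — showing no lower-order interference can occur among the $f^a_{ij}$ — to those references.
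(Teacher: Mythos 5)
The paper does not actually prove this theorem: it is imported verbatim from the literature, with the statement attributed to Tarasov and the citations \cite[Lemma 4]{Tarasov} and \cite[Theorem 3.6]{KostantHessenberg} standing in for a proof. Since your proposal ultimately defers the decisive step (that the $f^a_{ij}$ restrict to a coordinate system on $\xi+\mathfrak{b}$) to exactly those references, it is in effect the same ``proof'' as the paper's, and in that sense it is acceptable.

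That said, if your sketch were meant to stand on its own, two steps would need real work. First, the injectivity argument is circular as written: saying that two points of $\xi+\mathfrak{b}$ in the same fibre are ``$B$-conjugate into a common normal form'' by ``the argument-shift version of Kostant's slice transversality'' is precisely the statement being proved, not a tool available in advance. Second, the finiteness/degree-one step is not carried out: the standard $\mathbb{C}^{\times}$-contraction argument (which does work here, since $a\in\mathfrak{g}_a=\mathfrak{h}$ is fixed by the principal torus and each $f^a_{ij}$ is homogeneous of positive weight $d_i-j$ for the action $t\cdot x = t\,\mathrm{Ad}_{\gamma(t)}(x)$ preserving $\xi+\mathfrak{b}$) still requires showing that $F_a^{-1}(0)\cap(\xi+\mathfrak{b})$ is a single point before one can conclude finiteness, and the ``degree count via the grading'' is asserted rather than performed. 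By contrast, the part of your sketch establishing that $F_a$ is \'etale along $\xi+\mathfrak{b}$ is sound, since $\xi+\mathfrak{b}\subseteq\mathfrak{g}_{\mathrm{reg}}$ is classical (Kostant) and $a\in\mathfrak{b}$ gives $\xi+x+\mathbb{C}a\subseteq\xi+\mathfrak{b}$, so $(\xi+\mathfrak{b})\cap\mathrm{Sing}^a=\emptyset$ by Bolsinov's description of the critical locus.
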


We will sometimes refer to $\xi+\mathfrak{b}$ as a \textit{Tarasov section}. Such sections have interesting implications, some of which we now discuss.

\begin{lem}\label{Lemma: Section}
If $a\in\g_{\emph{reg}}$ and $Z\subseteq\g$ is a global section of $F_a$, then $Z\subseteq\g_{\emph{sreg}}^a$.
\end{lem}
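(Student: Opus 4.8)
The plan is to argue by contradiction, using the fact that the regular points of $F_a$ are exactly $\gsreg = \g \setminus \Sing^a$ (Bolsinov's result \eqref{eq:Def_gsing+Ca}), together with the fact that a global section must pass through the regular locus of $F_a$. Suppose some $z \in Z$ lies in $\Sing^a$, i.e. $z$ is a critical point of $F_a$. Then $dF_a(z)$ has rank strictly less than $b$. On the other hand, $F_a\vert_Z : Z \to \C^b$ is a variety isomorphism, so $Z$ is smooth of dimension $b$ and $d(F_a\vert_Z)(z) : T_z Z \to \C^b$ is an isomorphism. Since $T_z Z \subseteq T_z \g$ and $d(F_a\vert_Z)(z) = dF_a(z)\vert_{T_z Z}$, the differential $dF_a(z)$ is surjective onto $\C^b$, hence has rank $b$. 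This contradicts $z$ being a critical point. Therefore $Z \cap \Sing^a = \emptyset$, i.e. $Z \subseteq \g \setminus \Sing^a = \gsreg$.

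The one point requiring a small amount of care is the identification of $d(F_a\vert_Z)$ with the restriction of $dF_a$ to the tangent space of $Z$. This is just the chain rule applied to the inclusion $\iota : Z \hookrightarrow \g$, giving $d(F_a \circ \iota)(z) = dF_a(z) \circ d\iota(z)$, and $d\iota(z)$ is the inclusion $T_z Z \hookrightarrow T_z \g$ since $Z$ is a (locally closed, in fact closed) subvariety. One should also note that $Z$ being a global section forces it to be smooth: it is isomorphic as a variety to $\C^b$, which is smooth, so smoothness of $Z$ and the dimension count $\dim T_z Z = b$ are immediate. Alternatively, and perhaps more cleanly, one can phrase the whole argument without tangent spaces: if $z \in Z \cap \Sing^a$, then since $F_a\vert_Z$ is an isomorphism onto $\C^b$ and $\Sing^a = \gsing + \C a$ is $G$-stable under nothing in particular but is a proper closed subvariety, one still needs the differential argument to get the contradiction, so the tangent-space formulation seems unavoidable.

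I do not expect any serious obstacle here; the statement is essentially a formal consequence of two facts already in place — Bolsinov's description of the critical locus as $\Sing^a$, and the definition of a global section as a subvariety on which $F_a$ restricts to an isomorphism. The only thing to be slightly careful about is making sure the rank of $dF_a(z)$ really is forced to be $b$ by the existence of an isomorphism-inducing $b$-dimensional subvariety through $z$, which is exactly the chain-rule observation above. So the proof should be short: assume $z \in Z$ with $z \notin \gsreg$, i.e. $z \in \Sing^a$; use that $F_a\vert_Z$ is an isomorphism to conclude $\operatorname{rank} dF_a(z) = b$; invoke \cite[Proposition 3.1]{Bolsinov} to conclude $z \notin \Sing^a$; contradiction.
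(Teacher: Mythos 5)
Your argument is correct and is essentially the paper's proof: both use that $F_a\big\vert_Z$ being a variety isomorphism forces $d(F_a\big\vert_Z)(z)=dF_a(z)\big\vert_{T_zZ}$ to be an isomorphism, hence $dF_a(z)$ has rank $b$, so every $z\in Z$ is a regular point of $F_a$, and by Bolsinov's description the regular points are exactly $\gsreg$. The only difference is that you phrase it as a contradiction while the paper argues directly, which is immaterial.
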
  

\begin{proof}
Since $F_a\big\vert_Z$ is an isomorphism, it follows that the differential $d(F_a\big\vert_Z)(z):T_zZ\rightarrow\mathbb{C}^b$ is an isomorphism for all $z\in Z$. We conclude that $dF_a(z):T_z\g\rightarrow\mathbb{C}^b$ has rank $b$ for all $z\in Z$, i.e. every point in $Z$ is a regular point of $F_a$. Since $\g_{\text{sreg}}^a$ is exactly the set of regular points of $F_a$ (see Section \ref{Subsection: The Mishchenko-Fomenko subalgebra}), we must have $Z\subseteq\g_{\text{sreg}}^a$.
\end{proof}  

\begin{cor}\label{Corollary: Surjectivity}
Suppose that $a\in\mathfrak{g}_{\emph{reg}}$ is semisimple, and adopt all notation from Theorem \ref{Theorem: Section}. The following statements then hold. 
\begin{itemize}
\item[(i)] $\xi+\bb\subseteq\g_{\emph{sreg}}^a$;
\item[(ii)] $F_a\big\vert_{\gsreg}:\g_{\emph{sreg}}^a\rightarrow\mathbb{C}^b$ is surjective.
\end{itemize}
\end{cor}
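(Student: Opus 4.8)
The plan is to prove Corollary \ref{Corollary: Surjectivity} directly from Theorem \ref{Theorem: Section} and Lemma \ref{Lemma: Section}. For part (i), the Tarasov section $\xi+\bb$ is by Theorem \ref{Theorem: Section} a global section of $F_a$, so Lemma \ref{Lemma: Section} immediately gives $\xi+\bb\subseteq\g_{\mathrm{sreg}}^a$. There is essentially nothing more to do here.

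For part (ii), the key observation is that $F_a$ is surjective onto $\mathbb{C}^b$. This is because $F_a\big\vert_{\xi+\bb}:\xi+\bb\to\mathbb{C}^b$ is a variety isomorphism by Theorem \ref{Theorem: Section}, hence in particular surjective; so $F_a$ itself is surjective. But by part (i) we have $\xi+\bb\subseteq\g_{\mathrm{sreg}}^a$, so for any $z\in\mathbb{C}^b$ there is already a point of $\g_{\mathrm{sreg}}^a$ (namely the preimage of $z$ under the section) mapping to $z$. Therefore the restriction $F_a\big\vert_{\g_{\mathrm{sreg}}^a}:\g_{\mathrm{sreg}}^a\to\mathbb{C}^b$ is surjective.

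I do not anticipate any real obstacle: both parts are formal consequences of the cited results. The only thing to be slightly careful about is the logical order — one should first establish (i), and then observe that (i) together with the isomorphism property of the Tarasov section yields (ii) for free, rather than trying to argue surjectivity of $F_a\big\vert_{\g_{\mathrm{sreg}}^a}$ by some independent means. One could also phrase (ii) by saying that the inverse of the section $F_a\big\vert_{\xi+\bb}$ provides an explicit right inverse to $F_a\big\vert_{\g_{\mathrm{sreg}}^a}$, since its image lies in $\xi+\bb\subseteq\g_{\mathrm{sreg}}^a$.

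Concretely, the write-up will be just a few lines: ``Statement (i) is immediate from Theorem \ref{Theorem: Section} and Lemma \ref{Lemma: Section}. For (ii), Theorem \ref{Theorem: Section} shows that $F_a\big\vert_{\xi+\bb}$ is an isomorphism onto $\mathbb{C}^b$; composing its inverse with the inclusion $\xi+\bb\hookrightarrow\g_{\mathrm{sreg}}^a$ from (i) yields a section of $F_a\big\vert_{\g_{\mathrm{sreg}}^a}$, which is therefore surjective.''
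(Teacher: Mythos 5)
Your proposal is correct and follows exactly the paper's own argument: part (i) is the combination of Theorem \ref{Theorem: Section} with Lemma \ref{Lemma: Section}, and part (ii) follows from (i) together with the fact that the Tarasov section surjects onto $\mathbb{C}^b$. Nothing is missing.
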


\begin{proof}
Theorem \ref{Theorem: Section} implies that $\xi+\bb$ is a global section of $F_a$. Lemma \ref{Lemma: Section} then yields $\xi+\bb\subseteq\gsreg$, proving (i). Part (ii) follows from (i) and the fact that $\xi+\bb$ is a global section of $F_a$. 
\end{proof}

\begin{rem}
Charbonnel and Moreau \cite[Remark 3]{Charbonnel-Moreau} take $a\in\g_{\text{reg}}$ to be nilpotent and explain that $F_a^{-1}(0)$ need not intersect $\gsreg$ (e.g. $\g = \sln_n(\C)$ for certain $n>3$). By Lemma \ref{Lemma: Section}, this means that $F_a$ need not admit a global section when $a\in\greg$ is nilpotent. One can nevertheless develop the following counterpart of Theorem \ref{Theorem: Section}.

Let $a\in\greg$ be nilpotent, in which case $\ba$ is the unique Borel subalgebra of $\mathfrak{g}$ that contains $a$. The Jacobson--Morozov theorem allows us to include $a$ into an $\sln_2$-triple $(a,h,e)$, meaning that $h,e\in\mathfrak{g}$ satisfy
$$[a,e]=h,\quad [h,a]=2a,\quad\text{and}\quad [h,e]=-2e.$$
Set $\h:=\g_h$, noting that $\h\subseteq\ba$. The Cartan subalgebra $\h\subseteq\ba$ allows us to define an opposite Borel subalgebra $\ba_{-}\subseteq\g$. It is then straightforward to verify that $e\in\ba_{-}$. 

Now suppose that $x\in\h\cap\greg$ and let $B^a\subseteq G$ be the Borel subgroup with Lie algebra $\ba$. Noting that $a\in\ua:=[\ba,\ba]$, \cite[Lemma 3.1.44]{Chriss} implies that
$$x+a+\lambda a\in x+\mathfrak{u}^a=B^ax$$ for all $\lambda\in\mathbb{C}$.
This observation establishes that $a+x\in\gsreg$, so that 
$$\dim(\ker(dF_a(a+x)))=\dim(\mathfrak{g})-b=\dim(\ua).$$
Now note that Lemma \ref{Lemma:3.1.43.parabolic} gives
\[
f(a+x+ \mu v + \lambda a) = f(a+x+\lambda a) 
\]
for any $\mu\in\mathbb{C}$, $v\in\uu^a$, and $f\in\C[\g]^G$, implying that $\uu^a\subseteq \ker(dF_a(a+x))$. These last two sentences imply that 
\[
\uu^a = \ker(dF_a(a+x)).
\]
In particular, the differential of $$F_a\big\vert_{a+\ba_{-}}:a+\ba_{-}\rightarrow\mathbb{C}^b$$ is invertible at $a+x$ for all $x\in\h\cap\greg$.
We may therefore find an open dense neighbourhood $V\subseteq a+\ba_{-}$ of $a + (\h\cap\greg)$ such that $F_a\big\vert_V:V\rightarrow\mathbb{C}^b$ is a local biholomorphism. 

Note that $F_a\big\vert_V$ need not be injective. To see this, suppose that $f\in\C[\g]^G$, $x\in \h\cap\greg$, and $w\in W$, the Weyl group of $(\g,\h)$. Lemma \ref{Lemma:3.1.43.parabolic} then yields
$$f(a+wx + \lambda a) = f(wx) = f(x) = f(a+x+\lambda a),$$
which implies that $F_a(a+wx) = F_a(a+x)$. We thus expect $F_a\big\vert_{a+\bb^a_-}: a+\bb^a_-\to\C^b$ to be generically $\vert W\vert$-to-one as a map to its image. So while there does not seem to be a direct analogue of the Tarasov section in the nilpotent case, we have an affine subspace that intersects generic fibres in finitely many points. 
\end{rem}

\begin{rem}
Our previous remark discusses the non-existence of global sections for certain Mishchenko--Fomenko maps. There turns out to be an interesting counterpart in the context of Gelfand--Zeitlin theory. In more detail, Colarusso and Evens \cite[Corollary 5.19]{Colarusso} prove that the Gelfand--Zeitlin map on $\mathfrak{so}_n(\mathbb{C})$ admits no global sections for $n>3$.
\end{rem}

\subsection{Mishchenko--Fomenko fibres in reductive Lie algebras}\label{Subsection: Reductive Lie algebras}
In the interest of what lies ahead, we now establish some general properties of Mishchenko--Fomenko fibres. We begin by considering the part of Section \ref{Subsection: The Mishchenko-Fomenko subalgebra} that precedes Equation \eqref{eq:DefF_a}, noting that everything makes sense if one replaces $\g$ with an arbitrary reductive Lie algebra $\mathfrak{k}$. We may thereby form a Mishchenko--Fomenko map $F_a:\mathfrak{k}\rightarrow\mathbb{C}^{b(\mathfrak{k})}$ for each $a\in\mathfrak{k}_{\text{reg}}$, where $b(\mathfrak{k})$ is the dimension of a Borel subalgebra in $\mathfrak{k}$. 

\begin{prop}\label{Proposition: Irreducible component dimension}
Let $\mathfrak k$ be a complex reductive Lie algebra of rank $\mathrm{rk}(\mathfrak{k})$. 
If $a\in\mathfrak{k}_{\emph{reg}}$, then all fibres of $F_a$ are pure-dimensional with irreducible components of dimension $b(\mathfrak{k})-\mathrm{rk}(\mathfrak{k})$.
\end{prop}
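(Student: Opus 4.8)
The plan is to reduce the statement to two facts: (1) each fibre of $F_a$ has dimension \emph{at least} $b(\mathfrak{k})-\mathrm{rk}(\mathfrak{k})$ at every point, and (2) a dense subset of each fibre consists of points where the fibre is smooth of exactly this dimension. Combining these with the purity statement will force every irreducible component to have dimension $b(\mathfrak{k})-\mathrm{rk}(\mathfrak{k})$. Since $\mathfrak{k}$ is reductive, I would first decompose $\mathfrak{k}=\mathfrak{z}(\mathfrak{k})\oplus[\mathfrak{k},\mathfrak{k}]$ and note that the Mishchenko--Fomenko construction on $\mathfrak{k}$ splits as the product of the trivial system on the centre (where $F_a$ is the identity-type projection, contributing $\dim\mathfrak{z}(\mathfrak{k})$ to both $b(\mathfrak{k})$ and $\mathrm{rk}(\mathfrak{k})$, hence $0$ to the fibre dimension) with the system on the semisimple part $[\mathfrak{k},\mathfrak{k}]$. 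This reduces everything to the semisimple case, where I can invoke Moreau's results directly.

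For the lower bound I would use the standard generic-rank argument: the $b(\mathfrak{k})$ generators $f_1,\dots,f_{b(\mathfrak{k})}$ of $\mathcal{F}_a$ are algebraically independent (Theorem~\ref{Theorem: Mishchenko-Fomenko}), so $F_a$ is dominant and every nonempty fibre has dimension at least $\dim(\mathfrak{k})-b(\mathfrak{k})=b(\mathfrak{k})-\mathrm{rk}(\mathfrak{k})$ by \eqref{eq:2b=dimg +r} applied to $\mathfrak{k}$; this is the semicontinuity of fibre dimension for a dominant morphism of irreducible varieties. For the upper bound on a dense open subset, I would restrict attention to the locus $\mathfrak{k}_{\mathrm{sreg}}^a$ of regular points of $F_a$: on this locus $dF_a$ has rank $b(\mathfrak{k})$, so the fibre $F_a^{-1}(z)\cap\mathfrak{k}_{\mathrm{sreg}}^a$ is smooth of dimension exactly $\dim(\mathfrak{k})-b(\mathfrak{k})=b(\mathfrak{k})-\mathrm{rk}(\mathfrak{k})$ wherever it is nonempty. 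The remaining point is that $\mathfrak{k}_{\mathrm{sreg}}^a$ meets every fibre in a dense subset; this is exactly the content of the fact, due to Moreau \cite{Moreau}, that the fibres of $F_a$ are pure-dimensional (equivalently, that $\mathrm{Sing}^a\cap F_a^{-1}(z)$ is a proper closed subset of each fibre, of strictly smaller dimension). Given this, every irreducible component $Z$ of $F_a^{-1}(z)$ contains a point of $\mathfrak{k}_{\mathrm{sreg}}^a$, hence has dimension $\leq b(\mathfrak{k})-\mathrm{rk}(\mathfrak{k})$ (the smooth locus is dense in $Z$), while the lower bound gives $\dim Z\geq b(\mathfrak{k})-\mathrm{rk}(\mathfrak{k})$; purity then yields the claim for all of $F_a^{-1}(z)$.

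The main obstacle is transferring Moreau's purity result, which is stated for semisimple $\g$, to the reductive setting, and more precisely verifying that $\mathfrak{k}_{\mathrm{sreg}}^a$ is dense in every fibre rather than merely in $\mathfrak{k}$ itself. The centre–semisimple decomposition handles the reductive-vs-semisimple gap cleanly, since $\mathrm{Sing}^a$ in $\mathfrak{k}$ is $\mathfrak{z}(\mathfrak{k})\oplus\mathrm{Sing}^{a'}$ for the semisimple-part projection $a'$, and a fibre of $F_a$ in $\mathfrak{k}$ is a product of an affine space in $\mathfrak{z}(\mathfrak{k})$ with a fibre of $F_{a'}$ in $[\mathfrak{k},\mathfrak{k}]$. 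The density claim inside each semisimple fibre is then precisely Moreau's theorem, which I would cite. If one prefers to avoid citing purity as a black box, an alternative is to argue that $F_a$ is \emph{equidimensional} by a flatness argument — $F_a$ is a dominant morphism to smooth $\mathbb{C}^{b(\mathfrak{k})}$ whose source is Cohen--Macaulay (it is smooth affine space) with all fibres of the expected dimension, hence flat, hence equidimensional — but this still requires knowing the generic fibre dimension equals the fibre dimension everywhere, so it does not really circumvent the key input.
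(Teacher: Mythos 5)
Your reduction from reductive to semisimple $\mathfrak{k}$ via $\mathfrak{k}=\mathfrak{z}(\mathfrak{k})\oplus[\mathfrak{k},\mathfrak{k}]$ is the same as the paper's, and your lower bound (every component of a fibre of a morphism to $\C^{b(\mathfrak{k})}$ has codimension at most $b(\mathfrak{k})$) is fine. The gap is in your upper bound for the semisimple case. You assert that $\gsreg$ meets every fibre in a dense subset, and that this is ``exactly the content'' of Moreau's purity theorem. Neither claim is correct. Pure-dimensionality of $F_a^{-1}(z)$ does not imply that $F_a^{-1}(z)\cap\Sing^a$ is a proper closed subset of the fibre: $\Sing^a$ can have codimension as small as $2$ in $\g$ (cf.\ Theorem \ref{Theorem: Veldkamp}), whereas the fibres have codimension $b$, so entire fibres or entire components can sit inside $\Sing^a$. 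The paper itself records counterexamples: for $a$ regular nilpotent with Borel subalgebra $\bb$ and nilradical $\uu$, the irreducible component $\uu\subseteq F_a^{-1}(0)$ satisfies $\uu\cap\gsreg=\emptyset$ (see the remark following Proposition \ref{Prop: Unipotent orbit}), and by Charbonnel--Moreau one even has $F_a^{-1}(0)\cap\gsreg=\emptyset$ for certain $\sln_n(\C)$. So the generic-smoothness argument cannot deliver the inequality $\dim Z\leq b(\mathfrak{k})-\mathrm{rk}(\mathfrak{k})$ for every component $Z$, and your main line of proof breaks at this step.

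The repair is what the paper actually does, and what you mention in your final paragraph but set aside as circular: cite Moreau's theorem in its true form --- $F_a$ is \emph{flat} and surjective --- and then apply the standard fact (Hartshorne III.9.5--9.6) that for a flat morphism every irreducible component of every fibre has dimension exactly $\dim(\mathfrak{k})-b(\mathfrak{k})$. The equidimensionality of the fibres is genuinely the hard imported input here; it cannot be recovered from the algebraic independence of the generators together with density of the regular locus of $F_a$ in $\mathfrak{k}$, precisely because that regular locus can miss whole components of fibres.
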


\begin{proof}
We first assume that $\mathfrak{k}$ is semisimple. 
By \cite[Theorem 1.2]{Moreau}, $F_a$ is flat and surjective. It then follows from \cite[Corollary 9.6]{Hartshorne} that every fibre of $F_a$ is pure-dimensional with irreducible components of dimension $\dim(\mathfrak{k})-b(\mathfrak{k})=b(\mathfrak{k})-\mathrm{rk}(\mathfrak{k})$. 

Now assume that $\mathfrak{k}$ is reductive, and
let $\mathfrak{z}(\mathfrak{k})$ be the centre of $\mathfrak{k}$. One has \begin{equation}\label{Equation: Decomposition}\mathfrak{k} = \mathfrak{z}(\mathfrak{k})\oplus[\mathfrak{k},\mathfrak{k}],\end{equation} where $[\mathfrak{k},\mathfrak{k}] =:\mathfrak{l}$ is semisimple with rank $\mathrm{rk}(\mathfrak{l}) = \mathrm{rk}(\mathfrak k) - \dim(\mathfrak{z}(\mathfrak{k}))$. If $a\in\mathfrak{k}_{\text{reg}}$, then we may write $a = a'+a''$ with $a'\in\mathfrak{z}(\mathfrak{k})$ and $a''\in\mathfrak{l}$. Since $\mathfrak{k}_a=\mathfrak{z}(\mathfrak{k})\oplus\mathfrak{l}_{a''}$, we must have $a''\in\mathfrak{l}_{\text{reg}}$.

Let $d:=\dim(\mathfrak{z}(\mathfrak{k}))$ and choose a basis $f_1,\ldots,f_d$ of $\mathfrak{z}(\mathfrak{k})^*$. Let us also choose homogeneous, algebraically independent generators $f_{d+1},\ldots,f_{\mathrm{rk}(\mathfrak{k})}$ of $\mathbb{C}[\mathfrak{l}]^L$, where $L$ is the adjoint group of $L$. Note that the decomposition \eqref{Equation: Decomposition} allows us to regard $f_1,\ldots,f_{\mathrm{rk}(\mathfrak{k})}$ as polynomials on $\mathfrak{k}$. It is then not difficult to verify that $f_1,\ldots,f_{\mathrm{rk}(\mathfrak{k})}$ are homogeneous, algebraically independent generators of $\mathbb{C}[\mathfrak{k}]^K$, where $K$ is the adjoint group of $\mathfrak{k}$. We then have $F_a:\mathfrak{k}\rightarrow\mathbb{C}^{b(\mathfrak{k})}$, the Mishchenko--Fomenko map obtained from $a\in\mathfrak{k}_{\text{reg}}$ and $f_1,\ldots,f_{\mathrm{rk}(\mathfrak{k})}\in\mathbb{C}[\mathfrak{k}]^K$. Let us also consider the Mishchenko--Fomenko map $F_{a''}:\mathfrak{l}\rightarrow\mathbb{C}^{b(\mathfrak{k})-d}$ obtained from $a''\in\mathfrak{l}_{\text{reg}}$ and $f_{d+1},\ldots,f_{\mathrm{rk}(\mathfrak{k})}\in\mathbb{C}[\mathfrak{l}]^L$. If we use \eqref{Equation: Decomposition} to regard the $f_1,\ldots,f_d$ and the components of $F_{a''}$ as polynomials on $\mathfrak{k}$, then $$F_a=(f_1,\ldots,f_d,F_{a''}).$$ Each fibre of $F_a$ must therefore take the form $\{z\}\times F_{a''}^{-1}(w)$ for $z\in\mathfrak{z}(\mathfrak{k})$ and $w\in\mathbb{C}^{b(\mathfrak{k})-d}$. The first paragraph of this proof then implies that every fibre of $F_a$ is pure-dimensional with irreducible components of dimension $b(\mathfrak{l})-\mathrm{rk}(\mathfrak{l})$, where $b(\mathfrak{l})$ (resp. $\mathrm{rk}(\mathfrak{l})$) denotes the dimension of a Borel subalgebra in $\mathfrak{l}$ (resp. the rank of $\mathfrak{l}$). Note also that $b(\mathfrak{l})=b(\mathfrak{k})-d$ and $\mathrm{rk}(\mathfrak{l})=\mathrm{rk}(\mathfrak{k})-d$, so that
$$b(\mathfrak{l})-\mathrm{rk}(\mathfrak{l})=b(\mathfrak{k})-\mathrm{rk}(\mathfrak{k}).$$ This completes the proof.
\end{proof}

\subsection{Alternative generators of $\mathcal{F}_a$}\label{Subsection: Alternative generators}
Returning to the notation and conventions used prior to Section \ref{Subsection: Reductive Lie algebras}, we now introduce an alternative set of algebraically independent generators for the Mishchenko--Fomenko subalgebra $\mathcal{F}_a$. Fix $a\in\g_{\text{reg}}$ and recall the meaning of $f_{\lambda,a}$ for $f\in\mathbb{C}[\g]$ and $\lambda\in\mathbb{C}$. Recall also that $d_1,\ldots,d_r\in\mathbb{Z}_{>0}$ are the homogeneous degrees of $f_1,\ldots,f_r\in\mathbb{C}[\g]^G$, respectively. We have the following lemma.

\begin{lem}\label{Lemma:AlternativeGenerators}
Let $\Lambda = \{\lambda^{(i)}_j\colon i\in\{1,\dots,r\},\, j\in\{0,\dots,d_i-1\}\}\subseteq\mathbb{C}$ be such that $\lambda^{(i)}_0,\ldots, \lambda^{(i)}_{d_i-1}$ are pairwise distinct for each fixed $i\in\{1,\ldots,r\}$. Fix $a\in\g_{\emph{reg}}$ and define 
\[
g_{ij} := (f_i)_{\lambda^{(i)}_j,a} - (f_i)_{\lambda^{(i)}_j,a}(0)\in\C[\g]
\]
for each $i\in\{1,\ldots,r\}$ and $j\in\{0,\ldots,d_i-1\}$. The polynomials $g_{ij}$ then freely generate $\mathcal{F}_a$ as an algebra.
\end{lem}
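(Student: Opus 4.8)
The plan is to deduce the claim from Theorem~\ref{Theorem: Mishchenko-Fomenko} by showing that the $g_{ij}$ and the standard generators $f_{ij}^a$ generate the same subalgebra of $\C[\g]$; once we know $\langle g_{ij}\rangle = \mathcal F_a$, algebraic independence of the $g_{ij}$ is automatic because there are exactly $b$ of them and $\mathcal F_a$ has transcendence degree $b$. So everything reduces to a linear-algebra statement: for each fixed $i$, the span of $\{1, g_{i0},\dots,g_{i,d_i-1}\}$ inside $\C[\g]$ coincides with the span of $\{1, f_{i0}^a,\dots,f_{i,d_i-1}^a\}$.

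First I would unwind the definitions. By \eqref{eq:Def_fij} we have
\[
(f_i)_{\lambda,a}(x) = f_i(a)\lambda^{d_i} + \sum_{j=0}^{d_i-1} f_{ij}^a(x)\,\lambda^j ,
\]
so for each allowed pair $(i,j)$,
\[
g_{ij} = (f_i)_{\lambda_j^{(i)},a} - (f_i)_{\lambda_j^{(i)},a}(0) = \sum_{k=0}^{d_i-1}\big(\lambda_j^{(i)}\big)^{k}\,\big(f_{ik}^a - f_{ik}^a(0)\big),
\]
since subtracting the value at $0$ kills the constant term $f_i(a)\lambda^{d_i}$ together with the constants $f_{ik}^a(0)$. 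Setting $\tilde f_{ik} := f_{ik}^a - f_{ik}^a(0)$, this reads $g_{ij} = \sum_{k=0}^{d_i-1} (\lambda_j^{(i)})^k \tilde f_{ik}$, i.e. the vector $(g_{i0},\dots,g_{i,d_i-1})$ is obtained from $(\tilde f_{i0},\dots,\tilde f_{i,d_i-1})$ by applying the $d_i\times d_i$ Vandermonde matrix with nodes $\lambda_0^{(i)},\dots,\lambda_{d_i-1}^{(i)}$. By hypothesis these nodes are pairwise distinct, so the Vandermonde matrix is invertible; hence each $\tilde f_{ik}$ is a $\C$-linear combination of the $g_{ij}$, and conversely. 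Note $f_{i0}^a = f_i$, so $\tilde f_{i0} = f_i - f_i(0) = f_i$ as $f_i$ is homogeneous of positive degree; in any case the constants are in $\C\subseteq\mathcal F_a$, so adjusting by $f_{ik}^a(0)$ does not change the generated subalgebra.

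Putting the index $i$ back in play: the algebra generated by $\{g_{ij}\}$ contains every $\tilde f_{ik}$, hence every $f_{ik}^a$ (up to constants), hence contains $\mathcal F_a = \langle f_1,\dots,f_b\rangle$ by Theorem~\ref{Theorem: Mishchenko-Fomenko}; and conversely each $g_{ij}$, being a polynomial combination (in fact a linear combination plus a constant) of the $f_{ik}^a$, lies in $\mathcal F_a$. Therefore $\langle g_{ij}\rangle = \mathcal F_a$. Finally, $\mathcal F_a$ is freely generated by $b$ elements (Theorem~\ref{Theorem: Mishchenko-Fomenko}), so it has Krull dimension $b$; a set of $b$ generators of an algebra of Krull dimension $b$ over $\C$ is necessarily algebraically independent. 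Since there are precisely $\sum_{i=1}^r d_i = b$ of the $g_{ij}$ by \eqref{eq:2b=dimg +r}, they freely generate $\mathcal F_a$.

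I do not anticipate a serious obstacle here; the only thing to be careful about is the bookkeeping of constants (making sure that subtracting the value at the origin indeed removes exactly the $\lambda^{d_i}$-term and the constant, leaving a genuine $\C$-linear Vandermonde relation among the $\tilde f_{ik}$) and the standard but worth-stating fact that $b$ generators of a $b$-dimensional finitely generated $\C$-algebra must be algebraically independent.
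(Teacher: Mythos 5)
Your proof is correct and follows essentially the same route as the paper: expand via \eqref{eq:Def_fij}, observe that subtracting the value at $0$ removes the $\lambda^{d_i}$-term, and invert the resulting Vandermonde matrix with distinct nodes. The only cosmetic differences are that the constants $f_{ik}^a(0)$ you carry along are in fact all zero (each $f_{ik}^a$ is homogeneous of degree $d_i-k>0$), so your $\tilde f_{ik}=f_{ik}^a$ and the relation is exactly the paper's, and that you conclude freeness by a Krull-dimension count where the paper simply notes that an invertible linear change of free generators yields free generators; both are valid.
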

\begin{proof}
The homogeneity of $f_i$ implies that $(f_i)_{\lambda,a}(0) = f_i(a)\lambda^{d_i}$ for any $i\in\{1,\ldots,r\}$ and $\lambda\in\C$, so that \eqref{eq:Def_fij} yields
$$(f_i)_{\lambda,a}(x) - (f_i)_{\lambda,a}(0) =\sum_{j=0}^{d_i-1}f_{ij}^a(x)\lambda^j,\quad x\in\g.$$
Setting $\lambda=\lambda_{j}^{(i)}$ for $i\in\{1,\ldots,r\}$ and $j\in\{0,\ldots,d_i-1\}$, we obtain
$$g_{ij}=\sum_{k=0}^{d_i-1}(\lambda_j^{(i)})^kf_{ik}^a.$$
This amounts to the statement
\[
(g_{i0},\dots,g_{i(d_i-1)}) = (f_{i0}^a,\dots,f_{i(d_i-1)}^a)\left(\begin{array}{cccc}
1 & 1& \dots &1 \\ \lambda_0^{(i)}&\lambda_1^{(i)}&\dots&\lambda_{d_i-1}^{(i)}\\
(\lambda_0^{(i)})^2 & (\lambda_1^{(i)})^2 &\dots &(\lambda_{d_i-1}^{(i)})^2\\
\vdots &\vdots & \dots & \vdots\\
(\lambda_0^{(i)})^{d_i-1}&\dots & \dots & (\lambda_{d_i-1}^{(i)})^{d_i-1}
\end{array}\right).
\]
for all $i\in\{1,\ldots,r\}$. Now note that the above-defined $d_i\times d_i$ matrix is invertible, owing to the fact that $\lambda^{(i)}_0,\ldots, \lambda^{(i)}_{d_i-1}$ are pairwise distinct. Since the algebra $\mathcal{F}_a$ is freely generated by the $f_{ij}^a$ for $i\in\{1,\ldots,r\}$ and $j\in\{0,\ldots,d_i-1\}$ (see Theorem \ref{Theorem: Mishchenko-Fomenko}), the previous two sentences imply that $\mathcal{F}_a$ is freely generated by the $g_{ij}$ for $i\in\{1,\ldots,r\}$ and $j\in\{0,\ldots,d_i-1\}$.
\end{proof}

\subsection{Some additional properties of Mishchenko--Fomenko fibres}\label{Subsection: Some additional}
The preceding sections allow us to derive some additional results about the fibres of $F_a$. We begin with the following simple observation. 

\begin{lem}\label{Lemma: Contained in closure}
If $a\in\mathfrak{g}_{\emph{reg}}$, then $F_a^{-1}(F_a(x))\subseteq\overline{Gx}$ for all $x\in\mathfrak{g}_{\emph{reg}}$.
\end{lem}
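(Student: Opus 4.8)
The plan is to use the characterization of the Mishchenko--Fomenko fibre recalled in \eqref{Equation: Fibre via algebra}, together with the description of the fibres of the adjoint quotient map $F$ recorded in \eqref{eq:AdQuot}--\eqref{eq:AdQuotFibre}. Fix $a\in\greg$ and $x\in\greg$, and let $y\in F_a^{-1}(F_a(x))$ be arbitrary. By \eqref{Equation: Fibre via algebra}, we have $f(x+\lambda a)=f(y+\lambda a)$ for all $f\in\C[\g]^G$ and all $\lambda\in\C$; in particular this holds at $\lambda=0$, so $f(x)=f(y)$ for every $f\in\C[\g]^G$, i.e. $F(y)=F(x)$.

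Now apply the description of the fibres of $F$ from Section \ref{Subsection: Conventions}. Since $x\in\greg$, Equation \eqref{eq:AdQuotFibre} gives $F^{-1}(F(x))=\overline{Gx}$. Combining this with the conclusion of the previous paragraph yields $y\in F^{-1}(F(x))=\overline{Gx}$. As $y\in F_a^{-1}(F_a(x))$ was arbitrary, we conclude $F_a^{-1}(F_a(x))\subseteq\overline{Gx}$, which is exactly the claim.

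There is essentially no obstacle here: the argument is a one-line deduction once one notices that setting $\lambda=0$ in \eqref{Equation: Fibre via algebra} collapses the defining conditions for the Mishchenko--Fomenko fibre to the defining conditions for the adjoint-quotient fibre, and the latter is identified with $\overline{Gx}$ for regular $x$. The only point worth a moment's care is that \eqref{Equation: Fibre via algebra} ranges over \emph{all} invariant polynomials $f\in\C[\g]^G$ (not merely the chosen generators $f_1,\dots,f_r$), so that the $\lambda=0$ specialization really does pin down the full $F$-fibre; this is exactly what Remark \ref{Remark: Independence of generators} provides. Everything else is immediate from the cited facts.
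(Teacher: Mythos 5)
Your proof is correct and follows essentially the same route as the paper: the paper simply notes $F_a^{-1}(F_a(x))\subseteq F^{-1}(F(x))$ directly (since $f_1,\dots,f_r$ are among the components of $F_a$) and then applies \eqref{eq:AdQuotFibre}, whereas you deduce the same inclusion by specializing \eqref{Equation: Fibre via algebra} at $\lambda=0$. This is only a cosmetic difference; the argument is the same.
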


\begin{proof}
Consider the map $F=(f_1,\ldots,f_r):\mathfrak{g}\rightarrow\mathbb{C}^r$, noting that $F_a^{-1}(F_a(x))\subseteq F^{-1}(F(x))$ for all $x\in\mathfrak{g}$. If $x\in\mathfrak{g}_{\text{reg}}$, then $F^{-1}(F(x))=\overline{Gx}$ by \eqref{eq:AdQuotFibre}. It follows that $F_a^{-1}(F_a(x))\subseteq\overline{Gx}$ for all $x\in\greg$. 
\end{proof}

Lemma \ref{Lemma:AlternativeGenerators} allows us to refine Lemma \ref{Lemma: Contained in closure} as follows.

\begin{thm}\label{Proposition: FibreFaIntersectionOrbitTranslate}
Suppose that $a\in\greg$.
\begin{enumerate}
\item[(i)] If $x\in\gsreg$, then 
\begin{equation}\label{eq:FibreFaIntersectionOrbitTranslate1}
F_a^{-1}(F_a(x)) = \bigcap_{\lambda\in\C}\left(\overline{G(x+\lambda a)} - \lambda a\right)
\end{equation}
and
\begin{equation} \label{eq:FibreFaIntersectionOrbitTranslate2}
F_a^{-1}(F_a(x))\cap\gsreg = \bigcap_{\lambda\in\C}\left(G(x+\lambda a) - \lambda a\right).
\end{equation}
\item[(ii)] If $x\in\greg$, then there exists a finite subset $\Lambda\subseteq\mathbb{C}$ with the following property: \eqref{eq:FibreFaIntersectionOrbitTranslate1} and \eqref{eq:FibreFaIntersectionOrbitTranslate2} are true if one only intersects over all $\lambda\in\Lambda$.
\end{enumerate}
\end{thm}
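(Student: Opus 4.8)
The plan is to derive everything from the alternative generators supplied by Lemma \ref{Lemma:AlternativeGenerators} together with the fibre description \eqref{eq:AdQuotFibre} for the adjoint quotient map $F=(f_1,\dots,f_r)$. First I would observe that for fixed $\lambda\in\C$ and $f\in\C[\g]^G$, the shifted polynomial $f_{\lambda,a}$ is constant on a fibre of $F_a$ precisely because $f_{\lambda,a}(y)=f(y+\lambda a)$ and the collection $\{f_{\lambda,a}:f\in\C[\g]^G,\lambda\in\C\}$ generates $\mathcal F_a$; this is exactly \eqref{Equation: Fibre via algebra}. Consequently $y\in F_a^{-1}(F_a(x))$ if and only if $F(y+\lambda a)=F(x+\lambda a)$ for every $\lambda\in\C$. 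Now if $x\in\gsreg$, then $x+\lambda a\in\greg$ for all $\lambda$, so by \eqref{eq:AdQuotFibre} the condition $F(y+\lambda a)=F(x+\lambda a)$ says exactly that $y+\lambda a\in\overline{G(x+\lambda a)}$, i.e. $y\in\overline{G(x+\lambda a)}-\lambda a$. Intersecting over all $\lambda\in\C$ gives \eqref{eq:FibreFaIntersectionOrbitTranslate1}. For \eqref{eq:FibreFaIntersectionOrbitTranslate2} I would intersect both sides with $\gsreg$: on the right, a point $y\in\gsreg$ satisfies $y+\lambda a\in\greg$, and \eqref{eq:AdQuotFibre} upgrades $y+\lambda a\in\overline{G(x+\lambda a)}\cap\greg$ to $y+\lambda a\in G(x+\lambda a)$, which is the orbit itself; so the $\gsreg$-part of $\overline{G(x+\lambda a)}-\lambda a$ equals the $\gsreg$-part of $G(x+\lambda a)-\lambda a$, and intersecting over $\lambda$ yields \eqref{eq:FibreFaIntersectionOrbitTranslate2}. (One has to be slightly careful that $G(x+\lambda a)-\lambda a$ may contain singular-for-$F_a$ points, but intersecting with $\gsreg$ removes exactly those.)

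For part (ii), the point is that the equations cutting out $F_a^{-1}(F_a(x))$ are finitely many polynomial identities, so only finitely many values of $\lambda$ are actually needed. Concretely, take $\Lambda=\{\lambda^{(i)}_j\}$ as in Lemma \ref{Lemma:AlternativeGenerators}, chosen so that $\lambda^{(i)}_0,\dots,\lambda^{(i)}_{d_i-1}$ are pairwise distinct for each $i$ and, in addition, so that each $x+\lambda^{(i)}_j a$ lies in $\greg$ — this second requirement is achievable because, for $x\in\greg$, the set $\{\lambda\in\C:x+\lambda a\notin\greg\}$ is finite (it is the zero set of the non-identically-zero polynomial $\lambda\mapsto(\text{suitable }(\dim\g-r)\text{-minor or the generalized discriminant})$, which does not vanish at $\lambda=0$ since $x\in\greg$). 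By Lemma \ref{Lemma:AlternativeGenerators}, the polynomials $g_{ij}=(f_i)_{\lambda^{(i)}_j,a}-(f_i)_{\lambda^{(i)}_j,a}(0)$ freely generate $\mathcal F_a$, so $F_a^{-1}(F_a(x))=\{y:g_{ij}(y)=g_{ij}(x)\ \forall i,j\}$. But $g_{ij}(y)=g_{ij}(x)$ for all $j\in\{0,\dots,d_i-1\}$ is equivalent (by the invertible Vandermonde change of basis in that lemma, applied simultaneously to $y$ and to $x$) to $f_i(y+\lambda^{(i)}_j a)=f_i(x+\lambda^{(i)}_j a)$ for all those $j$, hence to $F(y+\lambda a)=F(x+\lambda a)$ for all $\lambda\in\Lambda$. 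Since every $x+\lambda a$ with $\lambda\in\Lambda$ is regular, \eqref{eq:AdQuotFibre} converts this into $y\in\bigcap_{\lambda\in\Lambda}(\overline{G(x+\lambda a)}-\lambda a)$, giving the finite version of \eqref{eq:FibreFaIntersectionOrbitTranslate1}; intersecting with $\gsreg$ and arguing exactly as in part (i) gives the finite version of \eqref{eq:FibreFaIntersectionOrbitTranslate2}.

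The main obstacle I anticipate is the bookkeeping around regularity of the shifted points: in part (i) the hypothesis $x\in\gsreg$ makes all $x+\lambda a$ regular by definition, but in part (ii) one must explicitly produce a finite $\Lambda$ avoiding the (finitely many) bad shifts while still respecting the pairwise-distinctness needed for the Vandermonde argument — this is a genuine but elementary constraint-satisfaction step, since $\C$ is infinite. A secondary subtlety is that for $x\in\greg\setminus\gsreg$ the full identities \eqref{eq:FibreFaIntersectionOrbitTranslate1}–\eqref{eq:FibreFaIntersectionOrbitTranslate2} with intersection over \emph{all} $\lambda\in\C$ can fail (some $\overline{G(x+\lambda a)}$ is not a fibre of $F$ when $x+\lambda a$ is singular), so it is important that part (ii) only claims the identities hold after restricting to the chosen finite $\Lambda$ of regular shifts, and the proof should flag precisely where this restriction is used.
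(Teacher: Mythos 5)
Your proposal follows the paper's proof almost verbatim: part (i) is the same chain of equivalences combining \eqref{Equation: Fibre via algebra} with \eqref{eq:AdQuotFibre}, and part (ii) uses Lemma \ref{Lemma:AlternativeGenerators} together with the finiteness of $\{\lambda\in\C : x+\lambda a\in\gsing\}$, exactly as the paper does (the paper additionally puts $0$ into $\Lambda$, which is convenient but not essential). Part (i) and the finite-$\Lambda$ version of \eqref{eq:FibreFaIntersectionOrbitTranslate1} are sound. For the latter, note that your ``hence to $F(y+\lambda a)=F(x+\lambda a)$ for all $\lambda\in\Lambda$'' is not a termwise equivalence --- for fixed $i$ the $g_{ij}$ only control $f_i$ at the shifts $\lambda_j^{(i)}$, not at every $\lambda\in\Lambda$ --- but the sandwich $F_a^{-1}(F_a(x))\subseteq\bigcap_{\lambda\in\Lambda}\bigl(\overline{G(x+\lambda a)}-\lambda a\bigr)\subseteq\{y: g_{ij}(y)=g_{ij}(x)\ \forall i,j\}=F_a^{-1}(F_a(x))$ repairs this.

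The genuine gap is the sentence ``intersecting with $\gsreg$ and arguing exactly as in part (i) gives the finite version of \eqref{eq:FibreFaIntersectionOrbitTranslate2}.'' In part (i) you may drop the final intersection with $\gsreg$ on the right because any $y$ with $y+\lambda a\in G(x+\lambda a)\subseteq\greg$ for \emph{all} $\lambda\in\C$ automatically lies in $\gsreg$; for a finite $\Lambda$ this fails, and the right-hand side of \eqref{eq:FibreFaIntersectionOrbitTranslate2} can strictly contain the left-hand side. Concretely, take $\g=\sln_2(\C)$, $a=s=\mathrm{diag}(1,-1)$ and $x=\mathrm{diag}(c,-c)$ with $c\neq0$, so that $x\in\greg\setminus\gsreg$ and part (ii) applies. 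If $-c\notin\Lambda$, then $x$ itself belongs to $G(x+\lambda s)-\lambda s$ for every $\lambda\in\Lambda$ yet $x\notin\gsreg$; if $-c\in\Lambda$, the $\lambda=-c$ term is the singleton $\{x\}$, while $F_s^{-1}(F_s(x))\cap\gsreg$ is infinite. Hence no finite $\Lambda$ satisfies \eqref{eq:FibreFaIntersectionOrbitTranslate2} for this $x$. (Even for $x\in\gsreg$ a generic admissible $\Lambda$ fails: with $x$ upper triangular and $x_2\neq0$, the point $\mathrm{diag}(x_1,-x_1)$ survives the finite intersection unless $-x_1\in\Lambda$.) The paper's own proof of (ii) says only that it ``proceeds analogously'' and shares exactly this gap, so you are in good company; but a correct write-up should either restrict the finite-$\Lambda$ claim to \eqref{eq:FibreFaIntersectionOrbitTranslate1}, or retain the intersection with $\gsreg$ on the right-hand side of the finite version of \eqref{eq:FibreFaIntersectionOrbitTranslate2}.
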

\begin{proof}
We begin with the proof of (i). Suppose that $x,y\in\g$, and recall the polynomials $f_1,\ldots,f_b\in\mathbb{C}[\g]$ from Section \ref{Subsection: The Mishchenko-Fomenko subalgebra}. We see that 
\[
f_{i}(x) = f_{i}(y)\quad \forall i\in\{1,\ldots,b\} \iff f_i(x+\lambda a) = f_i(y+\lambda a)\quad \forall\ i\in\{1,\ldots,r\},\text{ }\lambda\in\C.
\]
Now suppose that $x\in \gsreg$. This means precisely that $x+\C a\subseteq \greg$, which by \eqref{eq:AdQuotFibre} yields $F^{-1}(F(x+\lambda a)) = \overline{G(x+\lambda a)}$ for all $\lambda\in\mathbb{C}$. 

Assuming that $x\in\gsreg$ and using the previous paragraph where apprpriate, we obtain 
\begin{eqnarray*}
F_a^{-1}(F_a(x)) &=& \{y\in\g\colon f_{i}(y) = f_{i}(x)\text{ }\forall i\in\{1,\ldots,b\} \}\\
&=& \{y\in\g\colon f_i(y+\lambda a) = f_i(x+\lambda a)\text{ }\forall i\in\{1,\ldots,r\},\text{ }\lambda\in \C\}\\
&=& \{y\in\g\colon y+\lambda a\in\overline{G(x+\lambda a)}\text{ }\forall \lambda\in\C\}\\
&=& \bigcap_{\lambda\in\C}\left(\overline{G(x+\lambda a)}-\lambda a\right).
\end{eqnarray*}
This verifies \eqref{eq:FibreFaIntersectionOrbitTranslate1}. Using \eqref{eq:AdQuotFibre} and the fact that $y\in\gsreg$ implies $y+\lambda a\in\greg$ for all $\lambda\in\C$, one can obtain \eqref{eq:FibreFaIntersectionOrbitTranslate2} via a similar argument. 

Now we come to the proof of (ii). Note that if $x\in\greg$, then $x+\lambda a\in\gsing$ for only finitely many $\lambda\in\mathbb{C}$. Keeping this in mind, we may find a finite set $\Lambda = \{\lambda^{(i)}_j\colon i\in\{1,\dots,r\},\, j\in\{0,\dots,d_i-1\}\}\subseteq\mathbb{C}$ with the following properties: 
\begin{itemize}
\item $\lambda_0^{(i)}=0$ for all $i\in\{1,\ldots,r\}$;
\item $\lambda_0^{(i)},\ldots,\lambda_{d_i-1}^{(i)}$ are pairwise distinct for all $i\in\{1,\ldots,r\}$;
\item $x+\lambda_j^{(i)}a\in\mathfrak{g}_{\text{reg}}$ for all $i\in\{1,\ldots,r\}$ and $j\in\{0,\ldots,d_i-1\}$.
\end{itemize}
Lemma \ref{Lemma:AlternativeGenerators} then constructs algebraically independent generators $\{g_{ij}\}$ of $\mathcal{F}_a$, and we may use these generators to compute $F_a^{-1}(F_a(x))$ (see Remark \ref{Remark: Independence of generators}). Note also that the condition $\lambda_0^{(i)}=0$ yields $g_{i0}=f_i$ for all $i\in\{1,\ldots,r\}$. The proof of (ii) then proceeds analogously to that of (i), after we replace $f_1,\ldots,f_b$ with the $g_{ij}$.
\end{proof}

We now provide some incidental descriptions of the tangent spaces to Mishchenko--Fomenko fibres. To this end, recall the Poisson bracket \eqref{Equation:PoissonBracket} and all associated notation.

\begin{prop}\label{Lemma:TangentspaceFibre}
Suppose that $a\in\greg$ and $x\in\gsreg$. The tangent space $T_x(F_a^{-1}(F_a(x)))\subseteq\mathfrak{g}$ can then be described as follows.
\begin{enumerate}
\item[(i)]
If $\{h_i\}_{i\in I}\subseteq\mathbb{C}[\g]$ generates $\mathcal F_a$ as an algebra, then 
\begin{equation}\label{eq:TangentspaceFibre1}
T_x(F_a^{-1}(F_a(x))) = \mathrm{span}\{[x,dh_i(x)^\vee]:i\in I\}.
\end{equation}
Taking the components of $F_a$ as our generators (cf. \eqref{eq:DefF_a}), we get
\begin{equation}\label{eq:TangentspaceFibre2}
T_x(F_a^{-1}(F_a(x))) = \mathrm{span}\{[x, df_{i}(x)^\vee]: i\in\{r+1,\ldots,b\}\}.
\end{equation}
\item[(ii)] We have
\begin{equation}\label{eq:TangentspaceFibre3}
T_x(F_a^{-1}(F_a(x))) = \mathrm{span}\{[a,df(x+\lambda a)^{\vee}]: f\in \C[\g]^G,\lambda\in\C^{\times}\}.
\end{equation}
\end{enumerate}
\end{prop}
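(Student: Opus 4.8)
\textbf{Proof plan for Proposition \ref{Lemma:TangentspaceFibre}.}
The plan is to reduce everything to a single standard computation: for a regular value $x$ of a map $\Phi = (\phi_i)_{i}$ with components generating a subalgebra of $\mathbb{C}[\g]$, the tangent space to the fibre at $x$ is the common kernel $\bigcap_i \ker(d\phi_i(x))$, and one then identifies this kernel with a span of Hamiltonian vector fields using the Poisson structure. Concretely, for $h\in\mathbb{C}[\g]$ and $v\in\g=T_x\g$ we have $dh(x)(v) = \langle dh(x)^\vee, v\rangle$, so $dh(x)(v)=0$ for all generators $h$ iff $v\perp \mathrm{span}\{dh_i(x)^\vee\}$ with respect to the Killing form. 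On the other hand, the Hamiltonian vector field of $h$ at $x$ is $X_h(x) = [x, dh(x)^\vee]$, and the key linear-algebra fact is that $\mathrm{span}\{[x,dh_i(x)^\vee]\}$ is the Killing-orthogonal complement of $\mathrm{span}\{dh_i(x)^\vee\}$ \emph{inside} $T_x(F^{-1}(F(x)))$ — more precisely, Poisson-commutativity of $\mathcal{F}_a$ gives $\langle [x,dh_i(x)^\vee], dh_j(x)^\vee\rangle = \{h_i,h_j\}(x) = 0$, so the span of Hamiltonian vector fields is contained in the common kernel; a dimension count then forces equality.

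First I would establish (i). Since $x\in\gsreg$ is a regular point of $F_a$ (Bolsinov's description from Section \ref{Subsection: The Mishchenko-Fomenko subalgebra}), the fibre $F_a^{-1}(F_a(x))$ is smooth at $x$ of dimension $\dim(\g)-b$, and its tangent space is $\bigcap_{i\in I}\ker(dh_i(x))$, which has dimension exactly $\dim(\g)-b$ whenever $\{h_i\}$ generates $\mathcal{F}_a$ (the differentials $dh_i(x)$ span a $b$-dimensional space because $F_a$ has full rank $b$ at $x$ and the $h_i$ generate the same algebra as the components of $F_a$). Dually, via the Killing form, $\bigcap_i\ker(dh_i(x)) = (\mathrm{span}\{dh_i(x)^\vee : i\in I\})^{\perp}$, and $V:=\mathrm{span}\{dh_i(x)^\vee : i\in I\}$ is $b$-dimensional. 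Now Poisson-commutativity of $\mathcal{F}_a$ (Theorem \ref{Theorem: Mishchenko-Fomenko}) gives, for all $i,j\in I$,
\[
\langle [x, dh_i(x)^\vee], dh_j(x)^\vee\rangle = \langle x, [dh_i(x)^\vee, dh_j(x)^\vee]\rangle = \{h_i,h_j\}(x) = 0,
\]
using the invariance of the Killing form and \eqref{Equation:PoissonBracket}. Hence $\mathrm{span}\{[x,dh_i(x)^\vee]:i\in I\}\subseteq V^\perp = T_x(F_a^{-1}(F_a(x)))$. For the reverse inclusion it suffices to check the dimensions agree: $\dim\mathrm{span}\{[x,dh_i(x)^\vee]:i\in I\} = \dim\mathrm{span}\{dh_i(x)^\vee\} - \dim(\ker(\ad_x)\cap V)$. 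Since $x\in\gsreg\subseteq\greg$, $\ker(\ad_x)=\g_x$ has dimension $r$; and $\g_x\subseteq V$ because each $f_k$ is $\Ad$-invariant, so $df_k(x)^\vee\in\g_x^{\text{ann}}$... — here I must be slightly careful: the correct statement is that $\g_x$ is precisely $V^{\perp}\cap$(something), so I would instead argue directly that $\ad_x$ maps $V$ onto a space of dimension $b - r = \dim(\g)-b$ by showing $V\cap\g_x$ is $r$-dimensional, which holds because $\g_x = \mathrm{span}\{df_1(x)^\vee,\dots,df_r(x)^\vee\}$ for $x\in\greg$ (Kostant), and these $r$ vectors lie in $V$. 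Thus $\dim\mathrm{span}\{[x,dh_i(x)^\vee]\} = b-r = \dim(\g)-b = \dim T_x(F_a^{-1}(F_a(x)))$, giving \eqref{eq:TangentspaceFibre1}. Equation \eqref{eq:TangentspaceFibre2} follows by taking $\{h_i\}=\{f_1,\dots,f_b\}$ and discarding $[x,df_i(x)^\vee]=0$ for $i\in\{1,\dots,r\}$ (since $df_i(x)^\vee\in\g_x$).

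For (ii), I would apply Lemma \ref{Lemma:AlternativeGenerators}: the polynomials $g_{kj} = (f_k)_{\lambda_j^{(k)},a} - (f_k)_{\lambda_j^{(k)},a}(0)$, for suitable pairwise-distinct $\lambda_j^{(k)}$, freely generate $\mathcal{F}_a$. Since $d\big((f_k)_{\lambda,a}\big)(x) = df_k(x+\lambda a)$ and a constant drops out under $d$, part (i) yields
\[
T_x(F_a^{-1}(F_a(x))) = \mathrm{span}\{[x, df_k(x+\lambda_j^{(k)}a)^\vee] : k, j\}.
\]
It remains to replace $[x,\,\cdot\,]$ by $[a,\,\cdot\,]$ and to enlarge the finite index set to all $f\in\mathbb{C}[\g]^G$, $\lambda\in\mathbb{C}^\times$. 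The point is that $df_k(x+\lambda a)^\vee \in \g_{x+\lambda a}$ (invariant-polynomial differentials centralize their argument), so $[x+\lambda a,\, df_k(x+\lambda a)^\vee] = 0$, whence $[x, df_k(x+\lambda a)^\vee] = -\lambda[a, df_k(x+\lambda a)^\vee]$; for $\lambda\neq 0$ this changes the vector only by a scalar. (The $\lambda_0^{(k)}=0$ terms contribute $[x,df_k(x)^\vee]=0$ and can be dropped.) This rewrites the right-hand side of the displayed equation as a span of vectors $[a, df(x+\lambda a)^\vee]$ over finitely many $(f,\lambda)$, proving ``$\subseteq$'' in \eqref{eq:TangentspaceFibre3}; and ``$\supseteq$'' holds because every such vector $[a, df(x+\lambda a)^\vee] = -\lambda^{-1}[x, df(x+\lambda a)^\vee]$ (for $\lambda\neq 0$) is the Hamiltonian vector field at $x$ of the generator $f_{\lambda,a}\in\mathcal{F}_a$, hence lies in the tangent space by (i) applied to any generating set containing $f_{\lambda,a}$ — or more cleanly, by the first paragraph's general inclusion ``Hamiltonian vector fields of elements of $\mathcal{F}_a$ lie in the fibre tangent space''. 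The main obstacle is the bookkeeping in (i): correctly justifying that $\dim\mathrm{span}\{dh_i(x)^\vee\} = b$ and $\dim(\g_x\cap\mathrm{span}\{dh_i(x)^\vee\}) = r$ for an arbitrary generating set, for which I would lean on Theorem \ref{Theorem: Mishchenko-Fomenko} (freeness forces the Jacobian of any generating set to have the expected rank at a regular point) together with Kostant's theorem identifying $\g_x$ with $\mathrm{span}\{df_1(x)^\vee,\dots,df_r(x)^\vee\}$ for $x\in\greg$.
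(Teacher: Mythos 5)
Your proposal is correct and follows essentially the same route as the paper: identify the tangent space with the span of Hamiltonian vector fields $[x,dh_i(x)^\vee]$ of a generating set of $\mathcal{F}_a$, then use $[x+\lambda a, df(x+\lambda a)^\vee]=0$ to trade $[x,\cdot]$ for $[a,\cdot]$ in part (ii). The only difference is that you supply the dimension count (Poisson-commutativity gives one inclusion, and Kostant's description of $\g_x$ as the span of $df_1(x)^\vee,\dots,df_r(x)^\vee$ gives $\dim\ad_x(V)=b-r=\dim\g-b$) justifying the claim that the paper simply asserts, namely that the Hamiltonian vector fields of the generators span the whole fibre tangent space at a regular point.
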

\begin{proof}
We begin by proving (i). Note that $T_x(F_a^{-1}(F_a(x)))$ is the span of the Hamiltonian vector fields of $f_1,\ldots,f_b$ at $x$. On the other hand, $f_1,\ldots,f_b$ and $\{h_i\}_{i\in I}$ generate the same subalgebra of $\mathcal{F}_a$ of $\mathbb{C}[\g]$. It is then easy to deduce that $T_x(F_a^{-1}(F_a(x)))$ is the span of the Hamiltonian vector fields of the $h_i$ at $x$. Note also that \eqref{Equation:PoissonBracket} forces the Hamiltonian vector field of $h_i$ at $x$ to be $[x,dh_{i}(x)^\vee]$. This proves \eqref{eq:TangentspaceFibre1}. The statement \eqref{eq:TangentspaceFibre2} follows from the first part of (i) and the fact that $[x, df_{i}(x)^\vee]=0$ for all $i\in\{1,\ldots,r\}$ (see \cite[Proposition 1.3]{KostantSolution}). This proves (i). 

We now verify (ii). By applying (i) to the generating set $\{f_{\lambda,a}\colon f\in \C[\g]^G, \, \lambda\in\C\}$ of $\mathcal{F}_a$, we obtain
$$T_x(F_a^{-1}(F_a(x)))=\mathrm{span}\{[x,df(x+\lambda a)^{\vee}]\colon f\in \C[\g]^G, \lambda\in \C\}.$$
Now note that $[x+\lambda a, df(x+\lambda a)^\vee] =0$ for all $f\in\C[\g]^G$ and $\lambda\in\mathbb{C}$ (see \cite[Proposition 1.3]{KostantSolution}), which together with the last sentence yields \eqref{eq:TangentspaceFibre3}.
\end{proof}

\section{Irreducible components of Mishchenko--Fomenko fibres}\label{Section: Irreducible components}

Our attention now turns to the irreducible components of Mishchenko--Fomenko fibres. Some of our results specialize to those of Charbonnel--Moreau \cite{Charbonnel-Moreau} when $a\in\mathfrak{g}_{\text{reg}}$ is taken to be nilpotent. This reflects the degree to which \cite{Charbonnel-Moreau} inspired our work. 

\subsection{Irreducible components contained in parabolics}\label{Section:IrrCompPar}\label{Subsection: Parabolic components}
Suppose that a parabolic subalgebra $\p\subseteq\g$ contains $a\in\mathfrak{g}_{\text{reg}}$. Proposition \ref{Prop:alregular} then explains how to choose a Levi factor $\lf\subseteq\p$ such that $a_{\lf}\in\lf_{\text{reg}}$. We could next choose free generators of the invariant polynomials on $\lf$, subsequently using such generators to construct a Mishchenko--Fomenko map $F_{a_{\lf}}:\lf\rightarrow\mathbb{C}^{b(\lf)}$ (cf. Section \ref{Subsection: The Mishchenko-Fomenko subalgebra}). By Remark \ref{Remark: Independence of generators}, the fibres $F_{a_{\lf}}^{-1}(F_{a_{\lf}}(x))$ are independent of the aforementioned generators. It is with this understanding that we formulate the following result.     

\begin{thm}\label{Theorem: IrredCompInParabolic_x}
Let $a\in\mathfrak{g}_{\emph{reg}}$ be contained in a parabolic subalgebra $\p\subseteq\g$ having nilpotent radical $\uu$. Let $\h\subseteq\p$ be a Cartan subalgebra containing the semisimple part of $a$, and let $\lf\subseteq\p$ be the $\h$-stable Levi factor.
Consider the Mishchenko--Fomenko map $F_{a_{\lf}}:
\lf\to\mathbb{C}^{b(\lf)}$, and suppose that we have a point $x = x_{\lf} + x_{\uu}\in\p$ with $x_{\lf}\in\lf$ and $x_{\uu}\in\uu$. If $Y$ is an irreducible component of $F_{a_{\lf}}^{-1}(F_{a_{\lf}}(x_{\lf}))$ containing $x_{\lf}$, then $Y+\uu$ is an irreducible component of $F_a^{-1}(F_a(x))$ containing $x$. We thereby obtain a bijection 
\begin{eqnarray*}
\{\text{irred. comp. $Y\subseteq F_{a_{\mathfrak{l}}}^{-1}(F_{a_{\mathfrak{l}}}(x_{\mathfrak{l}}))$ s.t. $x_{\mathfrak{l}}\in Y$}\} &\to& \{\text{irred. comp. $Z\subseteq F_{a}^{-1}(F_{a}(x))$ s.t. $x\in Z\subseteq \mathfrak{p}$}\} \\
Y &\mapsto & Y+\uu.
\end{eqnarray*}
\end{thm}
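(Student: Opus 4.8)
The plan is to establish the theorem in three stages: first understand how invariant polynomials on $\g$, after argument-shifting by $a$, restrict to $\p$ and descend to $\lf$; second, use this to identify $F_a^{-1}(F_a(x))\cap\p$ with a product $F_{a_\lf}^{-1}(F_{a_\lf}(x_\lf))\times\uu$ (or rather its preimage under the projection $\p\to\lf$); and third, extract the statement about irreducible components from this identification.

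For the first stage, fix the decomposition $\p=\lf\oplus\uu$ coming from the $\h$-stable Levi factor. The key computation is that for $f\in\C[\g]^G$, $\lambda\in\C$, and any $w\in\p$, one has $f(w+\lambda a)=f(w_\lf+\lambda a_\lf + \text{(something in }\uu))$, and then by Lemma \ref{Lemma:3.1.43.parabolic} applied to the shifted point $w+\lambda a\in\p$ (note $a\in\p$ so $w+\lambda a\in\p$, and $w_\lf+\lambda a_\lf$ is its $\lf$-component), $f(w+\lambda a)=f(w_\lf+\lambda a_\lf)$ — constancy of $f$ along $\uu$-cosets. So $f(w+\lambda a)$ depends only on $w_\lf$. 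I would also need that restricting $\C[\g]^G$ to $\lf$ surjects onto $\C[\lf]^L$ (standard: $\lf$ is a Levi, restriction of invariants is surjective, e.g. via Chevalley restriction compatibility), so that the functions $w_\lf\mapsto f(w_\lf+\lambda a_\lf)$ as $f$ ranges over $\C[\g]^G$ and $\lambda$ over $\C$ generate exactly $\mathcal{F}_{a_\lf}\subseteq\C[\lf]$. Combined with the characterization \eqref{Equation: Fibre via algebra} of fibres, this gives: for $y\in\p$, we have $y\in F_a^{-1}(F_a(x))$ if and only if $f(y_\lf+\lambda a_\lf)=f(x_\lf+\lambda a_\lf)$ for all $f\in\C[\g]^G$, $\lambda\in\C$, i.e. if and only if $y_\lf\in F_{a_\lf}^{-1}(F_{a_\lf}(x_\lf))$. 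In other words, $F_a^{-1}(F_a(x))\cap\p = \pi^{-1}\big(F_{a_\lf}^{-1}(F_{a_\lf}(x_\lf))\big)$ where $\pi:\p\to\lf$ is the projection, and this preimage is precisely $F_{a_\lf}^{-1}(F_{a_\lf}(x_\lf))+\uu$, a set isomorphic as a variety to $F_{a_\lf}^{-1}(F_{a_\lf}(x_\lf))\times\uu$ via $(v,\xi)\mapsto v+\xi$.

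For the third stage, the product structure immediately gives that $Y\mapsto Y+\uu$ is a bijection between irreducible components of $F_{a_\lf}^{-1}(F_{a_\lf}(x_\lf))$ containing $x_\lf$ and irreducible components of $F_a^{-1}(F_a(x))\cap\p$ containing $x$ (irreducible components of $V\times\uu$ are exactly $C\times\uu$ for $C$ an irreducible component of $V$, since $\uu$ is irreducible; and $x=x_\lf+x_\uu\in C+\uu$ iff $x_\lf\in C$). The remaining point — and I expect this to be the main obstacle — is to upgrade "irreducible component of $F_a^{-1}(F_a(x))\cap\p$" to "irreducible component of $F_a^{-1}(F_a(x))$ that happens to lie in $\p$." Concretely: given an irreducible component $Y$ of $F_{a_\lf}^{-1}(F_{a_\lf}(x_\lf))$ containing $x_\lf$, the set $Y+\uu$ is irreducible and closed and lies in $F_a^{-1}(F_a(x))$, so it is contained in some irreducible component $Z$ of $F_a^{-1}(F_a(x))$; I must show $Z=Y+\uu$. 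The dimension count does this cleanly: $\dim(Y+\uu)=\dim Y+\dim\uu$; by Proposition \ref{Proposition: Irreducible component dimension} applied to the reductive Lie algebra $\lf$, $\dim Y = b(\lf)-\mathrm{rk}(\lf)$; and $\dim Z = b(\g)-r$ by the same proposition applied to $\g$. Then the identities $\dim\uu = b(\g)-b(\lf)$ and $\mathrm{rk}(\lf)=r$ (a Levi has the same rank as $\g$) give $\dim(Y+\uu)=b(\lf)-r+b(\g)-b(\lf)=b(\g)-r=\dim Z$. Since $Y+\uu$ is irreducible, closed, contained in the irreducible set $Z$ (well, $Z$ is irreducible), and has the same dimension, we get $Y+\uu=Z$. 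Conversely, any irreducible component $Z$ of $F_a^{-1}(F_a(x))$ with $x\in Z\subseteq\p$ is an irreducible component of $F_a^{-1}(F_a(x))\cap\p$ (since it is closed, irreducible, contained in that set, and maximal among irreducible subsets of the bigger set hence a fortiori of the smaller), so it equals $Y+\uu$ for the corresponding $Y$, which must then be an irreducible component of $F_{a_\lf}^{-1}(F_{a_\lf}(x_\lf))$ containing $x_\lf$ by the product-structure argument. This establishes the bijection. The genuinely delicate input is the surjectivity of $\C[\g]^G\to\C[\lf]^L$ under restriction together with correctly tracking that $a_\lf$ is the relevant shift element — but $a_\lf\in\lf_{\mathrm{reg}}$ is exactly Proposition \ref{Prop:alregular}, so $\mathcal{F}_{a_\lf}$ and $F_{a_\lf}$ are well-defined, and the rest is bookkeeping with Lemma \ref{Lemma:3.1.43.parabolic} and the pure-dimensionality from Proposition \ref{Proposition: Irreducible component dimension}.
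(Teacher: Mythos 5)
Your forward direction is sound and coincides with the paper's argument: Lemma \ref{Lemma:3.1.43.parabolic} shows that the shifted invariants are constant along $\uu$-cosets, giving $Y+\uu\subseteq F_a^{-1}(F_a(x))$, and the equidimensionality of Proposition \ref{Proposition: Irreducible component dimension} together with $b(\g)=b(\lf)+\dim(\uu)$ and $\mathrm{rk}(\lf)=r$ upgrades this containment to an irreducible component.

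The gap is in your second stage. The restriction map $\C[\g]^G\to\C[\lf]^L$ is \emph{not} surjective: under Chevalley restriction it becomes the inclusion of $\C[\h]^W$ into the invariants of the (smaller) Weyl group of $(\lf,\h)$, which is proper whenever $\lf\neq\g$. For $\g=\sln_2(\C)$ and $\lf=\h$, the restriction of $\tr(x^2)$ to $\h=\{\mathrm{diag}(t,-t)\}$ is $2t^2$, which does not generate $\C[t]=\C[\h]^L$. Consequently the functions $w_\lf\mapsto f(w_\lf+\lambda a_\lf)$ generate only a proper subalgebra of $\mathcal{F}_{a_\lf}$, and your identification $F_a^{-1}(F_a(x))\cap\p=F_{a_\lf}^{-1}(F_{a_\lf}(x_\lf))+\uu$ is false: the correct statement is that $F_a^{-1}(F_a(x))\cap\lf$ is a \emph{union} of fibres of $F_{a_\lf}$, in general more than one. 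Concretely, for nilpotent $a$, $\p=\bb$, $\lf=\h$, one has $F_a^{-1}(F_a(x))\cap\bb=\bigcup_{w\in W}(wx_{\h}+\uu)$, whereas your formula would produce only $x_{\h}+\uu$ (compare Proposition \ref{Prop: Unipotent orbit}(iii)). Since your surjectivity argument rests entirely on this product structure, it needs repair. The repair is exactly what the paper does: show that $F_a^{-1}(F_a(x))\cap\lf$ is an equidimensional union of $F_{a_\lf}$-fibres whose irreducible components are precisely the components of those fibres; given $Z$ with $x\in Z\subseteq\p$, verify $Z=Z_\lf+\uu$ by projecting and comparing dimensions; conclude that $Z_\lf$ is a component of one of those fibres containing $x_\lf$, which forces that fibre to be $F_{a_\lf}^{-1}(F_{a_\lf}(x_\lf))$. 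Because you only ever consider components containing $x$, the theorem's conclusion survives, but the intermediate claim you use to reach it does not.
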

\begin{proof}
We begin by claiming that $F_a$ is constant-valued on $F_{a_{\lf}}^{-1}(F_{a_{\lf}}(x_{\lf}))$. To this end, suppose that $y\in F_{a_{\lf}}^{-1}(F_{a_{\lf}}(x_{\lf}))$. An application of Lemma \ref{Lemma:3.1.43.parabolic} establishes that
$$f(y+\lambda a) = f(y+\lambda a_{\mathfrak l})$$ for all $f\in\mathbb{C}[\g]^G$ and $\lambda\in\mathbb{C}$, where $a_{\lf}$ is the projection of $a\in\p$ onto $\lf$. Now observe that Remark \ref{Remark: Independence of generators} applies to $F_{a_{\lf}}$ and shows that the right hand side is $f(x_{\lf}+\lambda a_{\lf})$, i.e. 
$$f(y+\lambda a)=f(x_{\lf}+\lambda a_{\lf}).$$ A second application of Lemma \ref{Lemma:3.1.43.parabolic} then gives 
$$f(y+\lambda a)=f(x_{\lf}+\lambda a).$$ Since this holds for all $f\in\mathbb{C}[\g]^G$ and $\lambda\in\mathbb{C}$, Remark \ref{Remark: Independence of generators} implies that $y\in F_a^{-1}(F_a(x_{\lf}))$. We conclude that $F_a$ is indeed constant-valued on $F_{a_{\lf}}^{-1}(F_{a_{\lf}}(x_{\lf}))$.

Now let $Y$ be an irreducible component of $F_{a_{\mathfrak{l}}}^{-1}(F_{a_{\mathfrak{l}}}(x_{\mathfrak{l}}))$ such that $x_{\lf}\in Y$, noting that $F_a$ must also be constant-valued on $Y$. An application of Lemma \ref{Lemma:3.1.43.parabolic} then shows that $F_a$ is constant-valued on $Y+\uu$. Since $x=x_{\lf}+x_{\uu}\in Y+\uu$, this means precisely that $Y+\uu\subseteq F_a^{-1}(F_a(x))$.
At the same time, Proposition \ref{Proposition: Irreducible component dimension} implies that
$$\dim(Y+\uu)=b(\lf)-\mathrm{rk}(\lf)+\dim(\uu).$$
We also know that $b(\mathfrak{g}) = b(\mathfrak{l}) + \dim(\uu)$ and $\mathrm{rk}(\mathfrak{g}) = \mathrm{rk}(\mathfrak{l})$, meaning that 
$$\dim(Y+\uu)=b(\g)-\mathrm{rk}(\g).$$
By Proposition \ref{Proposition: Irreducible component dimension}, the right hand side is precisely the dimension of each irreducible component in $F_a^{-1}(F_a(x))$. The irreducibility of $Y+\uu$ now implies that $Y+\uu$ must be an irreducible component of $F_a^{-1}(F_a(x))$. Having already noted that $x\in Y+\uu$, we have verified the first part of our proposition. 

It remains only to prove that $Y\mapsto Y+\uu$ defines a bijection between the above-indicated sets of irreducible components. This association is clearly injective, reducing us to verifying surjectivity. To this end, let $Z\subseteq\mathfrak{p}$ be an irreducible component of $F_a^{-1}(F_a(x))$ such that $x\in Z\subseteq\p$. The decomposition $\p=\lf\oplus\uu$ induces a projection $\p\rightarrow\lf$, and we let $Z_{\lf}\subseteq\lf$ denote the image of $Z$ under this projection. Lemma \ref{Lemma:3.1.43.parabolic} then implies that $Z_{\lf}+\uu\subseteq F_a^{-1}(F_a(x))$. At the same time, it is easy to see that $Z_{\mathfrak{l}}+\uu$ is irreducible and contains $Z$. These last two sentences force $Z=Z_{\lf}+\uu$ to hold. 

In light of the previous paragraph, it suffices to show that $Z_{\lf}$ is an irreducible component of $F_{a_{\lf}}^{-1}(F_{a_{\lf}}(x_{\lf}))$ containing $x_{\lf}$. We first note that the containment $x_{\lf}\in Z_{\lf}$ is clear, as $x\in Z$. To show that $Z_{\lf}$ is an irreducible component of $F_{a_{\lf}}^{-1}(F_{a_{\lf}}(x_{\lf}))$, we study the intersection $F_{a}^{-1}(F_a(x))\cap\lf$. Let $X$ be an irreducible component of $F_{a}^{-1}(F_a(x))\cap\lf$, noting that $X+\uu$ is irreducible. Lemma \ref{Lemma:3.1.43.parabolic} also implies that $X+\uu\subseteq\ F_a^{-1}(F_a(x))$, so that 
Proposition \ref{Proposition: Irreducible component dimension} yields
$$\dim(X+\uu)\leq b(\g)-\mathrm{rk}(\g).$$
Since $b(\lf)=b(\g)-\dim(\uu)$ and $\mathrm{rk}(\lf)=\mathrm{rk}(\g)$, this amounts to the statement that
$$\dim(X)\leq b(\lf)-\mathrm{rk}(l).$$ It follows that each irreducible component of $F_a^{-1}(F_a(x))\cap\lf$ has dimension at most $b(\lf)-\mathrm{rk}(l)$. In particular, any closed, irreducible, ($b(\lf)-\mathrm{rk}(l)$)-dimensional subvariety of $F_a^{-1}(F_a(x))\cap\lf$ is necessarily an irreducible component of $F_a^{-1}(F_a(x))\cap\lf$.

Now recall the first paragraph of this proof. One can use similar ideas to show that the component functions of $F_a\big\vert_{\lf}:\lf\rightarrow\mathbb{C}^b$ belong to the subalgebra $\mathcal{F}_{a_{\lf}}\subseteq\mathbb{C}[\lf]$. This implies that $F_{a}^{-1}(F_a(x))\cap\lf$ is a union of fibres of $F_{a_{\lf}}$. We may therefore write
$$F_{a}^{-1}(F_a(x))\cap\lf=\bigcup_{z\in S} F_{a_{\lf}}^{-1}(z)$$ for some subset $S\subseteq\mathbb{C}^{b(\lf)}$. For each $z\in S$, let $X_{z,1},\ldots,X_{z,n(z)}$ be the irreducible components of $F_{a_{\lf}}^{-1}(z)$. We then have
\begin{equation}\label{Equation: Irreducible component decomposition} F_{a}^{-1}(F_a(x))\cap\lf=\bigcup_{z\in S}\bigcup_{i=1}^{n(z)}X_{z,i},\end{equation}
while Proposition \ref{Proposition: Irreducible component dimension} implies that $\dim(X_{z,i})=b(\lf)-\mathrm{rk}(\lf)$ for all $z\in S$ and $i\in\{1,\ldots,n(z)\}$. Together with the last sentence of the previous paragraph, this implies that \eqref{Equation: Irreducible component decomposition} is the decomposition of $F_{a}^{-1}(F_a(x))\cap\lf$ into irreducible components.

We now observe that $Z_{\lf}$ is irreducible and satisfies
\begin{align*}
\dim(Z_{\lf}) & = \dim(Z_{\lf}+\uu)-\dim(\uu) \\
&=\dim(Z)-\dim(\uu)\hspace{52pt}\text{(since $Z=Z_{\lf}+\uu$)}\\
&=b(\g)-\mathrm{rk}(\g)-\dim(\uu) \hspace{33pt}\text{(by Proposition \ref{Proposition: Irreducible component dimension})}\\
&=b(\lf)-\mathrm{rk}(\lf)\hspace{80pt}\text{(since }b(\lf)=b(\g)-\dim(\uu)\text{ and }\mathrm{rk}(\lf)=\mathrm{rk}(\g)\text{)}.
\end{align*}
We may also use Lemma \ref{Lemma:3.1.43.parabolic} and the fact that $Z\subseteq F_{a}^{-1}(F_a(x))$ to establish that $Z_{\lf}\subseteq F_{a}^{-1}(F_a(x))\cap\lf$.
These last two sentences combine with the previous paragraph to imply that $Z_{\lf}=X_{z,i}$ for some $z\in S$ and $i\in\{1,\ldots,n(z)\}$. This means that $Z_{\lf}$ is an irreducible component of a fibre of $F_{a_{\lf}}$. Since $x_{\lf}\in Z_{\lf}$, the fibre in question must be $F_{a_{\lf}}^{-1}(F_{a_{\lf}}(x_{\lf}))$. The proof is complete.
\end{proof}

A similar approach can be used to describe the irreducible components of $F_a^{-1}(F_a(x))$ that are contained in a given parabolic $\mathfrak{p}$, provided that $\p$ contains $a\in\greg$.

\begin{prop}\label{Prop: IrredCompParabolic}
Let $a$, $\p$, $\lf$, and $\uu$ be exactly as described in the first two sentences of Theorem \ref{Theorem: IrredCompInParabolic_x}.  
If $x \in\mathfrak{p}$, then $F_a^{-1}(F_a(x))\cap\mathfrak{l}$ is a union of fibres of $F_{a_{\mathfrak{l}}}$. Furthermore, we have a bijection
{\small \begin{eqnarray*}
\left\{\text{$Y\subseteq F_a^{-1}(F_a(x))\cap\mathfrak{l}$ s.t. $Y$ is an irred. comp. of a fibre of $F_{a_{\mathfrak{l}}}$}\right\} &\to& \left\{\text{irred. comp. $Z\subseteq F_a^{-1}(F_a(x))$ s.t. $Z\subseteq\p$} \right\}\\
Y &\mapsto& Y+\uu.
\end{eqnarray*}}
\end{prop}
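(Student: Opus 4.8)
The plan is to reduce Proposition \ref{Prop: IrredCompParabolic} to Theorem \ref{Theorem: IrredCompInParabolic_x}, which already handles the irreducible components of $F_a^{-1}(F_a(x))$ passing through $x$ and contained in $\p$. The first claim, that $F_a^{-1}(F_a(x))\cap\lf$ is a union of fibres of $F_{a_{\lf}}$, is essentially extracted from the proof of Theorem \ref{Theorem: IrredCompInParabolic_x}: there one shows (using Lemma \ref{Lemma:3.1.43.parabolic} and the argument identifying the component functions of $F_a\big\vert_\lf$ as elements of $\mathcal{F}_{a_\lf}\subseteq\mathbb{C}[\lf]$) that if $y\in\lf$ lies in $F_a^{-1}(F_a(x))$ then the entire fibre $F_{a_\lf}^{-1}(F_{a_\lf}(y))$ does too. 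I would first record this as a standalone observation, so that we may write $F_a^{-1}(F_a(x))\cap\lf = \bigcup_{z\in S} F_{a_\lf}^{-1}(z)$ for some $S\subseteq\mathbb{C}^{b(\lf)}$, exactly as in \eqref{Equation: Irreducible component decomposition}, and note that by Proposition \ref{Proposition: Irreducible component dimension} all the irreducible components $X_{z,i}$ of these fibres have the common dimension $b(\lf)-\mathrm{rk}(\lf) = b(\g)-\mathrm{rk}(\g)-\dim(\uu)$, and that \eqref{Equation: Irreducible component decomposition} is therefore the irreducible decomposition of $F_a^{-1}(F_a(x))\cap\lf$.

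Next I would verify that the map $Y\mapsto Y+\uu$ is well-defined into the target set. If $Y=X_{z,i}$ for some $z\in S$, then $Y\subseteq F_a^{-1}(F_a(x))$ (since $Y\subseteq F_a^{-1}(F_a(x))\cap\lf$), so by Lemma \ref{Lemma:3.1.43.parabolic} we get $Y+\uu\subseteq F_a^{-1}(F_a(x))$; moreover $Y+\uu$ is irreducible with $\dim(Y+\uu)=b(\lf)-\mathrm{rk}(\lf)+\dim(\uu)=b(\g)-\mathrm{rk}(\g)$, which is the dimension of every irreducible component of $F_a^{-1}(F_a(x))$ by Proposition \ref{Proposition: Irreducible component dimension}. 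Hence $Y+\uu$ is an irreducible component of $F_a^{-1}(F_a(x))$, and it is contained in $\p$ since $Y\subseteq\lf$ and $\uu\subseteq\p$. For injectivity, note that $\uu$ is the fibre of the projection $\p\to\lf$ through $0$, so $(Y+\uu)\cap\lf = Y$ recovers $Y$ from $Y+\uu$ (using $Y\subseteq\lf$); I should double-check this set-theoretic identity, since a priori $(Y+\uu)\cap\lf$ could be larger than $Y$, but the decomposition $\p=\lf\oplus\uu$ makes it immediate.

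For surjectivity, let $Z$ be an irreducible component of $F_a^{-1}(F_a(x))$ with $Z\subseteq\p$. Exactly as in the proof of Theorem \ref{Theorem: IrredCompInParabolic_x}, I would set $Z_\lf$ to be the image of $Z$ under the projection $\p\to\lf$; then $Z_\lf+\uu$ is irreducible, contains $Z$, and (by Lemma \ref{Lemma:3.1.43.parabolic}) is contained in $F_a^{-1}(F_a(x))$, which forces $Z=Z_\lf+\uu$ by maximality of $Z$. The dimension computation displayed at the end of the proof of Theorem \ref{Theorem: IrredCompInParabolic_x} gives $\dim(Z_\lf)=b(\lf)-\mathrm{rk}(\lf)$, and $Z_\lf\subseteq F_a^{-1}(F_a(x))\cap\lf$; comparing with the irreducible decomposition \eqref{Equation: Irreducible component decomposition} recorded above, $Z_\lf$ must equal some $X_{z,i}$, i.e. $Z_\lf$ is an irreducible component of a fibre of $F_{a_\lf}$, and $Z_\lf\mapsto Z_\lf+\uu = Z$. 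This establishes the bijection. The main thing to be careful about — and arguably the only non-formal point — is the claim that the component functions of $F_a\big\vert_\lf$ lie in $\mathcal{F}_{a_\lf}$, so that $F_a^{-1}(F_a(x))\cap\lf$ is genuinely a union of whole $F_{a_\lf}$-fibres; everything else is a matter of transporting the dimension bookkeeping from Theorem \ref{Theorem: IrredCompInParabolic_x}. Since that claim was already used (and sketched as "one can use similar ideas") inside the proof of Theorem \ref{Theorem: IrredCompInParabolic_x}, I would make it explicit here: for $f\in\mathbb{C}[\g]^G$ and $\lambda\in\mathbb{C}$, Lemma \ref{Lemma:3.1.43.parabolic} gives $f(y+\lambda a)=f(y+\lambda a_\lf)$ for $y\in\lf$, and the right-hand side, as a polynomial in $y\in\lf$, is a generator of $\mathcal{F}_{a_\lf}$; expanding in $\lambda$ then exhibits each $f_{ij}^a\big\vert_\lf$ as an element of $\mathcal{F}_{a_\lf}$.
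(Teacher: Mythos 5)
Your proposal is correct and is essentially the paper's own argument: the paper proves this proposition by saying "the proof of Theorem \ref{Theorem: IrredCompInParabolic_x} works with minor modifications," and you have spelled out precisely those modifications — the identification of $F_a^{-1}(F_a(x))\cap\lf$ as a union of $F_{a_{\lf}}$-fibres via Lemma \ref{Lemma:3.1.43.parabolic}, the dimension bookkeeping from Proposition \ref{Proposition: Irreducible component dimension}, and the projection argument for surjectivity. Your explicit verification that the component functions of $F_a\big\vert_{\lf}$ lie in $\mathcal{F}_{a_{\lf}}$ fills in a step the paper only sketches, which is a welcome addition.
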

\begin{proof}
The proof of Theorem \ref{Theorem: IrredCompInParabolic_x} works with minor modifications. 
\end{proof}

We may build on Theorem \ref{Theorem: IrredCompInParabolic_x} and Proposition \ref{Prop: IrredCompParabolic} as follows.

\begin{prop}\label{Prop: Unipotent orbit}
Suppose that $a\in\greg$ is contained in a Borel subalgebra $\bb\subseteq\g$ with nilpotent radical $\uu$. Let $\mathfrak{h}\subseteq \mathfrak{g}$ be a Cartan subalgebra contained in $\bb$, and assume that $x\in\bb$. 
\begin{itemize}
\item[(i)] The variety $x+\mathfrak{u}$ is the unique irreducible component of $F_a^{-1}(F_a(x))$ that contains $x$ and is contained in $\mathfrak{b}$.
\item[(ii)] If $x$ is regular and semisimple, then $x+\mathfrak{u}$ is an irreducible component of $F_a^{-1}(F_a(x))\cap Gx$.
\item[(iii)] Write $x = x_{\h}+x_{\uu}$ with $x_{\h}\in\h$ and $x_{\uu}\in\uu$, and let $W$ be the Weyl group of $(\g,\h)$. If $a$ is nilpotent, then the irreducible components of $F_a^{-1}(F_a(x))$ contained in $\bb$ are the subvarieties $w x_{\h} +\uu$, $w\in W$.
\end{itemize}
\end{prop}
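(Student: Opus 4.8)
The plan is to deduce all three parts from the parabolic-reduction results of Section~\ref{Subsection: Parabolic components}, applied to the Borel subalgebra $\bb$ itself in the role of the parabolic $\p$. The crucial simplification is that the $\h$-stable Levi factor of a Borel subalgebra is the Cartan subalgebra $\h$: being abelian, its ring of invariants is all of $\C[\h]$, generated by the linear coordinate functions (each of degree $1$), so the Mishchenko--Fomenko map $F_{a_\h}:\h\to\C^{b(\h)}$ attached to $a_\h\in\h_{\mathrm{reg}}=\h$ is, after relabelling coordinates, just the identification $\h\xrightarrow{\cong}\C^{r}$. In particular every fibre of $F_{a_\h}$ is a single point, and every subvariety of $\h$ is a union of (point) fibres of $F_{a_\h}$.

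For part (i), use Lemma~\ref{Lemma: Jordan decomposition Borel} to choose a Cartan subalgebra $\h_{0}\subseteq\bb$ containing the semisimple part of $a$, and write $x = x_{\h_{0}}+x_{\uu}$ with $x_{\h_{0}}\in\h_{0}$ and $x_{\uu}\in\uu$. Theorem~\ref{Theorem: IrredCompInParabolic_x}, applied with $\p=\bb$ and $\lf=\h_{0}$, yields a bijection $Y\mapsto Y+\uu$ from the set of irreducible components of $F_{a_{\h_{0}}}^{-1}(F_{a_{\h_{0}}}(x_{\h_{0}}))$ containing $x_{\h_{0}}$ onto the set of irreducible components $Z$ of $F_a^{-1}(F_a(x))$ with $x\in Z\subseteq\bb$. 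The source is the singleton $\{\,\{x_{\h_{0}}\}\,\}$, so the target is the singleton $\{\,\{x_{\h_{0}}\}+\uu\,\} = \{\,x+\uu\,\}$, which is exactly the assertion of (i). For part (ii), note that a regular semisimple $x$ lies in $\greg$ and has closed orbit $Gx$, so Lemma~\ref{Lemma: Contained in closure} gives $F_a^{-1}(F_a(x))\subseteq\overline{Gx}=Gx$; hence $F_a^{-1}(F_a(x))\cap Gx = F_a^{-1}(F_a(x))$, and (ii) is immediate from the part of (i) saying that $x+\uu$ is an irreducible component of $F_a^{-1}(F_a(x))$.

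For part (iii), $a$ is nilpotent, so $\bb = \ba$ is forced, the semisimple part of $a$ is $0$ and lies in the given $\h$, and $a\in\uu=[\ba,\ba]$ (as recorded in Section~\ref{Subsection: Sections of MF}). Proposition~\ref{Prop: IrredCompParabolic} with $\p=\bb$ and $\lf=\h$ then gives a bijection $Y\mapsto Y+\uu$ between the irreducible components of fibres of $F_{a_\h}$ lying inside $F_a^{-1}(F_a(x))\cap\h$ --- equivalently, the points of $F_a^{-1}(F_a(x))\cap\h$ --- and the irreducible components of $F_a^{-1}(F_a(x))$ contained in $\bb$. So the whole matter reduces to showing $F_a^{-1}(F_a(x))\cap\h = Wx_\h$. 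For $z\in\h$, repeated applications of Lemma~\ref{Lemma:3.1.43.parabolic} to $\bb$ (using $a\in\uu$ and $x-x_\h\in\uu$) give $f(z+\lambda a)=f(z)$ and $f(x+\lambda a)=f(x)=f(x_\h)$ for all $f\in\C[\g]^G$ and $\lambda\in\C$; combining this with \eqref{Equation: Fibre via algebra} shows $z\in F_a^{-1}(F_a(x))$ if and only if $f(z)=f(x_\h)$ for every invariant $f$. By the Chevalley restriction theorem the restrictions $f\big\vert_\h$ exhaust $\C[\h]^{W}$, and $\C[\h]^{W}$ separates $W$-orbits, so this last condition is equivalent to $z\in Wx_\h$; feeding $F_a^{-1}(F_a(x))\cap\h = Wx_\h$ back into the bijection produces precisely the components $wx_\h+\uu$, $w\in W$. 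I expect this identification of $F_a^{-1}(F_a(x))\cap\h$ to be the only step that is not pure bookkeeping; checking that the $\h$-stable Levi of a Borel is $\h$, that $F_{a_\h}$ has point fibres, and that $a_\h$ means the obvious thing are all routine and essentially already contained in Sections~\ref{Section: Lie-theoretic foundations}--\ref{Section: Generalities}.
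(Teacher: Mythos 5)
Your argument is correct, and for parts (i) and (iii) it is essentially the paper's own proof: reduce to $\p=\bb$, $\lf=\h$ via Theorem \ref{Theorem: IrredCompInParabolic_x} and Proposition \ref{Prop: IrredCompParabolic}, observe that $F_{a_\h}$ has singleton fibres, and in (iii) identify $F_a^{-1}(F_a(x))\cap\h$ with $Wx_\h$ (the paper cites \cite[Lemma 9.2]{KostantTDS} where you invoke Chevalley restriction directly; these are the same fact). Your care in choosing a Cartan containing the semisimple part of $a$ before applying the reduction theorem is a point the paper glosses over, and it is handled correctly. The only genuine divergence is in part (ii): the paper argues that $Gx$ is open in $\overline{Gx}$, so $(x+\uu)\cap Gx$ is a component of $F_a^{-1}(F_a(x))\cap Gx$, and then uses $Bx=x+\uu$ to see that this intersection is all of $x+\uu$; you instead use the closedness of the adjoint orbit of a regular semisimple element to get $F_a^{-1}(F_a(x))\subseteq\overline{Gx}=Gx$, so that intersecting with $Gx$ changes nothing and (ii) collapses into (i). Your route is shorter and perfectly valid here, though it leans on semisimplicity of $x$ in a way the paper's openness argument does not, so the paper's version is the one that would survive a weakening of the hypothesis to merely regular $x$ (at the cost of only producing $(x+\uu)\cap Gx$ as the component).
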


\begin{proof}
To prove (i), observe that the component functions of $F_{a_{\h}}:\h\rightarrow\mathbb{C}^r$ are homogeneous, algebraically independent generators of $\mathbb{C}[\h]$. It follows that $F_{a_{\h}}$ is a vector space isomorphism, so that $F_{a_{\h}}^{-1}(F_{a_{\h}}(x_{\h}))=\{x_{\h}\}$.
With this in mind, consider the bijective correspondence implied by Theorem \ref{Theorem: IrredCompInParabolic_x} when $\mathfrak{p} = \mathfrak{b}$ and $\mathfrak{l} = \mathfrak{h}$. We conclude that $x_{\h}+\uu=x+\mathfrak{u}$ is the unique irreducible component of $F_a^{-1}(F_a(x))$ that contains $x$ and is contained in $\mathfrak{b}$.

To prove (ii), we recall from Lemma \ref{Lemma: Contained in closure} that $F_a^{-1}(F_a(x))\subseteq\overline{Gx}$. The orbit $Gx$ is open in its closure, which together with the previous sentence means that $F_a^{-1}(F_a(x))\cap Gx$ is open in $F_a^{-1}(F_a(x))$. Since $x+\mathfrak{u}$ is an irreducible component of $F_a^{-1}(F_a(x))$ that intersects this open set, it follows that $(x+\mathfrak{u})\cap Gx$ is an irreducible component of $F_a^{-1}(F_a(x))\cap Gx$. We are therefore reduced to proving that $(x+\mathfrak{u})\cap Gx=x+\mathfrak{u}$. Accordingly, let $B$ denote the Borel subgroup of $G$ having Lie algebra $\mathfrak{b}$. The result \cite[Lemma 3.1.44]{Chriss} then implies than $Bx=x+\mathfrak{u}$, so that $(x+\mathfrak{u})\cap Gx=x+\mathfrak{u}$. This completes the proof.  

To prove (iii), recall that (i) gives $x+\uu\subseteq F_a^{-1}(F_a(x))$. It follows that $x_{\h}\in F_a^{-1}(F_a(x))$, so that $F_a^{-1}(F_a(x)) = F_a^{-1}(F_a(x_{\h}))$. We also note that $a\in\uu$, a consequence of $a$ being nilpotent. A straightforward application of Lemma \ref{Lemma:3.1.43.parabolic} then shows that \begin{equation}\label{Equation: Nilpotent equation} F_a\big\vert_{\h} = (f_1\big\vert_{\h},\ldots,f_r\big\vert_{\h},0,\ldots,0): \h\to \C^b.\end{equation} We thus have $$F_a^{-1}(F_a(x))\cap\h = F_a^{-1}(F_a(x_{\h}))\cap\h = F^{-1}(F(x_{\h}))\cap\h,$$ where $F:=(f_1,\ldots,f_r):\g\rightarrow\mathbb{C}^r$. At the same time, \cite[Lemma 9.2]{KostantTDS} tells us that $F^{-1}(F(x_{\h}))\cap\h$ is the $W$-orbit of $x_{\h}$, i.e.
$$F_a^{-1}(F_a(x))\cap\h=\{wx_{\h}:w\in W\}.$$
The desired result now follows from setting $\p=\bb$ and $\lf=\h$ in Proposition \ref{Prop: IrredCompParabolic}, along with the observation that $F_{a_{\h}}:\h\rightarrow\mathbb{C}^r$ has only singleton fibres (see the proof of (i)).  
\end{proof}

\begin{rem}
It is illuminating to consider Proposition \ref{Prop: Unipotent orbit}(ii) in the case of a regular element $x\in\bb$. In this case, the last $b-r$ components of $F_a$ form a completely integrable system on the symplectic leaf $Gx\subseteq\g$ (see \cite{CRR} or \cite{Panyushev2}). Proposition \ref{Prop: Unipotent orbit}(ii) then amounts to the following statement: $x+\uu$ is an irreducible component of the fibre through $x$ in the integrable system on $Gx$.    
\end{rem}

\begin{rem}
The irreducible component $x+\mathfrak{u}\subseteq F_a^{-1}(F_a(x))$ constructed in Proposition \ref{Prop: Unipotent orbit}(i) need not meet $\gsreg$. To see this, let $a$ be any regular nilpotent element, $\mathfrak{b}$ the unique Borel subalgebra of $\mathfrak{g}$ containing $a$, and $\mathfrak{u}$ the nilpotent radical of $\mathfrak{b}$. It then follows that $a\in\mathfrak{u}$. Now choose a Cartan subalgebra $\mathfrak{h}$ of $\mathfrak{g}$ contained in $\mathfrak{b}$, which then renders $\mathfrak{u}$ a sum of the positive root spaces. We may therefore write
\[
a=\sum_{\alpha\in\Delta_{+}}r_{\alpha},
\]
where $\Delta_{+}$ is the set of positive roots and $r_{\alpha}\in\mathfrak{g}_{\alpha}$ for all $\alpha\in\Delta_{+}$. Since $a$ is regular, it follows from \cite[Theorem 5.3]{KostantTDS} that $r_{\alpha}\neq 0$ for all simple roots $\alpha$.

Now take an arbitrary $y\in\mathfrak{u}$, writing 
\[
y=\sum_{\alpha\in\Delta_{+}}s_{\alpha}
\]
with $s_{\alpha}\in\mathfrak{g}_{\alpha}$ for all $\alpha\in\Delta_{+}$. If we fix a particular simple root $\alpha$, then $r_{\alpha}$ being non-zero allows us to find $\lambda\in\mathbb{C}$ with $s_{\alpha}+\lambda r_{\alpha}=0$. We conclude that $y+\lambda a\not\in\mathfrak{g}_{\text{reg}}$, by \cite[Theorem 5.3]{KostantTDS}. It follows that $y\not\in\gsreg$, implying that $\mathfrak{u}\cap\gsreg=\emptyset$. In particular, every $x\in\mathfrak{u}$ has the property that $x+\mathfrak{u}$ ($=\mathfrak{u}$) does not meet $\gsreg$.
\end{rem}

\subsection{A recursive formula}\label{Subsection: A recursive formula}
We now use the results established in Section \ref{Section:IrrCompPar} to derive a recursive formula for the number of irreducible components in $F_a^{-1}(0)$. To this end, the following elementary lemma is needed. 

\begin{lem}\label{Lemma:zero_in_irrcomp}
If $a\in\greg$, then every irreducible component of $F_a^{-1}(0)$ contains the origin $0\in\g$. 
\end{lem}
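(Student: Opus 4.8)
The plan is to exploit the $\mathbb{C}^\times$-action on $\g$ that rescales $F_a^{-1}(0)$ to itself. Recall from Section~\ref{Subsection: The Mishchenko-Fomenko subalgebra} that each component $f_{ij}^a$ of $F_a$ is homogeneous of degree $d_i - j \geq 1$; in particular every component of $F_a$ is a homogeneous polynomial of positive degree, so $F_a(0) = 0$ and the zero-fibre $F_a^{-1}(0)$ is invariant under the dilation action $t \cdot x := tx$ of $\mathbb{C}^\times$ on $\g$. The strategy is then purely a general fact about cones: a Zariski-closed, $\mathbb{C}^\times$-stable subvariety of an affine space has the property that each of its irreducible components is itself $\mathbb{C}^\times$-stable, and a non-empty $\mathbb{C}^\times$-stable closed subvariety of $\g$ must contain $0$ (it is the limit $\lim_{t\to 0} tx$ of any of its points).

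First I would record that $F_a^{-1}(0)$ is a cone: if $x \in F_a^{-1}(0)$ and $t \in \mathbb{C}^\times$, then $f_{ij}^a(tx) = t^{d_i - j} f_{ij}^a(x) = 0$, so $tx \in F_a^{-1}(0)$. Next I would argue that $\mathbb{C}^\times$ permutes the (finitely many) irreducible components of $F_a^{-1}(0)$: for fixed $t$, the map $x \mapsto tx$ is an automorphism of $\g$ preserving $F_a^{-1}(0)$, hence permutes its irreducible components, and since $\mathbb{C}^\times$ is connected and there are only finitely many components, this permutation action is trivial — each irreducible component $Z$ satisfies $tZ = Z$ for all $t \in \mathbb{C}^\times$. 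Finally, for such a $Z$, pick any point $x \in Z$; then the orbit map $\mathbb{C}^\times \to Z$, $t \mapsto tx$, has image contained in the closed set $Z$, and taking the limit $t \to 0$ (which exists in $\g$ since the action is linear) gives $0 = \lim_{t\to 0} tx \in Z$. Thus every irreducible component of $F_a^{-1}(0)$ contains the origin.

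I do not anticipate a serious obstacle here; the only mild subtlety is justifying that $\mathbb{C}^\times$ acts trivially on the finite set of irreducible components, which follows from connectedness of $\mathbb{C}^\times$ (any continuous/algebraic action of a connected group on a finite discrete set is trivial). Alternatively, one can avoid this step entirely: for $x \in F_a^{-1}(0)$, the closure $\overline{\mathbb{C}^\times x} = \mathbb{C}^\times x \cup \{0\}$ is an irreducible subset of $F_a^{-1}(0)$ containing both $x$ and $0$, so $x$ and $0$ lie in a common irreducible component; applying this to a point $x$ in the smooth locus of a given component $Z$ (so that $x$ lies in $Z$ and no other component) forces $0 \in Z$. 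Either route closes the argument cleanly.
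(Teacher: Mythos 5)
Your proof is correct and follows essentially the same route as the paper: homogeneity of the components $f_{ij}^a$ makes $F_a^{-1}(0)$ a $\mathbb{C}^\times$-stable cone, each irreducible component is therefore $\mathbb{C}^\times$-stable, and being closed it must contain $0$. You simply spell out the two small steps (connectedness of $\mathbb{C}^\times$ forcing the permutation of components to be trivial, and the limit $t\to 0$) that the paper leaves implicit.
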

\begin{proof}
The formula \eqref{eq:Def_fij} implies that each polynomial $f_{ij}^a\in\mathbb{C}[\g]$ is homogeneous of degree $d_i-j$. It follows that $F_a^{-1}(0)$ is invariant under the dilation action of $\mathbb{C}^{\times}$ on $\mathfrak{g}$. This forces each irreducible component of $F_a^{-1}(0)$ to be invariant under the aforementioned $\mathbb{C}^{\times}$-action. Since each of these irreducible components is also closed, it must contain $0$.
\end{proof}

We now introduce the notation on which our recursive formula is based. Given $a\in\greg$, let us set 
\[
\mathcal B_a :=\{\text{Borel subalgebras $\bb$ of $\g$ s.t. $a\in\bb$}\},
\] 
\[
\widetilde{\mathcal P_a} :=\{\text{parabolic subalgebras $\p$ of $\g$ s.t. $\p\neq\g$ and $a\in\p$}\},
\]
and 
\[
\mathcal P_a := \widetilde{\mathcal P_a}\setminus \mathcal B_a.
\]
We also consider the sets
\[
^{\g}\mathcal I_a := \{\text{irreducible components of $F_a^{-1}(0)$}\}
\]
and 
\[
^{\g}\mathcal I_a' := \{Z\in{}^{\g}\mathcal I_a\colon Z\nsubseteq \mathfrak p \; \forall \mathfrak p\in\mathcal P_a\}.
\]

\begin{rem}
One can phrase the definition of $^{\g}\mathcal I_a'$ in slightly different terms. To see this, note that the irreducible components of $\bigcup_{\p\in\mathcal{P}_a}\p$ are the maximal elements of $\mathcal{P}_a$ with respect to inclusion. It follows that a closed, irreducible subvariety of $\g$ belongs to $\bigcup_{\p\in\mathcal{P}_a}\p$ if and only it belongs to some $\p\in\mathcal{P}_a$. We deduce that $$^{\g}\mathcal I_a' = \{Z\in{}^{\g}\mathcal I_a\colon Z\nsubseteq \bigcup_{\p\in\mathcal P_a}\mathfrak p\}.$$
\end{rem}

Given any fixed $\mathfrak{p}\in\mathcal{P}_a$, we write
\[
^{\g}\mathcal I_a^{\mathfrak p} := \{Z\in{}^{\g}\mathcal I_a\colon \text{$Z\subseteq\mathfrak p$ and $Z\nsubseteq \mathfrak p'$ for all $\mathfrak p'\in \mathcal P_a$ with $\mathfrak p'\subsetneq \mathfrak p$}\}.
\]

Let us consider the Jordan decompositon $a=s+n$ and choose a Cartan subalgebra $\h\subseteq\g$ such that $s\in\h$. Given any $\p\in\mathcal{P}_a$, let $\lf_{\p}$ denote the unique $\h$-stable Levi factor and $a_{\lf_{\p}}\in\lf_{\p}$ the projection of $a$ onto $\lf_{\p}$. Proposition \ref{Prop:alregular} tells us that $a_{\lf_{\p}}$ is regular in $\lf_{\p}$, and we thus have analogous definitions of $\mathcal{B}_{a_{\mathfrak{l_{\p}}}}$, $\widetilde{\mathcal{P}_{a_\mathfrak{l_{\p}}}}$, $\mathcal{P}_{a_\mathfrak{l_{\p}}}$, $^{\lf_{\p}}\mathcal I_{a_{\lf_{\p}}}$, and $^{\lf_{\p}}\mathcal I_{a_{\lf_{\p}}}'$. Now decompose $[\mathfrak{l}_{\mathfrak{p}},\mathfrak{l}_{\p}]$ into its simple factors $\mathfrak{l}_1,\ldots,\mathfrak{l}_N$, so that
\[
\mathfrak{l}_{\p}=\mathfrak{z}(\mathfrak{l}_{\p})\oplus\mathfrak{l}_1\oplus\dots\oplus\mathfrak{l}_N.
\]
If $a_{\lf_i}$ denotes the projection of $a_{\lf_{\p}}$ onto $\lf_{i}$, then $a_{\lf_i}\in(\lf_i)_{\text{reg}}$ and one has analogous definitions of $\mathcal{B}_{a_{\mathfrak{l}_i}}$, $\widetilde{\mathcal{P}_{a_{\mathfrak{l}_i}}}$, $\mathcal{P}_{a_{\mathfrak{l}_i}}$, $^{\lf_i}\mathcal I_{a_{\lf_i}}$, and $^{\lf_i}\mathcal I_{a_{\lf_i}}'$.
\begin{thm}\label{Thm:RecursiveFormula}
Using the notation explained above, we have the following recursive formula for $|^{\g}\mathcal I_a|$:
\begin{equation}\label{Equation:RecursiveFormula}
|^{\g}\mathcal I_a| = |^{\g}\mathcal I_a'| + \sum_{\mathfrak{p}\in\mathcal P_a}\left(\prod_{\text{$\mathfrak l\subseteq [\mathfrak l_{\p},\mathfrak l_{\p}]$\\ \hspace{1pt} \emph{simple factor}}}|^{\mathfrak l}I_{a_{\mathfrak l}}'|\right) + |\mathcal B_a|.
\end{equation}
\end{thm}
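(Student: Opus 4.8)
The plan is to partition the set $^{\g}\mathcal I_a$ of irreducible components of $F_a^{-1}(0)$ into three disjoint pieces according to how each component sits relative to the proper parabolic subalgebras containing $a$, and then count each piece. The first piece is $^{\g}\mathcal I_a'$, the \emph{exotic} components that are not contained in any $\p\in\mathcal P_a$; these contribute the term $|^{\g}\mathcal I_a'|$ directly. The second piece consists of components $Z$ that lie in some proper parabolic but in no proper Borel subalgebra containing $a$, i.e.\ $Z\subseteq\p$ for some $\p\in\mathcal P_a$ but $Z\nsubseteq\bb$ for all $\bb\in\mathcal B_a$. The third piece consists of the components $Z$ with $Z\subseteq\bb$ for some $\bb\in\mathcal B_a$. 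First I would check this is genuinely a disjoint union: any component not in the first piece lies in some element of $\widetilde{\mathcal P_a}$, and $\widetilde{\mathcal P_a}=\mathcal P_a\sqcup\mathcal B_a$, so it lies in some $\p\in\mathcal P_a$ or some $\bb\in\mathcal B_a$ (or both — but the second and third pieces are separated by whether the component lies in \emph{any} Borel from $\mathcal B_a$, so they are disjoint by fiat).

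For the third piece, I would invoke Proposition \ref{Prop: Unipotent orbit}. Since $a$ is regular and $F_a^{-1}(0)=F_a^{-1}(F_a(0))$ with $0$ lying in every Borel, part (iii) of that proposition (applied with $x=0$, so $x_\h=0$ and $x_\uu=0$) shows that for each $\bb\in\mathcal B_a$ with Cartan $\h\subseteq\bb$, nilpotent radical $\uu$, the irreducible components of $F_a^{-1}(0)$ contained in $\bb$ are exactly the $w\cdot 0+\uu=\uu$ — but wait, this needs $a$ nilpotent. For general regular $a$, I would instead use Proposition \ref{Prop: Unipotent orbit}(i): the \emph{unique} irreducible component of $F_a^{-1}(0)$ that contains $0$ and is contained in $\bb$ is $0+\uu=\uu$ itself. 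By Lemma \ref{Lemma:zero_in_irrcomp}, every component of $F_a^{-1}(0)$ contains $0$, so in fact $\uu=[\bb,\bb]$ is the unique component inside $\bb$. Thus the map $\bb\mapsto[\bb,\bb]$ gives a surjection from $\mathcal B_a$ onto the third piece; I would then need to argue it is injective, i.e.\ distinct Borels containing $a$ have distinct nilradicals — this follows because $\bb$ is recovered from $[\bb,\bb]$ as its normalizer (or because $\bb=[\bb,\bb]+\g_a$ using Corollary \ref{Corollary: gaba} and a dimension count). Hence the third piece has cardinality $|\mathcal B_a|$, the last summand.

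The main work is the second piece, which should contribute $\sum_{\p\in\mathcal P_a}\big(\prod_{\lf\subseteq[\lf_\p,\lf_\p]\ \mathrm{simple}}|^{\lf}\mathcal I_{a_\lf}'|\big)$. For a fixed $\p\in\mathcal P_a$, I would apply Proposition \ref{Prop: IrredCompParabolic} with $x=0$: the components $Z\subseteq\p$ of $F_a^{-1}(0)$ correspond bijectively, via $Y\mapsto Y+\uu_\p$, to the irreducible components $Y$ of fibres of $F_{a_{\lf_\p}}$ that lie in $F_a^{-1}(F_a(0))\cap\lf_\p$. Since $0\in F_a^{-1}(0)\cap\lf_\p$ and $F_{a_{\lf_\p}}(0)=0$, and by the analogue of Lemma \ref{Lemma:zero_in_irrcomp} in $\lf_\p$ every such component contains $0$, these are precisely the components of $F_{a_{\lf_\p}}^{-1}(0)$, i.e.\ the elements of $^{\lf_\p}\mathcal I_{a_{\lf_\p}}$. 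To get the "in no smaller parabolic" refinement matching $^{\g}\mathcal I_a^{\p}$ and the exotic condition, I would observe: a component $Z=Y+\uu_\p\subseteq\p$ fails to lie in any $\p'\in\mathcal P_a$ with $\p'\subsetneq\p$ precisely when $Y\subseteq\lf_\p$ is not contained in any proper parabolic of $\lf_\p$ containing $a_{\lf_\p}$, i.e.\ $Y\in{}^{\lf_\p}\mathcal I_{a_{\lf_\p}}'$ — here I would use that proper parabolics of $\g$ between $\bb$-type data and $\p$ correspond to proper parabolics of the Levi $\lf_\p$. Then, since $\lf_\p=\mathfrak z(\lf_\p)\oplus\lf_1\oplus\cdots\oplus\lf_N$ and $a_{\lf_\p}=(a_{\lf_1},\dots,a_{\lf_N})$ (plus central part), the proof of Proposition \ref{Proposition: Irreducible component dimension} shows $F_{a_{\lf_\p}}^{-1}(0)$ is a product $\prod_i F_{a_{\lf_i}}^{-1}(0)$, so its components are products of components of the factors; restricting to the exotic components gives $|^{\lf_\p}\mathcal I_{a_{\lf_\p}}'|=\prod_{i=1}^N|^{\lf_i}\mathcal I_{a_{\lf_i}}'|$. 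Finally, I would check that the second piece is partitioned as $\bigsqcup_{\p\in\mathcal P_a}{}^{\g}\mathcal I_a^{\p}$ — each component in the second piece lies in \emph{some} minimal $\p\in\mathcal P_a$ containing it, and minimality makes this $\p$ unique — which assembles everything into \eqref{Equation:RecursiveFormula}.

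The step I expect to be the main obstacle is the bookkeeping in the second piece: carefully matching the condition "$Z\nsubseteq\p'$ for all $\p'\in\mathcal P_a$ with $\p'\subsetneq\p$" (defining $^{\g}\mathcal I_a^{\p}$) with the exotic condition "$Y\notin{}$ any proper parabolic of $\lf_\p$" (defining $^{\lf_\p}\mathcal I_{a_{\lf_\p}}'$), and verifying that the minimal parabolic containing a given non-exotic, non-Borel component is well-defined so that the $^{\g}\mathcal I_a^{\p}$ partition the second piece without overlap. This requires the standard correspondence between parabolics of $\g$ contained in $\p$ (and containing a fixed Borel) and parabolics of the Levi $\lf_\p$, together with the observation that $Y+\uu_\p\subseteq\p'$ for a parabolic $\p'\subseteq\p$ if and only if $Y\subseteq\p'\cap\lf_\p$, which is a parabolic of $\lf_\p$.
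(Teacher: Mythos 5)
Your proposal follows essentially the same route as the paper: decompose $^{\g}\mathcal I_a$ into the exotic components, the components lying in some $\p\in\mathcal P_a$ (grouped by minimal such $\p$ into the sets $^{\g}\mathcal I_a^{\p}$), and the components $[\bb,\bb]$ coming from Borels via Proposition \ref{Prop: Unipotent orbit} and Lemma \ref{Lemma:zero_in_irrcomp}, then count the middle piece via the bijection $Y\mapsto Y+\uu_{\p}$ of Theorem \ref{Theorem: IrredCompInParabolic_x} and the factorization of $F_{a_{\lf_\p}}^{-1}(0)$ over the simple factors of $[\lf_\p,\lf_\p]$. You are in fact somewhat more explicit than the paper about the disjointness bookkeeping (injectivity of $\bb\mapsto[\bb,\bb]$, uniqueness of the minimal parabolic, and the matching of the exotic condition in $\lf_\p$ with the condition defining $^{\g}\mathcal I_a^{\p}$), which the paper largely leaves implicit.
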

\begin{proof}
Proposition \ref{Prop: Unipotent orbit} implies that each $\mathfrak b\in \mathcal B_a$ yields an irreducible component $[\bb,\bb]$ of $F_a^{-1}(0)$. At the same time, observe that $^{\g}\mathcal I_a^{\mathfrak p}\cap \; ^{\g}\mathcal I_a^{\mathfrak p'} = \emptyset$ if $\mathfrak p \neq \mathfrak p'$. These last two sentences allow us to write $^{\g}\mathcal I_a$ as the disjoint union 
\[
^{\g}\mathcal I_a  = \; ^{\g}\mathcal I_a' \cup \left(\bigcup_{\mathfrak p\in\mathcal P_a} {^{\g}\mathcal I_a^{\mathfrak p}}\right) \cup \{[\bb,\bb]\colon \bb\in\mathcal B_a\}.
\]
It therefore suffices to prove that 
\[
|^{\g}\mathcal I_a^{\mathfrak p}| = \prod_{\text{$\mathfrak l\subseteq [\mathfrak l_{\p},\mathfrak l_{\p}]$\\ \hspace{1pt} \text{simple factor}}}|^{\mathfrak l}\mathcal I_{a_{\mathfrak l}}'|
\]
for each $\p\in\mathcal{P}_a$. Our approach is similar to that appearing in the proof of \cite[Proposition 52]{Charbonnel-Moreau}, and the relevant details are given below. 

Fix an element $\p\in\mathcal{P}_a$ and let $\mathfrak{u}_{\p}$ denote its nilpotent radical. Lemma \ref{Lemma:zero_in_irrcomp} and Theorem \ref{Theorem: IrredCompInParabolic_x} tell us that $Y\mapsto Y+\mathfrak{u}_{\mathfrak{p}}$ defines a bijection from elements of $^{\mathfrak{l}_{\mathfrak p}}\mathcal I_{a_{\mathfrak{l}_{\mathfrak{p}}}}$ to those elements of $^{\g}\mathcal I_a$ that are contained in $\p$. This restricts to a bijection between $^{\mathfrak{l}_{\mathfrak p}}\mathcal I_{a_{\mathfrak{l}_{\mathfrak{p}}}}'$ and $^{\g}\mathcal I_a^{\p}$. We are therefore reduced to proving that
\[
|^{\mathfrak{l}_{\mathfrak p}}\mathcal I_{a_{\mathfrak{l}_{\mathfrak{p}}}}'| = \prod_{\text{$\mathfrak l\subseteq [\mathfrak l_{\p},\mathfrak l_{\p}]$\\ \hspace{1pt} \text{simple factor}}}|^{\mathfrak l}\mathcal I_{a_{\mathfrak l}}'|.
\]

Let us decompose $[\mathfrak{l}_{\mathfrak{p}},\mathfrak{l}_{\p}]$ into its simple factors $\mathfrak{l}_1,\ldots,\mathfrak{l}_N$. It follows that the elements of $^{\mathfrak{l}_{\mathfrak p}}\mathcal I_{a_{\mathfrak{l}_{\mathfrak{p}}}}$ are precisely those varieties of the form
$$\{0\}\times Z_1\times\dots\times Z_N\subseteq\mathfrak{z}(\mathfrak{l}_{\p})\oplus\mathfrak{l}_1\oplus\dots\oplus\mathfrak{l}_N=\mathfrak{l}_{\p},$$
where each $Z_i$ is an irreducible component of $F_{a_{\mathfrak{l}_i}}^{-1}(0)\subset \mathfrak{l}_i$. We therefore have bijections 
\[
^{\mathfrak{l}_{\mathfrak p}}\mathcal I_{a_{\mathfrak{l}_{\mathfrak{p}}}} \cong \;^{\mathfrak{l}_{1}}\mathcal I_{a_{\mathfrak{l}_{1}}}\times \dots \times\; ^{\mathfrak{l}_{N}}\mathcal I_{a_{\mathfrak{l}_{N}}}
\] 
and 
\[
^{\mathfrak{l}_{\mathfrak p}}\mathcal I_{a_{\mathfrak{l}_{\mathfrak{p}}}}' \cong \;^{\mathfrak{l}_{1}}\mathcal I_{a_{\mathfrak{l}_{1}}}'\times \dots \times\; ^{\mathfrak{l}_{N}}\mathcal I_{a_{\mathfrak{l}_{N}}}',
\] 
proving the theorem.
\end{proof}

\begin{rem}
Lemma 51 in \cite{Charbonnel-Moreau} shows that $^{\g}\mathcal I_a'\neq \emptyset$ if $a\in\greg$ is nilpotent. In Secton \ref{Sec:Examples}, we will see that $^{\g}\mathcal I_a'\neq \emptyset$ for every $a\in\g_{\text{reg}}$ when $\g=\mathfrak{sl}_3(\mathbb{C})$.  
\end{rem}

\begin{rem}
If $a\in\greg$ is nilpotent, then $|\mathcal B_a| =1$ and Theorem \ref{Thm:RecursiveFormula} reduces to \cite[Proposition 52]{Charbonnel-Moreau}. If $a\in\greg$ is semisimple, then $|\mathcal B_a| = |W|$ in the recursive formula \eqref{Equation:RecursiveFormula}.
\end{rem}

\begin{rem}
Our recursive formula allows us to obtain lower bounds on the numbers $|^{\g} \mathcal{I}_a|$, by determining $|\mathcal B_a|$, $|\mathcal P_a|$ and the decompositions of the Levi factors into simple parts. If one works in Type $A_r$ for small $r$, then these numbers and Levi factors are easy to obtain. We refer the reader to Section \ref{Sec:Examples} for further details.
\end{rem}

\subsection{Exotic irreducible components}  \label{Subsection: Exotic Components}
Fix $a\in\greg$ and recall the statements of Theorem \ref{Theorem: IrredCompInParabolic_x} and Proposition \ref{Prop: IrredCompParabolic}. Note that these become completely tautological in the case $\p=\g$, so that interesting results necessitate taking $\p\neq\g$. In this latter case, every irreducible component constructed via Theorem \ref{Theorem: IrredCompInParabolic_x} and Proposition \ref{Prop: IrredCompParabolic} is constrained to lie in some $\p\in \widetilde{\mathcal P_a}$. It is therefore natural to consider irreducible components of Mishchenko--Fomenko fibres that do not lie in a proper parabolic subalgebra containing $a$. We begin with the following result.    

\begin{prop}\label{Proposition: Tarasov_vs_parabolic}
Suppose that $a\in\mathfrak{g}_{\emph{reg}}$ is semisimple. If $\mathfrak{p}\in\widetilde{\mathcal P_a}$, then every fibre of $F_a$ has an irreducible component that is not contained in $\mathfrak{p}$.
\end{prop}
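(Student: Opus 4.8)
The plan is to use the Tarasov section from Theorem~\ref{Theorem: Section} together with a dimension count. Since $a\in\g_{\text{reg}}$ is semisimple, choose simple roots with respect to the Cartan subalgebra $\g_a$, let $\bb$ be the associated positive Borel, and let $\xi=\sum_{\alpha\in\Pi}e_{-\alpha}$ with each $e_{-\alpha}\in\g_{-\alpha}\setminus\{0\}$. By Theorem~\ref{Theorem: Section}, the affine subspace $\xi+\bb$ is a global section of $F_a$; in particular $F_a$ is surjective and each fibre $F_a^{-1}(z)$ meets $\xi+\bb$ in exactly one point $x_z$. First I would show that this point $x_z$ is \emph{not} contained in any $\p\in\widetilde{\mathcal P_a}$. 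Indeed, $\xi$ has a nonzero component in every negative simple root space $\g_{-\alpha}$, and any proper parabolic $\p\supseteq\ldots$ containing $a$ fails to contain $\g_{-\alpha}$ for at least one simple root $\alpha$ (this is the defining feature of a proper parabolic relative to the Borel-data coming from $\g_a$, after conjugating $\p$ appropriately so that it is standard). So no point of $\xi+\bb$ lies in a proper parabolic containing $a$; in particular $x_z\notin\p$.

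Next I would invoke the fact that $F_a^{-1}(z)$ is pure-dimensional (Proposition~\ref{Proposition: Irreducible component dimension}), so $x_z$ lies in some irreducible component $Z$ of $F_a^{-1}(z)$. Since $x_z\notin\p$, the component $Z$ is not contained in $\p$. As $z$ was arbitrary and $F_a$ is surjective, every fibre has such a component, which is exactly the claim. The one subtlety to be careful about is the reduction of a general $\p\in\widetilde{\mathcal P_a}$ to standard form: one must argue that membership of $\xi+\bb$ in \emph{some} parabolic containing $a$ would, after the appropriate adjoint-group conjugation fixing $a$ (here $G_a$ acts, and one uses that $\xi+\bb$ is built from the root data of $\g_a$), force a contradiction with $\xi$ having full support on the negative simple root spaces.

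The main obstacle I anticipate is precisely this last point: verifying cleanly that \emph{no} translate $\xi+b$ (for $b\in\bb$) can lie in a proper parabolic $\p$ containing $a$. The quick way is: if $\xi+b\in\p$ then, since $a=(\xi+b)$'s semisimple-part-related data... more directly, since $\p$ is a subalgebra containing $a$ and $\h=\g_a$, we get $\p\supseteq\h$, so $\p$ is standard with respect to $(\bb,\h)$; but a standard parabolic contains $\g_{-\alpha}$ only for $\alpha$ in its defining subset $\Pi_\p\subseteq\Pi$, and it is closed under the projection to each root space, so $\xi+b\in\p$ forces $\g_{-\alpha}\subseteq\p$ for \emph{all} $\alpha\in\Pi$ (because the $\g_{-\alpha}$-component of $\xi$ is nonzero), i.e.\ $\p=\g$, contradicting $\p\in\widetilde{\mathcal P_a}$. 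Here I am using that $a$ semisimple and regular forces $\g_a$ to be a Cartan subalgebra contained in $\p$, which pins down $\p$ as standard. With that in hand the proof is short; I would write it in essentially the three steps above.
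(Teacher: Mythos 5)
There is a genuine gap at the step you yourself flag as the crux. You fix one Tarasov section $\xi+\bb$ once and for all and claim that no point of it lies in any $\p\in\widetilde{\mathcal P_a}$; your justification is that $\p\supseteq\h=\g_a$ forces $\p$ to be \emph{standard} with respect to $(\bb,\h)$, so that containing all negative simple root spaces forces $\p=\g$. But containing the Cartan subalgebra $\h$ only tells you that $\p$ is a sum of $\h$ and root spaces; it does not give $\bb\subseteq\p$. What your (correct) projection argument actually yields from $\xi+b\in\p$ is $\bb_-\subseteq\p$, and a proper parabolic can perfectly well contain $\bb_-$. Concretely, in $\g=\sln_3(\C)$ with $a$ diagonal, $\bb$ upper triangular and $\xi=e_{21}+e_{32}$, the block lower-triangular parabolic $\p$ with blocks of sizes $(2,1)$ lies in $\widetilde{\mathcal P_a}$ and contains the point $e_{21}+e_{32}\in\xi+\bb$. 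So the fixed section does meet proper parabolics, and your argument breaks. (The points of $\xi+\bb$ that avoid \emph{all} proper parabolics are exactly those with nonzero highest-root component --- that is the content of the later Proposition \ref{Proposition: Not contained}, which is a strictly stronger statement than the one at hand.)

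The repair is to exploit the quantifier in the statement: $\p$ is fixed first, so you may choose the section adapted to $\p$. This is what the paper does. Write $\p=\h\oplus\bigoplus_{\alpha\in Q}\g_\alpha$ with $Q\subsetneq\Delta$, pick $\beta\in\Delta\setminus Q$, and choose the system of negative roots $\Delta_-$ so that $\beta\in\Delta_-$; let $\bb$ be the corresponding positive Borel and $\xi$ the corresponding sum of negative simple root vectors. If some $\xi+b$ lay in $\p$, your projection argument gives $-\Pi\subseteq Q$, hence $\bb_-\subseteq\p$, hence $\g_\beta\subseteq\p$ --- contradicting $\beta\notin Q$. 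With this adjustment the rest of your outline (the section meets every fibre, pure-dimensionality is not even needed, and a component through a point outside $\p$ is not contained in $\p$) goes through and coincides with the paper's proof.
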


\begin{proof}
Observe that $\mathfrak{h}:=\mathfrak{g}_a$ is a Cartan subalgebra of $\mathfrak{g}$ contained in $\mathfrak{p}$. Let $\Delta\subseteq\mathfrak{h}^*$ be the associated set of roots, noting that
\begin{equation}\label{Equation: Parabolic decomposition}\mathfrak{p}=\mathfrak{h}\oplus\bigoplus_{\alpha\in Q}\mathfrak{g}_{\alpha}\end{equation} for some proper subset $Q\subsetneq\Delta$. Choose an element $\beta\in\Delta\setminus Q$, as well as a set of negative roots $\Delta_{-}\subseteq\Delta$ with $\beta\in\Delta_{-}$. It follows that $\Delta_{+}:=-\Delta_{-}$ is the associated choice of positive roots, and that we have the opposite Borel subalgebras
$$\mathfrak{b}:=\mathfrak{h}\oplus\bigoplus_{\alpha\in\Delta_{+}}\mathfrak{g}_{\alpha}\quad\text{and}\quad\mathfrak{b}_{-}:=\mathfrak{h}\oplus\bigoplus_{\alpha\in\Delta_{-}}\mathfrak{g}_{\alpha}.$$ 

Now let $\Pi\subseteq\Delta_{+}$ be the set of simple roots and consider \begin{equation}\label{Equation: Nilpotent element}\xi:=\sum_{\alpha\in\Pi}e_{-\alpha},\end{equation} where $e_{-\alpha}\in\mathfrak{g}_{-\alpha}\setminus\{0\}$ for each $\alpha\in\Pi$. The subset $\xi+\mathfrak{b}\subseteq\mathfrak{g}$ is then a section of $F_a$ (see Theorem \ref{Theorem: Section}), so that it will suffice to prove $(\xi+\mathfrak{b})\cap\mathfrak{p}=\emptyset$. 

Assume that $(\xi+\mathfrak{b})\cap\mathfrak{p}\neq\emptyset$. It follows that $\xi+x\in\mathfrak{p}$ for some $x\in\mathfrak{b}$, which by \eqref{Equation: Parabolic decomposition} and \eqref{Equation: Nilpotent element} implies that $-\Pi\subseteq Q$. The subalgebra $\mathfrak{p}$ then contains $\mathfrak{h}$ and every negative simple root space, forcing $\mathfrak{b}_{-}\subseteq\mathfrak{p}$ to hold. At the same time, our condition $\beta\in\Delta_{-}$ yields the inclusion $\mathfrak{g}_{\beta}\subseteq\mathfrak{b}_{-}$. It follows that $\mathfrak{g}_{\beta}\subseteq\mathfrak{p}$, contradicting the fact that $\beta\not\in Q$. We conclude that $(\xi+\mathfrak{b})\cap\mathfrak{p}=\emptyset$, completing the proof. 
\end{proof}

While Proposition \ref{Proposition: Tarasov_vs_parabolic} considers irreducible components that are not contained in a fixed $\p\in\widetilde{\mathcal P_a}$, one could ask about components not contained in any $\p\in\widetilde{\mathcal P_a}$. We formalize the latter situation as follows. 

\begin{defn}
Suppose that $a\in\greg$ and $x\in\g$. We call an irreducible component $Z\subseteq F_a^{-1}(F_a(x))$ \textit{exotic} if $Z\nsubseteq\p$ for all $\p\in\widetilde{\mathcal{P}_a}$.
\end{defn}

Our next result identifies a family of fibres that have exotic irreducible components.  

\begin{prop}\label{Proposition: Not contained}
Assume that $\g$ is simple, and let $a\in\g_{\emph{reg}}$ be a semisimple element. Choose a collection of simple positive roots with respect to the Cartan subalgebra $\h:=\g_a$, and denote the resulting positive Borel subalgebra by $\mathfrak{b}\subseteq\g$. Let $\xi\in\g$ be a sum of non-zero negative simple root vectors, one for each negative simple root. If $x\in\xi+\mathfrak{b}$ has a non-zero component in the highest root space, then the fibre $F_a^{-1}(F_a(x))$ has an exotic irreducible component. 
\end{prop}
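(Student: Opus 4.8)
The plan is to reduce the assertion to the purely combinatorial claim that the point $x$ itself lies in no proper parabolic subalgebra containing $a$. Granting this, the proposition is immediate: $x$ lies in $F_a^{-1}(F_a(x))$, hence in some irreducible component $Z$ of that fibre, and if $Z\subseteq\mathfrak p$ held for some $\mathfrak p\in\widetilde{\mathcal P_a}$ we would obtain $x\in Z\subseteq\mathfrak p$, contradicting the claim; thus $Z$ is exotic. (Tarasov's theorem plays no role in this particular argument, although it is what makes the family $\xi+\mathfrak b$ natural to single out.)

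To prove the combinatorial claim, I would first use that $a$ is regular semisimple with $\g_a=\h$, so that every $\mathfrak p\in\widetilde{\mathcal P_a}$ contains $\h$ (as noted in the proof of Proposition \ref{Proposition: Tarasov_vs_parabolic}); hence such a $\mathfrak p$ has the form $\mathfrak p=\h\oplus\bigoplus_{\alpha\in R}\g_\alpha$ for a proper subset $R\subsetneq\Delta$ that is closed, in the sense that $\alpha,\beta\in R$ with $\alpha+\beta\in\Delta$ implies $\alpha+\beta\in R$. Next I would feed in the hypotheses on $x$: writing $x=\xi+b_0$ with $b_0\in\mathfrak b$, the $\g_{-\alpha}$-component of $x$ is the nonzero vector $e_{-\alpha}$ for every $\alpha\in\Pi$, and the $\g_\theta$-component of $x$ is nonzero, where $\theta$ denotes the highest root. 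If $x\in\mathfrak p$, these force $-\Pi\subseteq R$ and $\theta\in R$. Since every negative root is a sum of negative simple roots each of whose partial sums is a root, closedness of $R$ upgrades $-\Pi\subseteq R$ to $\Delta_-\subseteq R$; that is, $\mathfrak p$ contains the opposite Borel $\mathfrak b_-:=\h\oplus\bigoplus_{\alpha\in\Delta_-}\g_\alpha$. Thus $\mathfrak p$ is a standard parabolic relative to $\mathfrak b_-$, so by the dictionary recalled in Section \ref{Subsection: Conventions} (applied with $\mathfrak b_-$ in place of $\mathfrak b$) there is a subset $Q\subseteq\Pi$ with $R=\Delta_-\cup(\Delta_+\cap\mathrm{span}_{\mathbb{Z}}(Q))$, and $Q=\Pi$ precisely when $\mathfrak p=\g$. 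Finally, $\theta\in R$ together with $\theta\in\Delta_+$ gives $\theta\in\mathrm{span}_{\mathbb{Z}}(Q)$; but $\g$ is simple, so the highest root has strictly positive coefficients on every simple root, forcing $Q=\Pi$ and hence $\mathfrak p=\g$, contrary to $\mathfrak p\in\widetilde{\mathcal P_a}$. This contradiction proves the claim.

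No step here is genuinely difficult; the real content is the observation that the two hypotheses on $x$ pull against one another — the $\xi$-part forces any parabolic containing $x$ down to one containing $\mathfrak b_-$, after which the highest-root-space part (via simplicity of $\g$, which enters only through the full support of the highest root) pushes it back up to all of $\g$. The one place requiring mild care is the second passage, namely the explicit description of the root set of a parabolic containing $\mathfrak b_-$, which is just the standard correspondence between standard parabolics and subsets of the simple roots; everything else is formal.
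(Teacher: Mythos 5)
Your argument is correct and follows essentially the same route as the paper's: reduce to showing that $x$ itself lies in no proper parabolic subalgebra containing $a$, note that any $\p\in\widetilde{\mathcal{P}_a}$ contains $\h=\g_a$ and is hence a sum of root spaces, split on whether $-\Pi$ lies in its root set, and rule out the remaining case using simplicity of $\g$ via the highest root. The only cosmetic difference is in that last step, where the paper invokes the fact that $\g_{\lambda}$ together with the negative simple root spaces generates $\g$, while you use the equivalent fact that the highest root has full support (combined with the standard-parabolic dictionary relative to $\mathfrak{b}_{-}$); your observation that the Tarasov section is not logically needed for this conclusion is also accurate.
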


\begin{proof}
Denote the sets of roots, positive roots, and simple roots by $\Delta$, $\Delta_{+}$, and $\Pi$, respectively, so that
\begin{equation}\label{Equation: Positive Borel}\mathfrak{b}=\mathfrak{h}\oplus\bigoplus_{\alpha\in\Delta_{+}}\mathfrak{g}_{\alpha}.\end{equation}
Let us also set
$$V:=\mathfrak{h}\oplus\bigoplus_{\alpha\in\Delta_{+}\setminus\{\lambda\}}\mathfrak{g}_{\alpha},$$ where $\lambda\in\Delta_{+}$ is the highest root. Since $\xi+\bb$ is a section of $F_a$ (see Theorem \ref{Theorem: Section}), it suffices to prove that $(\xi+\mathfrak{b})\setminus(\xi+V)$ is disjoint from $\bigcup_{\p\in\widetilde{\mathcal{P}_a}}\p$. This is equivalent to verifying that $(\xi+\mathfrak{b})\cap\p\subseteq\xi+V$ for all $\p\in\widetilde{\mathcal{P}_a}$. 

Let $\mathfrak{p}\in\widetilde{\mathcal{P}_a}$ be given, noting that $\mathfrak{h}\subseteq\mathfrak{p}$. It follows that \begin{equation}\label{Equation: New parabolic decomposition} \mathfrak{p}=\mathfrak{h}\oplus\bigoplus_{\alpha\in\Delta_{\mathfrak{p}}}\mathfrak{g}_{\alpha}\end{equation} for some proper subset $\Delta_{\mathfrak{p}}\subsetneq\Delta$. We then have the following two possibilities: $-\Pi\nsubseteq\Delta_{\mathfrak{p}}$ or $-\Pi\subseteq\Delta_{\mathfrak{p}}$. In the first case, \eqref{Equation: Positive Borel}, \eqref{Equation: New parabolic decomposition}, and the definition of $\xi$ collectively force $\big(\xi+\mathfrak{b}\big)\cap\mathfrak{p}=\emptyset$ to hold. This certainly implies that $\big(\xi+\mathfrak{b}\big)\cap\mathfrak{p}\subseteq\xi+V$, as desired. 

If $-\Pi\subseteq\Delta_{\mathfrak{p}}$, then one has $$\big(\xi+\mathfrak{b}\big)\cap\mathfrak{p}=\xi+\bigg(\mathfrak{h}\oplus\bigoplus_{\alpha\in\Delta_{\mathfrak{p}}\cap\Delta_{+}}\mathfrak{g}_{\alpha}\bigg).$$ We conclude that $\big(\xi+\mathfrak{b}\big)\cap\mathfrak{p}\subseteq\xi+V$ if and only if $\lambda\not\in\Delta_{\mathfrak{p}}$. The latter condition is best investigated via the following classical fact: $\mathfrak{g}_{\lambda}$ and the negative simple root spaces generate $\mathfrak{g}$. Since $\mathfrak{p}$ is properly contained in $\mathfrak{g}$ and contains the negative simple root spaces, we must have $\lambda\not\in\Delta_{\mathfrak{p}}$. It follows that $\big(\xi+\mathfrak{b}\big)\cap\mathfrak{p}\subseteq\xi+V$, completing the proof.   
\end{proof}

\begin{rem}
Since $\xi+\mathfrak{b}$ is a ($b$-dimensional) section of $F_a:\g\rightarrow\mathbb{C}^b$ for all semisimple $a\in\g_{\text{reg}}$, Proposition \ref{Proposition: Not contained} provides a $b$-dimensional family of fibres with exotic irreducible components. 
\end{rem}

\begin{rem}
With only mild adjustments, one can formulate Proposition \ref{Proposition: Not contained} for semisimple Lie algebras $\mathfrak{g}$.
\end{rem}

\section{Singularities in Mishchenko--Fomenko fibres}\label{Section: Singularities}
Our attention now turns to the smooth and singular loci of Mishchenko--Fomenko fibres. Section \ref{Subsection: Critical values} studies the critical values of $F_a$, while Section \ref{Subsection: A family of singular fibres} elucidates a role for the subalgebra $\ba\subseteq\g$ discussed in Section \ref{subsec:ba}.  

\subsection{Critical values}\label{Subsection: Critical values}

Recall that the critical points of $F_a$ constitute the set $\Sing^a=\gsing+\mathbb{C}a\subseteq\g$ (see Section \ref{Subsection: The Mishchenko-Fomenko subalgebra}). Our objective is to discuss the set of critical values $F_a(\Sing^a)\subseteq\mathbb{C}^b$, and this necessitates using the following rephrased version of \cite[Theorem 4.2]{Veldkamp}. 

\begin{thm}\label{Theorem: Veldkamp}
The codimension of $\gsing$ in $\mathfrak{g}$ is $3$.
\end{thm}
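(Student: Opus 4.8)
The plan is to reduce the statement to a fact about the singular locus of the adjoint quotient map $F=(f_1,\dots,f_r):\g\to\C^r$ from \eqref{eq:AdQuot}, together with a dimension count on the discriminant. First I would recall that $\gsing$ is $G$-invariant, closed, and conical, so it is a union of closures of adjoint orbits $\overline{Gx}$ with $x$ singular. Since $F^{-1}(F(x))=\overline{Gx}$ for $x\in\greg$ and $F$ separates closed orbits, the image $F(\gsing)\subseteq\C^r$ is a proper closed conical subvariety — this is the \emph{discriminant} $\mathcal D\subseteq\C^r$ — and $\gsing=F^{-1}(\mathcal D)$. The key classical input I would cite (this is the content of \cite[Theorem 4.2]{Veldkamp}) is that $\mathcal D$ has codimension exactly $1$ in $\C^r$, i.e.\ $\dim\mathcal D=r-1$, and moreover that the generic singular orbit has dimension $\dim(\g)-r-2$; equivalently, over a generic point of $\mathcal D$ the fibre of $F$ is the closure of a single subregular orbit, whose stabilizer has dimension $r+2$.

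The main steps would then be: (1) identify the generic point of $\mathcal D$ with a subregular semisimple element — equivalently, work on a Cartan $\h$ and use that $F|_\h:\h\to\C^r$ realizes $\C^r=\h/W$, so $\mathcal D$ is the image of the union of root hyperplanes $\bigcup_{\alpha}\ker\alpha$, which is visibly of codimension $1$ in $\h$ and hence in $\C^r$; (2) show that over a generic point $\bar h$ of a single hyperplane $\ker\alpha$ (with $\alpha$ the unique root vanishing at $h$), the fibre $F^{-1}(\bar h)$ is the closure of the orbit of $h$, and compute $\dim\g_h=r+2$ since $\g_h=\h\oplus\g_\alpha\oplus\g_{-\alpha}$ is a copy of $\mathfrak{sl}_2\oplus(\text{abelian})$ of the right rank; (3) conclude that this generic singular fibre has dimension $\dim\g-(r+2)$, and that $\gsing$, being the union of $\overline{F^{-1}(\bar h)}$ over $\bar h\in\mathcal D$, has dimension $(r-1)+(\dim\g-r-2)=\dim\g-3$; (4) check that no other stratum of $\gsing$ — those lying over the non-generic, higher-codimension strata of $\mathcal D$ — can have larger dimension, using that a deeper stratum of $\mathcal D$ of codimension $k$ carries fibres of dimension $\dim\g-r-(\text{something}\ge k+1)$, so the total dimension only drops. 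Hence $\mathrm{codim}_\g\,\gsing=3$.

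The step I expect to be the main obstacle is (4), the uniformity of the bound across all strata of the discriminant: one must verify that whenever a semisimple element $t$ lies in an intersection of $k$ independent root hyperplanes, the corresponding centralizer $\g_t$ has dimension at least $r+2k$ (so that the fibre over $\bar t$ has codimension at least $r+2k$ in $\g$ while $\bar t$ ranges over a stratum of codimension $k$ in $\C^r$, giving total codimension at least $r+2k-(r-k)=3k\ge 3$, with equality only when $k=1$), and separately that non-semisimple singular elements — those with a nontrivial nilpotent part — contribute nothing of larger dimension. The cleanest route is to invoke the stratification of $\g$ by centralizer type (the sheets/Jordan classes decomposition) and the standard estimate $\dim\g_x\ge r+2$ for $x\in\gsing$ with $\dim\g_x=r+2$ exactly on the subregular sheet; this is precisely what Veldkamp's theorem packages, so in the write-up I would simply quote \cite[Theorem 4.2]{Veldkamp} for the codimension and restrict the argument above to explaining why the statement takes the rephrased form asserted here. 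No genuinely new computation is needed beyond the $\mathfrak{sl}_2$-centralizer dimension count in step (2).
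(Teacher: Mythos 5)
The paper does not actually prove this statement: it presents Theorem \ref{Theorem: Veldkamp} as a rephrasing of \cite[Theorem 4.2]{Veldkamp} and offers the citation as the proof. Since your write-up ultimately does the same thing (``I would simply quote \cite[Theorem 4.2]{Veldkamp}''), your bottom line coincides with the paper's.

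That said, two claims in the self-contained sketch surrounding that citation are false as stated and would need repair if you tried to carry the argument out. First, $\gsing\neq F^{-1}(\mathcal D)$. The fibre of $F$ over any point of the discriminant still contains a dense \emph{regular} orbit: for $h$ semisimple lying on exactly one root hyperplane $\ker\alpha$, the fibre $F^{-1}(F(h))$ equals $\overline{G(h+e)}=G(h+e)\cup Gh$, where $e$ is a nonzero nilpotent in $\g_h\cong\mathfrak{sl}_2(\C)\oplus\C^{r-1}$, and $h+e$ is regular because $\g_{h+e}=(\g_h)_e$ has dimension $r$. So $F^{-1}(F(h))$ is the closure of the regular orbit $G(h+e)$, of dimension $\dim\g-r$, not the closure of $Gh$; the correct statements are $F(\gsing)=\mathcal D$ and $\gsing\cap F^{-1}(F(h))=Gh$. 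Your arithmetic $(r-1)+(\dim\g-r-2)$ happens to use $\dim Gh$ rather than the dimension of the full fibre, so the number comes out right, but the set you describe (``the union of $\overline{F^{-1}(\bar h)}$ over $\bar h\in\mathcal D$'') has dimension $\dim\g-1$, not $\dim\g-3$. Second, step (4) is not a routine check: controlling the strata whose elements have nontrivial nilpotent part requires the uniform estimate that the non-regular locus of the nilpotent cone of every Levi factor has codimension at least $2$ in that cone (equivalently, that subregular nilpotent orbits have codimension $2$), across all types. That estimate is essentially the content of Veldkamp's theorem itself, so the ``self-contained'' route is circular unless you prove it independently; your decision to fall back on the citation at exactly this point is the right one, and it is what the paper does.
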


We may now gauge the codimension of the closure $\overline{F_a(\Sing^a)}\subseteq\mathbb{C}^b$.  

\begin{prop}\label{Proposition:dimF(Sing)}
If $a\in\mathfrak{g}_{\emph{reg}}$, then $\overline{F_a(\Sing^a)}\subseteq\mathbb{C}^b$ has codimension $1$ or $2$ in $\mathbb{C}^b$.
\end{prop}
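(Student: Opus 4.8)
The plan is to establish the two inequalities $b-2\le\dim\overline{F_a(\Sing^a)}\le b-1$; since $\dim\C^b=b$, this is exactly the assertion that the codimension is $1$ or $2$.

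For the upper bound I would invoke generic smoothness. By \cite[Theorem 1.2]{Moreau} the map $F_a\colon\g\to\C^b$ is surjective, hence dominant, and since $\g$ and $\C^b$ are smooth irreducible complex varieties there is a dense open $U\subseteq\C^b$ such that $F_a\colon F_a^{-1}(U)\to U$ is a smooth morphism. A smooth morphism between smooth varieties over $\C$ has everywhere surjective differential, so $\mathrm{d}F_a(x)$ has rank $b$ for every $x\in F_a^{-1}(U)$; that is, $F_a^{-1}(U)$ consists of regular points of $F_a$. Since $\Sing^a$ is precisely the set of critical points of $F_a$ (Bolsinov), we get $\Sing^a\subseteq F_a^{-1}(\C^b\setminus U)$, whence $\overline{F_a(\Sing^a)}\subseteq\C^b\setminus U$ is a proper closed subvariety and $\dim\overline{F_a(\Sing^a)}\le b-1$.

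The lower bound is the more substantive half, and its crux is to show $\dim\Sing^a=\dim\g-2$. The key preliminary observation is that no line $x+\C a$ with $x\in\g$ lies in $\gsing$: letting $p\in\C[\g]$ be the coefficient of $t^r$ in the characteristic polynomial $\det(t\,\Id-\ad_y)$ of $\ad_y$, one has $\greg=\{y\in\g : p(y)\ne 0\}$ and $p$ is homogeneous of degree $\dim\g-r$, so $p(x+\lambda a)=\lambda^{\dim\g-r}p(a+\lambda^{-1}x)$ is a polynomial in $\lambda$ whose coefficient of $\lambda^{\dim\g-r}$ equals $p(a)\ne 0$; hence $x+\lambda a\in\greg$ for all but finitely many $\lambda\in\C$. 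Granting this, on the one hand $\Sing^a=\gsing+\C a$ is the image of $\gsing\times\C\to\g$, $(x,\lambda)\mapsto x+\lambda a$, so $\dim\Sing^a\le\dim\gsing+1=\dim\g-2$ by Theorem \ref{Theorem: Veldkamp}. On the other hand, fix an irreducible component $\mathcal S$ of $\gsing$ with $\dim\mathcal S=\dim\g-3$ (it exists by Theorem \ref{Theorem: Veldkamp}) and set $V:=\overline{\mathcal S+\C a}$, so that $\mathcal S\subseteq V$ and $\dim\mathcal S\le\dim V\le\dim\mathcal S+1$. If $\dim V=\dim\mathcal S$, then the irreducible closed set $V$ contains the irreducible closed set $\mathcal S$ of the same dimension, forcing $V=\mathcal S$ and hence $x+\C a\subseteq\mathcal S\subseteq\gsing$ for every $x\in\mathcal S$, contradicting the observation above. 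Therefore $\dim V=\dim\g-2$, and since $\gsing$ is closed we have $V\subseteq\Sing^a$, giving $\dim\Sing^a=\dim\g-2$.

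Finally I would combine this with the equidimensionality of Mishchenko--Fomenko fibres. By Proposition \ref{Proposition: Irreducible component dimension} (with $\mathfrak k=\g$), every fibre of $F_a$ has dimension $b-r$. Restricting $F_a$ to the irreducible variety $V$ and writing $W:=\overline{F_a(V)}$, the dominant morphism $F_a\vert_V\colon V\to W$ has generic fibre of dimension $\dim V-\dim W$; as this fibre is contained in a fibre of $F_a$, we get $\dim V-\dim W\le b-r$, hence
\[
\dim\overline{F_a(\Sing^a)}\ \ge\ \dim W\ \ge\ \dim V-(b-r)\ =\ (\dim\g-2)-(b-r)\ =\ b-2,
\]
where the last equality uses $\dim\g=2b-r$ from \eqref{eq:2b=dimg +r}. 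Together with the upper bound this proves the claim. The one step I expect to require genuine care is the equality $\dim\Sing^a=\dim\g-2$, i.e. ruling out the degenerate possibility that enlarging $\gsing$ by the $a$-direction fails to raise its dimension; this is exactly where the elementary fact that no $a$-line lies in $\gsing$ enters, and everything else reduces to routine dimension bookkeeping together with generic smoothness and the known purity of the fibres of $F_a$.
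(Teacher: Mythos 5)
Your proof is correct, but the lower bound is reached by a genuinely different route than the paper's. The paper argues algebraically: it takes a codimension-$2$ irreducible component $X\subseteq\Sing^a$, uses Moreau's flatness of the extension $\mathcal{F}_a\subseteq\mathbb{C}[\g]$ to apply going-down, bounds the height of the prime $I\cap\mathcal{F}_a$ by $2$, and converts the resulting Krull-dimension estimate for $j^*(\mathcal{F}_a)$ into a generic rank bound for $dF_a\big\vert_X$. You instead stay geometric: you first prove (rather than assert, as the paper does) that $\Sing^a$ contains an irreducible closed subvariety $V$ of codimension $2$, via the observation that no line $x+\mathbb{C}a$ lies in $\gsing$, and then apply the fibre-dimension theorem to $F_a\big\vert_V:V\to\overline{F_a(V)}$ together with the purity of fibre dimensions of $F_a$ (Proposition \ref{Proposition: Irreducible component dimension}), which encodes Moreau's flatness in a different way; your upper bound via generic smoothness is essentially the paper's Sard argument. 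What your approach buys is a more elementary, coordinate-free dimension count that also fills in the step the paper leaves implicit (that adding $\mathbb{C}a$ to a codimension-$3$ component of $\gsing$ genuinely raises the dimension); what the paper's approach buys is that it works directly with the subalgebra $\mathcal{F}_a$ and makes the role of flatness/going-down explicit. One small point to tighten: your justification that $V=\overline{\mathcal{S}+\mathbb{C}a}\subseteq\Sing^a$ ``since $\gsing$ is closed'' is not quite enough (a closed set plus a line need not be closed); either note that $\Sing^a$ is closed because it is the critical locus of $F_a$ (Bolsinov), or bypass the issue entirely by observing that continuity gives $F_a(V)\subseteq\overline{F_a(\mathcal{S}+\mathbb{C}a)}\subseteq\overline{F_a(\Sing^a)}$, which is all your final estimate uses.
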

\begin{proof}
By Theorem \ref{Theorem: Veldkamp}, some irreducible component $X$ of $\Sing^a=\mathfrak{g}_{\text{sing}}+\mathbb{C}a$ has codimension $2$ in $\mathfrak{g}$. Let $j:X\rightarrow\mathfrak{g}$ be the inclusion map, observing that $j^*:\mathbb{C}[\mathfrak{g}]\rightarrow\mathbb{C}[X]$ is surjective with kernel equal to a prime ideal $I\subseteq\mathbb{C}[\mathfrak{g}]$. 

We claim that the prime ideal $I\cap\mathcal{F}_a\subseteq\mathcal{F}_a$ has height at most $2$. To see this, we first note that $\mathcal{F}_a\subseteq\mathbb{C}[\g]$ is a flat extension of rings (see \cite[Theorem 1.2]{Moreau}). It follows from \cite[Theorem 9.5]{Matsumura} that this extension satisfies the \textit{going-down property} for prime ideals. Now let $J_0\subsetneq J_1\subsetneq\cdots\subsetneq J_n=I\cap\mathcal{F}_a$ be a strictly increasing sequence of prime ideals in $\mathcal{F}_a$. The going-down property allows us to find a strictly increasing sequence $I_0\subsetneq I_1\subsetneq\cdots\subsetneq I_n=I$ of prime ideals in $\mathbb{C}[\g]$ satisfying $J_k=I_k\cap\mathcal{F}_a$ for all $k\in\{1,\ldots,n\}$. Note also that $I$ has height $2$ in $\mathbb{C}[\g]$, owing to the fact that $X$ has codimension $2$ in $\g$. It follows that $n\leq 2$, and we conclude that $I\cap\mathcal{F}_a$ has height at most $2$ in $\mathcal{F}_a$.

Now note that $\mathcal{F}_a$ is a polynomial algebra in $b$-many indeterminates (see Theorem \ref{Theorem: Mishchenko-Fomenko}). This combines with the result of the previous paragraph to yield 
\[
\mathrm{Kdim}\bigg((\mathcal{F}_a)/(I\cap\mathcal{F}_a)\bigg)\geq b-2,
\] 
where $\mathrm{Kdim}$ denotes Krull dimension. Observing that $j^*(\mathcal{F}_a)\cong\mathcal{F}_a/(I\cap\mathcal{F}_a)$, we obtain
\[
\mathrm{Kdim}(j^*(\mathcal{F}_a))\geq b-2.
\]
We also know that the functions $j^*(f_1),\ldots,j^*(f_b)\in\mathbb{C}[X]$ generate $j^*(\mathcal{F}_a)$. Taken together, these last two sentences have the following consequence: for all generic $x\in X$, the differentials of $j^*(f_1),\ldots,j^*(f_b)$ at $x$ span a subspace of $T_x^*X$ of dimension at least $b-2$. This is in turn equivalent to the differential of $F_a\big\vert_X:X\rightarrow\mathbb{C}^b$ having rank at least $b-2$ at all generic points of $X$. We conclude that the dimension of $\overline{F_a(X)}\subseteq\mathbb{C}^b$ is at least $b-2$. The inclusion $\overline{F_a(X)}\subseteq \overline{F_a(\Sing^a)}$ then establishes that the latter set has dimension at least $b-2$. On the other hand, it is a straightforward consequence of Sard's theorem that $\overline{F_a(\Sing^a)}$ cannot be $b$-dimensional. It follows that the dimension of $\overline{F_a(\Sing^a)}$ is $b-1$ or $b-2$, completing the proof.
\end{proof}

\begin{rem}
Each of the codimensions $1$ and $2$ is achievable in examples, as we later discuss in Remark \ref{Remark: Possible codimensions}. 
\end{rem}

\subsection{A family of singular fibres}\label{Subsection: A family of singular fibres}
Fix $a\in\greg$ and recall the subalgebra $\ba\subseteq\g$ from Section \ref{subsec:ba}. This subalgebra and its nilpotent radical $\uu^a:=[\ba,\ba]\subseteq\ba$ turn out to play the following role with respect to singular points in Mishchenko--Fomenko fibres.   

\begin{prop}\label{Proposition: Intersecting irreducible components}
Suppose that $a\in\greg$ is not nilpotent. If $x\in\ba$, then $x+\ua$ lies in the singular locus of $F_a^{-1}(F_a(x))$.
\end{prop}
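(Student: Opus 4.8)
The plan is to exhibit, at every point $x\in\ba$, a nonzero tangent direction to $F_a^{-1}(F_a(x))$ along which the fibre fails to be smooth; more precisely, I want to show that $\dim T_x(F_a^{-1}(F_a(x)))$ strictly exceeds the common dimension $b(\g)-\rank(\g)$ of the irreducible components of the fibre (Proposition \ref{Proposition: Irreducible component dimension}), which forces $x$ into the singular locus. The key idea is to produce two distinct irreducible components of $F_a^{-1}(F_a(x))$ passing through $x$, whose tangent spaces at $x$ are different; since $x$ lies on both, it is a singular point of the fibre. First I would use the structure of $\ba=\mathfrak z(\g_s)\oplus\tilde{\bb}^a$ from \eqref{eq:defba}: because $a$ is not nilpotent, $s\neq 0$, so $\g_s\subsetneq\g$ and $\ba$ is properly contained in more than one Borel subalgebra of $\g$. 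Indeed Lemma \ref{Lemma: Borel intersection} and Proposition \ref{Proposition: Intersection of Borels} give $\ba=\bb\cap\g_s$ for every Borel $\bb\supseteq a$, and the proof of Proposition \ref{Proposition: Intersection of Borels} explicitly constructs two such Borels $\bb$ and $w\bb$ with $\bb\cap w\bb=\ba$; since $s\neq 0$ we have $\ba\neq\bb$, so $\bb\neq w\bb$.

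Next I would invoke Proposition \ref{Prop: Unipotent orbit}(i): for each Borel subalgebra $\bb\supseteq a$ and each $x\in\bb$, the affine space $x+[\bb,\bb]$ is an irreducible component of $F_a^{-1}(F_a(x))$ containing $x$. Applying this to $x\in\ba\subseteq\bb$ and to $x\in\ba\subseteq w\bb$, I obtain two irreducible components $x+[\bb,\bb]$ and $x+[w\bb,w\bb]$ of $F_a^{-1}(F_a(x))$, each containing $x$. Their tangent spaces at $x$ are the linear subspaces $[\bb,\bb]$ and $[w\bb,w\bb]$. These two nilradicals are distinct: if $[\bb,\bb]=[w\bb,w\bb]$ then, adding back the common Cartan $\h$ and the semisimple part, one would get $\bb=w\bb$, contradicting $\bb\neq w\bb$. (Concretely, in the root-space language of the proof of Proposition \ref{Proposition: Intersection of Borels}, $[\bb,\bb]=\bigoplus_{\alpha\in\Delta_+}\g_\alpha$ and $[w\bb,w\bb]=\bigoplus_{\alpha\in w\Delta_+}\g_\alpha$, and $\Delta_+\neq w\Delta_+$.) Hence $x$ lies on two distinct irreducible components of $F_a^{-1}(F_a(x))$, so it is a singular point of that fibre, and $x\in\ba$ was arbitrary.

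The main obstacle I anticipate is the verification that the two components $x+[\bb,\bb]$ and $x+[w\bb,w\bb]$ are genuinely \emph{distinct} as subvarieties of $\g$, and not merely cut out by different-looking equations; equivalently, that $[\bb,\bb]\neq[w\bb,w\bb]$ as vector subspaces of $\g$. This is where the hypothesis that $a$ is not nilpotent is essential — it guarantees $s\neq 0$, hence $\ba=\bb\cap\g_s$ is a proper subalgebra of $\bb$, which is exactly what allows the existence of a second Borel $w\bb\neq\bb$ through $a$ in the proof of Proposition \ref{Proposition: Intersection of Borels}. A secondary point to handle carefully is that $x+[\bb,\bb]$ and $x+[w\bb,w\bb]$ are \emph{maximal} among irreducible closed subsets of the fibre (so that "irreducible component" is the right word and $x$ is not merely a smooth point of a larger component containing both); this follows from Proposition \ref{Proposition: Irreducible component dimension}, since both have dimension $\dim[\bb,\bb]=\dim\uu=b(\g)-\rank(\g)$, the exact dimension of every irreducible component of the fibre. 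Once distinctness is in hand, the conclusion that $x$ is singular is immediate, since a point lying on two distinct irreducible components of a variety is never a smooth point.
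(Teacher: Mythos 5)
Your proposal is correct and follows essentially the same route as the paper: use the fact that $a$ non-nilpotent forces $\ba$ to lie in two distinct Borel subalgebras $\bb_1,\bb_2$ containing $a$ (via Proposition \ref{Proposition: Intersection of Borels}), apply Proposition \ref{Prop: Unipotent orbit}(i) to get two distinct irreducible components $x+\uu_1$ and $x+\uu_2$ through $x$, and conclude singularity from membership in two components. The only cosmetic difference is that the paper closes by observing $x+\ua\subseteq(x+\uu_1)\cap(x+\uu_2)$ directly, whereas you deduce the statement for all of $x+\ua$ from the arbitrariness of $x\in\ba$ (which works, since $\ua\subseteq\ba$ and $x+\ua$ stays inside the fibre $F_a^{-1}(F_a(x))$).
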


\begin{proof}
Let $a=s+n$ be the Jordan decomposition of $a$, where $s\in\g$ is semisimple and $n\in\g$ is nilpotent. Since $a$ is not nilpotent, $s\neq 0$ and the Levi subalgebra $\mathfrak{g}_s$ is properly contained in $\mathfrak{g}$. It follows that no Borel subalgebra of $\mathfrak{g}_s$ can have the dimension necessary to be a Borel subalgebra of $\mathfrak{g}$. In particular, $\ba$ cannot be a Borel subalgebra of $\mathfrak{g}$. It now follows from Proposition \ref{Proposition: Intersection of Borels} that there exist distinct Borel subalgebras $\mathfrak{b}_1,\mathfrak{b}_2\subseteq\mathfrak{g}$ such that $\ba\subseteq\mathfrak{b}_1\cap\mathfrak{b}_2$. Proposition \ref{Prop: Unipotent orbit} then implies that $x+\mathfrak{u}_1$ and $x+\mathfrak{u}_2$ are irreducible components of $F_a^{-1}(F_a(x))$, where $\mathfrak{u}_1$ and $\mathfrak{u}_2$ are the nilpotent radicals of $\mathfrak{b}_1$ and $\mathfrak{b}_2$, respectively. Since $\bb_1$ and $\bb_2$ are distinct Borel subalgebras, we see that $x+\mathfrak{u}_1$ and $x+\mathfrak{u}_2$ are distinct irreducible components of $F_a^{-1}(F_a(x))$. We also note that $\ua\subseteq\mathfrak{u}_1\cap\mathfrak{u}_2$, implying that $x+\ua$ lies in the intersection of the two components $x+\mathfrak{u}_1$ and $x+\mathfrak{u}_2$. We conclude that $x+\ua$ consists of singular points in $F_a^{-1}(F_a(x))$. 
\end{proof}

One immediate consequence of Proposition \ref{Proposition: Intersecting irreducible components} is that the Mishchenko--Fomenko fibres over $F_a(\ba)\subseteq\mathbb{C}^b$ are singular when $a\in\greg$ is not nilpotent. In other words, the points in $F_a(\ba)$ index a family of singular Mishchenko--Fomenko fibres. This leads us to study $F_a(\ba)$ in more detail, for which the following lemma will be helpful.  

\begin{lem}\label{Lemma: Generate}
Suppose that $a\in\mathfrak{g}_{\emph{reg}}$ has a Jordan decomposition of $a=s+n$, where $s\in\g$ is semisimple and $n\in\g$ is nilpotent. Let $\mathfrak{h}$ be a Cartan subalgebra of $\mathfrak{g}_s$ contained in $\ba$. The polynomials $f_1\big\vert_{\mathfrak{h}},\ldots,f_b\big\vert_{\mathfrak{h}}$ then generate $\mathbb{C}[\mathfrak{h}]^{W_s}$, where $W$ is the Weyl group of $(\mathfrak{g},\mathfrak{h})$ and $W_s$ is the $W$-stabilizer of $s$.  
\end{lem}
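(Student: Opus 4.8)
The plan is to identify $F_a(\mathfrak h)\subseteq\mathbb C^b$ with the categorical quotient $\mathfrak h/W_s$ via the restriction map, and to do so by relating $f_1|_{\mathfrak h},\dots,f_b|_{\mathfrak h}$ to the restrictions of the invariant polynomials on $\mathfrak g_s$. First I would recall from Section~\ref{subsec:ba} that $\mathfrak h$ is a Cartan subalgebra of $\mathfrak g_s$, so that $W_s=W(\mathfrak g_s,\mathfrak h)$, and by Chevalley's restriction theorem the restriction map $\mathbb C[\mathfrak g_s]^{G_s}\to\mathbb C[\mathfrak h]^{W_s}$ is an isomorphism. Thus it suffices to show that the subalgebra of $\mathbb C[\mathfrak h]$ generated by $f_1|_{\mathfrak h},\dots,f_b|_{\mathfrak h}$ coincides with the image of $\mathbb C[\mathfrak g]^G\big|_{\mathfrak g_s}$ restricted to $\mathfrak h$ — or more precisely, that it contains $\mathbb C[\mathfrak h]^{W_s}$ and is contained in it.

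For the inclusion ``$\subseteq \mathbb C[\mathfrak h]^{W_s}$'': each $f_{ij}^a$ is a coefficient in the $\lambda$-expansion of $x\mapsto f_i(x+\lambda a)$. Writing $a=s+n$ with $s\in\mathfrak h$, I would restrict $x$ to $\mathfrak h\subseteq\mathfrak g_s$ and use that $\mathfrak h$ consists of elements commuting with $s$; more usefully, since $\mathfrak h+\mathbb C a\subseteq\mathfrak b^a\subseteq\mathfrak b$ for some Borel $\mathfrak b$, and the nilpotent part $n$ lies in $[\mathfrak g_s,\mathfrak g_s]$, one can use Lemma~\ref{Lemma:3.1.43.parabolic}-type arguments inside $\mathfrak g_s$: for $h\in\mathfrak h$ and $f\in\mathbb C[\mathfrak g]^G$, $f(h+\lambda a)=f(h+\lambda s+\lambda n)$ equals $f$ evaluated on a regular-semisimple-plus-nilpotent element of $\mathfrak g_s$ with semisimple part $h+\lambda s$, hence equals $(f|_{\mathfrak g_s})$ on that semisimple part, which is a $W_s$-invariant function of $h$ (for fixed $\lambda$). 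Extracting $\lambda$-coefficients preserves $W_s$-invariance, giving $f_{ij}^a|_{\mathfrak h}\in\mathbb C[\mathfrak h]^{W_s}$.

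For the reverse inclusion — that $f_1|_{\mathfrak h},\dots,f_b|_{\mathfrak h}$ \emph{generate} all of $\mathbb C[\mathfrak h]^{W_s}$ — I would argue that already for a single well-chosen $\lambda$, or by combining finitely many $\lambda$'s, the restrictions $(f_i)_{\lambda,a}|_{\mathfrak h}$ generate $\mathbb C[\mathfrak h]^{W_s}$. Concretely: for generic $\lambda$, the element $\lambda^{-1}(h+\lambda a)$ has semisimple part $\lambda^{-1}h+s$, and as $h$ ranges over $\mathfrak h$ this sweeps out $\mathfrak h+s$; the functions $h\mapsto f_i(\lambda^{-1}h+s+\text{nilpotent})=f_i|_{\mathfrak g_s}(\lambda^{-1}h+s)$ are, up to the affine reparametrization $h\mapsto \lambda^{-1}h+s$, exactly the restrictions to $\mathfrak h$ of a generating set of $\mathbb C[\mathfrak g_s]^{G_s}\cong\mathbb C[\mathfrak h]^{W_s}$ (noting $s$ is $W_s$-fixed, so translation by $s$ preserves $W_s$-invariants and is an algebra automorphism of $\mathbb C[\mathfrak h]^{W_s}$). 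Since $f_1|_{\mathfrak g_s},\dots,f_r|_{\mathfrak g_s}$ generate $\mathbb C[\mathfrak g_s]^{G_s}$ — this uses that restriction of invariants from $\mathfrak g$ to a Levi $\mathfrak g_s$ is surjective, e.g. by Chevalley's theorem applied compatibly to a common Cartan and the inclusion $W_s\hookrightarrow W$ — the polynomials $(f_i)_{\lambda,a}|_{\mathfrak h}$ generate $\mathbb C[\mathfrak h]^{W_s}$, and a fortiori so do $f_1|_{\mathfrak h},\dots,f_b|_{\mathfrak h}$, since each $(f_i)_{\lambda,a}|_{\mathfrak h}$ is a $\mathbb C$-linear combination of $f_i(a)\lambda^{d_i}$ and the $f_{ij}^a|_{\mathfrak h}$.

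The main obstacle I anticipate is making the second inclusion rigorous: one must ensure the reparametrization argument genuinely produces \emph{generators}, not merely that the relevant functions land in $\mathbb C[\mathfrak h]^{W_s}$. The cleanest route is probably to fix one generic $\lambda_0\neq 0$ with $h+\lambda_0 a$ regular for generic $h$, observe that $\varphi_{\lambda_0}:\mathfrak h\to\mathfrak h$, $h\mapsto \lambda_0^{-1}h+s$ is an affine isomorphism commuting with $W_s$, hence induces an algebra automorphism $\varphi_{\lambda_0}^*$ of $\mathbb C[\mathfrak h]^{W_s}$, and then show $(f_i)_{\lambda_0,a}|_{\mathfrak h}=\varphi_{\lambda_0}^*\big((f_i|_{\mathfrak g_s})|_{\mathfrak h}\big)$ up to the constant $f_i(a)\lambda_0^{d_i}$ and up to homogeneity rescaling — the subtlety being the presence of the nilpotent part $n$, which I would absorb using Lemma~\ref{Lemma:3.1.43.parabolic} (invariants are insensitive to nilpotent translations within a parabolic). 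The surjectivity of restriction of $G$-invariants onto $G_s$-invariants is standard but should be cited carefully, as it is the linchpin that upgrades ``contained in $\mathbb C[\mathfrak h]^{W_s}$'' to ``equal to $\mathbb C[\mathfrak h]^{W_s}$''.
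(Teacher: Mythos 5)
Your first step (passing from $a$ to its semisimple part $s$ on $\mathfrak{h}$, so that the nilpotent part is invisible to the invariants, and noting that each restricted polynomial lands in $\mathbb{C}[\mathfrak{h}]^{W_s}$) is sound and is essentially how the paper begins: it uses \cite[Corollary 3.1.43]{Chriss} to get $f_i(x+\lambda a)=f_i(x+\lambda s)$ for $x\in\mathfrak{h}$, hence $f_{ij}^a\big\vert_{\mathfrak{h}}=f_{ij}^s\big\vert_{\mathfrak{h}}$. The gap is in your reverse inclusion. Your linchpin claim that ``restriction of invariants from $\mathfrak{g}$ to the Levi $\mathfrak{g}_s$ is surjective'', i.e.\ that $f_1\big\vert_{\mathfrak{g}_s},\dots,f_r\big\vert_{\mathfrak{g}_s}$ generate $\mathbb{C}[\mathfrak{g}_s]^{G_s}$, is false whenever $W_s\subsetneq W$: by Chevalley, restricting further to $\mathfrak{h}$ these polynomials generate exactly $\mathbb{C}[\mathfrak{h}]^{W}$, which is a \emph{proper} subalgebra of $\mathbb{C}[\mathfrak{h}]^{W_s}$ (the inclusion $W_s\hookrightarrow W$ gives the containment of invariant rings in the wrong direction for your purposes). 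Already for $\mathfrak{g}=\mathfrak{sl}_2(\mathbb{C})$ with $s$ regular semisimple one has $\mathbb{C}[\mathfrak{g}]^G\big\vert_{\mathfrak{h}}=\mathbb{C}[t^2]\subsetneq\mathbb{C}[t]=\mathbb{C}[\mathfrak{h}]^{W_s}$. Consequently your single-$\lambda$ reparametrization cannot work either: $\varphi_{\lambda_0}^*$ is an automorphism of $\mathbb{C}[\mathfrak{h}]^{W_s}$, so applying it to generators of $\mathbb{C}[\mathfrak{h}]^{W}$ produces generators of the proper subalgebra $\varphi_{\lambda_0}^*\big(\mathbb{C}[\mathfrak{h}]^{W}\big)$, not of $\mathbb{C}[\mathfrak{h}]^{W_s}$; in the $\mathfrak{sl}_2$ example one gets $\mathbb{C}[(\lambda_0^{-1}t+s)^2]\neq\mathbb{C}[t]$.

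The point is that generation of $\mathbb{C}[\mathfrak{h}]^{W_s}$ genuinely requires the whole $\lambda$-family of coefficients $f_{ij}^s$ (in the $\mathfrak{sl}_2$ example it is the linear coefficient $2st$ of $f(t+\lambda s)$ that supplies the missing generator $t$), and this is not a formal consequence of Chevalley restriction. The paper handles exactly this step by citing a nontrivial external input: the first lemma on page 302 of \cite{Shuvalov}, which states that the shifted-argument coefficients $f_{ij}^s\big\vert_{\mathfrak{g}_s}$ generate $\mathbb{C}[\mathfrak{g}_s]^{G_s}$; only after that does Chevalley's restriction isomorphism $\mathbb{C}[\mathfrak{g}_s]^{G_s}\xrightarrow{\cong}\mathbb{C}[\mathfrak{h}]^{W_s}$ finish the argument. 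To repair your proof you would need to prove or cite this result (or an equivalent one); as written, the decisive step rests on a false statement.
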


\begin{proof}
An application of \cite[Corollary 3.1.43]{Chriss} gives $f_i(x+\lambda a)=f_i(x+\lambda s)$ for all $i\in\{1,\ldots,r\}$, $x\in\mathfrak{h}$, and $\lambda\in\mathbb{C}$. It follows that $f_{ij}^a(x)=f_{ij}^s(x)$ for all $i\in\{1,\ldots,r\}$, $j\in\{0,\ldots,d_i-1\}$, and $x\in\mathfrak{h}$, where the $f_{ij}^a$ are defined in \eqref{eq:Def_fij} and the $f_{ij}^s$ are defined analogously. In other words, we have 
\[
f_{ij}^a\big\vert_{\mathfrak{h}}=f_{ij}^s\big\vert_{\mathfrak{h}},\quad i\in\{1,\ldots,r\},\text{ }j\in\{0,\ldots,d_i-1\}.
\] 
We are thus reduced to proving that the $f_{ij}^s\big\vert_{\mathfrak{h}}$ generate $\mathbb{C}[\mathfrak{h}]^{W_s}$. To this end, the first lemma appearing on page 302 of \cite{Shuvalov} implies that the $f_{ij}^s\big\vert_{\mathfrak{g}_s}$ generate $\mathbb{C}[\mathfrak{g}_s]^{G_s}$. Note also that $f_{ij}^s\big\vert_{\mathfrak{h}}$ is the image of $f_{ij}^s\big\vert_{\mathfrak{g}_s}$ under Chevalley's restriction isomorphism $\mathbb{C}[\mathfrak{g}_s]^{G_s}\xrightarrow{\cong}\mathbb{C}[\mathfrak{h}]^{W_s}$. It follows that the $f_{ij}^s\big\vert_{\mathfrak{h}}$ must generate $\mathbb{C}[\mathfrak{h}]^{W_s}$, completing the proof.
\end{proof}

We may now establish the following qualitative features of $F_a(\ba)$.

\begin{thm}\label{Theorem: F(ba)}
Suppose that $a\in\mathfrak{g}_{\emph{reg}}$ has a Jordan decomposition of $a=s+n$, where $s\in\g$ is semisimple and $n\in\g$ is nilpotent. Let $\pi_r:\mathbb{C}^b\rightarrow\mathbb{C}^r$ denote projection onto the first $r$ components, and let $\h$ be a Cartan subalgebra of $\g_s$ satisfying $\h\subseteq\ba$. Consider the Weyl group $W$ of $(\g,\h)$, and let $W_s\subseteq W$ be the $W$-stabilizer of $s$.
\begin{enumerate}
\item[(i)] We have $F_a(\ba)=F_a(\h)$.
\item[(ii)] The image $F_a(\ba)$ is an $r$-dimensional closed subvariety of $\mathbb{C}^b$ satisfying $\pi_r(F_a(\ba)) = \C^r$. If $a$ is nilpotent, then $F_a(\ba) = \C^r\times \{0\}\subseteq\mathbb{C}^b$.
\item[(iii)] Consider the restricted map $F_a\big\vert_{\h}:\h\rightarrow F_a(\ba)$ obtained by virtue of (i). The associated comorphism $(F_a\big\vert_{\mathfrak{h}})^*:\mathbb{C}[F_a(\ba)]\rightarrow\mathbb{C}[\mathfrak{h}]$ is injective with image $\mathbb{C}[\mathfrak{h}]^{W_s}$.
\item[(iv)] The restriction $\pi_r\big\vert_{F_a(\ba)}:F_a(\ba)\to \C^r$ is a finite morphism of degree $|W/W_s|$. 
\end{enumerate}
\end{thm}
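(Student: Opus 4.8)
The plan is to reduce all four parts to the restriction $F_a\big\vert_{\h}\colon\h\to\C^b$ and to Lemma~\ref{Lemma: Generate}, by which the components $f_1\big\vert_{\h},\dots,f_b\big\vert_{\h}$ of $F_a\big\vert_{\h}$ generate $\C[\h]^{W_s}$. (Here $\h$, having dimension $r$ and consisting of semisimple elements of $\g$, is automatically a Cartan subalgebra of $\g$; this is already implicit in Lemma~\ref{Lemma: Generate}.) For (i) I would argue as follows: by Proposition~\ref{Proposition: Intersection of Borels} there is a Borel subalgebra $\bb\subseteq\g$ with $\ba\subseteq\bb$, and since $\h\subseteq\bb$ is a Cartan subalgebra of $\g$ one has $\bb=\h\oplus[\bb,\bb]$. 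Writing an arbitrary $x\in\ba\subseteq\bb$ as $x=x_{\h}+x'$ with $x_{\h}\in\h$ and $x'\in[\bb,\bb]$, the computation in the proof of Proposition~\ref{Proposition: Binvariance} gives $f(x+\lambda a)=f(x_{\h}+\lambda a)$ for all $f\in\C[\g]^G$ and $\lambda\in\C$, so $F_a(x)=F_a(x_{\h})\in F_a(\h)$ by Remark~\ref{Remark: Independence of generators}; the reverse inclusion $F_a(\h)\subseteq F_a(\ba)$ is trivial.

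For (ii) and (iii) I would consider the $\C$-algebra map $\varphi\colon\C[y_1,\dots,y_b]\to\C[\h]$ sending the coordinate functions to the components of $F_a\big\vert_{\h}$; by Lemma~\ref{Lemma: Generate}, $\operatorname{im}(\varphi)=\C[\h]^{W_s}$. Since $W_s$ is finite, $\C[\h]$ is module-finite over $\C[\h]^W$, hence over the intermediate ring $\C[\h]^{W_s}$, so $\h\to\operatorname{Spec}(\C[\h]^{W_s})$ is finite and surjective, while $\operatorname{Spec}(\C[\h]^{W_s})\hookrightarrow\C^b$ is a closed immersion. Therefore $F_a(\h)$ is the closed, $r$-dimensional subvariety of $\C^b$ cut out by $\ker(\varphi)$; and because $\C[y]/\ker(\varphi)\cong\C[\h]^{W_s}$ is a domain, $\ker(\varphi)$ is prime, so $\C[F_a(\ba)]=\C[y]/\ker(\varphi)\cong\C[\h]^{W_s}$ and $(F_a\big\vert_{\h})^*$ is exactly the inclusion $\C[\h]^{W_s}\hookrightarrow\C[\h]$. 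Combined with (i) this yields (iii) and the first assertion of (ii). For $\pi_r(F_a(\ba))=\C^r$, note that $\pi_r\circ F_a\big\vert_{\h}=(f_1\big\vert_{\h},\dots,f_r\big\vert_{\h})$ is, by Chevalley's restriction theorem, the adjoint quotient $\h\to\h/W=\C^r$, which is surjective; and if $a$ is nilpotent then $s=0$, $W_s=W$, and equation~\eqref{Equation: Nilpotent equation} gives $F_a\big\vert_{\h}=(f_1\big\vert_{\h},\dots,f_r\big\vert_{\h},0,\dots,0)$, whence $F_a(\ba)=\C^r\times\{0\}$.

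For (iv): under these identifications $\pi_r\big\vert_{F_a(\ba)}$ corresponds to the inclusion $\C[\h]^W\hookrightarrow\C[\h]^{W_s}$, which is module-finite ($\C[\h]^{W_s}$ is a $\C[\h]^W$-submodule of the finite module $\C[\h]$ over the Noetherian ring $\C[\h]^W$) and injective, so $\pi_r\big\vert_{F_a(\ba)}$ is a finite dominant morphism of irreducible $r$-dimensional affine varieties. Its degree is $[\operatorname{Frac}(\C[\h]^{W_s}):\operatorname{Frac}(\C[\h]^W)]$; a standard averaging argument identifies these fraction fields with $\C(\h)^{W_s}$ and $\C(\h)^W$, and since $W$ acts faithfully on $\h$ the Galois theory of $\C(\h)/\C(\h)^W$ gives $[\C(\h)^{W_s}:\C(\h)^W]=[W:W_s]=|W/W_s|$. (One may further invoke Steinberg's theorem that $W_s$ is generated by reflections to deduce that $\C[\h]^{W_s}$ is a polynomial algebra, i.e. that $F_a(\ba)$ is smooth.)

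There is essentially only one non-formal ingredient, namely Lemma~\ref{Lemma: Generate} (already proved), which pins down the image algebra as $\C[\h]^{W_s}$; everything else is bookkeeping with quotients by finite groups. The point that most needs care is reducedness: one must check that $\C[y]/\ker(\varphi)$ is already reduced, so that $\C[F_a(\ba)]$ equals $\C[\h]^{W_s}$ on the nose and (iii) is a genuine isomorphism rather than merely a surjection.
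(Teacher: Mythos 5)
Your proof is correct and follows essentially the same route as the paper's: part (i) via the invariance of $f$ on $x+[\bb,\bb]$ for a Borel $\bb\supseteq\ba$, parts (ii)--(iii) via the comorphism $\C[y_1,\dots,y_b]\to\C[\h]$ whose image is $\C[\h]^{W_s}$ by Lemma \ref{Lemma: Generate} together with the finiteness of $\C[\h]$ over $\C[\h]^W$, and part (iv) by identifying $\pi_r\big\vert_{F_a(\ba)}$ with the quotient map $\h/W_s\to\h/W$. Your added care about reducedness of $\C[y_1,\dots,y_b]/\ker(\varphi)$ and the Galois-theoretic computation of the degree $|W/W_s|$ only make explicit steps that the paper leaves implicit.
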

\begin{proof} 
We begin by verifying (i). Suppose that $x\in \mathfrak{b}_{a}$ and write $x = x_{\mathfrak{h}} + x_{\ua}$ with $x_{\mathfrak{h}}\in\h$ and $x_{\ua}\in\ua:=[\ba,\ba]$. If $f\in\mathbb{C}[\mathfrak g]^G$ is an invariant polynomial, then \cite[Corollary 3.1.43]{Chriss} allows us to write
\begin{equation}\label{Equation:baCartan}
f(x+\lambda a ) =  f(x_{\mathfrak{h}}+\lambda a+x_{\ua}) = f(x_{\mathfrak{h}} + \lambda a)
\end{equation}
for all $\lambda\in\mathbb{C}$. It follows that $F_a(\ba)=F_a(\h)$.

We now verify (ii). Consider the restriction $F_a\big\vert_{\mathfrak{h}}:\mathfrak{h}\rightarrow\mathbb{C}^b$, as well as the induced map of coordinate rings $(F_a\big\vert_{\mathfrak{h}})^*:\mathbb{C}[x_1,\ldots,x_b]\rightarrow\mathbb{C}[\mathfrak{h}]$. We then have $(F_a\big\vert_{\mathfrak{h}})^*(x_i)=f_i\big\vert_{\mathfrak{h}}$ for all $i=1,\ldots,b$. The polynomials $f_1\big\vert_{\mathfrak{h}},\ldots,f_r\big\vert_{\mathfrak{h}}$ generate the subalgebra $\mathbb{C}[\mathfrak{h}]^W$, so that the image of $(F_a\big\vert_{\mathfrak{h}})^*$ must contain $\mathbb{C}[\mathfrak{h}]^W$. Since the Chevalley--Shephard--Todd theorem shows $\mathbb{C}[\mathfrak{h}]$ to be a free module of finite rank $\vert W\vert$ over $\mathbb{C}[\mathfrak{h}]^W$, it follows that $\mathbb{C}[\mathfrak{h}]$ is finitely generated over $\mathbb{C}[x_1,\ldots,x_b]$. This amounts to $F_a\big\vert_{\mathfrak{h}}$ being a finite morphism of affine varieties. Noting that finite morphisms are closed, we see that $F_a(\mathfrak{h})=F_a(\ba)$ is a closed subset of $\mathbb{C}^b$. 

The equality
$F_a(\ba) = F_a(\mathfrak h)$
implies $\dim(F_a(\ba))\leq \dim \mathfrak{h} = r$. 
On the other hand, note that $\pi_r\circ F_a: \mathfrak g\to\C^r$ is the map $F$ from \eqref{eq:AdQuot}. The restriction $F\big\vert_{\h}:\h\rightarrow\mathbb{C}^r$ is known to be surjective (see the proof of \cite[Proposition 10]{KostantLie}), so that we must have 
\[
\pi_r(F_a(\ba)) = \C^r.
\]
This implies that $\dim(F_a(\ba))\geq r$, and we conclude that $\dim(F_a(\ba)) = r$.

To prove the second claim in (ii), suppose that $a$ is nilpotent. The equation \eqref{Equation: Nilpotent equation} then implies that $$F_a\big\vert_{\h}=(F\big\vert_{\h},0,\ldots,0):\g\rightarrow\mathbb{C}^b.$$ It follows that $F_a(\ba)=F_a(\h)=F(\h)\times\{0\}=\mathbb{C}^r\times\{0\}\subseteq\mathbb{C}^b$, where the final instance of equality comes from $F\big\vert_{\h}:\h\rightarrow\mathbb{C}^r$ being surjective.

To prove (iii), we consider the restricted map $F_a\big\vert_{\h}:\h\rightarrow F_a(\ba)$. Since this map is surjective (by (i)), it induces an injection $(F_a\big\vert_{\mathfrak{h}})^*:\mathbb{C}[F_a(\ba)]\rightarrow\mathbb{C}[\h]$ of coordinate rings. At the same time, note that (ii) shows the inclusion $j:F_a(\ba)\hookrightarrow\mathbb{C}^b$ to be a closed immersion. It follows that $j^*:\mathbb{C}[x_1,\ldots,x_b]\rightarrow\mathbb{C}[F_a(\ba)]$ is surjective, so that $\mathbb{C}[F_a(\ba)]$ is generated by $j^*(x_1),\ldots,j^*(x_b)$. On the other hand, it is straightforward to verify that
\[
(F_a\big\vert_{\mathfrak{h}})^*(j^*(x_i))=f_i\big\vert_{\mathfrak{h}}
\] 
for all $i=1,\ldots,b$. We conclude that the image of $(F_a\big\vert_{\mathfrak{h}})^*$ is generated by $f_1\big\vert_{\mathfrak{h}},\ldots,f_b\big\vert_{\mathfrak{h}}$, which by Lemma \ref{Lemma: Generate} means that the image is exactly $\mathbb{C}[\mathfrak{h}]^{W_s}$.

Let us now prove (iv). Note that Chevalley's restriction theorem forces $f_1\big\vert_{\mathfrak{h}},\ldots,f_r\big\vert_{\mathfrak{h}}$ to be algebraically independent generators of $\mathbb{C}[\mathfrak{h}]^W$, so that there is a unique $\mathbb{C}$-algebra isomorphism $\mathbb{C}[x_1,\ldots,x_r]\rightarrow\mathbb{C}[\mathfrak{h}]^W$ sending $x_i$ to $f_i\big\vert_{\mathfrak{h}}$, $i=1,\ldots,r$. This isomorphism fits into the commutative diagram 
\begin{equation*}
\begin{xy}
\xymatrixcolsep{5pc}\xymatrix{
\C[x_1,\dots,x_r]   \ar[r]^{\cong} \ar[d]_{\big(\pi_r\big\vert_{F_a(\ba)}\big)^*} & \ar[d] \C[\mathfrak h]^W    \\
     \C[F_a(\ba)]  \ar[r]^{\big(F_a\big\vert_{\mathfrak h}\big)^*}_{\cong} & \C[\mathfrak h]^{W_s},   }
\end{xy}
\end{equation*}
where the rightmost vertical map is the obvious inclusion, $\big(F_a\big\vert_{\mathfrak{h}}\big)^*$ is the map from (iii), and $\big(\pi_r\big\vert_{F_a(\ba)}\big)^*$ is the map of coordinate rings corresponding to $\pi_r\big\vert_{F_a(\ba)}:F_a(\ba)\rightarrow\mathbb{C}^r$. Now consider the corresponding commutative diagram 
\begin{equation*}
\begin{xy}
\xymatrixcolsep{5pc}\xymatrix{
\mathfrak{h}/W_s  \ar[r]^{\cong} \ar[d] & \ar[d]^{\pi_r\big\vert_{F_a(\ba)}} F_a(\ba)    \\
\mathfrak{h}/W  \ar[r]_{\cong} & \mathbb{C}^r}
\end{xy}
\end{equation*}
in the category of affine varieties, noting that the map $\mathfrak{h}/W_s\rightarrow\mathfrak{h}/W$ is a finite morphism of degree $\vert W/W_s\vert$. Since the horizontal arrows in this second diagram are isomorphisms, it follows that $\pi_r\big\vert_{F_a(\ba)}$ is a finite morphism of degree $\vert W/W_s\vert$.  
\end{proof}

\section{Examples}\label{Sec:Examples} 
In the interest of concreteness, we illustrate some of our results in the cases $\g = \sln_2(\C)$ and $\g = \sln_3(\C)$. 

\subsection{The case $\g = \sln_2(\C)$}\label{subsection:sln2}
Consider $\mathfrak g =\sln_2(\C)$ and let $\h\subseteq\mathfrak{sl}_2(\mathbb{C})$ be the standard Cartan subalgebra of diagonal matrices. We also have the positive Borel subalgebra $\bb_{+}\subseteq\mathfrak{sl}_2(\mathbb{C})$ of upper-triangular matrices, as well as the negative Borel subalgebra $\bb_{-}\subseteq\mathfrak{sl}_2(\mathbb{C})$ of lower-triangular matrices. 

Observe that $a\in\mathfrak{sl}_2(\mathbb{C})$ is regular if and only if $a\neq 0$. At the same time, recall that the relevant properties of $F_a$ only depend on the conjugacy class of $a$ (see Remark \ref{Remark: Essential properties}). We will therefore assume that $a$ is one of
\[
 s= \begin{pmatrix}
 a_1 & 0 \\ 0 & -a_1
 \end{pmatrix} (a_1\neq 0) \quad \text{and}\quad
 n= \begin{pmatrix}
 0 & 1 \\ 0 & 0
 \end{pmatrix}.
\]  
Note that $\Sing^a = \C a$ in each case. 

The Killing form determines a quadratic form on $\mathfrak{sl}_2(\mathbb{C})$, and this freely generates the algebra of invariant polynomials on $\sln_2(\C)$. Note that the aforementioned quadratic form takes $x\in \sln_2(\C)$ to a constant multiple of $\mathrm{tr}(x^2)$. A straightforward calculation then justifies our taking $F_a:\sln_2(\C)\to \C^2$ to be
\[
F_a(x) = (\frac{1}{2}\mathrm{tr}(x^2), \mathrm{tr}(ax)).
\]
If we now write 
\begin{equation}\label{Equation: Matrix equation}
x= \left(\begin{array}{cc} x_1 & x_2 \\ x_3 & -x_1\end{array}\right)\in\mathfrak{sl}_2(\mathbb{C}),
\end{equation} then we obtain 
\begin{eqnarray*}
F_s(x_1,x_2,x_3) &=& (x_1^2 + x_2x_3, 2a_1x_1),\\
F_n(x_1,x_2,x_3) &=& (x_1^2 + x_2x_3, x_3).
\end{eqnarray*}
It follows that
\begin{eqnarray*}
F_s^{-1}(0) &=& \{x\in\sln_2(\C)\colon  x_1^2 + x_2x_3 =0 = 2a_1x_1\} = \{x\in\sln_2(\C)\colon x_1 =0, x_2x_3=0\} = \uu_+\cup \uu_-,\\
F_n^{-1}(0) &=& \{x\in\sln_2(\C)\colon x_1^2 + x_2x_3 = 0 = x_3\} = \uu_+,
\end{eqnarray*}
where $\uu_{+}$ and $\uu_{-}$ are the nilpotent radicals of $\bb_{+}$ and $\bb_{-}$, respectively. Note that $F_a^{-1}(0)$ has no exotic irreducible components in each case. Using the notation of Section \ref{Subsection: A recursive formula}, these last two sentences imply that $|^{\sln_2(\C)}\mathcal I_s| = 2$, $|^{\sln_2(\C)}\mathcal I_n| = 1$, and $|^{\sln_2(\C)}\mathcal I_s'| = 0 = |^{\sln_2(\C)}\mathcal I_n'|$. One can also establish that $\mathcal B_s = \{\bb_{+},\bb_{-}\}$, $\mathcal B_n = \{\bb_{+}\}$, and $\mathcal P_s = \mathcal P_n  =\emptyset$. The reader will note that the previous two sentences are consistent with our recursive formula \eqref{Equation:RecursiveFormula}.

We now describe the images $F_a(\Sing^a)$ and $F(\bb^a)$, treating the cases $a=s$ and $a=n$ separately. To this end, suppose that $a=s$ and $z=(z_1,z_2)\in\mathbb{C}^2$. Let us write $T$ for the maximal torus of diagonal matrices in $\SLn_2(\C)$, in which case we have following:
\[
F_s^{-1}(z) = \left\{\begin{pmatrix}
\tfrac{z_2}{2a_1} & x_2 \\\left(z_1-\tfrac{z_2^2}{4a_1^2}\right)\tfrac{1}{x_2} & -\tfrac{z_2}{2a_1}
\end{pmatrix} \colon x_2\in \C\setminus\{0\} \right\}  = T\begin{pmatrix}
\tfrac{z_2}{2a_1} & 1 \\z_1-\tfrac{z_2^2}{4a_1^2} & -\tfrac{z_2}{2a_1}
\end{pmatrix} \quad \text{if} \quad z_1-\tfrac{z_2^2}{4a_1^2}\neq 0 
\]
and
\[
F_s^{-1}(z) = \left\{\begin{pmatrix} \tfrac{z_2}{2a_1} & x_2 \\ 0 & -\tfrac{z_2}{2a_1}\end{pmatrix}\colon x_2\in\C \right\} \cup \left\{\begin{pmatrix} \tfrac{z_2}{2a_1} & 0 \\ x_3 & -\tfrac{z_2}{2a_1}\end{pmatrix} \colon x_3\in\C\right\} \quad \text{if}\quad z_1-\tfrac{z_2^2}{4a_1^2} = 0.
\]
If $z_1-\tfrac{z_2^2}{4a_1^2}\neq 0$, then $F_s^{-1}(z)$ does not meet $\Sing^s = \h$ and is irreducible. If $z_1-\tfrac{z_2^2}{4a_1^2}=0$, then the irreducible components of $F_s^{-1}(z)$ are $x +\uu_{+}$ and $x+\uu_{-}$ with $x = \tfrac{z_2}{2a_1^2}s\in\Sing^s$. This shows that $F_s^{-1}(z)$ meets $\Sing^s$ if and only if $z_1-\tfrac{z_2^2}{4a_1^2}=0$. Noting that $\Sing^s  = \bb^s = \h$, we have 
\begin{equation}\label{Equation: First example} F_s(\Sing^s)= F_s(\bb^s) = \left\{(z_1,z_2)\in \C^2\colon z_1-\tfrac{z_2^2}{4a_1^2} = 0\right\}.\end{equation} 
Observe that projection onto the first factor $F_s(\bb_s)\rightarrow\mathbb{C}$ is two-to-one, except over the origin (cf. Theorem \ref{Theorem: F(ba)}(iv)).

In the case $a=n$, we have 
\[
F_n^{-1}(z) = \{x\in \sln_2(\C)\colon x_1^2 - x_2z_2 = z_1,\; x_3 = z_2\}
\]
for all $z=(z_1,z_2)\in\mathbb{C}^2$.
We see that $F_n^{-1}(z)$ is irreducible if $z_2\neq 0$ or $z_1=0=z_2$, and we have already noted that $F_n^{-1}(0) = \uu_{+} = \C n = \Sing^n$ in the latter case. If $z_2=0\neq z_1$, then denote by $\sqrt{z_1}$ a fixed square root of $z_1$. We then have 
$$F_n^{-1}(z) = \big(\mathrm{diag}\left(\sqrt{z_1}, -\sqrt{z_1}\right)+\uu_{+}\big) \cup \big(\mathrm{diag}(-\sqrt{z_1}, \sqrt{z_1}) + \uu_+\big).$$  This consists of two irreducible components, each contained in $\bb_{+}$ (cf. Proposition \ref{Prop: Unipotent orbit}(iii)).

We now compute $F_n(\bb^n)$ and $F_n(\Sing^n)$, noting that $\bb^n = \bb = \{x_3=0\}$ in the notation \eqref{Equation: Matrix equation}. Since $F_n(x_1,x_2,0) =  (x_1^2, 0)$, we see that
$$F_n(\bb^n) = \left\{(z_1,z_2)\in\C^2\colon z_2=0\right\} = \C\times\{0\}.$$ 
We also have $\Sing^n = \C n = \{x_1=0=x_3\} = \uu_+$, yielding 
\begin{equation}\label{Equation: Second example}
F_n(\Sing^n) = \{0\}.
\end{equation}

\begin{rem}\label{Remark: Possible codimensions}
Recall that Proposition \ref{Proposition:dimF(Sing)} reduces the possible codimensions of $\overline{F_a(\Sing^a)}$ to $1$ and $2$. Equations \eqref{Equation: First example} and \eqref{Equation: Second example} show that each of these possible codimensions is achievable.
\end{rem}

\subsection{The case $\g = \sln_3(\C)$} \label{Subsection: The case g=sln3}
Suppose that $\mathfrak g = \sln_3(\C)$ and that $a\in\mathfrak{sl}_3(\mathbb{C})_{\text{reg}}$.
The map $F_a:\mathfrak{sl}_3(\mathbb{C})\rightarrow\mathbb{C}^5$ is more complicated than its $\mathfrak{sl}_2(\mathbb{C})$ counterpart, to the point that general fibres of $F_a$ are more difficult to describe explicitly. We will nevertheless show that $F_a^{-1}(0)$ has an exotic irreducible component, and we will illustrate the assertions of Theorem \ref{Theorem: F(ba)}. 

We may take $F_a:\sln_3(\C)\to \C^5$ to be given by
\begin{equation}\label{Equation: Specific MF}
F_a(x) = (\mathrm{tr}(x^2), \mathrm{tr}(x^3), 2\mathrm{tr}(ax),3\mathrm{tr}(ax^2),6\mathrm{tr}(a^2x)).
\end{equation}
It is also straightforward to verify that
\[  
s= \begin{pmatrix} s_1 & 0 & 0 \\0 & s_2 & 0 \\ 0 & 0 & s_3\end{pmatrix} (s_i\neq s_j\text{ }\forall i\neq j),\quad r = \begin{pmatrix} \rho & 1 & 0\\0 & \rho & 0\\0 &0 & -2\rho\end{pmatrix}(\rho\neq 0),\quad\text{and}\quad n = \begin{pmatrix} 0 & 1 & 0 \\ 0 & 0 & 1\\ 0 & 0 & 0\end{pmatrix}
\]
are a complete collection of representatives for the conjugacy classes of regular elements in $\mathfrak{sl}_3(\mathbb{C})$. In light of Remark \ref{Remark: Essential properties}, we will assume that $a$ is one of $s$, $r$, and $n$.

Recalling the notation established in Section \ref{Subsection: A recursive formula}, we obtain the following data.
$$\mathcal B_s = \left \{  \scalemath{0.8}{
\begin{pmatrix}
* & * & * \\ 0 & * & 
 \\ 0 & 0 & *
\end{pmatrix}, 
\begin{pmatrix}
* & 0 & 0 \\ * & * & 0 \\ * & * & *
\end{pmatrix}, 
\begin{pmatrix}
* & * & 0 \\ 0 & * & 0 \\ * & * & *
\end{pmatrix}, 
\begin{pmatrix}
* & 0 & * \\ * & * & * \\ 0 & 0 & *
\end{pmatrix}, 
\begin{pmatrix}
* & 0 & 0 \\ * & * & * \\ * & 0 & *
\end{pmatrix}, 
 \begin{pmatrix}
* & * & * \\ 0 & * & 0 \\ 0 & * & *
\end{pmatrix} }   
\right\},$$
$$\mathcal{P}_s = \left \{ 
\p_1 = \scalemath{0.8}{\begin{pmatrix}
* & * & * \\ * & * & * \\ 0 & 0 & *
\end{pmatrix}},
\p_1^- = \scalemath{0.8}{\begin{pmatrix}
* & * & 0 \\ * & * & 0 \\ * & * & *
\end{pmatrix}}, 
\p_2 = \scalemath{0.8}{\begin{pmatrix}
* & * & * \\ 0 & * & * \\ 0 & * & *
\end{pmatrix}},  
\p_2^- = \scalemath{0.8}{\begin{pmatrix}
* & 0 & 0 \\ * & * & * \\ * & * & *
\end{pmatrix}}, 
\p_3 = \scalemath{0.8}{\begin{pmatrix}
* & * & * \\ 0 & * & 0 \\ * & * & *
\end{pmatrix}}, 
\p_3^- = \scalemath{0.8}{\begin{pmatrix}
* & 0 & * \\ * & * & * \\ * & 0 & *
\end{pmatrix} }    
\right \},$$ 

$$\mathcal B_r = \left \{ 
\scalemath{0.8}{\begin{pmatrix}
* & * & * \\ 0 & * & * \\ 0 & 0 & *
\end{pmatrix}, 
\begin{pmatrix}
* & * & 0 \\ 0 & * & 0 \\ * & * & *
\end{pmatrix},  
\begin{pmatrix}
* & * & * \\ 0 & * & 0 \\ 0 & * & *
\end{pmatrix}}     
\right\},$$

$$\mathcal{P}_r = \left \{ 
\p_1 = \scalemath{0.8}{\begin{pmatrix}
* & * & * \\ * & * & * \\ 0 & 0 & *
\end{pmatrix}},
\p_1^- = \scalemath{0.8}{\begin{pmatrix}
* & * & 0 \\ * & * & 0 \\ * & * & *
\end{pmatrix}}, 
\p_2 = \scalemath{0.8}{\begin{pmatrix}
* & * & * \\ 0 & * & * \\ 0 & * & *
\end{pmatrix}},  
\p_3 = \scalemath{0.8}{\begin{pmatrix}
* & * & * \\ 0 & * & 0 \\ * & * & *
\end{pmatrix}},      
\right \},$$ 
 
$$\mathcal B_n = \left \{ 
\scalemath{0.8}{\begin{pmatrix}
* & * & * \\ 0 & * & * \\ 0 & 0 & *
\end{pmatrix}}   
\right \},$$
$$\mathcal{P}_n = \left \{ 
\p_1 = \scalemath{0.8}{\begin{pmatrix}
* & * & * \\ * & * & * \\ 0 & 0 & *
\end{pmatrix}}, 
\p_2 = \scalemath{0.8}{\begin{pmatrix}
* & * & * \\ 0 & * & * \\ 0 & * & *
\end{pmatrix}}  
\right \}.$$  

In what follows, $\h\subseteq\mathfrak{sl}_3(\mathbb{C})$ is the usual Cartan subalgebra of diagonal matrices and $\uu_{+}\subseteq\mathfrak{sl}_3(\mathbb{C})$ and $\uu_{-}\subseteq\mathfrak{sl}_3(\mathbb{C})$ are the maximal nilpotent subalgebras of upper-triangular and lower-triangular matrices, respectively. 

\begin{prop}\label{Proposition: Calculation}
If $s\in\sln_3(\C)$ is as defined above, then there exists an element $x\in F_s^{-1}(0)\cap (\uu_-\oplus\uu_+)$ that is not contained in any $\p\in\widetilde{\mathcal{P}_s}$.
\end{prop}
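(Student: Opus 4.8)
The plan is to construct the required $x$ by hand, reducing $F_s(x)=0$ to a small system of polynomial equations on the space of trace‑zero matrices with zero diagonal. First I would parametrize a general element $x\in\uu_-\oplus\uu_+$ as a matrix $(x_{ij})$ with $x_{ii}=0$ and record that, since $s$ is diagonal, $\tr(sx)=\sum_i s_ix_{ii}=0$ and $\tr(s^2x)=0$ hold identically on $\uu_-\oplus\uu_+$; hence the third and fifth components of $F_s$ in \eqref{Equation: Specific MF} vanish automatically there. Setting $p:=x_{12}x_{21}$, $q:=x_{13}x_{31}$, $r:=x_{23}x_{32}$, $u:=x_{12}x_{23}x_{31}$ and $v:=x_{13}x_{32}x_{21}$, a short computation gives $\tr(x^2)=2(p+q+r)$, $\tr(x^3)=3(u+v)$ and, using $s_1+s_2+s_3=0$, $\tr(sx^2)=-(s_3p+s_2q+s_1r)$. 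So on $\uu_-\oplus\uu_+$ the condition $F_s(x)=0$ is equivalent to $p+q+r=0$, $s_3p+s_2q+s_1r=0$ and $u+v=0$.

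Next I would solve this system. The first two equations are linear in $(p,q,r)$, and since $s_1,s_2,s_3$ are pairwise distinct their common solution set is the line spanned by $(s_1-s_2,\,s_3-s_1,\,s_2-s_3)$; I fix $t\neq 0$ and take $(p,q,r)=t(s_1-s_2,\,s_3-s_1,\,s_2-s_3)$, so that $p,q,r$ are all nonzero and $P:=pqr\neq 0$. Because the monomials in question satisfy the identity $uv=pqr$, the remaining equation $u+v=0$ forces $v=-u$ and $u^2=-P$; conversely, fixing $u$ to be a square root of $-P$ (so $u\neq 0$) and $v:=-u$ is consistent with $uv=P$. It then remains to realize these prescribed nonzero values by an actual matrix all of whose off-diagonal entries are nonzero, and this is a two-line back-substitution: taking $x_{12}=x_{31}=1$, $x_{21}=p$, $x_{13}=q$, $x_{23}=u$, $x_{32}=r/u$, one verifies directly that $x_{12}x_{21}=p$, $x_{13}x_{31}=q$, $x_{23}x_{32}=r$, $x_{12}x_{23}x_{31}=u$ and $x_{13}x_{32}x_{21}=pqr/u=-u=v$. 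By construction the resulting $x$ lies in $F_s^{-1}(0)\cap(\uu_-\oplus\uu_+)$ and has all six off-diagonal entries nonzero.

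Finally I would argue that no $\p\in\widetilde{\mathcal P_s}$ contains $x$. Since $s$ is regular semisimple, $\g_s=\h$ is the diagonal Cartan and every parabolic subalgebra containing $s$ contains $\h$, hence has the form $\h\oplus\bigoplus_{\gamma\in\Phi}\g_\gamma$ for a subset $\Phi$ of the six roots; if such a parabolic is proper then $\Phi$ omits at least one root, so at least one off-diagonal matrix slot is forced to be $0$. (For $\sln_3(\C)$ this can also simply be read off the twelve subalgebras displayed in $\mathcal B_s$ and $\mathcal P_s$ above: each listed Borel has three, and each listed maximal parabolic two, forced zero off-diagonal entries.) As $x$ has every off-diagonal entry nonzero, $x\notin\p$ for every $\p\in\widetilde{\mathcal P_s}$, as required. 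I expect the only mildly delicate point to be the back-substitution step -- checking that one can simultaneously hit the prescribed values of the quadratic monomials $p,q,r$ and the cubic monomial $u$ while keeping all six entries nonzero -- but as indicated this amounts to a couple of lines of algebra; the rest is linear algebra and bookkeeping against the lists already given.
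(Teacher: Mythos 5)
Your proposal is correct and is essentially the paper's proof in matrix-entry rather than root-vector notation: the paper also reduces $F_s(x)=0$ on $\uu_-\oplus\uu_+$ to the same three equations in $p,q,r,u,v$ and picks precisely $(p,q,r)=(s_1-s_2,\,s_3-s_1,\,s_2-s_3)$ (i.e.\ $(\alpha(s),-\gamma(s),\beta(s))$) with $u^2=-pqr$, then observes that all six off-diagonal entries are nonzero and hence $x$ avoids every subalgebra in the displayed lists. The only difference is presentational — you derive the solution by solving the linear system and using $uv=pqr$, whereas the paper writes it down and verifies it.
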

\begin{proof}
Given $i\in\{1,2,3\}$, let $\epsilon_i\in\mathfrak{h}^*$ be the linear functional that picks out the diagonal entry in position $(i,i)$. We then have the standard simple roots $\alpha :=\alpha_1= \epsilon_1 - \epsilon_2$ and  $\beta :=\alpha_2= \epsilon_2-\epsilon_3$, and we put $\gamma :=\alpha_3= \alpha + \beta = \epsilon_1-\epsilon_3$. It follows that $\Delta=\{\alpha,\beta,\gamma,-\alpha,-\beta,-\gamma\}$ and $\Delta_+ = \{\alpha,\beta,\gamma\}$. 

If $i,j\in\{1,2,3\}$ are distinct, let $e_{ij}\in\mathfrak{sl}_3(\mathbb{C})$ be the matrix with $1$ in position $(i,j)$ and all remaining entries equal to $0$. Let us set
$$e_\alpha := e_{12},\quad e_\beta := e_{23},\quad e_\gamma := e_{13},\quad e_{-\alpha} := e_{21},\quad e_{-\beta} := e_{32},\quad e_{-\gamma} := e_{31},$$
reflecting the fact that $e_{12}$ lies in the $\alpha$-root space, $e_{23}$ is in the $\beta$-root space, etc. We also consider the matrices in $\h$ given by 
$$h_\alpha := [e_\alpha,e_{-\alpha}] = \mathrm{diag}(1,-1,0)\quad\text{and}\quad h_\beta := [e_\beta,e_{-\beta}] = \mathrm{diag}(0,1,-1).$$  
Now expand $s$ in the basis $\{h_\alpha,h_\beta\}$ of $\h$, i.e. $$s = s_\alpha h_\alpha + s_\beta h_\beta=\mathrm{diag}(s_\alpha,s_\beta-s_\alpha,-s_\beta)$$
for $s_{\alpha},s_{\beta}\in\mathbb{C}$. 
Note that 
\begin{equation}\label{Equation:alphasbetas}
\alpha(s) = 2s_\alpha-s_\beta,\quad\text{and}\quad\beta(s) = 2s_\beta-s_\alpha.
\end{equation} 

Now consider an element 
$$x = \sum_{\nu\in\Delta} x_\nu e_\nu \in\uu_-\oplus \uu_+,$$
 where all $x_\nu\in\C$. Let us choose $x_\alpha, x_{\beta}, x_{-\gamma}\in \C$ such that  
$$(x_{\alpha}x_{\beta}x_{-\gamma})^2 = \alpha(s)\beta(s)\gamma(s).$$ Note that $x_{\alpha}$, $x_{\beta}$, and $x_{-\gamma}$ are non-zero, as $s$ being a regular element forces each of $\alpha(s)$, $\beta(s)$, and $\gamma(s)$ to be non-zero. Now define $x_{-\alpha}, x_{-\beta}, x_{\gamma}\in\C$ by the conditions 
$$x_{\alpha}x_{-\alpha} = \alpha(s),\quad x_{\beta}x_{-\beta} = \beta(s), \quad x_{\gamma}x_{-\gamma} = -\gamma(s).$$
It follows that $x_\nu\neq 0$ for all $\nu\in\Delta$. A glance at the above-listed elements of $\widetilde{\mathcal P_s}=\mathcal{B}_s\cup\mathcal{P}_s$ then reveals that $x\notin \p$ for all $\p\in\widetilde{\mathcal P_s}$. 

It remains only to verify that $x$ solves $F_s(x) = 0$. To this end, recall the form of our Mishchenko--Fomenko map $F_s:\mathfrak{sl}_3(\mathbb{C})\rightarrow\mathbb{C}^5$. We have $\tr(s^2x) = 0 = \tr(sx)$, so that the equation $F_s(x) = 0$ is equivalent to the system of equations 
\begin{eqnarray*}
x_{\alpha}x_{-\alpha} + x_{\beta}x_{-\beta} + x_{\gamma}x_{-\gamma} &=& 0\qquad (\iff \tr(x^2) = 0)\\
s_\beta x_{\alpha}x_{-\alpha} - s_\alpha x_{\beta}x_{-\beta} - (s_\beta-s_\alpha)x_{\gamma}x_{-\gamma} &=& 0 \qquad (\iff \tr(sx^2) = 0)\\
x_{\alpha}x_{\beta}x_{-\gamma} + x_{\gamma}x_{-\beta}x_{-\alpha} &=& 0 \qquad (\iff \tr(x^3) = 0).
\end{eqnarray*}

To address the first equation, note that 
$$x_{\alpha}x_{-\alpha} + x_{\beta}x_{-\beta} + x_{\gamma}x_{-\gamma} = \alpha(s) +\beta(s) -\gamma(s) = 0.$$ The second equation is also satisfied, as \eqref{Equation:alphasbetas} gives 
\begin{eqnarray*}
s_\beta x_{\alpha}x_{-\alpha} - s_\alpha x_{\beta}x_{-\beta} - (s_\beta-s_\alpha)x_{\gamma}x_{-\gamma} &=& s_\beta\alpha(s) - s_\alpha\beta(s)+(s_\beta-s_\alpha)\gamma(s)\\
&=& s_\beta\alpha(s) - s_\alpha\beta(s) + (s_\beta-s_\alpha)(\alpha(s)+\beta(s))\\
&=& 2s_\alpha s_\beta - s_\beta^2 - 2s_\alpha s_\beta + s_\alpha^2 + (s_\beta -s_\alpha)(s_\alpha+s_\beta)\\
&=& s_\alpha^2-s_\beta^2 + s_\beta^2 - s_\alpha^2\\
&=& 0.
\end{eqnarray*}
To deal with the third equation, observe that 
$$(x_\alpha x_\beta x_{-\gamma}) \left(x_{\alpha}x_{\beta}x_{-\gamma} + x_{\gamma}x_{-\beta}x_{-\alpha}\right) = (x_\alpha x_\beta x_{-\gamma})^2 + x_\alpha x_{-\alpha} x_\beta x_{-\beta} x_\gamma x_{-\gamma} = \alpha(s)\beta(s)\gamma(s) - \alpha(s)\beta(s)\gamma(s) =0$$
Since $x_\alpha x_\beta x_{-\gamma} \neq 0$, this implies 
$$x_{\alpha}x_{\beta}x_{-\gamma} + x_{\gamma}x_{-\beta}x_{-\alpha} = 0.$$
\end{proof}

\begin{rem}
Recall the notation used in the proof of Proposition \ref{Proposition: Calculation}. One can verify that the element $x=\sum_{\nu} x_\nu e_\nu$ is regular nilpotent, i.e. $x^2 \neq 0 = x^3$. We also have $$\tr(sx) = \tr(s^2x) = \tr(sx^2)=\tr(x^2) = \tr(x^3) = 0,$$ so that $$\tr((x+\lambda s)^2) = \tr((\lambda s)^2)\quad\text{and}\quad \tr((x+\lambda s)^3) = \tr((\lambda s)^3).$$ This is easily seen to imply that $x+\lambda s$ and $\lambda s$ belong to the same adjoint orbit for all $\lambda\neq 0$. Since $x$ and $s$ are both regular, we conclude that $x\in\mathfrak{sl}_3(\mathbb{C})_{\text{sreg}}^s$. 
\end{rem}

\begin{prop}\label{Proposition: Case r}
If $r\in\sln_3(\C)$ is as defined above, then there exists an element $x\in F_r^{-1}(0)\cap \mathrm{image}(\ad_r)$ that is not contained in any $\p\in\widetilde{\mathcal{P}_r}$.
\end{prop}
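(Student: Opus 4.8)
The plan is to follow the template of the proof of Proposition \ref{Proposition: Calculation}, with $\mathrm{image}(\ad_r)$ playing the role that $\uu_-\oplus\uu_+$ played there; this is the natural substitute, since $\uu_-\oplus\uu_+=\mathrm{image}(\ad_s)$ for a regular semisimple element $s$. The first step is to compute $\mathrm{image}(\ad_r)$ explicitly. Write the Jordan decomposition $r=s'+n'$ with $s'=\mathrm{diag}(\rho,\rho,-2\rho)$ and $n'=e_{12}$. The commuting operators $\ad_{s'}$ and $\ad_{n'}$ constitute the Jordan decomposition of $\ad_r$, so $\ad_r$ preserves every $\ad_{s'}$-eigenspace, acts invertibly on those with nonzero eigenvalue, and acts as $\ad_{n'}$ on the zero-eigenspace $\g_{s'}=\h\oplus\C e_{12}\oplus\C e_{21}$. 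A short computation then shows that $\mathrm{image}(\ad_r)=\C\cdot\mathrm{diag}(1,-1,0)\oplus\C e_{12}\oplus\C e_{13}\oplus\C e_{23}\oplus\C e_{31}\oplus\C e_{32}$, i.e.\ its elements are exactly the matrices
\[
x=\begin{pmatrix} c & p & t\\ 0 & -c & q\\ t' & q' & 0\end{pmatrix},\qquad c,p,q,q',t,t'\in\C.
\]
(Note that $\g_r$ is not complementary to $\mathrm{image}(\ad_r)$, since $e_{12}$ lies in both.)

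The key simplification is that, for $x=[r,y]\in\mathrm{image}(\ad_r)$, cyclicity of the trace gives $\tr(r^k x)=\tr(r^{k+1}y)-\tr(r^{k+1}y)=0$ for all $k\ge 0$. Hence the two linear components $2\tr(rx)$ and $6\tr(r^2x)$ of $F_r$ (see \eqref{Equation: Specific MF}) vanish identically on $\mathrm{image}(\ad_r)$, so the condition $x\in F_r^{-1}(0)$ is equivalent to $\tr(x^2)=\tr(x^3)=\tr(rx^2)=0$; the first two of these say precisely that $x$ is nilpotent. Computing $x^2$ and $x^3$ for the parametrization above yields $\tr(x^2)=2(c^2+tt'+qq')$ and $\tr(x^3)=3\bigl(pqt'+c(tt'-qq')\bigr)$, while $\tr(rx^2)=3\rho c^2+qt'$ after substituting $\tr(x^2)=0$. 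Thus the system to solve is $c^2+tt'+qq'=0$, $pqt'+c(tt'-qq')=0$, and $qt'=-3\rho c^2$.

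Next I would impose the requirement that $x$ lie outside every $\p\in\widetilde{\mathcal P_r}$. From the explicit lists of $\mathcal B_r$ and $\mathcal P_r$, each Borel in $\mathcal B_r$ sits inside one of the four parabolics $\p_1,\p_1^-,\p_2,\p_3$, so it is enough that $x$ avoid those four; reading off matrix entries, and using that the $(2,1)$-entry of any element of $\mathrm{image}(\ad_r)$ vanishes, this comes down to the two conditions $q\neq 0$ (which rules out $\p_3$ and $\p_1^-$) and $t'\neq 0$ (which rules out $\p_2$ and $\p_1$). It then remains to exhibit one solution of the displayed system with $q\neq 0$ and $t'\neq 0$: choosing $c=q=1$ and $t=0$ forces $t'=-3\rho$, $q'=-1$, $p=\tfrac1{3\rho}$, and a direct check confirms that $x=\left(\begin{smallmatrix} 1 & \frac1{3\rho} & 0\\ 0 & -1 & 1\\ -3\rho & -1 & 0\end{smallmatrix}\right)$ lies in $F_r^{-1}(0)\cap\mathrm{image}(\ad_r)$ and in none of the members of $\widetilde{\mathcal P_r}$. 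I expect the only delicate steps to be the explicit determination of $\mathrm{image}(\ad_r)$ (where the naive guess $\g=\g_r\oplus\mathrm{image}(\ad_r)$ fails) and the bookkeeping of the four parabolic exclusions; there is no conceptual obstacle, and the remaining algebra is routine.
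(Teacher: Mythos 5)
Your proposal is correct and follows essentially the same route as the paper: it identifies $\mathrm{image}(\ad_r)$ by the conditions $x_{11}+x_{22}=0=x_{33}=x_{21}$, reduces $F_r(x)=0$ to the same three polynomial equations (the cyclicity observation that the linear components vanish is implicit in the paper's reduction), and exhibits an explicit solution excluded from all $\p\in\widetilde{\mathcal P_r}$ by the nonvanishing of the $(2,3)$- and $(3,1)$-entries. The only difference is the particular solution chosen, which I have checked satisfies the system.
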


\begin{proof}
Let us write
$$x=\begin{pmatrix}
x_{11} & x_{12} & x_{13}\\ x_{21} & x_{22} & x_{23} \\ x_{31} & x_{32} & x_{33}
\end{pmatrix}\in\mathfrak{sl}_3(\mathbb{C}).$$
Now recall the definition of $r$ and observe that $x\in\mathrm{image}(\ad_r)$ if and only if $x_{11} + x_{22} = 0 = x_{33} = x_{21}$. In this case, the equation $F_r(x) = 0$ reduces to the system
\begin{eqnarray*}
 x_{11}^2 + x_{13}x_{31} + x_{23}x_{32} &=& 0\\
x_{11}(x_{13}x_{31}-x_{23}x_{32}) + x_{12}x_{23}x_{31} &=& 0\\ 
2\rho x_{11}^2 - \rho x_{13}x_{31} - \rho x_{23}x_{32}  + x_{23}x_{31} &=& 0.
\end{eqnarray*}
One can use a direct calculation to check that 
$$x= \begin{pmatrix}
-3\rho & 1 & 3\rho i\\ 0 & 3\rho & 9\rho^2 i \\ 3\rho i & 0 & 0
\end{pmatrix}
$$ is a solution, where $i=\sqrt{-1}$.
An inspection of the list $\widetilde{\mathcal P_r}=\mathcal{B}_r\cup\mathcal{P}_r$ shows that $x$ is not contained in any $\p\in\widetilde{\mathcal P_r}$, since $x_{23},x_{31} \neq 0$.
\end{proof}

\begin{rem}
A direct calculation establishes that the above-constructed matrix 
$$x= \begin{pmatrix}
-3\rho & 1 & 3\rho i\\ 0 & 3\rho & 9\rho^2 i \\ 3\rho i & 0 & 0
\end{pmatrix}
$$
is regular nilpotent, i.e. $x^2 \neq 0=x^3$. One can also check that $x+\lambda r$ and $\lambda r$ have the same Jordan canonical form for all $\lambda\neq 0$. It follows that $x+\lambda r$ and $\lambda r$ are $\operatorname{SL}_3(\mathbb{C})$-conjugate for all $\lambda\neq 0$, and we conclude that $x\not\in\mathrm{Sing}^r$.
\end{rem}

It remains to consider the case $a=n$. To this end, Charbonnel and Moreau establish that 
\begin{equation}\label{Equation:CM}
x= \begin{pmatrix}
0 & 0 & 0\\ 1 & 0 & 0 \\0 & -1 & 0
\end{pmatrix}
\end{equation}
is contained in $F_n^{-1}(0)$. It is also clear that $x$ does not lie in any $\p\in\widetilde{\mathcal{P}_n}$. This combines with Propositions \ref{Proposition: Calculation} and \ref{Proposition: Case r} to yield the following result.

\begin{thm}\label{Theorem: Exotic component}
If $a\in\sln_3(\C)_{\emph{reg}}$, then $F_a^{-1}(0)$ has a point that does not lie in any $\p\in\widetilde{\mathcal{P}_a}$. Equivalently, $F_a^{-1}(0)$ has an exotic irreducible component.
\end{thm}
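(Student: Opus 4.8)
The plan is to reduce to a finite list of cases via conjugation-invariance, dispatch each case with the explicit constructions already in hand, and then record the short general argument behind the word ``Equivalently.''

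First I would use Remark \ref{Remark: Essential properties}. For $g\in G$ one has $F_a(\Ad_g(x))=F_{\Ad_{g^{-1}}(a)}(x)$, so $\Ad_g$ carries $F_{\Ad_{g^{-1}}(a)}^{-1}(0)$ onto $F_a^{-1}(0)$; it also carries $\widetilde{\mathcal{P}_{\Ad_{g^{-1}}(a)}}$ onto $\widetilde{\mathcal{P}_a}$. Hence the property ``$F_a^{-1}(0)$ contains a point lying in no $\p\in\widetilde{\mathcal{P}_a}$'' depends only on the adjoint orbit of $a$, and it suffices to verify it for one representative of each conjugacy class of regular elements in $\sln_3(\C)$. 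The matrices $s$, $r$, $n$ displayed above form such a complete set of representatives, so there are exactly three cases. For $a=s$, Proposition \ref{Proposition: Calculation} supplies an element of $F_s^{-1}(0)$ contained in no $\p\in\widetilde{\mathcal{P}_s}$; for $a=r$, Proposition \ref{Proposition: Case r} does the same; and for $a=n$, the matrix in \eqref{Equation:CM} lies in $F_n^{-1}(0)$ by Charbonnel--Moreau and, having nonzero $(2,1)$ and $(3,2)$ entries, lies in none of the parabolic subalgebras comprising $\mathcal{B}_n\cup\mathcal{P}_n=\widetilde{\mathcal{P}_n}$. This proves the first assertion.

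Next I would justify the equivalence with the statement about exotic components. In each of the three cases $\widetilde{\mathcal{P}_a}$ is the explicit finite list recorded above, so $\bigcup_{\p\in\widetilde{\mathcal{P}_a}}\p$ is a finite union of proper subspaces of $\g$ and in particular a proper closed subvariety. If $x\in F_a^{-1}(0)$ avoids this union, then any irreducible component $Z$ of $F_a^{-1}(0)$ with $x\in Z$ satisfies $Z\nsubseteq\p$ for every $\p\in\widetilde{\mathcal{P}_a}$, i.e. $Z$ is exotic. Conversely, if $Z\subseteq F_a^{-1}(0)$ is an exotic irreducible component, then $Z$ is not contained in any single $\p$; since $Z$ is irreducible and $\widetilde{\mathcal{P}_a}$ is finite, $Z$ cannot be covered by $\bigcup_{\p}\p$ either, so $Z$ — and hence $F_a^{-1}(0)$ — contains a point lying in no $\p\in\widetilde{\mathcal{P}_a}$.

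There is essentially no obstacle left: the substantive content is already carried by Propositions \ref{Proposition: Calculation} and \ref{Proposition: Case r} together with the Charbonnel--Moreau example, and the rest is bookkeeping. The one point worth a remark is the finiteness of $\widetilde{\mathcal{P}_a}$ used in the ``Equivalently'' step; for general $\g$ this follows from the fact that every parabolic subalgebra containing a regular element $a$ also contains a Borel subalgebra containing $a$ (via Proposition \ref{Prop:alregular}), hence contains $\g_a$ by Corollary \ref{Corollary: gaba} (itself resting on Lemma \ref{Lemma: Jordan decomposition Borel}), so there are only finitely many such parabolics — but in the case at hand it is simply read off the explicit lists.
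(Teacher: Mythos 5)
Your proposal is correct and follows essentially the same route as the paper: reduce to the representatives $s$, $r$, $n$ via Remark \ref{Remark: Essential properties}, invoke Propositions \ref{Proposition: Calculation} and \ref{Proposition: Case r} and the Charbonnel--Moreau matrix \eqref{Equation:CM}, and observe that a point of $F_a^{-1}(0)$ avoiding every $\p\in\widetilde{\mathcal{P}_a}$ forces the component through it to be exotic. Your added justification of the ``Equivalently'' step (including the converse via irreducibility and the finiteness of $\widetilde{\mathcal{P}_a}$) is sound and merely makes explicit what the paper leaves implicit; only the citation of Proposition \ref{Prop:alregular} for ``every parabolic containing $a$ contains a Borel containing $a$'' is slightly off, but that aside is not needed for the argument.
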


\begin{rem}
Note that $F_a$ is invariant under the action of the centralizer $\SLn_3(\C)_a\subseteq \SLn_3(\C)$. Note also that $F_a^{-1}(0)$ is invariant under the dilation action of $\mathbb{C}^{\times}$ on $\mathfrak{sl}_3(\mathbb{C})$, and that this action commutes with the adjoint action of $\SLn_3(\C)_a$ on $\mathfrak{sl}_3(\mathbb{C})$. Now let $x$ be the element constructed in the proof of Proposition \ref{Proposition: Calculation} if $a=s$, Proposition \ref{Proposition: Case r} if $a=r$, and Equation \eqref{Equation:CM} if $a=n$. It is then straightforward to deduce that the exotic irreducible component referenced in Theorem \ref{Theorem: Exotic component} is the orbit closure $\overline{(\C^{\times}\times\SLn_3(\C)_a)x}\subseteq F_a^{-1}(0)$. 
\end{rem}

We now illustrate Theorem \ref{Theorem: F(ba)}, and this involves considering the subalgebra $\bb^a$ in each of the cases $a=s$, $a=r$, and $a=n$. A first observation is that 
\[
\bb^s = \begin{pmatrix} * & 0 & 0 \\ 0 & * & 0\\ 0 & 0 & *\end{pmatrix} = \h, \quad 
\bb^r =  \begin{pmatrix} * & * & 0 \\ 0 & * & 0\\ 0 & 0 & *\end{pmatrix}, \quad\text{and}\quad \bb^n =  \begin{pmatrix} * & * & * \\ 0 & * & *\\ 0 & 0 & *\end{pmatrix}.
\]
Recall also that the Weyl group $W$ is generated by the simple reflections $\sigma_1: \mathrm{diag}(x_{11},x_{22},-x_{11}-x_{22})\mapsto \mathrm{diag}(x_{22},x_{11},-x_{11}-x_{22})$ and $\sigma_2:\mathrm{diag} (x_{11},x_{22},-x_{11}-x_{22})\mapsto \mathrm{diag}(-x_{22}-x_{11},x_{22},x_{11})$.

A straightforward computation reveals that the restriction of $F_s$ to $\bb^s = \h$ is
\[
F_s(x) =\left(\begin{array}{c}2(x_{11}^2 + x_{22}^2 + x_{11}x_{22})\\
 -3(x_{11}^2x_{22} + x_{11}x_{22}^2)\\ 
2(2s_1+ s_2)x_{11} + 2(s_1 + 2s_2)x_{22}\\ 
-3(s_2x_{11}^2 + 2(s_1 + s_2)x_{11}x_{22} + s_1x_{22}^2)\\
-6(2s_1s_2 + s_2^2)x_{11} - 6(s_1^2 + 2s_1s_2)x_{22}
\end{array}\right),
\]
where $x_{ij}$ denotes the entry of $x\in\bb^s$ in position $(i,j)$.
One can also show that the restriction of $F_r$ to $\bb^r$ is given by
\[
F_r(x) =\left(\begin{array}{c}2(x_{11}^2 + x_{22}^2 + x_{11}x_{22})\\
 -3(x_{11}^2x_{22} + x_{11}x_{22}^2)\\ 
6\rho(x_{11} + x_{22})\\
-3\rho (x_{11}^2  +4x_{11}x_{22} +x_{22}^2)
\\
-18\rho^2(x_{11} +x_{22})
\end{array}\right),\quad x\in\bb^r.
\]
The restriction of $F_n$ to $\bb^n = \bb$ is given by
\[
F_n(x) =\left(\begin{array}{c}2(x_{11}^2 + x_{22}^2 + x_{11}x_{22})\\
 3( - x_{11}^2x_{22}  - x_{11}x_{22}^2)\\ 
0\\
0\\
0
\end{array}\right),\quad x\in\bb^n.
\]
Note that each of these restrictions only depends on the diagonal part $x_{\h}$ of $x$ (cf. Theorem \ref{Theorem: F(ba)}(i)). 
We also see that $F_n(\bb^n) = \C^2\times\{0\}$, which is consistent with Theorem \ref{Theorem: F(ba)}(ii).

We now illustrate Theorem \ref{Theorem: F(ba)}(iv) by computing the degree of $\pi_2:F_r(\ba)\rightarrow\mathbb{C}^2$. Consider the semisimple part $r_{\h}:= \mathrm{diag}(\rho,\rho,-2\rho)$ of $r$, observing that $W_{r_{\h}}=\{\mathrm{id},\sigma_1\}$. The restriction $F_r\big\vert_{\h}:\h\rightarrow\mathbb{C}^5$ is easily seen to be invariant under $W_{r_{\h}}$. At the same time, one can verify the following fact: if $x\in\h\cap\greg$, then
\[
F_r(\sigma(x)) \neq F_r(x)
\]
unless $\sigma\in W_{r_{\h}}$. Using these last three sentences and recalling that $F_r(\ba)=F_r(\h)$ (see Theorem \ref{Theorem: F(ba)}(i)), it is straightforward to deduce that $\pi_2:  F_r(\bb^r)\to \C^2$ has degree $|W/W_{r_{\h}}|=3$.

\section*{Notation}
\begin{itemize}
\item $|M|$ cardinality of a set $M$
\item $f\big\vert_Z$ restriction of a map $f$ to a subset $Z$
\item $df(x)$ differential of a map $f$ at a point $x$
\item $Hx$ orbit of $x$ under a group $H$
\item $H_x\subseteq H$ the $H$-stabilizer of $x$
\item $\g$ semisimple Lie algebra
\item $r = \mathrm{rk}(\g)$ rank of $\g$
\item $b = b(\g)$ dimension of a Borel subalgebra of $\g$
\item $\langle\cdot,\cdot\rangle$ Killing form on $\g$
\item $G$ adjoint group of $\g$
\item $\exp:\g\rightarrow G$ exponential map
\item $\Ad$, $\ad$ adjoint representations of $G$, $\g$, respectively
\item $\g_x\subseteq\g$ the $\g$-centralizer of $x\in\g$
\item $\h\subseteq\g$ Cartan subalgebra; $T = \exp(\h)$
\item $\Delta,\Delta_+,\Pi$ sets of roots, positive roots, simple roots, respectively 
\item $W = W(\g,\h) = N_G(T)/T$ Weyl group
\item $B\subseteq G$ Borel subgroup; $\bb=\mathrm{Lie}(B)$
\item $\ba$ intersection of all Borel subalgebras of $\g$ that contain $a\in\g_{\text{reg}}$
\item $\ua$ nilpotent radical of $\ba$
\item $P\subseteq G$ parabolic subgroup; $\p=\mathrm{Lie}(P)$
\item $\lf$ Levi factor in a parabolic subalgebra
\item $\greg$ regular elements in $\g$
\item $\gsing=\g\setminus\greg$ singular elements in $\g$
\item $\Sing^a = \gsing +\C a$
\item $\gsreg = \g\setminus \Sing^a$
\item $\mathbb{C}[X]$ coordinate algebra of a complex affine variety $X$
\item $\mathbb{C}[\g]^G\subseteq\mathbb{C}[\g]$ algebra of $G$-invariant polynomials on $\g$
\item $f_1,\ldots,f_r$ homogeneous, algebraically independent generators of $\mathbb{C}[\g]^G$
\item $F$ the adjoint quotient map $(f_1,\ldots,f_r):\g\to\C^r$
\item $F_a:\g\to\C^b$ Mishchenko--Fomenko map associated with $a\in\greg$ 
\item $\mathcal F_a\subseteq \C[\g]$ Mishchenko--Fomenko subalgebra associated with $a\in\mathfrak{g}_{\text{reg}}$ 
\item $\mathcal B_a$ set of all Borel subalgebras of $\g$ that contain $a\in\greg$
\item $\widetilde{\mathcal{P}_a}$ set of all proper parabolic subalgebras of $\g$ that contain $a\in\greg$
\item $\mathcal{P}_a=\widetilde{\mathcal{P}_a}\setminus\mathcal{B}_a$
\item $^{\g}\mathcal I_a$ set of all irreducible components in $F_a^{-1}(0)$
\item $^{\g}\mathcal I_a'$ set of all $Z\in {^{\g}\mathcal I_a}$ satisfying $Z\nsubseteq\p$ for all $\p\in\mathcal{P}_a$
\end{itemize}
\bibliographystyle{acm} 
\bibliography{NewMF}
\end{document}